\documentclass{amsart}
\usepackage[dvipsnames]{xcolor}
\usepackage{amsmath,amssymb,amsthm}
\usepackage{amsfonts,amssymb}
\usepackage{mathrsfs}
\usepackage{enumitem}
\usepackage{mathtools}

\usepackage{ stmaryrd }

\usepackage{natbib}

\usepackage{tikz}
\usepackage{tikz-cd}
\tikzset{overcross/.style={double, line width=1.5, white, double=#1, double distance=\knotlinewidth},
    overcross/.default={black},
    knot/.style={line width=\knotlinewidth, baseline=-.5ex}}
\newcommand{\knotlinewidth}{.7pt}
\usetikzlibrary { decorations.pathmorphing, decorations.pathreplacing, decorations.shapes, }
\usetikzlibrary{matrix}

\usepackage{float}
\usepackage{graphicx, accents}
\usepackage{caption}
\usepackage[margin=10pt]{subcaption}
\usepackage{tabularx}
\usepackage{booktabs}
\usepackage{array}
\usepackage{diagbox}
\usepackage{nicematrix}
\usepackage{hhline}
\usepackage{multirow}
\usepackage{tabularray}
\usepackage[normalem]{ulem}
\usepackage{wrapfig}

\usepackage{xfrac}

\usepackage{xcolor}
  \usepackage[table]{xcolor}

\newcommand{\highlight}[1]{%
  \colorbox{yellow!30}{$#1$}}

\makeatletter
\newcommand{\subalign}[1]{%
  \vcenter{%
    \Let@ \restore@math@cr \default@tag
    \baselineskip\fontdimen10 \scriptfont\tw@
    \advance\baselineskip\fontdimen12 \scriptfont\tw@
    \lineskip\thr@@\fontdimen8 \scriptfont\thr@@
    \lineskiplimit\lineskip
    \ialign{\hfil$\m@th\scriptstyle##$&$\m@th\scriptstyle{}##$\hfil\crcr
      #1\crcr
    }%
  }%
}
\makeatother

\makeatletter
\newcommand{\raisemath}[1]{\mathpalette{\raisem@th{#1}}}
\newcommand{\raisem@th}[3]{\raisebox{#1}{$#2#3$}}
\makeatother

\makeatletter
\DeclareRobustCommand{\cev}[1]{%
  \mathpalette\do@cev{#1}%
}
\newcommand{\do@cev}[2]{%
  \fix@cev{#1}{+}%
  \reflectbox{$\m@th#1\vec{\reflectbox{$\fix@cev{#1}{-}\m@th#1#2\fix@cev{#1}{+}$}}$}%
  \fix@cev{#1}{-}%
}
\newcommand{\fix@cev}[2]{%
  \ifx#1\displaystyle
    \mkern#23mu
  \else
    \ifx#1\textstyle
      \mkern#23mu
    \else
      \ifx#1\scriptstyle
        \mkern#22mu
      \else
        \mkern#22mu
      \fi
    \fi
  \fi
}
  
\makeatother

\newcommand{\nocontentsline}[3]{}
\newcommand\stoptoc{%
   \let\origcontentsline\addcontentsline
   \let\addcontentsline\nocontentsline
}
\newcommand\resumetoc{%
   \let\addcontentsline\origcontentsline
}

\usepackage{hyperref}
\newtheorem{theorem}{Theorem}[section]
\newtheorem{conjecture}{Conjecture}[section]
\newtheorem{corollary}[theorem]{Corollary}
\newtheorem{proposition}[theorem]{Proposition}
\newtheorem{lemma}[theorem]{Lemma}

\theoremstyle{definition}
\newtheorem{definition}[theorem]{Definition}
\newtheorem{example}[theorem]{Example}
\newtheorem{question}[theorem]{Question}

\theoremstyle{remark}
\newtheorem*{remark}{Remark}

\newcommand{\Z}{\mathbb{Z}}
\newcommand{\N}{\mathbb{N}}

\newcommand{\F}{\mathbb{F}}
\newcommand{\RP}{\mathbb{RP}}
\newcommand{\Kh}{\mathrm{Kh}}

\newcommand{\lk}{\mathrm{lk}}

\newcommand{\abs}[1]{\left| #1 \right|}

\newcommand{\cut}{\mathsf{Cut}}
\newcommand{\cyc}{\mathsf{Cyc}}
\newcommand{\G}{\mathbb{G}}

\newcommand{\blue}[1]{\textcolor{blue}{#1}}
\newcommand{\red}[1]{\textcolor{red}{#1}}
\newcommand{\green}[1]{\textcolor{ForestGreen}{#1}}
\newcommand{\orange}[1]{\textcolor{orange}{#1}}

\usepackage{geometry}

\title{Myopic Tutte polynomials and Khovanov homology in $\RP^3$}

\author{Keegan Boyle}
\address{Department of Mathematical Sciences, New Mexico State University, Las Cruces, NM 88003}
\email{kboyle@nmsu.edu}
\author{Dean Spyropoulos}
\address{Department of Mathematical Sciences, New Mexico State University, Las Cruces, NM 88003}
\email{dspyro@nmsu.edu}
\date{}

\begin{document}

\begin{abstract}
We present a ``myopic'' Tutte polynomial for graphs on $\RP^2$ which takes only nullhomologous spanning subgraphs as input. It recovers the generalized Krushkal polynomial and Drobotukhina's analogue of the Jones polynomial for alternating, nullhomologous links in $\RP^3$. We use this myopic Tutte polynomial to prove an analogue of the Kauffman-Murasugi-Thistlethwaite Theorem, relating the Jones polynomial of an alternating link to certain refinements of the crossing number. Finally, we construct a spanning tree model for the Khovanov homology of nullhomologous links, mirroring work by Champanerkar-Kofman and Wehrli for links in $S^3$. For alternating links, we use our model to prove that the Khovanov homology in $\mathbb{Z}/2\mathbb{Z}$ coefficients is determined entirely by the Jones polynomial and signatures of the link.
\end{abstract}

\maketitle

\vspace{-0.5cm}

\tableofcontents

\vspace{-1cm}

\section{Introduction}

Recently, there has been a surge of interest in the Khovanov homology \cite{MR1740682} of links in $\RP^3$ \cite{MR2113902,MR2320160,MR3189291}; for instance, see \cite{MR4904031, MR4862260, MR5083272, MR4919586, ren2025intrinsic, MR4983443, rushworth2026some}. In light of the growing curiosity, we revisit the fundamental problem of characterizing Khovanov thinness \cite{MR1917056, MR2034399}, now in $\RP^3$. In $S^3$, a link is called \emph{Khovanov-thin} if its Khovanov homology is supported on precisely two diagonals. Such links are considered ``boring'' in a sense, as their Khovanov homology in $\F := \Z / 2\Z$-coefficients is no stronger than their (unnormalized) Jones polynomial $\hat{J}_L(t)$ (obtained as the graded Euler characteristic of the homology) together with the signature $\sigma(L)$ (obtained from intercepts of the diagonals).

To begin our investigations in real projective space, view $\RP^3 - \infty$ as a twisted interval bundle over $\RP^2$, so that any link $L\subset \RP^3$ may be represented by a projection $D\subset \RP^2$, called a \emph{diagram} of $L$. For example, below is a diagram for a three-crossing knot in $\RP^3$. To its right is its Khovanov homology in $\F$-coefficients.\footnote{We work over $\F := \Z/2\Z$-coefficients, corresponding to the theory of Asaeda, Przytycki, and Sikora \cite{MR2113902}. Gabrov\v sek \cite{MR3189291} fixed a sign convention extending the construction to the integers (thereafter completed by Manolescu-Willis \cite{MR4904031}).}

\[
\tikz{
\node(K) at (0,0) {$K = \tikz[baseline={([yshift=-.5ex]current bounding box.center)}, scale=.25]{
	\draw[knot, overcross] (0,1.75) to[out=0, in=-135] (5*1.41/2, 5*1.41/2);
	\draw[knot, overcross] (2,2) to[out=-90, in=45] (-5*1.41/2, -5*1.41/2);
	\draw[knot, overcross] (5*1.41/2, -5*1.41/2) to[out=135, in=-90] (-2,2);
	\draw[knot, overcross] (-2,2) to[out=90, in=180] (0,4);
	\draw[knot, overcross] (-5*1.41/2, 5*1.41/2) to[out=-45, in=180] (0, 1.75);
	\draw[knot, overcross] (0,4) to[out=0, in=90] (2,2);
	\draw[white, ultra thick] (0,0) circle (5cm);
	\draw[line cap=round, dash pattern=on 0pt off 3.5pt] (0,0) circle (5cm);
}$};
\node(T) at (7,0) {\begin{tabular}{|c||c|c|c|c|} 
\hline
$2$  &              &              &              & $\orange{\mathbb{F}}$  \\ 
\hline
$1$  &              &              & $\green{\mathbb{F}}$ &               \\ 
\hline
$0$  &              &              & $\orange{\mathbb{F}}$ & $\red{\mathbb{F}}$  \\ 
\hline
$-1$ &              & $\green{\mathbb{F}}$ & $\blue{\mathbb{F}}$ &               \\ 
\hline
$-2$ &              &              & $\red{\mathbb{F}}$ &               \\ 
\hline
$-3$ & $\green{\mathbb{F}}$ & $\blue{\mathbb{F}}$ &              &               \\ 
\hline
$-4$ &              &              &              &               \\ 
\hline
$-5$ & $\blue{\mathbb{F}}$ &              &              &               \\ 
\hhline{|=::====|}
   \slashbox{$j$}{$i$}   & $-2$         & $-1$         & $0$          & $1$           \\
\hline
\end{tabular}};
\draw[->, shorten <=5pt, shorten >=5pt, thick] (K) -- node[above, midway]{$\Kh$} (T)
}
\]

Now, this knot does not seem Khovanov-thin in the traditional sense: the homology is supported on four diagonals, represented above by four colors. However, this reflects a feature: the Jones polynomial of some links in $\RP^3$ (as defined by Drobotukhina\footnote{We frequently cite Yulia Drobotukhina, who is now known as Julia Viro.} in \cite{MR1073213}) have exponents which are both integers and half-integers. For example,
\[
J_K(t) = t^{-2} - t^{-1} + 1 - t^{\frac{1}{2}} + t^{\frac{1}{2}}
\]
which can be obtained as the graded Euler characteristic of the Khovanov homology above by taking 
\begin{align*}
\hat{J}_K(q) & = (q+q^{-1}) \cdot J_K(q = -t^{\frac{1}{2}}) \\
& = \blue{q^{-5}} + \green{q} + \red{q^{-2}} - \orange{q^{2}}.
\end{align*}
Moreover, some links in $\RP^3$ have two signatures---the placement of the cool-colored diagonals in the Khovanov homology above reflects the fact that one signature of $K$ is 0, while the placement of the warm-colored diagonals matches the fact that the other signature of $K$ is 1---see Example \ref{ex:maingoeritz} and Proposition \ref{prop:goeritzsig}.

The links in $\RP^3$ which have two Jones polynomials, two signatures, and two Khovanov homologies are precisely the non-local, nullhomologous links. We call $L$ \emph{local} if it is contained within a 3-ball in $\RP^3$; otherwise, it is called \emph{non-local}. We call $L$ \emph{nullhomologous} (resp., \emph{homologically essential}) if $[L] = 0$ (resp. $[L] = 1$) in $H_1(\RP^3) \cong \F$. A diagram $D\subset \RP^2$ is called \emph{alternating} if, along it, underpasses and overpasses alternate if and only if the arc between successive crossings either fails to intersect the boundary of the disk of the diagram, or intersects it in $4k$-many points. A link $L\subset \RP^3$ is called alternating if it has an alternating diagram. For example, the knot $K$ above is non-local, nullhomologous, and alternating. Note that alternating links must be nullhomologous by definition. At the outset, our goal is to prove the following.

\begin{theorem}
\label{thm:main}
If $L$ is a non-local, alternating link in $\RP^3$, then its Khovanov homology in $\Z/2\Z$-coefficients is supported on exactly four diagonals given by $j - 2i = - \sigma_{\pm}(L) \pm 1$. Moreover, it is equivalent to the data $\{J_L(t) = J_L^+(L) + J_L^-(t), \sigma_+(L), \sigma_-(L)\}$.
\end{theorem}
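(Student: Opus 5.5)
Following the abstract, the proof will go through a spanning tree model for Khovanov homology in $\RP^3$, in the spirit of Champanerkar--Kofman and Wehrli. First we use the fact that a nullhomologous non-local link has Khovanov homology splitting as $\Kh(L) = \Kh^+(L)\oplus \Kh^-(L)$. This splitting comes from the two classes of enhanced states: a nullhomologous diagram in $\RP^2$ resolves into either no essential circles or an even number of them. It matches $J_L = J_L^+ + J_L^-$. Each summand is then handled separately, and the claim is that $\Kh^\pm(L)$ is supported on the two diagonals $j-2i = -\sigma_\pm(L)\pm 1$.

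Step one: fix an alternating diagram $D\subset\RP^2$ and its checkerboard (Tait) graph $G$ on $\RP^2$. The diagram is nullhomologous, so a checkerboard colouring exists. Using the myopic Tutte polynomial, which only admits nullhomologous spanning subgraphs, we express $J_L^\pm$ as a sum over the appropriate class of "spanning trees" or quasi-trees of $G$. Each term is a monomial weighted by activities.

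Step two: apply the iterated mapping-cone / Gaussian elimination argument crossing by crossing, in a fixed edge order. This reduces the Khovanov complex of each summand to a complex whose generators are indexed by these spanning subgraphs. Each generator carries two adjacent $j$-gradings, by analogy with Wehrli's twisted unknots. For a generator $T$ we compute $j-2i$: the quantum and homological gradings are both affine in the activity numbers. In an alternating diagram the terms cancel, exactly as in $S^3$, so $j-2i$ becomes a constant plus a contribution of $\pm1$. Here we must use the $4k$-boundary-intersection convention for alternating diagrams in $\RP^2$. That convention is what makes each crossing's smoothing sign consistent with the checkerboard colouring across the boundary circle.

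Step three: identify the constant with $-\sigma_\pm(L)$ using the Goeritz-matrix formula for the two signatures (Proposition \ref{prop:goeritzsig}). Each signature is read off from a Goeritz matrix of the Tait graph restricted to one homology class of subgraph, plus a correction term in the crossing counts. This gives the four diagonals. Thinness then finishes the argument. Over $\F$, the homology on each pair of adjacent diagonals is determined by its Euler characteristic $J^\pm$ together with the diagonal position $\sigma_\pm$, using the standard knight-move / $\mathrm{Kh}$ versus reduced-homology argument. We still have to show that $J^+$ and $J^-$ can be recovered from $J$. Their exponents lie in different cosets ($\Z$ versus $\Z+\tfrac12$, as in the example), so the sum determines the parts.

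The main obstacle is step two's grading computation for non-tree spanning subgraphs, since $\RP^2$ allows homologically essential cycles. We must check that the myopic restriction to nullhomologous subgraphs exactly matches the generators surviving cancellation in each summand. We must also check that no generator lands off its diagonal. I would attempt this by induction on crossings via skein exact sequences preserving alternation. The fallback is a direct Euler characteristic computation with the Goeritz form to pin down the intercepts.
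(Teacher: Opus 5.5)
Your outline matches the paper's route: a Champanerkar--Kofman/Wehrli spanning-tree model, a diagonal grading that is constant on each summand, Proposition~\ref{prop:goeritzsig} for the intercepts, and the $\Z$ versus $\Z+\frac{1}{2}$ exponents to separate $J_L^\pm$. However, the step you defer as the main obstacle is the real content of the proof, and you have misidentified it.

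The generators are not spanning subgraphs of a single Tait graph singled out by the myopic state sum. They are the spanning trees of \emph{both} Tait graphs $G_+$ and $G_-$ (equivalently, spanning trees and projective completions of $G_+$, Lemma~\ref{lem:projectivecompletions}). It is the spanning-tree expansion, not the state sum, that gets categorified.

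The missing ingredient is Lemma~\ref{lem:champkof1}. These trees are in bijection with the twisted unknots $\mathscr{U}(D)$ of an incomplete skein-resolution tree, with $\mu(T)=A^{\sigma(U)}(-A)^{3w(U)}$. A projectively externally active edge is simply smoothed like an externally inactive one (Table~\ref{tab:Usmooth}), so there is no separate grading problem for ``non-tree subgraphs''. Every twisted unknot is a class-$0$ unknot (Lemma~\ref{lem:ManWillis}), so its reduced complex Gaussian-eliminates to a single fundamental cycle. This yields the retraction of $\widetilde{C}(D)$ onto $\mathcal{C}(D)=\mathcal{C}_+(D)\oplus\mathcal{C}_-(D)$ with the explicit gradings (\ref{eq:treegradings}).

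Your explanation of the splitting is also wrong. A resolution of a nullhomologous diagram contains no essential circle at all: Lemma~\ref{lem:ManWillis} rules it out, and two essential circles in $\RP^2$ would have to intersect anyway. The splitting is by the parity of $j$, which the differential preserves, and the two parity classes are exactly the trees of $G_+$ and of $G_-$.

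Once (\ref{eq:treegradings}) is available, $j-2i$ depends only on $v(T)=e_\pm(T)$, the number of $\pm$-signed edges of $T$. For an alternating diagram every edge of $G_\pm$ has sign $\pm$, so $v(T)=v(G_\pm)-1$ for every tree. This, not a cancellation you still need to verify, is why $\partial_\pm$ (of bidegree $(\mp1,\mp1)$) vanishes and the spectral sequence of Theorem~\ref{thm:ckspectral} collapses at $E_1$. The ranks are then read off coefficientwise from $J_L^\pm$ by Proposition~\ref{prop:gradedeuler}; no knight-move argument is needed over $\F$.

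Proposition~\ref{prop:goeritzsig} then converts the constant $v$ into $\sigma_\pm(L)$. Note that the two Goeritz matrices come from the two checkerboard colourings via the planar closures $D^\pm$, not from one Tait graph restricted to a homology class.

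Neither of your proposed ways around the obstacle closes the gap. An Euler characteristic computation, with or without the Goeritz form, cannot control where homology is supported. A skein-exact-sequence induction would need an $\RP^3$ analogue of the quasi-alternating argument, handling two determinants and the new $1$-$1$ bifurcations, and you do not supply one.
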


\begin{wrapfigure}[10]{R}{0.25\textwidth}
\vspace{-15pt}
\centering
\begin{minipage}{0.25\textwidth}
\[\tikz[scale=.275]{
	\begin{scope}[white!60!black]
	\draw[knot] (0,1.75) to[out=0, in=-135] (5*1.41/2, 5*1.41/2);
	\draw[knot] (2,2) to[out=-90, in=45] (-5*1.41/2, -5*1.41/2);
	\draw[knot] (5*1.41/2, -5*1.41/2) to[out=135, in=-90] (-2,2);
	\draw[knot] (-2,2) to[out=90, in=180] (0,4);
	\draw[knot] (-5*1.41/2, 5*1.41/2) to[out=-45, in=180] (0, 1.75);
	\draw[knot] (0,4) to[out=0, in=90] (2,2);
	\end{scope}
	\begin{scope}[blue, very thick]
		\fill (0, 0.5) circle (.3cm);
		\fill (0, -3) circle (.3cm);
		\draw (0, 0.5) -- (0, -3);
		\draw[rounded corners = 3mm] (0, 0.5) -- (-2.15,2.25) -- (-5*0.5,5*0.866);
		\draw[rounded corners = 3mm] (0, 0.5) -- (2.15,2.25) -- (5*0.5,5*0.866);
		\draw (-5*0.5,-5*0.866) -- (0,-3);
		\draw (5*0.5,-5*0.866) -- (0,-3);
	\end{scope}
	\begin{scope}[red, very thick]
		\fill (0, 3) circle (.3cm);
		\fill (-3, 0) circle (.3cm);
		\draw[rounded corners = 1mm] (0,3) -- (-2.05,2.25) -- (-3, 0) -- (-5,0);
		\draw (-3, 0) -- (-5*0.866, -5*0.5);
		\draw[rounded corners = 1mm] (0,3) -- (2.05,2.25) -- (3, 2) -- (5*0.866, 5*0.5);
		\draw[rounded corners] (-3, 0) -- (0,-1.15) -- (5,0);
	\end{scope}
	\draw[white, ultra thick] (0,0) circle (5cm);
	\draw[dotted] (0,0) circle (5cm);
}\]
\end{minipage}
\caption{Tait graphs.}
\label{fig:pdtaits}
\end{wrapfigure}
While such a result is likely attainable via the extension of Lee homology provided by Manolescu-Willis \cite{MR4904031}, we select to take a detour based on the following observation. First, the nullhomologous links in $\RP^3$ are precisely the ones which admit checkerboard-colorable diagrams. Hence, they admit Tait graphs which are embedded graphs in $\RP^2$. For instance, in Figure \ref{fig:pdtaits} we picture the two Tait graphs associated to the diagram for $K$ above. Mirroring Thistlethwaite \cite{MR899051}, one can ask if the two Jones polynomials of an alternating link can be obtained by some ``Tutte-like'' polynomial for the two Poincar\'e dual Tait graphs of $D$. To the affirmative, we present a polynomial invariant $\psi_G \in \mathbb{Z}[x,y]$ of graphs $G \hookrightarrow \RP^2$ and prove the following.

\begin{theorem}
\label{thm:jonespolynomialalternatingtutte}
If $D$ is a connected, reduced, alternating diagram for a non-split, non-local alternating link in $\RP^3$, and $G_+$ and $G_-$ are its two checkerboard graphs, then there are $k_+, k_- \in \mathbb{Z}[\frac{1}{2}]$ such that
\[
J_L(t) = (-1)^{w(D)} \left( t^{k_+} \psi_{G_+}(-t, -t^{-1}) + t^{k_-} \psi_{G_-}(-t^{-1}, -t)\right)
\]
and $k_+ - k_- = \frac{1}{2}$.
\end{theorem}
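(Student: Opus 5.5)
We follow Thistlethwaite's argument for $S^3$, adapted to the two-component Kauffman bracket in $\RP^3$. Drobotukhina's bracket of a checkerboard-colorable diagram $D \subset \RP^2$ is a state sum over the $2^{c(D)}$ Kauffman states. For a nullhomologous diagram, each state's resolution is a collection of circles in $\RP^2$: either all are trivial, or exactly one is essential (one-sided) and the rest are trivial. Drobotukhina's normalization gives the essential circle a different weight from a trivial one. This is what splits $J_L = J_L^+ + J_L^-$ and produces half-integer exponents.

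\textbf{Step 1: translate states into subgraphs.} Fix the checkerboard coloring for which $G_+$ is the Tait graph of the shaded regions. A Kauffman state corresponds to a spanning subgraph $A \subseteq E(G_+)$: the edges where the $A$-smoothing merges shaded regions. Because $D$ is alternating, every crossing has the same sign relative to the coloring, so the $A$-smoothings are exactly the edges of $G_+$ after a global convention. The circles of the state are then the boundary components of a regular neighborhood of the ribbon subgraph $A \hookrightarrow \RP^2$.

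\textbf{Step 2: sort subgraphs by homology.} If $A$ is nullhomologous in $H_1(\RP^2;\F)$, the neighborhood $N(A)$ sits in a disk up to isotopy, so every state circle is trivial. The number of circles is $k(A) + |A| - |V| + k(A)$ up to Euler-characteristic corrections, which is the quantity the myopic Tutte polynomial $\psi_{G_+}$ records. If $A$ contains an essential cycle, its complement $E(G_+) \setminus A$ corresponds under duality to a spanning subgraph $A^* \subseteq E(G_-)$ that is nullhomologous in $\RP^2$. The key lemma is that for $\RP^2$ exactly one of $A$ and $A^*$ is nullhomologous: $H_1(\RP^2) = \F$ and the intersection pairing is nondegenerate, so $A$ contains an essential cycle if and only if $A^*$ contains none.

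The bracket therefore splits into a sum over nullhomologous $A \subseteq G_+$ and a sum over nullhomologous $A^* \subseteq G_-$, with the roles of the $A$- and $B$-smoothings exchanged in the second sum. Counting trivial circles in each sum by the rank and nullity terms in the definition of $\psi$, and substituting the Kauffman variable $A = t^{-1/4}$ in the usual way, we get $\psi_{G_+}(-t,-t^{-1})$ from the first sum and $\psi_{G_-}(-t^{-1},-t)$ from the second. The second sum also carries the extra essential-circle factor.

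\textbf{Step 3: collect the monomial prefactors.} Multiply by the writhe normalization $(-A^3)^{-w(D)}$; the writhe contributes the sign $(-1)^{w(D)}$. The remaining powers $t^{k_\pm}$ come from $|E|$, $|V(G_\pm)|$ and the essential-circle weight. Using Euler's relation on $\RP^2$,
\[
|V(G_+)| + |V(G_-)| = c(D) + 1,
\]
the difference $k_+ - k_-$ reduces to the contribution of $\chi(\RP^2) = 1$ together with the essential-circle weight, and should come out to exactly $\tfrac12$. The hypotheses that $D$ is connected, reduced and non-local guarantee that both $G_\pm$ are cellularly embedded, that both sums are nonempty, and that the Euler relation above holds.

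\textbf{Main obstacle.} The hard step is Step 2: showing that the circle count for a nullhomologous $A$ is exactly the exponent pattern encoded by $\psi$, including the trivial-versus-essential distinction for the faces of $N(A)$, and pinning down the resulting half-integer shift $\tfrac12$. I would first work out the count for the diagram $K$ shown in the introduction, to fix the conventions, and then prove the general statement by deletion and contraction on non-loop, non-bridge edges.
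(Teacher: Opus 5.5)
Your overall plan is sound and differs from the paper's route. The plan is to expand $\langle D\rangle$ over Kauffman states, identify each state with the spanning subgraph $H\subseteq E(G_+)$ of $A$-smoothed crossings, and split the sum according to whether $H$ or its dual complement $H^*\subseteq E(G_-)$ is local. The paper instead proves the spanning-tree expansion $\Psi_D=\langle D\rangle$ (Proposition~\ref{prop:KBgraphpolynomial}) and matches tree weights against Proposition~\ref{prop:tutte_spanningtrees}. Done correctly, your route would be shorter, since it needs only Proposition~\ref{prop:tutte2}.

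However, the mechanism you give for the splitting is false, and it sits exactly in the step you leave open. For a diagram of a nullhomologous link, no state contains an essential circle (Lemma~\ref{lem:ManWillis}). The state circles are the common boundary $\partial N(H)=\partial N(H^*)$, they are all trivial, and each is weighted as usual by $-A^2-A^{-2}$. So there is no ``essential-circle factor'' in the second sum, and no such weight enters $k_+-k_-$. What actually differs between the two sums is the circle count. If $H$ is non-local, the neighborhood of its non-local component $H_0$ is $\RP^2$ minus $b_1(H_0)$ open disks. The state therefore has $b_0(H)+b_1(H)-1$ circles rather than $b_0(H)+b_1(H)$; equivalently, $b_0(H^*)+b_1(H^*)$ circles, counted from the local side. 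This drop by one shifts the exponents of $A$ by $2$ modulo $4$, and that is where the half-integer powers of $t$ come from.

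Your justification of the dichotomy also gives only one direction. Disjointness and the intersection pairing show that $H^*$ is local when $H$ is not. For the converse you need an extra argument, for example that $N(H^*)$ is the closure of $\RP^2\setminus N(H)$, which contains a M\"obius band when $N(H)$ lies in a disk.

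With the correct count, the argument closes with no extra bookkeeping. Write $m=c(D)$ and $n_\pm=v(G_\pm)$, and use $\abs{H}=n_+-b_0(H)+b_1(H)$. The local-$H$ states then contribute
\[
\sum_{H\ \mathrm{local}}A^{2\abs{H}-m}(-A^2-A^{-2})^{b_0(H)+b_1(H)-1}
=A^{k'_+}\sum_{H\ \mathrm{local}}(-1-A^{-4})^{b_0(H)-1}(-1-A^{4})^{b_1(H)}
=A^{k'_+}\,\psi_{G_+}(-A^{-4},-A^{4}),
\]
with $k'_+=2(n_+-1)-m$. Symmetrically, the local-$H^*$ states contribute $A^{-k'_-}\psi_{G_-}(-A^{4},-A^{-4})$ with $k'_-=2(n_--1)-m$. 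After the writhe normalization and the substitution $A=t^{-1/4}$, the identity $k_+-k_-=-(k'_++k'_-)/4=\tfrac12$ follows from Euler's relation $n_++n_-=m+1$ alone.

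As written, though, your Steps~2--3 attribute the shift to a nonexistent factor and only assert that the difference ``should'' be $\tfrac12$. The central computation of the proof is therefore missing.
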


As with the ordinary Tutte polynomial, ours admits three definitions, the most notable being a spanning tree expansion. The relevance to Khovanov homology and Theorem \ref{thm:main} is provided by Champanerkar-Kofman \cite{MR2480298} and Wehrli \cite{MR2477595}, who showed that for any link $L$ in $S^3$, there is a cochain complex generated by spanning trees of the Tait graph of $L$ whose homology is the reduced Khovanov homology of $L$. So too for non-local nullhomologous links in $\RP^3$; we use their approach to prove Theorem \ref{thm:main}.

We call the polynomial $\psi_G$ the \emph{myopic Tutte polynomial} since it takes only the homologically trivial spanning subgraphs of $G$ as input. The literature on extensions of the Tutte polynomial to graphs embedded into surfaces is vast---we refer the interested reader to the surveys \cite{MR4972578} and \cite{MR4952626}. The most general of these extensions is the generalized Krushkal polynomial \cite{MR2769192, MR3739494}, a four-variable polynomial $K_G \in \mathbb{Z}[x, y, A, B]$ defined for graphs embedded into any surface $\Sigma$. An interesting feature of our presumably weak polynomial is that it is equivalent to $K_G$ when $\Sigma = \RP^2$. In particular, this implies that in $\RP^2$, $K_G$ admits a spanning tree expansion (in arbitrary surfaces, only a quasi-tree expansion is guaranteed; see \cite[Theorem 4.4]{MR3739494}). Let $G^*$ denote the Poincar\'e dual of $G$ in $\RP^2$. Using the property that $K_{G^*}(x,y, A, B) = K_{G}(y, x, B, A)$, we prove the following.

\begin{theorem}
\label{thm:equalkrushkal}
For any connected, non-local graph $G \subset \RP^2$, the polynomial $K_G(x, y, A, B)$ is equivalent to $\{\psi_G(x,y), \psi_{G^*}(x,y)\}$. Namely,
\[
K_G(x,y,0,1) = \psi_G(x,y)
\]
and
\[
K_G(x,y, A, B) = \sqrt{B} \, \psi_G(x,y) + \sqrt{A} \, \psi_{G^*}(y,x).
\]
\end{theorem}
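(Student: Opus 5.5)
The plan is to split the state sum defining $K_G$ according to whether a spanning subgraph is nullhomologous or homologically essential in $\RP^2$, and then use the duality $K_{G^*}(x,y,A,B)=K_G(y,x,B,A)$ to identify the essential half with $\psi_{G^*}$. I take $K_G$ in its generalized (Euler-genus) form
\[
K_G(x,y,A,B)=\sum_{H\subseteq G} x^{c(H)-c(G)}\, y^{k(H)}\, A^{s(H)/2}\, B^{s^{\perp}(H)/2}.
\]
Here $H$ runs over spanning subgraphs, $k(H)=\dim\ker\bigl(H_1(H)\to H_1(\RP^2)\bigr)$, $s(H)$ is the Euler genus of a regular neighbourhood $N(H)$, and $s^\perp(H)$ is the Euler genus of $\RP^2\setminus N(H)$.

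\textbf{Step 1 (dichotomy).} Every spanning subgraph $H\subset\RP^2$ is of exactly one of two types, since $H_1(\RP^2)\cong\F$.
\begin{itemize}
\item If $H$ is nullhomologous, so that $H_1(H)\to H_1(\RP^2)$ vanishes, then $N(H)$ contains no orientation-reversing loop. Hence $N(H)$ is planar and $s(H)=0$. Its complement in $\RP^2$ then contains a Möbius band, and by additivity of Euler genus $s^\perp(H)=1$.
\item If $H$ is essential, then $N(H)$ contains a Möbius band. Hence $s(H)=1$, the complement is planar, and $s^\perp(H)=0$.
\end{itemize}
Therefore
\[
K_G=\sqrt{B}\,\Sigma_0(x,y)+\sqrt{A}\,\Sigma_1(x,y),
\]
where $\Sigma_0$ (resp. $\Sigma_1$) is the sum of $x^{c(H)-c(G)}y^{k(H)}$ over nullhomologous (resp. essential) spanning subgraphs.

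\textbf{Step 2 (the nullhomologous half).} For nullhomologous $H$ we have $k(H)=\dim H_1(H)$, which is the nullity. So $\Sigma_0$ should be exactly the subgraph-expansion definition of $\psi_G$. This step requires checking that the paper's normalization of $\psi_G$ (the exponent of $x$ relative to $c(G)$, and any shift) agrees with this. Since $G$ is connected and non-local, $G$ itself is essential, so no degenerate terms appear. Setting $A=0$, $B=1$ then gives $K_G(x,y,0,1)=\psi_G(x,y)$.

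\textbf{Step 3 (the essential half via duality).} From Step 1, $\Sigma_1(x,y)=K_G(x,y,1,0)$. By the duality property, $K_G(x,y,1,0)=K_{G^*}(y,x,0,1)$. Applying Step 2 to $G^*$, which is also connected and non-local, gives $K_{G^*}(y,x,0,1)=\psi_{G^*}(y,x)$. Substituting into the decomposition of Step 1 yields
\[
K_G=\sqrt{B}\,\psi_G(x,y)+\sqrt{A}\,\psi_{G^*}(y,x).
\]
Conversely, $K_G$ determines $\psi_G=K_G(x,y,0,1)$ and $\psi_{G^*}(y,x)=K_G(x,y,1,0)$, which proves the claimed equivalence.

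I expect the main obstacle to be the bookkeeping in Steps 1 and 2. The conventions for $s$ and $s^\perp$ in the generalized Krushkal polynomial of a non-orientable surface (half-integer exponents, Euler genus versus $2g$) must match the $\sqrt{A}$ and $\sqrt{B}$ in the statement. The exponents in the paper's definition of $\psi$ must also match the ones above, possibly after a normalization shift. As a sanity check I would test the formula on the smallest non-local example: a single vertex with one essential loop, whose dual is again a single vertex with one essential loop.
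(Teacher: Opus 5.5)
Your proposal is correct and follows the paper's argument essentially step for step. Step 1 is Lemma~\ref{lem:rp2krushkal}, which gives $s(H),s^\perp(H)\in\{0,1\}$ with $s(H)\neq s^\perp(H)$ according to whether $H$ is local or essential. Step 2 is Proposition~\ref{prop:fromkrushkal}, which identifies the $\sqrt{B}$-part with the state sum of Proposition~\ref{prop:tutte2}. Step 3 is Proposition~\ref{prop:tokrushkal}, which reads off the $\sqrt{A}$-part via $K_{G^*}(x,y,A,B)=K_G(y,x,B,A)$.

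The only fix needed is the normalization you already flagged. The paper's $K_G$ uses the factors $(x-1)^{b_0(H)-b_0(G)}(y-1)^{\dim\ker(i_*^H)}$ rather than powers of $x$ and $y$. These match the $(x-1)$ and $(y-1)$ in the state sum for $\psi_G$, so with the paper's definition your Step 2 holds exactly, with no shift.
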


The myopic Tutte polynomial description of the Jones polynomial of alternating links also allows us to make some overlooked observations. Using the notation in Theorem \ref{thm:jonespolynomialalternatingtutte}, we write
\[
J_L^+(t) := (-1)^{w(D)} t^{k_+} \psi_{G_+}(-t, -t^{-1}) 
\qquad \text{and} \qquad
J_L^-(t) := (-1)^{w(D)} t^{k_-} \psi_{G_-}(-t^{-1}, -t)
\]
so that $J_L(t) = J_L^+(t) + J_L^-(t)$ for non-local alternating links. Assume that $D$ is the connected, reduced, alternating diagram for $L$ inducing the Tait graphs $G_+$ and $G_-$. If $G$ is a graph, let $E(G)$ denote the set of edges of $G$. Define the \emph{projective rank} of a graph $G\subset \RP^2$ to be
\[
\alpha(G) := \min_{E \subset E(G)}\{\abs{E} : G - E~\text{is local}\}.
\]
See Example \ref{ex:alphacomp} for a sample computation of $\alpha(G)$. Denote the projective ranks of $G_+$ and $G_-$ by $\alpha^+(D)$ and $\alpha^-(D)$ respectively. For a Laurent polynomial $P$, let $d_{\max} P$ and $d_{\min} P$ denote the maximum and minimum exponent of $P$ respectively. Let $c(L)$ denote the crossing number of $L$.

\begin{theorem}
\label{thm:altbreadth}
Assume that $L$ is a non-split, non-local, alternating link in $\RP^3$ with connected, reduced, alternating diagram $D \subset \RP^2$. Then
\[
d_{\max} J_L^\pm - d_{\min} J_L^\pm = c(L) - \alpha^\pm(D).
\]
In particular, $\alpha^\pm(D)$ are invariants of the non-local alternating link $L \subset \RP^3$.
\end{theorem}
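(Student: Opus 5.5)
By Theorem \ref{thm:jonespolynomialalternatingtutte}, $J_L^+(t) = \pm t^{k_+}\psi_{G_+}(-t,-t^{-1})$. So the span of $J_L^+$ equals the span of the one-variable Laurent polynomial $\psi_{G_+}(-t,-t^{-1})$, and similarly for $J_L^-$ with the variables swapped. It therefore suffices to prove a purely graph-theoretic statement. Let $G \subset \RP^2$ be a connected, non-local graph with no bridges and no loops that are nullhomologous in the appropriate sense (these conditions are what a reduced diagram supplies). Then the span of $\psi_G(-t,-t^{-1})$ is $|E(G)| - \alpha(G)$. The edges of $G_\pm$ correspond to the crossings of $D$, so $|E(G_\pm)| = c(D)$. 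Once the span is known to be an invariant, we need $c(D) = c(L)$. I would argue this last point directly: the span of $J_L^+ + J_L^-$ is bounded above by $c(D)$ through the usual Kauffman bracket state-sum bound. Alternatively, I would only claim the formula with $c(D)$ and note that $\alpha^\pm(D) = c(D) - \mathrm{span}(J_L^\pm)$. Invariance then follows provided $c(D)$ is determined, which needs the Tait-type minimality result. I expect to invoke an analogue of the KMT argument for the combined polynomial here.

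For the graph statement I would use the spanning tree expansion of $\psi_G$. It should take the form $\psi_G(x,y) = \sum_T x^{i(T)}y^{e(T)}$, where $T$ runs over the appropriate nullhomologous spanning trees (or spanning subgraphs with one cycle class removed) and $i, e$ are internal and external activities. Substituting $x=-t$, $y=-t^{-1}$ gives monomials $\pm t^{i(T)-e(T)}$. The maximum of $i-e$ is attained uniquely, by the tree with all its edges internally active and no edges externally active, and the minimum is attained uniquely by the reverse. Uniqueness means these extreme coefficients are $\pm1$ and cannot cancel. This is the Thistlethwaite argument.

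Equivalently, and more robustly, I would bound the degrees using the subgraph (rank-nullity) expansion. If $\psi_G(x,y) = \sum_{A} (x-1)^{r(E)-r(A)}(y-1)^{n(A)}$ over nullhomologous spanning subgraphs $A$, then after substitution the top degree comes from the subgraph maximizing $r(E)-r(A)$ while keeping $n(A)=0$. That subgraph is the empty (or minimal) nullhomologous forest. The bottom degree comes from the largest nullhomologous subgraph $A$, which is obtained by deleting a minimal set of edges that makes $G$ local. That minimal number is exactly $\alpha(G)$, so $|A| = |E|-\alpha(G)$. Summing the exponents gives a span of $r + n$ evaluated on that maximal local subgraph, which is $|E| - \alpha$. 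One must check that the extreme coefficients are nonzero. I would do this by showing that the top and bottom terms come from sums with a fixed sign, as in the reduced/bridgeless hypothesis for the classical case.

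I expect the main obstacle to be this extremal-coefficient nonvanishing and the precise identification of the lowest-degree term with $\alpha(G)$. Several maximal local subgraphs $A$ could contribute at the extreme degree with the same sign. I would handle this with a deletion–contraction induction over non-loop, non-bridge edges, tracking the degree and sign of the leading coefficients. The local case reduces to the classical planar Tutte polynomial, whose extreme coefficients are known to be $\pm1$ when there are no bridges or loops. The base case is the minimal non-local graph, for example a single vertex with one essential loop, where $\alpha = 1$ and the span is $0$.
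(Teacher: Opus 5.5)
Your reduction to a statement about $\psi_G(-t,-t^{-1})$ is fine, and so is your treatment of the top degree. For $c(D)=c(L)$ you should simply cite Theorem \ref{thm:dromain}; a state-sum upper bound on the span does not give it by itself.

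The gap is the bottom degree, which is exactly where the projective setting departs from Thistlethwaite. In the spanning tree expansion of Proposition \ref{prop:tutte_spanningtrees}, the exponent of $y$ is the \emph{local} external activity. The ``reverse'' extremal tree you invoke does not exist once $\alpha(G)>0$. For any spanning tree $T$, let $B_T$ be the set of non-tree edges $e$ with $[\cyc(T,e)]=1$. The cycle space of $G-B_T$ is spanned by null-homologous fundamental cycles, so $G-B_T$ is local and $\abs{B_T}\ge\alpha(G)$. Hence every tree has local external activity at most $m-n+1-\alpha$. Taken literally, your first argument would give span $m$ rather than $m-\alpha$. Nor is the minimum attained uniquely with coefficient $\pm1$: the graph of Example \ref{ex:alphacomp} has no bridges or loops and $\alpha=2$, yet $\psi_G(-t,-t^{-1})=t^2-2t+2-2t^{-1}$.

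Your subgraph-expansion route does locate the correct bottom degree $-(m-n+1-\alpha)$, after checking that any local spanning subgraph can be completed by forest edges to a connected local one with the same $b_1$. However, the sign-coherence you plan to prove there is false. A local $H$ with $b_1(H)=m-n+1-\alpha$ contributes $(-1)^{b_0(H)-1+b_1(H)}$ to the lowest coefficient, and $b_0(H)$ varies among such $H$. In Example \ref{ex:alphacomp} the four contributions at $t^{-1}$ are $-1,+1,-1,-1$. The deletion--contraction induction is only named, and neither bridgelessness nor $\alpha$ is controlled under $G-e$ and $G/e$, so it does not close the gap.

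The missing idea is to combine the activity bound above with positivity of the spanning tree expansion. A tree contributes $(-1)^{i+j}t^{i-j}$, and $i-j=-(m-n+1-\alpha)$ forces $i=0$ and $j=m-n+1-\alpha$. So every contribution to the lowest degree has the same sign, and nothing cancels.

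For existence, take $A$ with $\abs{A}=\alpha$ and $G-A$ local. This graph is connected: otherwise some edge of $A$ joins two components of $G-A$, and restoring it keeps the graph local, contradicting minimality. Take a spanning tree $T\subset G-A$ and order the non-tree edges first, with those outside $A$ before those in $A$. Since $G$ has no bridges, no tree edge is internally active, and exactly $m-n+1-\alpha$ non-tree edges are locally externally active. This is the content of Lemma \ref{lem:mindegs}.
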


\begin{wrapfigure}[11]{R}{0.3\textwidth}
\vspace{-15pt}
\begin{minipage}{0.3\textwidth}
\[
\tikz[baseline={([yshift=-.5ex]current bounding box.center)}, scale=0.35]
{
\draw[knot] (1,0) to[out=90, in=-90] (0,1);
\draw[knot] (1,1) to[out=90,in=-90] (0,2);
\draw[knot] (1,2) to[out=90,in=-90] (0,3);
\draw[knot] (1,4) to[out=90, in=-90] (0,5);
\draw[knot, overcross] (0,0) to[out=90, in=-90] (1,1);
\draw[knot, overcross] (0,1) to[out=90, in=-90] (1,2);
\draw[knot, overcross] (0,2) to[out=90, in=-90] (1,3);
\node at (0.5,3.8) {$\vdots$};
\draw[knot, overcross] (0,4) to[out=90, in=-90] (1,5);
\draw[knot] (0.5+5*0.707, 2.5+5*0.707) to[out=-135, in=90] (1,5);
\draw[knot] (0.5-5*0.707, 2.5+5*0.707) to[out=-45, in=90] (0,5);
\draw[knot] (0.5+5*0.707, 2.5-5*0.707) to[out=135, in=-90] (1,0);
\draw[knot] (0.5-5*0.707, 2.5-5*0.707) to[out=45, in=-90] (0,0);
\draw[dotted] (0.5,2.5) circle(5cm);
}
\]
\end{minipage}
\caption{The link $\Upsilon_n$.}
\label{fig:upsilon}
\end{wrapfigure}
Kauffman \cite{MR899057}, Murasugi \cite{MR895570}, and Thistlethwaite \cite{MR899051} proved that the breadth of the Jones polynomial of an alternating link equals its crossing number, thus the Jones polynomial detects the unknot among alternating links. Using Theorem \ref{thm:altbreadth}, we are able to show that Drobotukhina's analogue of the Jones polynomial detects a set $\mathscr{L}$ of a few infinite families of (unoriented) links among alternating links in $\RP^3$; see (\ref{eq:scriptell}) and Figure \ref{fig:betafams}. The simplest family of links in $\mathscr{L}$ is depicted in Figure \ref{fig:upsilon}, and are denoted $\Upsilon_n$ for $n\in \Z$, where $\abs{n}$ is the crossing number of the link. If $n < 0$, $\Upsilon_n$ means the mirror of the link in Figure \ref{fig:upsilon}.

\begin{theorem} \label{thm:detection}
If $L\in \mathscr{L}$ and $L'$ is an alternating link, then $L$ is equivalent to $L'$ as an unoriented link whenever $J_L(t) = J_{L'}(t)$.
\end{theorem}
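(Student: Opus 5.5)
We prove Theorem \ref{thm:detection} by showing that the Jones polynomial of an alternating link pins down enough combinatorial data to force a diagram in the family $\mathscr{L}$; the intended argument is mostly case analysis driven by Theorem \ref{thm:altbreadth}.

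\emph{Reduction to the non-local, non-split case.} Suppose $J_L(t)=J_{L'}(t)$ with $L\in\mathscr{L}$ and $L'$ alternating. First we separate $L'$ from the local case. For a local link the Jones polynomial agrees, up to normalization, with that of a link in $S^3$, so all exponents lie in a single coset of $\mathbb{Z}$. For the non-local members of $\mathscr{L}$, $J_L$ instead has two nonzero summands $J_L^\pm$ whose exponents lie in different cosets of $\mathbb{Z}$, since $k_+-k_-=\tfrac12$ in Theorem \ref{thm:jonespolynomialalternatingtutte}. This rules out local $L'$, and the same coset bookkeeping rules out the split case. So $L'$ is non-local, and we may take a connected, reduced, alternating diagram $D'$ for it. Its Jones polynomial then splits as $J_{L'}=J_{L'}^++J_{L'}^-$.

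\emph{Reading off $c$ and $\alpha^\pm$.} Splitting each polynomial by the coset of its exponents recovers $J^\pm$ from $J$, possibly up to swapping the two summands (mirroring also swaps them). Applying Theorem \ref{thm:altbreadth} to both links gives
\[
c(L)-\alpha^\pm(D)=c(L')-\alpha^\pm(D').
\]
We also need $c$ and $\alpha^\pm$ individually, not just their differences. For this we use the extreme coefficients of $\psi_{G_\pm}$, together with the identity in Theorem \ref{thm:equalkrushkal} relating $\psi_{G^*}$ to $\psi_G$. The leading and trailing terms of $\psi_G(-t,-t^{-1})$ should count the vertices, the faces and $\alpha$ of $G$, in the way the span argument in the proof of Theorem \ref{thm:altbreadth} identifies them. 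Since $G_+$ and $G_-$ are Poincar\'e dual in $\RP^2$,
\[
V(G_+)+V(G_-)=c+1.
\]
Combining this with the two breadth equations should determine $c(L')$, $\alpha^+(D')$ and $\alpha^-(D')$. I expect the families in $\mathscr{L}$ to be exactly those with the smallest possible $\alpha^\pm$: for $\Upsilon_n$, one Tait graph is a single vertex with a cycle of $|n|$ loops, and the other is a two-vertex graph whose edges are all essential, so its $\alpha$ is tiny, namely $1$ or $2$.

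\emph{Classifying reduced alternating diagrams.} It remains to classify the reduced alternating diagrams in $\RP^2$ with the prescribed $c$ and $\alpha^\pm$ and show each is isotopic to the given member of $\mathscr{L}$. A small $\alpha^\pm$ means that deleting one or two edges makes the Tait graph local. So the embedded graph consists of a local (planar) piece together with at most two essential edges. Duality, together with the constraint on the other Tait graph, should force the local piece to be a chain of parallel or series edges. These are exactly the twist-region pictures in Figure \ref{fig:betafams}. Distinct candidates left by this analysis should then be separated by the sign of $n$, which corresponds to mirroring, and by the full polynomials $\psi$, computed via the deletion–contraction definitions for these small families.

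The main obstacle is this classification: showing that small projective rank forces the graph into one of the listed families. For that step I would enumerate the cellular embeddings in $\RP^2$ with one or two essential edges and check reducedness case by case.
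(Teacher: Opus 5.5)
There is a genuine gap: you have the key inequality pointing the wrong way.

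By Theorem \ref{thm:altbreadth}, the breadth of $J^\pm_{L'}$ is $c(L')-\alpha^\pm(D')=e(G_\pm)-\alpha(G_\pm)$. Every $L\in\mathscr{L}$ has one of these breadths equal to $0$, $1$ or $2$. So the Jones polynomial tells you that one Tait graph of $D'$ has $\alpha$ within $2$ of its number of edges. That is, $\alpha$ is nearly \emph{maximal}, not small.

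Your own example shows the inversion. For $\Upsilon_n$, the relevant graph is the single vertex with $|n|$ projective loops. Its $\alpha$ is $|n|=c$, which gives breadth $0$. Its dual is an essential $|n|$-cycle on $|n|$ vertices (a two-vertex graph only when $|n|=2$), and that dual has $\alpha=1$.

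Small $\alpha$ carries almost no structure. Adding one edge through the crosscap to any reduced planar Tait graph gives a cellular graph with $\alpha=1$. So ``a local piece plus at most two essential edges'' is an infinite, uncontrolled class, and duality will not force twist regions. The enumeration you identify as the main obstacle would therefore not close up onto Figure \ref{fig:betafams}.

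The missing idea is the bound $\alpha(G)\le e(G)-(v(G)-1)$. It holds because deleting every edge outside a spanning tree leaves a local graph. Equivalently, $\beta(G)=e(G)-\alpha(G)\ge v(G)-1$, so breadth at most $2$ forces at most three vertices. Such graphs can be listed by adding one edge at a time. This is the enumeration behind Theorem \ref{thm:betatabulation} (Figure \ref{fig:tabulationgraphs}), and it is exactly what the paper's proof uses:
\begin{itemize}
\item breadth $0$ forces $G_\varepsilon$ to consist only of projective loops, so $L'=\Upsilon_k$;
\item breadth $1$ forces $V_k$ or $mV_k$;
\item breadth $2$ leaves the $W,X,Y,Z$ families and $\Upsilon\#\mathcal{H}$.
\end{itemize}
The candidates, including those outside $\mathscr{L}$, are then separated by three checks: the minor part of the bracket, the major determinant (Corollary \ref{cor:detsandtrees}), and a direct comparison of $W_{-3}$ with its mirror.

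Two smaller points. First, you do not need extreme coefficients or Theorem \ref{thm:equalkrushkal} to get $c(L')$. Theorem \ref{thm:dromain} gives it from the breadth of $J$, and only the breadths of $J^\pm$ are actually needed. Second, coset bookkeeping does not exclude a split $L'$. Splitting off a local link introduces a factor $-(t^{1/2}+t^{-1/2})$, and both cosets remain present. What does work is that this factor vanishes at $t=-1$, whereas $J_L(-1)\neq 0$ for every $L\in\mathscr{L}$.
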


For example, to prove Theorem \ref{thm:detection} for the family $\{\Upsilon_n\}$, we note that $\min \{\text{breadth}\, J^+_L, \text{breadth}\, J^-_L\} = 0$ if and only if $L$ is $\Upsilon_n$ for some $n$. This is because $\abs{E(G)} = \alpha(G)$ if and only if $G$ is the single vertex graph whose edges are all non-local loops. Then, the result follows if there are no duplicates of the Jones polynomial within this family. The other families belonging to $\mathscr{L}$ are described in \S \ref{ss:tabulation}; these are some other alternating links which have $\min \{\text{breadth} \, J^+_L, \text{breadth} \, J^-_L\} \le 2$.

\subsection{Relation to the Jones unknotting conjecture}

A conjectural generalization of Theorem \ref{thm:altbreadth} to non-alternating links would be strong enough to imply to Jones unknotting conjecture. In order to state this carefully we need the following definition. 

For an arbitrary nullhomologous link $L\subset \RP^3$, we write $J_L(t) = J^\Re_L(t) + J^\Im_L(t)$ to separate the portions of the Jones polynomial with integral and half-integral exponents respectively (when $L$ is local, one summand is zero). For any diagram $D$ of $L$, Proposition \ref{prop:KBgraphpolynomial} says that, when $L$ is non-local, these two parts come from the spanning trees of a Tait graph for $D$ and its dual. Therefore, it makes sense to write $\alpha_{\Re}(D)$ and $\alpha_{\Im}(D)$ to mean the projective rank of the graph contributing to $J^\Re_L$ and $J^\Im_L$ respectively.

\begin{definition} \label{def:localcrossingno}
Let $\varepsilon \in \{\Re, \Im\}$ and $L \subset \RP^3$ be a nullhomologous link with diagram $D$. Let $c(D)$ be the crossing number of $D$. The \emph{$\varepsilon$ projective crossing number} $\alpha_{\varepsilon}(L)$ is defined as 
\[
\alpha_{\varepsilon}(L)  := \min_{D} \alpha_{\varepsilon}(D),
\]	
and the \emph{$\varepsilon$ local crossing number} $\beta_{\varepsilon}$ is defined as
\[
\beta_{\varepsilon}(L) : = \min_{D} [c(D) - \alpha_{\varepsilon}(D)],
\]	
where both minimums are taken over all diagrams $D$ in $\RP^2$ which represent $L$. 
\end{definition}

\begin{remark}
Note that $\alpha_\varepsilon(D)$ counts a subset of the crossings of $D$. Resolving $D$ at these crossings produces a link for which the checkerboard surface is now local. In particular, $\alpha_\varepsilon(L)$ can be thought of as the minimum number of crossings, over all diagrams, which need to be resolved (in the direction consistent with a checkerboard surface of the indicated parity) to obtain a local link. 
\end{remark}

Although there are obvious inequalities $0 \leq \alpha_{\varepsilon}(L) \leq c(L)$ and $0 \leq \beta_{\varepsilon}(L) \leq c(L)$, we don't know of any relationship between $\alpha$ and $\beta$. We can start by considering the possible generalizations of Theorem  \ref{thm:altbreadth} to non-alternating links. We first observe that two possible generalizations are false.

\begin{example} Perhaps the most straightforward generalization of Theorem \ref{thm:altbreadth} would be that 
	\[ 
	d_{\max}J_L^{\varepsilon} - d_{\min}J_L^{\varepsilon} \leq \beta_{\varepsilon}(L).
	\] 
However, we do not have to look far to see that this is false. Consider the first non-alternating link $4_3^2$ of Drobotukhina's table \cite{MR1296890}; a diagram of $4_3^2$ with its associated (signed) Tait graphs is provided in Figure \ref{fig:guess1iswrong}.
\begin{figure}[ht]
\begin{minipage}{.3\linewidth}
\[
\tikz[baseline={([yshift=-.5ex]current bounding box.center)}, scale=0.35]
{
\draw[knot] (-2,2.5) arc (-180:0:2.5cm and .7cm);
\draw[knot, overcross] (1,1.5) -- (1,3.5);
\draw[knot, overcross] (0,1.5) -- (0,3.5);
\draw[knot] (0.5+5*0.707, 2.5+5*0.707) to[out=-135, in=90] (1,3.5);
\draw[knot, overcross] (0.5-5*0.707, 2.5+5*0.707) to[out=-45, in=90] (0,3.5);
\draw[knot] (0.5+5*0.707, 2.5-5*0.707) to[out=135, in=-90] (1,1.5);
\draw[knot] (0.5-5*0.707, 2.5-5*0.707) to[out=45, in=-90] (0,1.5);
\draw[knot,overcross] (-2,2.5) arc (-180:0:2.5cm and -.7cm);
\draw[dotted] (0.5,2.5) circle(5cm);
}
\]
\end{minipage}
\begin{minipage}{.6\linewidth}
\[
\tikz[baseline={([yshift=-.5ex]current bounding box.center)}, scale=0.5]
	{
\begin{scope}[red]
\fill (1.5,1.5) circle (.2cm);
\fill (1.5+1.25*0.7071,1.5-1.25*0.7071) circle (.2cm);
\fill (1.5-1.25*0.7071,1.5-1.25*0.7071) circle (.2cm);
\draw[ultra thick] (1.5-2.5*0.7071,1.5-2.5*0.7071) -- node[black, above, pos=0.75]{\tiny$-$}node[black, above, pos=0.325]{\tiny$+$}node[black, above, pos=0.05]{\tiny$-$} (1.5+2.5*0.7071,1.5+2.5*0.7071);
\draw[ultra thick] (1.5+2.5*0.7071,1.5-2.5*0.7071) -- node[black, above, pos=0.05]{\tiny$+$}node[black, above, pos=0.325]{\tiny$-$}node[black, above, pos=0.75]{\tiny$+$} (1.5-2.5*0.7071,1.5+2.5*0.7071);
\end{scope}
\draw[dotted] (1.5,1.5) circle(2.5);
\begin{scope}[xshift=8cm]
\begin{scope}[blue]
\fill (2.75,1.5) circle (.2cm);
\fill (0.25,1.5) circle (.2cm);
\draw[ultra thick] (2.75,1.5) to[bend left=2cm] node[black, midway, above]{\tiny$+$} (0.25,1.5);
\draw[ultra thick] (2.75,1.5) to[bend right=2cm] node[black, midway, above]{\tiny$+$} (0.25,1.5);
\draw[ultra thick] (2.75,1.5) to[out=45,in=180] node[black, pos=0.8, left]{\tiny$-$} (1.5+0.866*2.5, 1.5+0.5*2.5);
\draw[ultra thick] (2.75,1.5) to[out=-45, in=180] node[black, pos=0.8, left]{\tiny$-$} (1.5+0.866*2.5, 1.5-0.5*2.5);
\draw[ultra thick] (0.25,1.5) to[out=135, in=0] (1.5-0.866*2.5, 1.5+0.5*2.5);
\draw[ultra thick] (0.25,1.5) to[out=-135, in=0] (1.5-0.866*2.5, 1.5-0.5*2.5);
\end{scope}
\draw[dotted] (1.5,1.5) circle(2.5);
\end{scope}
\node at (5.5, -2.5) {$\langle D \rangle =  \red{-A^4 + 2 - A^{-4}} \blue{-A^6 - A^{-6}}$}
    }
\]
\end{minipage}
\caption{The non-local nullhomologous link $4_3^2$ (left) and a computation of $J_K(t)$ associated to each Tait graph (right). Note that $4_3^2$ is non-alternating (and amphichiral).}
\label{fig:guess1iswrong}
\end{figure}
Proposition \ref{prop:KBgraphpolynomial} says that the Kauffman bracket of this diagram $D$ for $4_3^2$ can be broken up into two pieces corresponding to the two parts of the Jones polynomial: one for each Tait graph associated to $D$. A quick check verifies that $w(D) = \pm 4$ for any choice of orientation on $D$. So, for $L = 4_3^2$,
\[
d_{\max}J_L^{\Re} - d_{\min}J_L^{\Re} = 2
\qquad\text{and}\qquad
d_{\max}J_L^{\Im} - d_{\min}J_L^{\Im} = 3.
\]
However, Figure \ref{fig:guess1iswrong} establishes that $\beta_\varepsilon(4_3^2) \le 2$ for both of $\varepsilon \in \{\Re, \Im\}$. (Indeed, Theorem \ref{thm:betatabulation} implies that $\beta_\varepsilon(L) = 2$ for each $\varepsilon$.) Thus
\[
d_{\max}J_L^{\Im} - d_{\min}J_L^{\Im} = 3 > 2 = \beta_\Im(L).
\]

\end{example}
\begin{example}
Alternatively, using Theorem \ref{thm:dromain}, Theorem \ref{thm:altbreadth} can be rewritten as 
\[
(d_{\max}J_L - d_{\min}J_L) - (d_{\max}J_L^{\varepsilon} - d_{\min}J_L^{\varepsilon}) + \dfrac{1}{2} = \alpha_{\varepsilon}(L),
\]
which might suggest the inequality 
\[
(d_{\max}J_L - d_{\min}J_L) - (d_{\max}J_L^{\varepsilon} - d_{\min}J_L^{\varepsilon}) + \dfrac{1}{2} \geq \alpha_{\varepsilon}(L). 
\]
This is also false, with counter example provided by $L = 4_3^2$. Indeed, since $L$ is amphichiral, we have that 
\[
d_{\min} J_L = d_{\min} J_L^\Im
\qquad\text{and}\qquad
d_{\max} J_L = d_{\max} J_L^\Im
\]
so that the left-hand side is equal to zero when $\varepsilon = \Im$. Since $4_3^2$ is non-local, $\alpha_\varepsilon(L) >0$.
\end{example}

Interestingly, reversing each of the inequalities in the examples above gives plausible statements. We list them here as conjectures.

\begin{conjecture} \label{conj:alphabetainequalities}
Let $L \subset \RP^3$ be a nullhomologous link and let $\varepsilon \in \{\Re, \Im\}$. Then 
\begin{equation}
(d_{\max}J_L - d_{\min}J_L) - (d_{\max}J_L^{\varepsilon} - d_{\min}J_L^{\varepsilon}) + \dfrac{1}{2} \leq \alpha_{\varepsilon}(L).
\end{equation}
Furthermore, if $L$ is local, suppose that $L$ is a knot and $\varepsilon = \Re$. Then
\begin{equation}
\label{eq:impliesjones}
d_{\max}J_L^{\varepsilon} - d_{\min}J_L^{\varepsilon} \geq \beta_{\varepsilon}(L), 
\end{equation}
\end{conjecture}

Part (\ref{eq:impliesjones}) of Conjecture \ref{conj:alphabetainequalities} implies the Jones unknotting conjecture. Indeed, suppose that $K \subset S^3$ is a knot with $J_K(t) = 1$. Then viewing $K$ as a local, and hence nullhomologous, knot in $\RP^3$, part (\ref{eq:impliesjones}) of Conjecture \ref{conj:alphabetainequalities} implies that $\beta_{\Re}(K) = 0$. However, there is precisely one infinite family of diagrams in $\RP^3$ with $c(D) = \alpha_{\varepsilon}(D)$, so that $\beta = 0$; see Theorem \ref{thm:betatabulation}. Within this family, it is easy to verify that two diagrams represent the same link if and only if they have the same Jones polynomial. In fact, it follows that Conjecture \ref{conj:alphabetainequalities} implies that the Jones polynomial detects each $\Upsilon_n$, not only the unknot. 

\begin{remark}
In part (\ref{eq:impliesjones}) of Conjecture \ref{conj:alphabetainequalities}, some restriction on local links is necessary. To see this, we appeal to work of Thistlethwaite \cite{MR1831681} and Eliahou-Kauffman-Thistlethwaite \cite{MR1928648} which provides infinite families of prime $k$-component links with Jones polynomial equal to the Jones polynomial of the $k$-component unlink for each $k > 1$. So, let $L$ be a nontrivial 2-component link with Jones polynomial $-t^{-\frac{1}{2}} - t^{\frac{1}{2}}$; hence $d_{\max}J_L^\Im - d_{\min}J_L^\Im = 1$. For (\ref{eq:impliesjones}) to be satisfied, that must mean that $\beta_\Im(L)$ must be 0 or 1. However, in Theorem \ref{thm:betatabulation}, we provide all possible links with $\min\{\beta_\Re(L), \beta_\Im(L)\}$ equal to 0 or 1 (or 2, for that matter). Besides the unlinks, no local links belong to either of these families.
\end{remark}

\subsection{Amendments to the literature: amphichirality of links in $\RP^3$}

In her formative paper investigating the Jones polynomial for links in real projective space, Drobotukhina proved the following theorem, serving as an analogue to the work of Kauffman \cite{MR899057}, Murasugi \cite{MR895570}, and Thistlethwaite \cite{MR899051}.

\begin{theorem}[\cite{MR1073213}]
\label{thm:dromain}
If $L$ is a non-split, non-local link in $\RP^3$, then
\[
d_{\max}J_L(t) - d_{\min}J_L(t) \le c(L) - \frac{1}{2}
\]
with equality when $L$ is an alternating link.
\end{theorem}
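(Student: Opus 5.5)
We would follow the Kauffman state-sum strategy, adapted to $\RP^2$, in the form Drobotukhina uses. Fix a connected diagram $D\subset\RP^2$ with $c = c(D)$ crossings and write
\[
\langle D\rangle=\sum_S A^{a(S)-b(S)}(-A^2-A^{-2})^{|S|-1}\cdot \lambda(S),
\]
where $S$ ranges over Kauffman states and $|S|$ is the number of circles. The factor $\lambda(S)$ accounts for the essential circle: a state of a non-local link either consists of trivial circles only, or contains exactly one essential circle (for a nullhomologous link, possibly an even number, but in $\RP^2$ at most one essential class survives disjointly). Its effect is a shift of the evaluation convention. After normalizing $J_L(t)$ by $(-A^3)^{-w(D)}$ and substituting $A=t^{-1/4}$, the breadth of $J_L$ is one quarter of the $A$-breadth of $\langle D\rangle$. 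So the whole task is to bound the $A$-span of the bracket by $4c-2$.

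\textbf{Step 1: the upper bound.} Let $s_A$ and $s_B$ be the all-$A$ and all-$B$ states. They contribute the extreme degrees
\[
M = c+2|s_A|-2 \qquad\text{and}\qquad m = -c-2|s_B|+2.
\]
The usual argument applies: changing one smoothing changes $|S|$ by exactly $\pm1$ when the two arcs lie on distinct circles or on one trivial circle. In $\RP^2$ a third possibility appears. Resmoothing a crossing on a single circle can leave the number of circles unchanged, when the circle passes through the crosscap ("$0$-move"). This gives the bound
\[
\text{span}\langle D\rangle \le 2c+2(|s_A|+|s_B|)-4,
\]
with a correction coming from the essential-circle factor. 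The key topological input replaces the $S^3$ inequality $|s_A|+|s_B|\le c+2$. Euler characteristic counting on $\RP^2$ uses the state surface (the cell decomposition by $D$ together with the circles of $s_A$ and $s_B$), and yields
\[
|s_A|+|s_B|\le c+1
\]
for a connected non-local diagram. This is because $\chi(\RP^2)=1$ rather than $2$. Combining, $\text{span}\le 4c-2$, and hence $d_{\max}J_L - d_{\min}J_L\le c-\tfrac12$. Minimizing over diagrams gives the inequality with $c(L)$.

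\textbf{Step 2: equality for alternating links.} Take $D$ reduced, alternating, connected and non-local. The circles of $s_A$ and $s_B$ are then the boundaries of the two checkerboard color classes, i.e. vertices of $G_\pm$ and faces. Euler's formula on $\RP^2$ gives
\[
|V(G_+)|+|V(G_-)| = c+1,
\]
so equality holds in the Euler bound. We must also show the extreme coefficients do not cancel. In $S^3$ this holds because no state adjacent to $s_A$ increases the circle count. Here, adjacency-type $0$-moves might contribute in the same degree. Reducedness plus alternation excludes them: an alternating diagram is "adequate" on $\RP^2$, since each crossing joins two distinct vertices of $G_+$ or is a nonseparating (non-local) loop, and the latter changes parity instead. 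We would verify that the leading coefficient is $\pm1$, as in Thistlethwaite's argument.

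\textbf{Main obstacle.} The subtle point is that the top coefficient of the Real part and the bottom coefficient of the Imaginary part live in different parity classes. The $-\tfrac12$ arises precisely because the extreme exponents of $J_L$ are realized one in $J^\Re$ and one in $J^\Im$ (compare the decomposition $J_L = J_L^+ + J_L^-$ with $k_+-k_-=\tfrac12$ from Theorem~\ref{thm:jonespolynomialalternatingtutte}). To handle the bookkeeping cleanly, we would use Theorem~\ref{thm:jonespolynomialalternatingtutte} and Theorem~\ref{thm:altbreadth}. The extreme degrees of $J_L^+$ and $J_L^-$ come from the extreme terms of $\psi_{G_\pm}$, and these are nonvanishing by a spanning-tree count. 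Comparing the offsets $k_\pm$ with these extreme terms then gives total span exactly $c-\tfrac12$.
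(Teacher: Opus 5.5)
\textbf{The gap is the inequality $|s_A|+|s_B|\le c+1$ in Step 1,} and the whole upper bound rests on it. Your reason for it, $\chi(\RP^2)=1$, does not prove it. For a non-alternating diagram the state circles are not face boundaries of any cell structure on $\RP^2$. The honest Euler-characteristic count caps off the saddle cobordism from $s_A$ to $s_B$. This produces an abstract closed connected surface with $\chi=|s_A|+|s_B|-c\le 2$, so it only gives $|s_A|+|s_B|\le c+2$, and that surface can be a sphere.

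In fact the inequality is false for homologically essential diagrams. Take the projective line $P$ together with a small circle crossing it twice, over then under, i.e.\ a meridian of $P$. This is the planar Hopf diagram with one arc rerouted through the crosscap. The bracket only sees the smoothing pairings at the crossings, and every state has exactly one essential circle. Hence $|s_A|=|s_B|=2=c$ and $\langle D\rangle=\theta(-A^4-A^{-4})$. This link is non-split and non-local with $c(L)=2$, yet $d_{\max}J_L-d_{\min}J_L=2>c(L)-\tfrac12$. Already the projective line itself, with $c=0$ and $J=1$, violates the literal statement. So no argument can give the bound for class-1 links. It has to be restricted to nullhomologous links, which is the paper's setting, since alternating links are nullhomologous by definition.

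For nullhomologous $D$ the inequality is true, but the proof must use the checkerboard structure. Let $G$ be a Tait graph, with $v$ vertices and $c$ edges, cellularly embedded in $\RP^2$. Let $H\subseteq G$ be the spanning subgraph of crossings whose $A$-smoothing joins shaded regions. Then $s_A$ and $s_B$ are the boundary curves of regular neighbourhoods of $H$ and of $G-H$. Capping these off, local components become spheres and the at most one non-local component becomes $\RP^2$. This gives
\[
|s_A|+|s_B| = c-2v+2\big(b_0(H)+b_0(G-H)\big)-\epsilon(H)-\epsilon(G-H),
\]
where $\epsilon=1$ exactly for non-local subgraphs. Submodularity of graph rank gives $b_0(H)+b_0(G-H)\le v+1$. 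Equality holds only if every cycle of $G$ lies in $H$ or in $G-H$. Since $G$ contains an essential cycle, equality then forces $\epsilon(H)+\epsilon(G-H)\ge 1$. Hence $|s_A|+|s_B|\le c+1$. With this, Step 1 goes through; by Lemma \ref{lem:ManWillis} there are no essential state circles, so no ``correction'' factor arises.

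Your Step 2 adequacy argument is fine in substance. A projective loop yields a 1-1 bifurcation, which cannot reach the extreme degree, and reducedness excludes local loops. However, you must combine Step 2 with Step 1 to conclude $c(D)=c(L)$.

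In the ``main obstacle'' paragraph, do not invoke Theorem \ref{thm:altbreadth}. Its proof uses Theorem \ref{thm:dromain} to identify $m$ with $c(L)$, so citing it here is circular. If you want the $J^\pm$ bookkeeping, Theorem \ref{thm:jonespolynomialalternatingtutte} with Lemma \ref{lem:mindegs} and $\alpha^\pm\ge 1$ suffices: the maximum comes from $x^{n_+-1}$ in $\psi_{G_+}$ and the minimum from $x^{n_--1}$ in $\psi_{G_-}$. Given the adequacy argument, this bookkeeping is unnecessary anyway.
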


Drobotukhina's theorem serves as motivation for our Theorem \ref{thm:altbreadth} and Conjecture \ref{conj:alphabetainequalities}, as we are interested in the possibility and use for similar statements about $J^\Re_L$ and $J^\Im_L$. Before proceeding, we highlight the following immediate corollary of her work, which does not seem to appear in the literature.

\begin{corollary}
\label{cor:amphichiralknots}
Non-local alternating knots in $\RP^3$ are not amphichiral.
\end{corollary}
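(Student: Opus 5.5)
We argue from the symmetry of the Jones polynomial under mirroring together with Drobotukhina's breadth formula, Theorem \ref{thm:dromain}. For a knot $K\subset\RP^3$ the Jones polynomial does not depend on an orientation. Taking the mirror image replaces $t$ by $t^{-1}$, that is, $J_{\overline{K}}(t) = J_K(t^{-1})$. This is the same behaviour as in $S^3$, and it follows from the Kauffman bracket state sum: mirroring swaps $A$ and $A^{-1}$ and negates the writhe. If $K$ is amphichiral, then $J_K(t)=J_K(t^{-1})$. In that case the multiset of exponents of $J_K$ is symmetric about $0$, so $d_{\min}J_K = -d_{\max}J_K$, and the breadth $d_{\max}J_K - d_{\min}J_K = 2d_{\max}J_K$ is an integer.

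Next we compute the breadth. By Theorem \ref{thm:dromain}, a non-local alternating knot $K$ satisfies
\[
d_{\max}J_K - d_{\min}J_K = c(K)-\tfrac12 .
\]
The hypotheses of that theorem hold: a knot is automatically non-split, and $K$ is non-local by assumption. The right-hand side is a half-integer and not an integer, which contradicts the previous paragraph. This gives the corollary.

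The one point that needs care is that an amphichiral $K$ really does force integral breadth. Exponents of $J_K$ lie in $\frac12\Z$, so integrality does not come for free. What does hold is this: the maximal and minimal exponents satisfy $d_{\min}=-d_{\max}$, so the breadth $2d_{\max}$ lies in $\Z$ because $d_{\max}\in\frac12\Z$.

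We also have to confirm that Drobotukhina's polynomial for $\RP^3$ links transforms as $t\mapsto t^{-1}$ under mirroring, including the possible orientation-reversing ambient homeomorphisms of $\RP^3$. If amphichirality allows any such homeomorphism, note that each one acts on $\RP^3$ as a reflection composed with an isotopy. So it suffices to check the reflection across the $\RP^2$ fibre direction, which corresponds to switching all crossings in the diagram. We regard this as the main (mild) obstacle. Once it is settled, the parity contradiction finishes the proof.
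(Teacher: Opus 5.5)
Your proposal is correct and follows the paper's proof essentially verbatim: amphichirality gives $J_K(t)=J_K(t^{-1})$, so the breadth equals $2d_{\max}J_K\in\Z$, which contradicts the half-integral breadth $c(K)-\tfrac12$ from Theorem~\ref{thm:dromain}. The paper does not take up your caveat about orientation-reversing homeomorphisms; it simply relies on its earlier observation that mirroring interchanges $t^{1/2}$ and $t^{-1/2}$ in $J$.
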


The proof is provided in \S \ref{ss:drobjp}, following the definition of $J_L(t)$. This corollary is clearly false for links: the single-crossing 2-component link $\Upsilon_1$ is alternating and amphichiral. We also remind the reader that alternating links are assumed to be nullhomologous by definition (for example, both unknots are amphichiral). Figure \ref{fig:non-localnull-homologousamphichiral} depicts the knot $6_{17}$ of Drobotukhina's table \cite{MR1296890}, which is the first non-local nullhomologous amphichiral knot; we conclude that it is non-alternating. Notably, it has zero writhe.

\begin{figure}[ht]
\begin{minipage}{.3\linewidth}
\[
\tikz[baseline={([yshift=-.5ex]current bounding box.center)}, scale=0.75]
	{
\draw[knot] (1,0) to[out=135, in=-90] (0,1);
\draw[knot] (1,1) to[out=90, in=-90] (0,2);
\draw[knot] (1,2) to[out=90, in=-45] (1.5-2.5*0.7071, 1.5+2.5*0.7071);
\draw[knot] (2,0) to[out=45, in=-90] (3,1);
\draw[knot] (2,1) to[out=90, in=-90] (3,2);
\draw[knot] (2,2) to[out=90, in=-135] (1.5+2.5*0.7071, 1.5+2.5*0.7071);
\draw[knot, overcross] (1.5-2.5*0.7071, 1.5-2.5*0.7071) to[out=45, in=-90] (1,1);
\draw[knot, overcross] (0,1) to[out=90, in=-90] (1,2);
\draw[knot, overcross] (0,2) to[out=90, in=-135] (1,3);
\draw[knot, overcross] (1.5+2.5*0.7071, 1.5-2.5*0.7071) to[out=135, in=-90] (2,1);
\draw[knot, overcross] (3,1) to[out=90, in=-90] (2,2);
\draw[knot, overcross] (3,2) to[out=90, in=-45] (2,3);
\draw[knot] (1,0) to[out=-45, in=-135] (2,0);
\draw[knot] (1,3) to[out=45, in=135] (2,3);
\draw[dotted] (1.5,1.5) circle(2.5);
}
\]
\end{minipage}
\begin{minipage}{.6\linewidth}
\[
\tikz[baseline={([yshift=-.5ex]current bounding box.center)}, scale=0.6]
	{
\begin{scope}[red, scale=0.9]
\fill (1.5,1.5) circle (.2cm);
\fill (1.5+1.25*0.7071,1.5+1.25*0.7071) circle (.2cm);
\fill (1.5-1.25*0.7071,1.5+1.25*0.7071) circle (.2cm);
\fill (1.5+1.25*0.7071,1.5-1.25*0.7071) circle (.2cm);
\fill (1.5-1.25*0.7071,1.5-1.25*0.7071) circle (.2cm);
\draw[ultra thick] (1.5-2.5*0.7071,1.5-2.5*0.7071) -- node[black, above, pos=0.85]{\tiny$-$}node[black, above, pos=0.6]{\tiny$-$}node[black, above, pos=0.35]{\tiny$-$} (1.5+2.5*0.7071,1.5+2.5*0.7071);
\draw[ultra thick] (1.5-2.5*0.7071,1.5+2.5*0.7071) -- node[black, above, pos=0.15]{\tiny$+$}node[black, above, pos=0.4]{\tiny$+$}node[black, above, pos=0.65]{\tiny$+$} (1.5+2.5*0.7071,1.5-2.5*0.7071);
\end{scope}
\draw[dotted, scale=0.9] (1.5,1.5) circle(2.5);
\begin{scope}[xshift=8cm]
\begin{scope}[blue, scale=0.9]
\fill (2.75,1.5) circle (.2cm);
\fill (0.25,1.5) circle (.2cm);
\draw[ultra thick] (2.75,1.5) -- node[black, midway, above]{\tiny$+$} (0.25,1.5);
\draw[ultra thick] (2.75,1.5) to[bend left=2cm] node[black, midway, above]{\tiny$+$} (0.25,1.5);
\draw[ultra thick] (2.75,1.5) to[bend right=2cm] node[black, midway, above]{\tiny$+$} (0.25,1.5);
\draw[ultra thick] (2.75,1.5) to[out=45,in=180] node[black, midway, left]{\tiny$-$} (1.5+0.866*2.5, 1.5+0.5*2.5);
\draw[ultra thick] (2.75,1.5) -- node[black, midway, above]{\tiny$-$} (4, 1.5);
\draw[ultra thick] (2.75,1.5) to[out=-45, in=180] node[black, midway, left]{\tiny$-$} (1.5+0.866*2.5, 1.5-0.5*2.5);
\draw[ultra thick] (0.25,1.5) to[out=135, in=0] (1.5-0.866*2.5, 1.5+0.5*2.5);
\draw[ultra thick] (0.25,1.5) -- (-1, 1.5);
\draw[ultra thick] (0.25,1.5) to[out=-135, in=0] (1.5-0.866*2.5, 1.5-0.5*2.5);
\end{scope}
\draw[dotted, scale=0.9] (1.5,1.5) circle(2.5);
\end{scope}
\node at (5.5, -2.5) {$J_K(t) = (t^{-2} - 2t^{-1} + 3 - 2t + t^2) + (t^{-\frac{5}{2}} - t^{-\frac{3}{2}} - t^{\frac{3}{2}} + t^{\frac{5}{2}})$}
    }
\]
\end{minipage}
\caption{A non-local nullhomologous amphichiral knot $K$ (left) and a computation of $J_K(t)$ (right).}
\label{fig:non-localnull-homologousamphichiral}
\end{figure}

In \cite{MR1296890}, Drobotukhina introduced the polynomial
\[
f_L(t) = \left. (-A^3)^{-w'(D)} \langle D \rangle \right|_{A=t^{-1/4}}
\]
where $w'(D)$ is the difference between the number of positive and negative crossings, where only self-crossings of components of $D$ are considered. Then $f_L$ is an invariant of non-oriented links in $\RP^3$. It agrees with $J_L$ when $L$ is a knot, and it provides an obstruction to amphichirality. Now, following the proof of Theorem \ref{thm:jonespolynomialalternatingtutte}, it is just as easy to obtain the following expansion of $f_L$ via myopic Tutte polynomials when $L$ is alternating:
\begin{equation} \label{eq:flsplitting}
f_L(t) = (-1)^{w'(D)} \left(t^{\frac{3w'(D) - k_+'}{4}}\psi_{G_+}(-t, -t^{-1}) + t^{\frac{3w'(D) + k_-'}{4}}\psi_{G_-}(-t^{-1}, -t)\right).
\end{equation}
(Compare (\ref{eq:flsplitting}) with (\ref{eq:jlsplitting}) in the proof of Theorem \ref{thm:jonespolynomialalternatingtutte}.) The reason why Corollary \ref{cor:amphichiralknots} fails for links is that, while $V_L(t) \in \mathbb{Z}[t^{\pm\frac{1}{2}}]$, $f_L(t)$ lives in $\mathbb{Z}[t^{\pm\frac{1}{4}}]$. Indeed, it is still the case that $k_+ - k_- = \frac{1}{2}$ (interpreted appropriately). For example, Figure \ref{fig:5_5^3} shows a calculation of $f_L(t)$ for the second alternating amphichiral link in Drobotukhina's table, labeled $5_5^3$.

\begin{figure}[ht]
\begin{minipage}{.25\linewidth}
\[
\tikz[baseline={([yshift=-.5ex]current bounding box.center)}, scale=0.7]
	{
\draw[knot] (1.5-2.5*0.7071, 1.5-2.5*0.7071) -- (1.25,1.25);
\draw[knot] (1.75,1.75) -- (1.5+2.5*0.7071, 1.5+2.5*0.7071);
\draw[knot, overcross] (1.5,1.5) circle(1.125);
\draw[knot, overcross] (1.5-2.5*0.7071, 1.5+2.5*0.7071) -- (1.5+2.5*0.7071, 1.5-2.5*0.7071);
\draw[knot, overcross] (1.25,1.25) -- (1.75,1.75);
\draw[dotted] (1.5,1.5) circle(2.5);
    }
\]
\end{minipage}
\begin{minipage}{.7\linewidth}
\[
\qquad
\tikz[baseline={([yshift=-.5ex]current bounding box.center)}, scale=0.5]
	{
\node at (1.5,1) {$G_+ = G_- =$};
\begin{scope}[xshift=4.5cm, scale=0.7]
\fill (1.5+1.125*0.7, 1.5+1.125*0.7) circle (.25cm);
\fill (1.5-1.125*0.7, 1.5+1.125*0.7) circle (.25cm);
\fill (1.5, 1.5-0.8) circle (.25cm);
\begin{scope}[ultra thick]
\draw (1.5+2.5*0.7071, 1.5+2.5*0.7071) -- (1.5+1.125*0.7, 1.5+1.125*0.7);
\draw (1.5-2.5*0.7071, 1.5+2.5*0.7071) -- (1.5-1.125*0.7, 1.5+1.125*0.7);
\draw (1.5, 1.5-0.8) to[out=180,in=45] (1.5-2.5*0.7071, 1.5-2.5*0.7071);
\draw (1.5, 1.5-0.8) to[out=0,in=135]  (1.5+2.5*0.7071, 1.5-2.5*0.7071);
\draw (1.5, 1.5-0.8)
-- (1.5+1.125*0.7, 1.5+1.125*0.7);
\draw (1.5, 1.5-0.8) -- (1.5-1.125*0.7, 1.5+1.125*0.7);
\draw (1.5-1.125*0.7, 1.5+1.125*0.7) -- (1.5+1.125*0.7, 1.5+1.125*0.7);
\end{scope}
\draw[dotted] (1.5,1.5) circle(2.5);
\end{scope}
\node at (1.1, -1.5) {$\psi_{G_\pm}(x,y) = x^2 + 3x + 2y + 2$};
\node at (5.35, -2.5) {$f_L(t) = t^{\frac{1}{4}} (t^2 - 3t + 2 - 2t^{-1}) + t^{-\frac{1}{4}} (t^{-2}-3t^{-1} + 2 - 2t)$};
\node at (6.05, -3.5) {$ = t^{\frac{9}{4}} - 3t^{\frac{5}{4}} - 2t^{\frac{3}{4}} + 2 t^{\frac{1}{4}} + 2 t^{-\frac{1}{4}} - 2t^{-\frac{3}{4}} - 3t^{-\frac{5}{4}} + t^{-\frac{9}{4}}$};
    }
\]
\end{minipage}
\caption{A computation of $f_L(t)$ (right) for the alternating amphichiral link $L = 5_5^3$ (left).}
\label{fig:5_5^3}
\end{figure}

For both of the alternating amphichiral links in Drobotukhina's table, the Tait graphs are self-dual: $G_+ = G_-$. Such duality for alternating links implies amphichirality, so our work implies that only links can have self-dual Tait graphs.

\begin{corollary}
\label{cor:amphichiral}
Let $D$ be a reduced alternating diagram of a non-local alternating link $L \subset \RP^2$. If $G_+(D) = G_-(D)$, then $L$ is amphichiral and, moreover, has at least two components.
\end{corollary}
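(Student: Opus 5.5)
The plan has two parts: first show that $L$ is amphichiral, then show that a knot cannot satisfy $G_+(D) = G_-(D)$.

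\textbf{Amphichirality.} The mirror $\overline{D}$ of $D$ is obtained by switching every crossing. This keeps the underlying projection in $\RP^2$ but exchanges the roles of the two checkerboard colorings. Hence $G_\pm(\overline{D}) = G_\mp(D)$, where the hypothesis $G_+(D)=G_-(D)$ is read as an equality of graphs embedded in $\RP^2$, up to homeomorphism of $\RP^2$. A reduced alternating diagram is recovered, up to isotopy in $\RP^2$ and up to global mirroring, from one of its Tait graphs via the medial construction: the crossing types are forced by the alternating condition and the chosen coloring. So from $G_+(D) = G_-(D) = G_+(\overline{D})$ we get a homeomorphism of $\RP^2$ carrying the medial diagram of $D$ to that of $\overline{D}$ and preserving the over/under data. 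A homeomorphism of $\RP^2$ extends to a homeomorphism of the twisted $I$-bundle $\RP^3-\infty$, so $D$ and $\overline{D}$ represent equivalent links, up to the orientation of $\RP^3$ that this extension induces. I would check that, after possibly composing with the reflection in the fiber direction, this gives an equivalence of $L$ with its mirror.

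\textbf{At least two components.} Suppose for contradiction that $L$ is a knot. Use Theorem \ref{thm:jonespolynomialalternatingtutte}:
\[
J_L(t) = (-1)^{w(D)}\bigl(t^{k_+}\psi_{G_+}(-t,-t^{-1}) + t^{k_-}\psi_{G_-}(-t^{-1},-t)\bigr),
\]
with $k_+-k_-=\tfrac12$. Since $G_+=G_-=:G$, the second summand is obtained from the first by $t\mapsto t^{-1}$, up to the shift in prefactor. Amphichirality of a knot gives $J_L(t)=J_L(t^{-1})$, and since $J_L=f_L$ for knots the relevant exponents lie in $\tfrac12\Z$. Matching the integral and half-integral parts $J^\Re_L, J^\Im_L$, each of which comes from a single Tait graph, the symmetry $t\mapsto t^{-1}$ must either preserve each part or swap them.

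\begin{itemize}
\item Swapping is impossible, since $t\mapsto t^{-1}$ preserves whether an exponent is integral or half-integral.
\item If each part is preserved, then $t^{k_+}\psi_G(-t,-t^{-1})$ is symmetric under $t\mapsto t^{-1}$. Taking its extreme degrees, which by Theorem \ref{thm:altbreadth} are controlled by $c(L)-\alpha(G)$ and by $\psi_G$'s top and bottom degrees, symmetry forces $k_+$ to be determined by $\psi_G$ alone. The same argument forces $k_-=k_+$, contradicting $k_+-k_-=\tfrac12$.
\end{itemize}

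More cleanly, I would first compute $k_\pm$ in terms of the writhe and the vertex counts of $G_\pm$ from the proof of Theorem \ref{thm:jonespolynomialalternatingtutte}. When $G_+\cong G_-$ these data coincide, so $k_+$ and $k_-$ differ only by a writhe-dependent term. For a knot the writhe is the self-writhe $w'$, and the relation $k_+-k_-=\tfrac12$ together with symmetry forces $w(D)$ to be half-integral, which is absurd. This is exactly where the knot hypothesis enters: for links, $f_L$ lives in $\Z[t^{\pm 1/4}]$, as in the $5_5^3$ example, and the parity obstruction disappears. Alternatively, use Corollary \ref{cor:amphichiralknots} directly: non-local alternating knots are not amphichiral, and the first part shows $L$ is amphichiral, so $L$ must have at least two components.

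\textbf{Main obstacle.} The hardest step is the first part: making rigorous that equality of the embedded Tait graphs yields an orientation-reversing self-equivalence of $\RP^3$ taking $L$ to its mirror. This needs care about how homeomorphisms of $\RP^2$ lift to $\RP^3$ and how they interact with the mirror convention for the over/under data. Once amphichirality is established, the second part follows immediately from Corollary \ref{cor:amphichiralknots}.
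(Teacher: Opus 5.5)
Your proposal is correct and matches the paper's (essentially one-line) argument. Switching all crossings gives $G_+(\overline{D})=G_-(D)$, and a reduced alternating diagram is determined by its signed embedded Tait graph, so self-duality yields an isotopy of $\RP^2$ carrying $D$ to its mirror; Corollary \ref{cor:amphichiralknots} then rules out knots. Your worry about lifts goes away because the paper defines equality of embedded graphs as ambient isotopy, and such an isotopy lifts fiberwise to the twisted $I$-bundle.

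The Jones-polynomial detour is unnecessary and is off as written, so just cite the corollary. Symmetry of both summands forces $k_+=-k_-$ (not $k_+=k_-$), which gives $k_+=\tfrac14\notin\tfrac12\Z$. Also, $k_+-k_-=\tfrac12$ does not depend on the writhe.
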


The converse of Corollary \ref{cor:amphichiral} is a projective version of the Kauffman conjecture \cite{MR1078665, MR1078636}. The original conjecture was disproven by Dasbach-Hougardy \cite{MR1414091} and subsequently reformulated by Kauffman-Jablan in \cite{MR3024020}. From Figure \ref{fig:5_5^3}, note the apparent infinite family of alternating amphichiral links, consisting of one $n$-component link for each $n \ge 2$. Each has $G_+(D) = G_-(D)$.

\[
\tikz[baseline={([yshift=-.5ex]current bounding box.center)}, scale=0.5]
	{
\draw[knot] (1.5-2.5*0.7071, 1.5+2.5*0.7071) -- (1.5+2.5*0.7071, 1.5-2.5*0.7071);
\draw[knot, overcross] (1.5-2.5*0.7071, 1.5-2.5*0.7071) -- (1.5+2.5*0.7071, 1.5+2.5*0.7071);
\draw[dotted] (1.5,1.5) circle(2.5);
    }
\qquad\qquad
\tikz[baseline={([yshift=-.5ex]current bounding box.center)}, scale=0.5]
	{
\draw[knot] (1.5-2.5*0.7071, 1.5-2.5*0.7071) -- (1.25,1.25);
\draw[knot] (1.75,1.75) -- (1.5+2.5*0.7071, 1.5+2.5*0.7071);
\draw[knot, overcross] (1.5,1.5) circle(1.125);
\draw[knot, overcross] (1.5-2.5*0.7071, 1.5+2.5*0.7071) -- (1.5+2.5*0.7071, 1.5-2.5*0.7071);
\draw[knot, overcross] (1.25,1.25) -- (1.75,1.75);
\draw[dotted] (1.5,1.5) circle(2.5);
    }
\qquad\qquad
\tikz[baseline={([yshift=-.5ex]current bounding box.center)}, scale=0.5]
	{
\draw[knot] (1.5-2.5*0.7071, 1.5-2.5*0.7071) -- (0.4,0.4);
\draw[knot] (1.25,1.25) -- (1.75,1.75);
\draw[knot] (2.6,2.6) -- (1.5+2.5*0.7071, 1.5+2.5*0.7071);
\draw[knot] (0.4,2.6) -- (1.25, 1.75);
\draw[knot] (1.75, 1.25) -- (2.6, 0.4);
\draw[knot, overcross] (1.5,1.5) circle(1.125);
\draw[knot, overcross]
(1.5,1.5) circle(2);
\draw[knot, overcross] (0.4,0.4) -- (1.25,1.25);
\draw[dotted] (1.5,1.5) circle(2.5);
\draw[knot, overcross] (1.75,1.75) -- (2.6,2.6);
\draw[knot, overcross] (1.5-2.5*0.7071, 1.5+2.5*0.7071) -- (0.4,2.6);
\draw[knot, overcross] (1.25, 1.75) -- (1.75, 1.25);
\draw[knot, overcross] (2.6, 0.4) -- (1.5+2.5*0.7071, 1.5-2.5*0.7071);
    }
\qquad\qquad
\cdots
\]

\stoptoc

\subsection*{Speculations}

The work presented in this article motivates a few projects which warrant investigation. We list a couple of them below.

\subsubsection*{Quasialternating links in $\RP^3$}

\begin{wrapfigure}[16]{R}{0.3\textwidth}
\vspace{-10pt}
\centering
\begin{minipage}{0.3\textwidth}
\[
\tikz[baseline={([yshift=-.5ex]current bounding box.center)}, scale=.3]{
    \draw[knot] (0.5,0) -- (1.5,1);
    \draw[knot, overcross] (0.5,1) -- (1.5,0);
    \draw[knot] (-1.5,0) -- (-0.5,1);
    \draw[knot, overcross] (-1.5,1) -- (-0.5,0);
    \draw[knot] (-0.5,1) to[out=45, in=135] (0.5,1);
    \draw[knot] (-0.5,0) to[out=-45, in=-135] (0.5,0);
    \draw[knot] (3.5,1) -- (4.5,0);
    \draw[knot, overcross] (3.5,0) -- (4.5,1);
    \draw[knot] (1.5,1) to[out=45, in=135] (3.5,1);
    \draw[knot] (3.5,0) to[out=-135, in=0] (0,-3.5);
    \draw[knot, overcross] (1.5,0) to[out=-45, in=135] (6*1.41/2, -6*1.41/2);
    \draw[knot] (-3.5,0) -- (-4.5,1);
    \draw[knot, overcross] (-3.5,1) -- (-4.5,0);
    \draw[knot] (-1.5,0) to[out=-135, in=-45] (-3.5,0);
    \draw[knot] (-3.5,1) to[out=45, in=180] (0,3.5);
    \draw[knot, overcross] (-1.5,1) to[out=135, in=-45] (-6*1.41/2, 6*1.41/2);
    \draw[knot] (4.5,1) to[out=45, in=-135] (6*1.41/2, 6*1.41/2);
    \draw[knot] (4.5,0) to[out=-45, in=-135] (5, 0) to[out=45, in=-45] (5,1);
    \draw[knot, overcross] (5,1) to[out=135, in=0] (0,3.5);
    \draw[knot] (-4.5,0) to[out=-135, in=45] (-6*1.41/2, -6*1.41/2);
    \draw[knot] (-4.5,1) to[out=135, in=45] (-5, 1) to[out=-135, in=135] (-5,0);
    \draw[knot, overcross] (-5,0) to[out=-45, in=180] (0,-3.5);
	\draw[white, ultra thick] (0,0) circle (6cm);
	\draw[dotted] (0,0) circle (6cm);
    }
\]
\end{minipage}
\caption{An 8-crossing, non-alternating nullhomologous knot in $\RP^3$.}
\label{fig:weaklyqa}
\end{wrapfigure}

Following Theorem \ref{thm:main}, we hope to search for larger classes of Khovanov-thin links in $\RP^3$. At the time of writing, there is no known geometric classification of homologically-thin links in $S^3$--- to the authors' knowledge, there are no examples of homologically-thin links which are not \emph{two-fold quasi-alternating} \cite{MR3760881}. These contain the class of \emph{quasi-alternating links}, introduced in \cite{MR2141852}. The known homologically-thin non-quasi-alternating links (\textit{e.g.}, those introduced in \cite{MR2592726}) are two-fold quasi-alternating.

What does it mean for a link in $\RP^3$ to be quasi-alternating? If a link $L$ is Khovanov-thin, then the link surgeries spectral sequence of \cite{MR2141852} implies that the branched double cover of $S^3$ branched along $mL$ is an $L$-space. More recently, Chen \cite{MR5083272} showed that there are similar spectral sequences for nullhomologous links in $\RP^3$. Observe that a non-local, nullhomologous link in $\RP^3$ has two non-trivial branched double covers. It also has two chances to be Khovanov-thin. For example, we claim that one of the Khovanov homologies of the knot pictured in Figure \ref{fig:weaklyqa} is thick, and the other is thin.

\subsubsection*{Does Khovanov homology detect infinitely many knots in $\RP^3$?}

\begin{wrapfigure}[14]{L}{0.3\textwidth}
\vspace{-10pt}
\centering
\begin{minipage}{0.3\textwidth}
\centering
\begin{tabular}{|c||c|c|c|c|} 
\hline
$3n$   &            &             &          & $\blue{\F}$  \\ 
\hline
$3n-1$   &            &             &          &              \\ 
\hline
$3n-2$     &            &             & & $\blue{\F}$  \\ 
\hline
$\vdots$ &            &             &     $\iddots$     &              \\ 
\hline
$n+2$    &            & $\blue{\F}$ &  &              \\ 
\hline
$n+1$    & $\red{\F}$ &             &          &              \\ 
\hline
$n$   &            & $\blue{\F}$ &          &              \\ 
\hline
$n-1$    & $\red{\F}$ &             &          &              \\ 
\hhline{|=::====|}
\slashbox{$j$}{$i$}          & $0$        & $1$         & $\cdots$ & $n$          \\
\hline
\end{tabular}\end{minipage}
\caption{$\Kh(\Upsilon_n; \F).$}
\end{wrapfigure}

In a seminal article \cite{MR2805599}, Kronheimer and Mrowka proved that Khovanov homology detects the unknot. Since then, Khovanov homology has been shown to detect several other small-crossing knots \cite{MR3190305, MR4049809, MR4275096, MR4516040, MR4393789, MR4889247}, but no infinite family is known to be detected.

Let $K$ be a knot in $\RP^3$. In work-in-progress, the second author and Chen Zhang claim that if $\widetilde{\Kh}(K) \cong \widetilde{\Kh}(\Upsilon_{2k})$ for any $k\in \Z$, then $K = \Upsilon_{2k}$. Like the unknot, the reduced Khovanov homology of $\Upsilon_n$ is very simple: one part of the invariant is always dimension one, and the other is dimension $n$. The literature provides a few avenues to pursue this result. We are especially interested in studying the knot Floer homology of nullhomologous links in $\RP^3$, and the development of a Dowlin spectral sequence \cite{MR4777638}.

\subsection*{Article outline}

Our article is organized as follows.

\begin{itemize}

\item In \S \ref{s:background}, we give background on the Jones polynomial, Goeritz matrices, and Khovanov homology for nullhomologous links in $\RP^3$. We can prove Corollary \ref{cor:amphichiralknots} right away in \S \ref{ss:drobjp}. Proposition \ref{prop:goeritzsig}, proven in \S \ref{ss:Goeritz}, is essential only to the proof of Theorem \ref{thm:main}.

\item In \S \ref{s:myopictutte}, we describe the myopic Tutte polynomial for graphs embedded in $\RP^2$. We provide three definitions: one in terms of edge deletion/contraction, another via a state-sum formula, and finally, in \S \ref{ss:spanningtreemyopic}, one in terms of spanning trees. We show that one can recover the generalized Krushkal polynomial from the myopic Tutte polynomial and prove Theorem \ref{thm:equalkrushkal} in \S \ref{ss:krushkal}. 

\item In \S \ref{s:spanningtreejones}, we give a description of the Kauffman bracket of nullhomologous links in terms of spanning trees of Tait graphs; see Proposition \ref{prop:KBgraphpolynomial}. This is followed by a proof of Theorem \ref{thm:jonespolynomialalternatingtutte}.

\item In \S \ref{s:applications}, we provide a few applications. These include proofs of Theorems \ref{thm:altbreadth} and \ref{thm:detection}. We also exhibit the tabulation of nullhomologous links in $\RP^3$ by local crossing number in \S \ref{ss:tabulation}.

\item Finally, in \S \ref{s:spanningtreekh}, we develop the theory of spanning tree complexes for nullhomologous links in $\RP^3$. In order to make full use of our myopic Tutte polynomials, we follow Champanerkar-Kofman \cite{MR2480298} and use their spectral sequence to conclude with a proof of Theorem \ref{thm:main}.

\end{itemize}

\subsection*{AI disclosure}

No artificial intelligence was used in the development of this article.

\subsection*{Acknowledgements}

The authors wish to thank Adam Lowrance for his thoughts on several drafts of this article. We also thank Daren Chen, Ilya Kofman, and Chen Zhang for helpful and inspiring conversations.

\resumetoc

\section{Background}
\label{s:background}

\subsection{Drobotukhina's analogue of the Jones polynomial}
\label{ss:drobjp}

The Jones polynomial for links in $\RP^3$ was defined by Drobotukhina \cite{MR1073213} via the Kauffman bracket.

\begin{definition}
\label{def:kauffmanbracket}
Given $L\subset \RP^3$, pick a diagram $D \subset \RP^2$ for $L$. Define the \emph{Kauffman bracket} $\langle D \rangle \in \mathbb{Z}[A, A^{-1}, \theta]$ by the following rules.
\begin{enumerate}[label=(\arabic*)]
\item Near any crossing in $D$, $\left\langle~
\tikz[baseline=.6ex, scale = .4]{
\draw (0,0) -- (1,1);
\draw (1,0) -- (.7,.3);
\draw (.3,.7) -- (0,1);
}
~\right\rangle = A \left\langle ~ \tikz[baseline=.6ex, scale = .4]{
\draw[rounded corners = 1mm] (0,0) -- (.45,.5) -- (0,1);
\draw[rounded corners = 1mm] (1,0) -- (.55,.5) -- (1,1);
}~\right\rangle + A^{-1} \left\langle~ \tikz[baseline=.6ex, scale = .4]{
\draw[rounded corners = 1mm] (0,0) -- (.5,.45) -- (1,0);
\draw[rounded corners = 1mm] (0,1) -- (.5,.55) -- (1,1);
}~\right\rangle$. The first is called the $A$-resolution, and the second is the $B$-resolution.
\item $\left\langle D \sqcup \bigcirc \right\rangle = (-A^{-2} - A^2) \langle D \rangle$; \textit{i.e.}, we can remove a crossingless class-0 unknot at the expense of multiplying by the provided Laurent binomial.
\item $\langle \circledcirc \rangle = 1$ and $\langle \ominus \rangle = \theta$; \textit{i.e.}, the crossingless class-0 unknot evaluates to 1, and the crossingless class-1 unknot evaluates to $\theta$.
\end{enumerate}
\end{definition}

For an $n$-crossing diagram $D$, order the crossings arbitrarily and pick $\mathbf{v} = (v_1,\ldots, v_n) \in \{A, B\}^n$. Let $D_\mathbf{v}$ denote the resolution of $D$ obtained by performing the $v_i$-resolution at the $i$th crossing of $D$. 

\begin{lemma}[\cite{MR4904031}, Lemma 2.3]
\label{lem:ManWillis}
A crossingless class-1 unknot exists in a resolution $D_\mathbf{v}$ if and only if $D$ is a diagram for a class-1 link $L$.
\end{lemma}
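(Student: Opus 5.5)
We work with $\RP^2$ modeled as the closed disk with antipodal boundary points identified, and we use the $\F$-valued mod-2 intersection pairing on $H_1(\RP^2;\F)\cong\F$. The nontrivial class has self-intersection $1$, meaning any two class-1 closed curves meet an odd number of times, while any class-0 curve meets any closed curve an even number of times. The key observation is that the homology class of $D_\mathbf{v}$ in $H_1(\RP^2;\F)$ does not depend on $\mathbf{v}$. Each resolution is a local modification inside a small disk around a crossing, so it preserves the mod-2 class of the underlying $1$-cycle. Consequently $[D_\mathbf{v}] = [D] = [L]$ under the isomorphism $H_1(\RP^3)\cong H_1(\RP^2)$ induced by the projection, which is a homotopy equivalence away from the point at infinity. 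We will also use the fact that $D_\mathbf{v}$ is a disjoint union of embedded circles, each of class 0 or class 1.

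The first step is to show that a resolution contains at most one class-1 circle. Two disjoint class-1 circles would have intersection number $0$, contradicting self-intersection $1$. So every $D_\mathbf{v}$ consists of some number of class-0 circles together with either zero or exactly one class-1 circle. Its total class is therefore $1$ precisely when a class-1 component is present, and $0$ otherwise.

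Combining the two steps: if $L$ is class 1, then $[D_\mathbf{v}]=1$, so some component of $D_\mathbf{v}$ is a class-1 unknot. This holds for every $\mathbf{v}$, and in particular for some $\mathbf{v}$. Conversely, if some $D_\mathbf{v}$ contains a class-1 circle, it contains exactly one, so $[L]=[D_\mathbf{v}]=1$. Both directions of the lemma follow, and we in fact obtain the stronger statement that either every resolution has exactly one class-1 circle or none does.

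The step needing the most care is the invariance of the class under resolution. We will justify it concretely by counting intersections of $D_\mathbf{v}$ with a generic line $\ell$ representing the generator of $H_1(\RP^2)$, chosen to miss the small crossing disks. Resolution changes nothing outside those disks, so $|D_\mathbf{v}\cap \ell| = |D\cap\ell|$. Since the class of a $1$-cycle is its intersection parity with $\ell$, the class is unchanged. This parity equals the class of $L$: in the disk model it is the parity of the number of boundary-crossing points of the diagram, which determines the homology class of $L$ in $\RP^3$.
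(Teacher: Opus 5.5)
Your argument is correct. The mod-$2$ class of $D_\mathbf{v}$ in $H_1(\RP^2;\F)$ is unchanged by smoothing and equals $[L]$, and two disjoint class-$1$ circles would have intersection number $0\neq 1$. Together these give both directions of the lemma, plus the stronger ``every resolution or none'' statement.

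The paper itself does not prove this lemma: it cites Manolescu--Willis and then records the ``at most one class-$1$ circle'' fact without proof. Your homological and intersection-parity argument is the standard one and covers both claims. One small wording fix: in your last sentence, count the points of $D$ on the line at infinity after the antipodal identification. The raw count of endpoints on $\partial\mathbb{D}^2$ is always even.
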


In addition, there can be at most one class-1 unknot in $D_\mathbf{v}$. Therefore,
\begin{enumerate}
\item if $D$ is a diagram for a class-0 link, then $\langle D \rangle \in \mathbb{Z}[A, A^{-1}]$, and
\item if $D$ is a diagram for a class-1 link, then $\langle D \rangle = \theta \cdot f$ for $f\in \mathbb{Z}[A, A^{-1}]$.
\end{enumerate}
Drobotukhina's version of the bracket polynomial is obtained by setting $\theta=1$.

In \cite{MR1073213}, Drobotukhina proved that two links $L, L' \subset \RP^3$ are isotopic if and only if there is a sequence of moves $\Omega_1$--$\Omega_5$, taking a diagram of $L$ to a diagram of $L'$. The moves $\Omega_1$--$\Omega_3$ are the ordinary Reidemeister moves, and $\Omega_4$ and $\Omega_5$ are the moves pictured in Figure \ref{fig:rmoves}.

\begin{figure}[ht]
\tikz[]{
\node(1) at (0,0) {$
\tikz[baseline={([yshift=-.5ex]current bounding box.center)}, scale=.25]{
	\fill[black!20!white] (-5*0.259,5*0.966) -- (-5*0.259,-5*0.966) to[out=-15,in=-165] (5*0.259,-5*0.966) -- (5*0.259,5*0.966) to[out=165,in=15] (-5*0.259,5*0.966);
	\draw[knot] (1.25,5*0.259) -- (5*0.966,5*0.259);
	\draw[knot] (1.25,-5*0.259) -- (5*0.966,-5*0.259);
	\draw[knot] (-5*0.966,5*0.259) to[out=0, in=90] (-2.5,0) to[out=-90, in=0] (-5*0.966,-5*0.259);
	\draw[white, line width=1.85pt, decorate,decoration={coil,segment length=3.5pt,aspect=0}] (5*0.259,5*0.966) -- (5*0.259,-5*0.966);
	\draw[white, line width=1.85pt, decorate,decoration={coil,segment length=3.5pt,aspect=0}] (-5*0.259,5*0.966) -- (-5*0.259,-5*0.966);
	\draw[knot, decorate,decoration={coil,segment length=3.5pt,aspect=0}] (5*0.259,5*0.966) -- (5*0.259,-5*0.966);
	\draw[knot, decorate,decoration={coil,segment length=3.5pt,aspect=0}] (-5*0.259,5*0.966) -- (-5*0.259,-5*0.966);
	\draw[white, ultra thick] (0,0) circle (5cm);
	\draw[dotted] (0,0) circle (5cm);
}$};
\node(2) at (5,0) {$
\tikz[baseline={([yshift=-.5ex]current bounding box.center)}, scale=.25]{
	\fill[black!20!white] (-5*0.259,5*0.966) -- (-5*0.259,-5*0.966) to[out=-15,in=-165] (5*0.259,-5*0.966) -- (5*0.259,5*0.966) to[out=165,in=15] (-5*0.259,5*0.966);
	\draw[knot] (1.25,5*0.259) to[out=0, in=90] (3.5,0) to[out=-90, in=0] (1.25,-5*0.259);
	\draw[white, line width=1.85pt, decorate,decoration={coil,segment length=3.5pt,aspect=0}] (5*0.259,5*0.966) -- (5*0.259,-5*0.966);
	\draw[white, line width=1.85pt, decorate,decoration={coil,segment length=3.5pt,aspect=0}] (-5*0.259,5*0.966) -- (-5*0.259,-5*0.966);
	\draw[knot, decorate,decoration={coil,segment length=3.5pt,aspect=0}] (5*0.259,5*0.966) -- (5*0.259,-5*0.966);
	\draw[knot, decorate,decoration={coil,segment length=3.5pt,aspect=0}] (-5*0.259,5*0.966) -- (-5*0.259,-5*0.966);
	\draw[white, ultra thick] (0,0) circle (5cm);
	\draw[dotted] (0,0) circle (5cm);
}$};
\draw[<->, thick] (1) -- node[above, midway]{$\Omega_4$} (2);
\begin{scope}[xshift=8.5cm]
\node(1') at (0,0) {$
\tikz[baseline={([yshift=-.5ex]current bounding box.center)}, scale=.25]{
	\fill[black!20!white] (-5*0.259,5*0.966) -- (-5*0.259,-5*0.966) to[out=-15,in=-165] (5*0.259,-5*0.966) -- (5*0.259,5*0.966) to[out=165,in=15] (-5*0.259,5*0.966);
	\draw[knot] (1.25,5*0.259) -- (5*0.966,5*0.259);
	\draw[knot] (1.25,-5*0.259) -- (5*0.966,-5*0.259);
	\draw[knot] (-1.25,-5*0.259) to[out=180, in=0] (-5*0.966,5*0.259);
	\draw[knot, overcross] (-1.25,5*0.259) to[out=180, in=0] (-5*0.966,-5*0.259);
	\draw[white, line width=1.85pt, decorate,decoration={coil,segment length=3.5pt,aspect=0}] (5*0.259,5*0.966) -- (5*0.259,-5*0.966);
	\draw[white, line width=1.85pt, decorate,decoration={coil,segment length=3.5pt,aspect=0}] (-5*0.259,5*0.966) -- (-5*0.259,-5*0.966);
	\draw[knot, decorate,decoration={coil,segment length=3.5pt,aspect=0}] (5*0.259,5*0.966) -- (5*0.259,-5*0.966);
	\draw[knot, decorate,decoration={coil,segment length=3.5pt,aspect=0}] (-5*0.259,5*0.966) -- (-5*0.259,-5*0.966);
	\draw[white, ultra thick] (0,0) circle (5cm);
	\draw[dotted] (0,0) circle (5cm);
}$};
\node(2') at (5,0) {$
\tikz[baseline={([yshift=-.5ex]current bounding box.center)}, scale=.25]{
	\fill[black!20!white] (-5*0.259,5*0.966) -- (-5*0.259,-5*0.966) to[out=-15,in=-165] (5*0.259,-5*0.966) -- (5*0.259,5*0.966) to[out=165,in=15] (-5*0.259,5*0.966);
	\draw[knot] (1.25,5*0.259) to[out=0, in=180] (5*0.966,-5*0.259);
	\draw[knot, overcross] (1.25,-5*0.259) to[out=0, in=180] (5*0.966,5*0.259);
	\draw[knot] (-1.25,5*0.259) -- (-5*0.966,5*0.259);
	\draw[knot] (-1.25,-5*0.259) -- (-5*0.966,-5*0.259);
	\draw[white, line width=1.85pt, decorate,decoration={coil,segment length=3.5pt,aspect=0}] (5*0.259,5*0.966) -- (5*0.259,-5*0.966);
	\draw[white, line width=1.85pt, decorate,decoration={coil,segment length=3.5pt,aspect=0}] (-5*0.259,5*0.966) -- (-5*0.259,-5*0.966);
	\draw[knot, decorate,decoration={coil,segment length=3.5pt,aspect=0}] (5*0.259,5*0.966) -- (5*0.259,-5*0.966);
	\draw[knot, decorate,decoration={coil,segment length=3.5pt,aspect=0}] (-5*0.259,5*0.966) -- (-5*0.259,-5*0.966);
	\draw[white, ultra thick] (0,0) circle (5cm);
	\draw[dotted] (0,0) circle (5cm);
}$};
\draw[<->, thick] (1') -- node[above, midway]{$\Omega_5$} (2');
\end{scope}
}
\caption{Drobotukhina's moves $\Omega_4$ and $\Omega_5$. Here, the shaded region is understood to contain an arbitrary link diagram.}
\label{fig:rmoves}
\end{figure}

It is routinely verified that the Kauffman bracket obeys $\Omega_2$--$\Omega_5$. Per usual, to obtain an invariant, one passes to oriented links. Given an oriented link $L \subset \RP^3$ and a diagram $D$ for $L$, define the \emph{writhe} $w(D)$ as the difference between the number of positive crossings $\left(~\tikz[baseline=.6ex, scale = .4]{
\draw[->] (0,0) -- (1,1);
\draw (1,0) -- (.7,.3);
\draw[->] (.3,.7) -- (0,1);
}~\right)$ and the number of negative crossings $\left(~\tikz[baseline=.6ex, scale = .4]{
\draw[->] (.7,.7) -- (1,1);
\draw[->] (1,0) -- (0,1);
\draw (0,0) -- (.3,.3);
}~\right)$ in $D$. Then, Drobotukhina's analogue of the Jones polynomial is defined as 
\begin{equation}\label{eq:jpdefinition}
J_L(t, \theta) = \left.(-A^3)^{-w(D)}\langle D \rangle\right|_{A=t^{-1/4}} \in \mathbb{Z}[t^{\frac{1}{2}}, t^{-\frac{1}{2}}, \theta],
\end{equation}
and it is invariant under $\Omega_1$--$\Omega_5$.

We are mainly interested in nullhomologous links. Since $\theta$ is irrelevant for such links, we drop it from the notation. For nullhomologous links, the Jones polynomial can be supported in both integer and half-integer exponents. Therefore, there are two summands in the Jones polynomial corresponding to the real and imaginary part of $\left.J_L(t)\right|_{t=-1}$. We write
\[
J_L(-1) = \pm \mathrm{det}_{\Re}(L) \pm \mathrm{det}_{\Im}(L)\, i
\]
for $\det_{\Re}, \det_{\Im} \ge 0$, and call $\det_{\Re}(L)$ and $\det_{\Im}(L)$ the \emph{real} and \emph{imaginary determinants} of $L$, respectively.

\begin{example}
\label{ex:PT_KB}
Let us return to our first example of a knot in $\RP^3$. It is the knot $3_1$ from Drobotukhina's tabulation \cite{MR1296890}. One can compute that
\[
\left\langle
\tikz[baseline={([yshift=-.5ex]current bounding box.center)}, scale=.125]{
	\draw[knot, overcross] (0,1.75) to[out=0, in=-135] (5*1.41/2, 5*1.41/2);
	\draw[knot, overcross] (2,2) to[out=-90, in=45] (-5*1.41/2, -5*1.41/2);
	\draw[knot, overcross] (5*1.41/2, -5*1.41/2) to[out=135, in=-90] (-2,2);
	\draw[knot, overcross] (-2,2) to[out=90, in=180] (0,4);
	\draw[knot, overcross] (-5*1.41/2, 5*1.41/2) to[out=-45, in=180] (0, 1.75);
	\draw[knot, overcross] (0,4) to[out=0, in=90] (2,2);
	\draw[white, ultra thick] (0,0) circle (5cm);
	\draw[line cap=round, dash pattern=on 0pt off 3.5pt] (0,0) circle (5cm);
}
\right\rangle = -A^5 + A + A^{-1} - A^{-3} -A^{-5}.
\]
This link has a single component, so its Jones polynomial does not depend on the choice of orientation: $w(D) = -1$ and we calculate
\[
J_L(t) = 1 - t^{-1} + t^{-2} + t^{\frac{1}{2}} - t^{-\frac{1}{2}},
\]
so
\[
J_L(-1) = 3 + 2i,
\]
meaning that $\det_{\Re}(L) = 3$ and $\det_{\Im}(L) = 2$.
\end{example}

Let $mD$ denote the mirror image of $D$. As is the case in $S^3$, we have that $w(D) = -w(mD)$ and $\langle mD \rangle = \left.\langle D \rangle\right|_{A \to A^{-1}}$. Thus, if $mL$ is the link with diagram $mD$, we have that $J_{mL}(t, \theta)$ is obtained from $J_{L}(t, \theta)$ by interchanging $t^{\frac{1}{2}}$ and $t^{-\frac{1}{2}}$. For $L$ in Example \ref{ex:PT_KB},
\[
J_{mL}(t) = t^2 - t + 1 - t^{\frac{1}{2}} + t^{-\frac{1}{2}} \qquad \text{and} \qquad \det(mL) = 3-2i.
\]

\begin{proof}[Proof of Corollary \ref{cor:amphichiralknots}]
In addition to being an invariant of oriented links, the Jones polynomial is an invariant of non-oriented knots. Thus if $K \subset \RP^3$ is amphichiral, it must be the case that $J_K(t) = J_K(t^{-1})$. In particular, if $M \in \Z[\pm \frac{1}{2}]$ is the maximum degree of $J_K$ for an amphichiral knot $K$, them $-M$ is the minimum degree of $J_K$. Thus the Jones polynomial of $K$ takes the form
\[
J_K(t) = c_{-M} t^{-M} + \cdots + c_M t^M
\]
so that the breadth of $J_K(t)$ is $2M \in \Z$. However, by Theorem \ref{thm:dromain}, if $K$ is alternating, then the breadth of $J_K(t)$ is a half-integer, from which we deduce that $K$ is not alternating. 
\end{proof}

\begin{example}
\label{ex: twists_KB}
Notice that
\[
\left\langle
\tikz[baseline={([yshift=-.5ex]current bounding box.center)}, scale=.125]{
	\draw[knot] (5*1.41/2, -5*1.41/2) -- (-5*1.41/2, 5*1.41/2);
	\draw[knot, overcross] (-5*1.41/2, -5*1.41/2) -- (5*1.41/2, 5*1.41/2); 
	\draw[white, ultra thick] (0,0) circle (5cm);
	\draw[dotted] (0,0) circle (5cm);
}
\right\rangle = A + A^{-1}.
\]
This is a diagram for the two-component link $\Upsilon_1$ in $\RP^3$, so we have that $w(D) = \pm1$, and
\[
J_{\tikz[baseline={([yshift=-.5ex]current bounding box.center)}, scale=0.05]{
	\draw[dotted] (0,0) circle (5cm);
	\draw[knot, ->] (5*1.41/2, -5*1.41/2) -- (-5*1.41/2, 5*1.41/2);
	\draw[knot, overcross] (-5*1.41/2, -5*1.41/2) -- (5*1.41/2, 5*1.41/2); 
	\draw[knot, ->] (-5*1.41/2, -5*1.41/2) -- (5*1.41/2, 5*1.41/2); 
}}(t) = - t - t^{\frac{1}{2}} 
\qquad \text{and} \qquad
J_{\tikz[baseline={([yshift=-.5ex]current bounding box.center)}, scale=0.05]{
	\draw[dotted] (0,0) circle (5cm);
	\draw[knot, ->] (5*1.41/2, -5*1.41/2) -- (-5*1.41/2, 5*1.41/2);
	\draw[knot, overcross] (-5*1.41/2, -5*1.41/2) -- (5*1.41/2, 5*1.41/2);
	\draw[knot, <-] (-5*1.41/2, -5*1.41/2) -- (5*1.41/2, 5*1.41/2); 
}}(t) = -t^{-1} - t^{-\frac{1}{2}}
\]
so $J_{{\tikz[baseline={([yshift=-.5ex]current bounding box.center)}, scale=0.045]{
	\draw[dotted] (0,0) circle (5cm);
	\draw[knot, ->] (5*1.41/2, -5*1.41/2) -- (-5*1.41/2, 5*1.41/2);
	\draw[knot, overcross] (-5*1.41/2, -5*1.41/2) -- (5*1.41/2, 5*1.41/2); 
	\draw[knot, ->] (-5*1.41/2, -5*1.41/2) -- (5*1.41/2, 5*1.41/2); 
}} }(-1) = 1-i$ and $J_{{\tikz[baseline={([yshift=-.5ex]current bounding box.center)}, scale=0.045]{
	\draw[dotted] (0,0) circle (5cm);
	\draw[knot, ->] (5*1.41/2, -5*1.41/2) -- (-5*1.41/2, 5*1.41/2);
	\draw[knot, overcross] (-5*1.41/2, -5*1.41/2) -- (5*1.41/2, 5*1.41/2); 
	\draw[knot, <-] (-5*1.41/2, -5*1.41/2) -- (5*1.41/2, 5*1.41/2); 
}}}(-1) = 1 + i$. Now, from the observation that the Kauffman bracket of a negative Reidemeister I move evaluates to $-A^{-3}$, we have that
\begin{align*}
\left\langle
\tikz[baseline={([yshift=-.5ex]current bounding box.center)}, scale=.125]{
	\draw[knot] (5*0.5, -5*0.866) to[out=90, in=-90] (1,0) to[out=90, in=-90] (5*0.5, 5*0.866);
	\draw[knot] (-5*0.5, -5*0.866) to[out=90, in=-90] (-1,0) to[out=90, in=-90] (-5*0.5, 5*0.866);
	\node[scale=0.8, draw, fill=white] at (0,0) {$~n~$};
	\draw[white, ultra thick] (0,0) circle (5cm);
	\draw[dotted] (0,0) circle (5cm);
}
\right\rangle
& =
A \left\langle
\tikz[baseline={([yshift=-.5ex]current bounding box.center)}, scale=.125]{
	\draw[knot] (5*0.5, -5*0.866) to[out=90, in=-90] (1,0) to[out=90, in=-90] (5*0.5, 5*0.866);
	\draw[knot] (-5*0.5, -5*0.866) to[out=90, in=-90] (-1,0) to[out=90, in=-90] (-5*0.5, 5*0.866);
	\node[scale=0.8, draw, fill=white] at (0,0) {$n-1$};
	\draw[white, ultra thick] (0,0) circle (5cm);
	\draw[dotted] (0,0) circle (5cm);
}
\right\rangle
+
A^{-1}
\left\langle
\tikz[baseline={([yshift=-.5ex]current bounding box.center)}, scale=.125]{
	\draw[knot] (5*0.5, -5*0.866) to[out=90, in=-90] (1,0);
	\draw[knot] (-5*0.5, -5*0.866) to[out=90, in=-90] (-1,0);
	\draw[knot] (-5*0.5, 5*0.866) to[out=-30, in=-150] (5*0.5, 5*0.866);
	\draw[knot] (-2,1.8) to[out=90, in=90] (2,1.8);
	\node[scale=0.8, draw, fill=white] at (0,0) {$n-1$};
	\draw[white, ultra thick] (0,0) circle (5cm);
	\draw[dotted] (0,0) circle (5cm);
}
\right\rangle
\\
& =
A \left\langle
\tikz[baseline={([yshift=-.5ex]current bounding box.center)}, scale=.125]{
	\draw[knot] (5*0.5, -5*0.866) to[out=90, in=-90] (1,0) to[out=90, in=-90] (5*0.5, 5*0.866);
	\draw[knot] (-5*0.5, -5*0.866) to[out=90, in=-90] (-1,0) to[out=90, in=-90] (-5*0.5, 5*0.866);
	\node[scale=0.8, draw, fill=white] at (0,0) {$n-1$};
	\draw[white, ultra thick] (0,0) circle (5cm);
	\draw[dotted] (0,0) circle (5cm);
}
\right\rangle
+
A^{-1}
(-A^{-3})^{n-1}.
\end{align*}
Iterating this, we conclude that
\begin{equation}
\label{eq:M2KB}
\left\langle
\tikz[baseline={([yshift=-.5ex]current bounding box.center)}, scale=.125]{
	\draw[knot] (5*0.5, -5*0.866) to[out=90, in=-90] (1,0) to[out=90, in=-90] (5*0.5, 5*0.866);
	\draw[knot] (-5*0.5, -5*0.866) to[out=90, in=-90] (-1,0) to[out=90, in=-90] (-5*0.5, 5*0.866);
	\node[scale=0.8, draw, fill=white] at (0,0) {$~n~$};
	\draw[white, ultra thick] (0,0) circle (5cm);
	\draw[dotted] (0,0) circle (5cm);
}
\right\rangle
= A^n + \sum_{i=0}^{n-1} (-1)^i A^{n - 2 - 4i}.
\end{equation}
Let $\Upsilon_n^\uparrow$ be the link $\Upsilon_n$ oriented so that $w(D) = n$ (which is always the case if $n$ is even). Then we conclude that
\[
J_{\Upsilon_n^\uparrow}(t) = (-1)^n \left( t^{\frac{n}{2}} + \sum_{i=0}^{n-1}(-1)^i t^{\frac{n+1}{2} + i}\right).
\]
We record that $\det_\varepsilon(\Upsilon_n) \in \{1, n\}$. For completeness, note that if $n$ is odd and $\Upsilon_n^\downarrow$ is $\Upsilon_n$ oriented so that $w(D) = -n$, we have
\[
J_{\Upsilon_n^\downarrow}(t) = -t^{-n} - \sum_{i=0}^{n-1} (-1)^i t^{-n+\frac{1}{2}+i}.
\]
\end{example}

In Example \ref{ex: twists_KB}, it is apparent that the real and imaginary determinants are interchanged if $n$ is odd and the orientation on a single strand is reversed. In general, if $L^*$ differs from $L$ by reversing the orientation of one strand $K$ of $L$, then
\[
J_{L^*}(t) = t^{-3\lk(K, L-K)}J_L(t)
\]
as is the case classically (c.f. \cite[Chapter 3]{MR1472978}). Therefore, reversing the orientation of a component $K$ of $L$ interchanges the real and imaginary determinants if and only if $\lk(K, L-K)$ is a half-integer.

\subsubsection{The determinant and spanning trees}
\label{ss:dettrees}

For alternating links $L \subset S^3$, it is a classical result (for example, see \cite{MR99665}) that the determinant of $L$ is equal to the number of spanning trees of the Tait graph coming from either checkerboard surface induced by a reduced, alternating diagram of $L$. For example, the determinant of the trefoil is 3, which is reflected by the fact that there are three spanning trees for either of the Tait graphs corresponding to the diagram below.
\[
\tikz[baseline={([yshift=-.5ex]current bounding box.center)}, scale=.7]{
	\begin{scope}[white!60!black]
		\draw[knot] (1,2) to[out=90, in=-90] (0,3);
		\draw[knot] (0,2) to[out=90, in=-90] (1,3);
		\draw[knot] (1,1) to[out=90, in=-90] (0,2);
		\draw[knot] (0,1) to[out=90, in=-90] (1,2);
		\draw[knot] (1,0) to[out=90, in=-90] (0,1);
		\draw[knot] (0,0) to[out=90, in=-90] (1,1);
		\draw[knot] (0,0) to[out=-90, in=-90] (-1,0) -- (-1,3) to[out=90, in=90] (0,3);
		\draw[knot] (1,0) to[out=-90, in=-90] (2,0) -- (2,3) to[out=90, in=90] (1,3);
	\end{scope}
	\begin{scope}[blue, very thick]
		\fill (-0.5, 1.5) circle (.15cm);
		\fill (1.5, 1.5) circle (.15cm);
		\draw (-0.5, 1.5) -- (1.5,1.5);
		\draw[rounded corners = 2mm] (-0.5, 1.5) -- (-0.5, 2.5) -- (1.5, 2.5) -- (1.5, 1.5);
		\draw[rounded corners = 2mm] (-0.5, 1.5) -- (-0.5,0.5) -- (1.5,0.5) -- (1.5, 1.5);
	\end{scope}
	\begin{scope}[red, very thick]
		\fill (0.5, 1) circle (.15cm);
		\fill (0.5, 2) circle (.15cm);
		\fill (-1.5, 1.5) circle (.15cm);
		\draw (0.5, 1) -- (0.5, 2);
		\draw[rounded corners = 2mm] (0.5, 2) -- (0.5,3.5) -- (-1.5, 3.5) -- (-1.5, 1.5);
		\draw[rounded corners = 2mm] (0.5, 1) -- (0.5, -0.5) -- (-1.5, -0.5) -- (-1.5, 1.5);
	\end{scope}
}
\]
Interestingly, the two Tait graphs associated to a diagram of a non-local nullhomologous link in $\RP^3$, while dual as graphs, serve distinct purposes. For example, we can perform isotopies on the Tait graphs associated to the diagram in Example \ref{ex:PT_KB}, pictured in Figure \ref{fig:pdtaits}, to obtain the graphs
\[
\tikz[baseline={([yshift=-.5ex]current bounding box.center)}, scale=.25]{
	\begin{scope}[blue, very thick]
		\fill (0,2.5) circle (.4cm);
		\fill (0,-2.5) circle (.4cm);
		\draw (0,-2.5) -- (0,-5);
		\draw (0,2.5) -- (0,5);
		\draw (0,-2.5) to[out=30, in=-30] (0,2.5);
		\draw (0,-2.5) to[out=150, in=-150] (0,2.5);
	\end{scope}
	\draw[white, ultra thick] (0,0) circle (5cm);
	\draw[dotted] (0,0) circle (5cm);
}
\qquad \text{and} \qquad
\tikz[baseline={([yshift=-.5ex]current bounding box.center)}, scale=.25]{
	\begin{scope}[red, very thick]
		\fill (0, 3) circle (.4cm);
		\fill (0,0) circle (.4cm);
		\draw (0,-5) -- (0,5);
		\draw (-5,0) -- (5,0);
	\end{scope}
	\draw[white, ultra thick] (0,0) circle (5cm);
	\draw[dotted] (0,0) circle (5cm);
}
\]
in $\RP^2$. Observe that the blue Tait graph has 3 spanning trees and the red Tait graph has 2 spanning trees. This corresponds to the fact that $\det_{\Re}(L) = 3$ and $\det_{\Im}(L) = 2$ in this case. Corollary \ref{cor:detsandtrees} asserts that this is always the case for nullhomologous alternating links.

\subsection{Goeritz matrices for alternating links}
\label{ss:Goeritz}

Associated to any nullhomologous link in $\RP^3$ are two Goeritz matrices. We provide a brief description here; see \cite{JeffMarshMilne} for more details. We postpone the relation of these matrices to the two 2-fold branched covers until future work; the main content of this section is a proof that the determinants of these Goeritz matrices are the real and imaginary determinants of $L$. We will also provide combinatorial formulas for the signatures of nullhomologous links in $\RP^3$.

Our definition follows \cite{MR500905} exactly. Let $L\subset \RP^3$ be a nullhomologous link and $D \subset \RP^2$ be a diagram for it. Pick a checkerboard coloring $\kappa$ for $D$; as before, the faces colored black give rise to the associated Tait graph. Let $R_0, R_1, \ldots, R_n$ denote the white regions in $\mathbb{D}^2 - D$, so that no white region is incident to itself. To each crossing $c$, assign the number $\mu(c)$ according to Figure \ref{fig:incidence}.

\begin{figure}[ht]
\begin{tikzpicture}[thick, scale=1.75]
\node at (0.125,0.5) {$R_i$};
\node at (0.875,0.5) {$R_j$};
\fill[black!20!white] (0,0) -- (.5,.5) -- (1,0);
\fill[black!20!white] (1,1) -- (.5,.5) -- (0,1);
\draw (0,0) -- (1,1);
\draw (0,1) -- (.3,.7);
\draw (1,0) -- (.7,.3);
\node at (0.5,-.35) {$+1$};
\begin{scope}[xshift=3cm]
\node at (0.5,0.125) {$R_i$};
\node at (0.5,0.875) {$R_j$};
\fill[black!20!white] (0,0) -- (.5,.5) -- (0,1);
\fill[black!20!white] (1,1) -- (.5,.5) -- (1,0);
\draw (0,0) -- (1,1);
\draw (0,1) -- (.3,.7);
\draw (1,0) -- (.7,.3);
\node at (0.5,-.35) {$-1$};
\end{scope}
\end{tikzpicture}
\caption{Incidence number associated to a crossing.}
\label{fig:incidence}
\end{figure}

The \emph{Goeritz matrix} $\mathcal{G} = \mathcal{G}(D, \kappa)$ associated to a fixed checkerboard coloring $\kappa$ of $D$ is defined as follows. For each $i, j \in \{0, 1, \ldots, n\}$, let
\[
g_{ij} = - \sum_{c \in R_i \cap R_j} \mu(c) ~\text{if}~ i\not=j, 
\qquad \text{and} \qquad
g_{ii} = -\sum_{i \not=j} g_{ij} ~ \text{otherwise.}
\]
Then $\mathcal{G}$ is the matrix with entries $g_{i,j}$ for $i, j\in \{1, \ldots, n\}$. Notice that $\mathcal{G}$ is an invariant of $(D, \kappa)$ up to matrix similarity. If $D$ is a reduced, alternating diagram of an alternating link $L \subset \RP^3$, then we denote by $\mathcal{G}_+$ and $\mathcal{G}_-$ the Goeritz matrices coming from the checkerboard coloring yielding $G_+(D)$ and $G_-(D)$, respectively.

For $L \subset \RP^2$ oriented, define
\[
\mu(D, \kappa) = \sum_{c~\text{of type II}} \mu(c)
\]
according to Figure \ref{fig:type}. We define
\[
\sigma_{(D, \kappa)}(L) = \mathrm{sig}(\mathcal{G}(D, \kappa)) - \mu(D, \kappa).
\]
When $D$ is a reduced alternating diagram, we write $\sigma_\pm = \mathrm{sig}(\mathcal{G}_\pm) - \mu(D_\pm)$, where $D_\pm$ is the diagram $D$ with checkerboard coloring yielding $G_\pm$.

\begin{figure}[ht]
\begin{tikzpicture}[thick, scale=1.75]
\node at (0.125,0.5) {$R_i$};
\node at (0.875,0.5) {$R_j$};
\fill[black!20!white] (0,0) -- (.5,.5) -- (1,0);
\fill[black!20!white] (1,1) -- (.5,.5) -- (0,1);
\draw[->] (0,0) -- (1,1);
\draw[->] (0,1) -- (1,0);
\node at (0.5,-.35) {Type I};
\begin{scope}[xshift=3cm]
\node at (0.5,0.125) {$R_i$};
\node at (0.5,0.875) {$R_j$};
\fill[black!20!white] (0,0) -- (.5,.5) -- (0,1);
\fill[black!20!white] (1,1) -- (.5,.5) -- (1,0);
\draw[->] (0,0) -- (1,1);
\draw[->] (0,1) -- (1,0);
\node at (0.5,-.35) {Type II};
\end{scope}
\end{tikzpicture}
\caption{Types of crossings in an oriented diagram with checkerboard coloring.}
\label{fig:type}
\end{figure}

\begin{example}
\label{ex:maingoeritz}
Returning to our main example,
\[
\tikz[baseline={([yshift=-.5ex]current bounding box.center)}, scale=.25]{
	\fill[black!20!white] (-5*1.41/2, 5*1.41/2) to[out=-45, in=135] (-2, 2.25) to[out=-100, in=135] (0,-1.17) to[out=-135, in=45] (-5*1.41/2, -5*1.41/2) to[out=135, in=-90, looseness=0.8] (-5,0) to[out=90,in=-135] (-5*1.41/2, 5*1.41/2);
	\fill[black!20!white] (5*1.41/2, 5*1.41/2) to[out=-135, in=45] (2, 2.25) to[out=-80, in=45] (0,-1.17) to[out=-45, in=135] (5*1.41/2, -5*1.41/2) to[out=45, in=-90, looseness=0.8] (5,0) to[out=90,in=-45] (5*1.41/2, 5*1.41/2);
	\fill[black!20!white] (-2, 2.25) to[out=100, in=180] (0,4) to[out=0, in=80] (2, 2.25) to[out=210, in=0] (0,1.75) to[out=180, in=-30] (-2, 2.25);
	\draw[knot, overcross] (0,1.75) to[out=0, in=-135] (5*1.41/2, 5*1.41/2);
	\draw[knot, overcross] (2,2) to[out=-90, in=45] (-5*1.41/2, -5*1.41/2);
	\draw[knot, overcross] (5*1.41/2, -5*1.41/2) to[out=135, in=-90] (-2,2);
	\draw[knot, overcross] (-2,2) to[out=90, in=180] (0,4);
	\draw[knot, overcross] (-5*1.41/2, 5*1.41/2) to[out=-45, in=180] (0, 1.75);
	\draw[knot, overcross] (0,4) to[out=0, in=90] (2,2);
	\draw[white, ultra thick] (0,0) circle (5cm);
	\draw[dotted] (0,0) circle (5cm);
	\node at (0,-3.25) {\tiny $R_0$};
	\node at (0,0.65) {\tiny $R_1$};
	\node at (-2.4,3.6) {\tiny $R_2$};
}
~\xrightarrow{\text{incidence matrix}}~
\begin{bmatrix}
1 & -1 & 0 \\
-1 & 3 & -2 \\
0 & -2 & 2
\end{bmatrix}
~\Rightarrow ~
\mathcal{G}_+ = \begin{bmatrix} 3 & -2 \\ -2 & 2 \end{bmatrix}
\]
(recall that this checkerboard coloring yields the positive graph; see Equation (\ref{eq:PT_taits})) and
\[
\tikz[baseline={([yshift=-.5ex]current bounding box.center)}, scale=.25]{
	\fill[black!20!white] (-2, 2.25) to[out=-45, in=180] (0,1.75) to[out=0,in=-135] (2,2.25) to[out=-80, in=45] (0,-1.17) to[out=135, in=-100] (-2, 2.25);
	\fill[black!20!white] (-5*1.41/2, 5*1.41/2) to[out=45, in=180] (0,5) to[out=0, in=135] (5*1.41/2, 5*1.41/2) to[out=225, in=30] (2, 2.25) to[out=100, in=0] (0,4) to[out=180, in=80] (-2, 2.25) to[out=150, in=-45] (-5*1.41/2, 5*1.41/2);
	\fill[black!20!white] (-5*1.41/2, -5*1.41/2) to[out=45, in=-135] (0,-1.17) to[out=-45, in=135] (5*1.41/2, -5*1.41/2) to[out=-135, in=0] (0,-5) to[out=180, in=-45] (-5*1.41/2, -5*1.41/2);
	\draw[knot, overcross] (0,1.75) to[out=0, in=-135] (5*1.41/2, 5*1.41/2);
	\draw[knot, overcross] (2,2) to[out=-90, in=45] (-5*1.41/2, -5*1.41/2);
	\draw[knot, overcross] (5*1.41/2, -5*1.41/2) to[out=135, in=-90] (-2,2);
	\draw[knot, overcross] (-2,2) to[out=90, in=180] (0,4);
	\draw[knot, overcross] (-5*1.41/2, 5*1.41/2) to[out=-45, in=180] (0, 1.75);
	\draw[knot, overcross] (0,4) to[out=0, in=90] (2,2);
	\draw[white, ultra thick] (0,0) circle (5cm);
	\draw[dotted] (0,0) circle (5cm);
	\node at (-3,0) {\tiny $R_0$};
	\node at (0,2.9) {\tiny $R_1$};
	\node at (3,0) {\tiny $R_2$};
}
~\xrightarrow{\text{incidence matrix}}~
\begin{bmatrix}
-2 & 1 & 1 \\
1 & -2 & 1 \\
1 & 1 & -2
\end{bmatrix}
~\Rightarrow ~
\mathcal{G}_- = \begin{bmatrix} -2 & 1 \\ 1 & -2 \end{bmatrix}
\]
Notice that $\det(\mathcal{G}_+) = \det_\Im(L) = 2$ and $\det(\mathcal{G}_-) = \det_\Re(L) = 3$. We can also compute that
\[
\mu \left(
\tikz[baseline={([yshift=-.5ex]current bounding box.center)}, scale=.25]{
	\fill[black!20!white] (-5*1.41/2, 5*1.41/2) to[out=-45, in=135] (-2, 2.25) to[out=-100, in=135] (0,-1.17) to[out=-135, in=45] (-5*1.41/2, -5*1.41/2) to[out=135, in=-90, looseness=0.8] (-5,0) to[out=90,in=-135] (-5*1.41/2, 5*1.41/2);
	\fill[black!20!white] (5*1.41/2, 5*1.41/2) to[out=-135, in=45] (2, 2.25) to[out=-80, in=45] (0,-1.17) to[out=-45, in=135] (5*1.41/2, -5*1.41/2) to[out=45, in=-90, looseness=0.8] (5,0) to[out=90,in=-45] (5*1.41/2, 5*1.41/2);
	\fill[black!20!white] (-2, 2.25) to[out=100, in=180] (0,4) to[out=0, in=80] (2, 2.25) to[out=210, in=0] (0,1.75) to[out=180, in=-30] (-2, 2.25);
	\draw[knot, overcross] (0,1.75) to[out=0, in=-135] (5*1.41/2, 5*1.41/2);
	\draw[knot, overcross] (2,2) to[out=-90, in=45] (-5*1.41/2, -5*1.41/2);
	\draw[knot, overcross] (5*1.41/2, -5*1.41/2) to[out=135, in=-90] (-2,2);
	\draw[knot, overcross] (-2,2) to[out=90, in=180] (0,4);
	\draw[knot, overcross] (-5*1.41/2, 5*1.41/2) to[out=-45, in=180] (0, 1.75);
	\draw[knot, overcross,] (0,4) to[out=0, in=90] (2,2);
	\draw[knot, <-] (0,4) to[out=0, in=90] (2,2);
	\draw[white, ultra thick] (0,0) circle (5cm);
	\draw[dotted] (0,0) circle (5cm);
} \right) = 1
\qquad \text{and} \qquad
\mu \left(
\tikz[baseline={([yshift=-.5ex]current bounding box.center)}, scale=.25]{
	\fill[black!20!white] (-2, 2.25) to[out=-45, in=180] (0,1.75) to[out=0,in=-135] (2,2.25) to[out=-80, in=45] (0,-1.17) to[out=135, in=-100] (-2, 2.25);
	\fill[black!20!white] (-5*1.41/2, 5*1.41/2) to[out=45, in=180] (0,5) to[out=0, in=135] (5*1.41/2, 5*1.41/2) to[out=225, in=30] (2, 2.25) to[out=100, in=0] (0,4) to[out=180, in=80] (-2, 2.25) to[out=150, in=-45] (-5*1.41/2, 5*1.41/2);
	\fill[black!20!white] (-5*1.41/2, -5*1.41/2) to[out=45, in=-135] (0,-1.17) to[out=-45, in=135] (5*1.41/2, -5*1.41/2) to[out=-135, in=0] (0,-5) to[out=180, in=-45] (-5*1.41/2, -5*1.41/2);
	\draw[knot, overcross] (0,1.75) to[out=0, in=-135] (5*1.41/2, 5*1.41/2);
	\draw[knot, overcross] (2,2) to[out=-90, in=45] (-5*1.41/2, -5*1.41/2);
	\draw[knot, overcross] (5*1.41/2, -5*1.41/2) to[out=135, in=-90] (-2,2);
	\draw[knot, overcross] (-2,2) to[out=90, in=180] (0,4);
	\draw[knot, overcross] (-5*1.41/2, 5*1.41/2) to[out=-45, in=180] (0, 1.75);
	\draw[knot, overcross] (0,4) to[out=0, in=90] (2,2);
	\draw[knot, <-] (0,4) to[out=0, in=90] (2,2);
	\draw[white, ultra thick] (0,0) circle (5cm);
	\draw[dotted] (0,0) circle (5cm);
} \right) = -2.
\]
Therefore $\sigma_+(L) = 1$ and $\sigma_-(L) = 0$.
\end{example}

It is clear that the two Goeritz matrices are nothing but the regular Goeritz matrices obtained by two distinct ``plat-like'' closures of $D$. Namely, given a diagram $D$ with coloring $\kappa$, let $\overline{D(\kappa)}$ be the diagram in the plane obtained by taking a crossingless closure of $D \cap \mathbb{D}^2$ in which each endpoint of $D$ on $\partial \mathbb{D}^2$ is connected to an adjacent endpoint so that each white region $R_i$ does not encroach on a distinct region. For example, the Goeritz matrices of the knot in Example \ref{ex:maingoeritz} are obtained as classical Goeritz matrices of the Hopf link and right-handed trefoil, since the two relevant closures are
\begin{equation}
\label{eq:platish}
\tikz[baseline={([yshift=-.5ex]current bounding box.center)}, scale=.25]{
	\fill[black!20!white] (-5*1.41/2, 5*1.41/2) to[out=-45, in=135] (-2, 2.25) to[out=-100, in=135] (0,-1.17) to[out=-135, in=45] (-5*1.41/2, -5*1.41/2) to[out=135, in=-90, looseness=0.8] (-5,0) to[out=90,in=-135] (-5*1.41/2, 5*1.41/2);
	\fill[black!20!white] (5*1.41/2, 5*1.41/2) to[out=-135, in=45] (2, 2.25) to[out=-80, in=45] (0,-1.17) to[out=-45, in=135] (5*1.41/2, -5*1.41/2) to[out=45, in=-90, looseness=0.8] (5,0) to[out=90,in=-45] (5*1.41/2, 5*1.41/2);
	\fill[black!20!white] (-2, 2.25) to[out=100, in=180] (0,4) to[out=0, in=80] (2, 2.25) to[out=210, in=0] (0,1.75) to[out=180, in=-30] (-2, 2.25);
	\draw[knot, overcross] (0,1.75) to[out=0, in=-135] (5*1.41/2, 5*1.41/2);
	\draw[knot, overcross] (2,2) to[out=-90, in=45] (-5*1.41/2, -5*1.41/2);
	\draw[knot, overcross] (5*1.41/2, -5*1.41/2) to[out=135, in=-90] (-2,2);
	\draw[knot, overcross] (-2,2) to[out=90, in=180] (0,4);
	\draw[knot, overcross] (-5*1.41/2, 5*1.41/2) to[out=-45, in=180] (0, 1.75);
	\draw[knot, overcross] (0,4) to[out=0, in=90] (2,2);
	\draw[knot, looseness=1.5] (-5*1.41/2, -5*1.41/2) to[out=-135, in=180] (0,-6) to[out=0, in=-45] (5*1.41/2, -5*1.41/2);
	\draw[knot, looseness=1.5] (-5*1.41/2, 5*1.41/2) to[out=135, in=180] (0,6) to[out=0, in=45] (5*1.41/2, 5*1.41/2);
	\draw[white, ultra thick] (0,0) circle (5cm);
	\draw[dotted] (0,0) circle (5cm);
	\node at (0,-3.25) {\tiny $R_0$};
	\node at (0,0.65) {\tiny $R_1$};
	\node at (-2.4,3.6) {\tiny $R_2$};
}
\qquad \quad \qquad \text{and} \qquad \quad \qquad
\tikz[baseline={([yshift=-.5ex]current bounding box.center)}, scale=.25]{
	\fill[black!20!white] (-2, 2.25) to[out=-45, in=180] (0,1.75) to[out=0,in=-135] (2,2.25) to[out=-80, in=45] (0,-1.17) to[out=135, in=-100] (-2, 2.25);
	\fill[black!20!white] (-5*1.41/2, 5*1.41/2) to[out=45, in=180] (0,5) to[out=0, in=135] (5*1.41/2, 5*1.41/2) to[out=225, in=30] (2, 2.25) to[out=100, in=0] (0,4) to[out=180, in=80] (-2, 2.25) to[out=150, in=-45] (-5*1.41/2, 5*1.41/2);
	\fill[black!20!white] (-5*1.41/2, -5*1.41/2) to[out=45, in=-135] (0,-1.17) to[out=-45, in=135] (5*1.41/2, -5*1.41/2) to[out=-135, in=0] (0,-5) to[out=180, in=-45] (-5*1.41/2, -5*1.41/2);
	\draw[knot, overcross] (0,1.75) to[out=0, in=-135] (5*1.41/2, 5*1.41/2);
	\draw[knot, overcross] (2,2) to[out=-90, in=45] (-5*1.41/2, -5*1.41/2);
	\draw[knot, overcross] (5*1.41/2, -5*1.41/2) to[out=135, in=-90] (-2,2);
	\draw[knot, overcross] (-2,2) to[out=90, in=180] (0,4);
	\draw[knot, overcross] (-5*1.41/2, 5*1.41/2) to[out=-45, in=180] (0, 1.75);
	\draw[knot, overcross] (0,4) to[out=0, in=90] (2,2);
	\draw[knot, looseness=1.5] (-5*1.41/2, -5*1.41/2) to[out=-135, in=-90] (-6,0) to[out=90, in=135] (-5*1.41/2, 5*1.41/2);
	\draw[knot, looseness=1.5] (5*1.41/2, -5*1.41/2) to[out=-45, in=-90] (6,0) to[out=90, in=45] (5*1.41/2, 5*1.41/2);
	\draw[white, ultra thick] (0,0) circle (5cm);
	\draw[dotted] (0,0) circle (5cm);
	\node at (-3,0) {\tiny $R_0$};
	\node at (0,2.9) {\tiny $R_1$};
	\node at (3,0) {\tiny $R_2$};
}
\end{equation}
respectively. If $D$ is a reduced, alternating diagram in $\RP^2$, denote by $D^\pm$ the diagram $\overline{D(\kappa_\pm)}$ in $\mathbb{R}^2$ corresponding to the positive/negative checkerboard colorings. If $D$ is a diagram in $\mathbb{R}^2$, let $s_\mathbf{A}(D)$ and $s_{\mathbf{B}}(D)$ denote the number of circles in the all $A$- and all $B$-smoothings of $D$ respectively.

\begin{proposition}
\label{prop:goeritzsig}
If $D$ is an alternating diagram of a non-split, non-local link $L \subset \RP^3$, then
\newline
\begin{minipage}{.45\linewidth}
\begin{align*}
\sigma_+(L) &= s_\mathbf{A}(D^+) - n_+(D) - 1 \\
& = 1 + n_-(D) - s_\mathbf{B}(D^+)
\end{align*}
\end{minipage}
\begin{minipage}{.1\linewidth}
\[and\]
\end{minipage}
\begin{minipage}{.45\linewidth}
\begin{align*}
\sigma_-(L) & = s_\mathbf{A}(D^-) - n_+(D) - 1 \\
& = 1 + n_-(D) - s_\mathbf{B}(D^-).
\end{align*}
\end{minipage}
\end{proposition}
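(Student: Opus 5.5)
We prove the formulas for $\sigma_+$; the formulas for $\sigma_-$ follow verbatim with $\kappa_-$ in place of $\kappa_+$. The plan is to reduce to the classical Traczyk/Gordon--Litherland computation for alternating diagrams in the plane, using the plat-like closure $D^+ = \overline{D(\kappa_+)}$. By construction, the white regions of $(D,\kappa_+)$ correspond bijectively to the white regions of $D^+$, because the closing arcs are chosen so that no white region merges with a distinct one. The crossings of $D$ are exactly the crossings of $D^+$, with the same incidence numbers $\mu(c)$. Hence $\mathcal{G}_+$ is literally the classical Goeritz matrix of $D^+$ with its induced checkerboard coloring.

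Next we orient $D^+$ so that the types (I/II) of its crossings agree with those of $D$. The closing arcs only join boundary endpoints of $D\cap\mathbb{D}^2$ to adjacent endpoints. We must check that some orientation of the components of $D^+$ restricts to the given orientation of $D$ on $\mathbb{D}^2$. This should follow from nullhomology: the endpoints of $D$ on $\partial\mathbb{D}^2$ alternate in/out in a way compatible with a checkerboard coloring, so adjacent endpoints paired across a white or black boundary arc have opposite directions. With this, $\mu(D,\kappa_+) = \mu(D^+)$, and so $\sigma_+(L) = \mathrm{sig}\,\mathcal{G}(D^+) - \mu(D^+) = \sigma(D^+)$ is the Gordon--Litherland signature of the planar link $D^+$.

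The third step is to note that $D^+$ is a reduced alternating planar diagram. Alternation holds because the closing arcs add no crossings, and the alternating condition on $D$ (the $4k$ boundary-intersection rule) is exactly what makes the over/under pattern continue consistently across the new arcs. We then invoke Traczyk's formula for alternating diagrams, $\sigma(D^+) = s_\mathbf{A}(D^+) - n_+(D^+) - 1 = 1 + n_-(D^+) - s_\mathbf{B}(D^+)$, with the sign conventions matched to the Goeritz convention of \cite{MR500905}. Since $n_\pm(D^+) = n_\pm(D)$, this gives the claim. The equality of the two expressions also follows independently from $s_\mathbf{A}(D^+) + s_\mathbf{B}(D^+) = c(D) + 2$, which holds for a connected alternating planar diagram. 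Connectivity comes from non-splitness, and the reduced hypothesis is not needed here.

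The main obstacle is the orientation compatibility in the second step, together with the requirement that the closure is both checkerboard-consistent and alternating. Boundary endpoints in $\RP^2$ are identified antipodally, so we must carefully track how pairing adjacent endpoints interacts with the antipodal identification. To handle this, I would first treat the crossingless arcs near $\partial\mathbb{D}^2$ combinatorially, checking that the white regions are preserved and that the Type I/II labels of crossings are unchanged. Failing a direct argument, I would use a Drobotukhina $\Omega_4$/$\Omega_5$ normalization to put $D$ in a standard position near the boundary before closing up.
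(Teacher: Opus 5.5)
Your identification $\mathcal{G}_+=\mathcal{G}(D^+)$ and your argument that $D^\pm$ is alternating are fine, and they agree with the paper. The second step, however, fails outright, not just for lack of detail.

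\textbf{Why a compatible orientation of both closures is impossible.} Label the points of $D\cap\partial\mathbb{D}^2$ cyclically as $p_1,\dots,p_{2k}$, with $p_{i+k}$ antipodal to $p_i$. Choose labels so that $D^+$ joins $p_{2i-1}$ to $p_{2i}$. Because the two colorings are complementary, $D^-$ then joins $p_{2i}$ to $p_{2i+1}$.
\begin{itemize}
\item A strand leaving the disk at $p_i$ re-enters at $p_{i+k}$, so the in/out status at $p_{i+k}$ is opposite to that at $p_i$.
\item $k$ is even because $D$ is nullhomologous, and $k\ge 1$ because $D$ is non-local.
\item If both closures admitted orientations extending that of $D$, the statuses would alternate around the circle. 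Since $k$ is even, this forces equal status at $p_i$ and $p_{i+k}$, a contradiction.
\end{itemize}
So your heuristic that nullhomology makes the endpoints alternate in/out is false. For every non-local diagram, at least one of $D^+,D^-$ has no orientation compatible with $D$. For example, take $\Upsilon_1$ with the over-strand oriented SW$\to$NE and the under-strand NW$\to$SE: the eastern closing arc of $D^+$ joins two outgoing endpoints. For such a closure, your identifications $\mu(D,\kappa_\pm)=\mu(D^\pm)$ and $n_\pm(D^\pm)=n_\pm(D)$ have no basis, so the argument cannot be run ``verbatim'' for both $\sigma_+$ and $\sigma_-$. An $\Omega_4/\Omega_5$ normalization does not help, since the obstruction holds for every non-local diagram.

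\textbf{The fix, which is the paper's route.} Never orient $D^\pm$. By definition $\sigma_\pm(L)=\mathrm{sig}(\mathcal{G}_\pm)-\mu(D,\kappa_\pm)$, and the two terms can be handled separately.
\begin{itemize}
\item The signature term is orientation-free. All incidence numbers are $+1$ for $\kappa_+$, so $\mathcal{G}_+=\mathcal{G}(D^+)$ is a reduced Laplacian of the connected white-region graph of $D^+$. Hence it is positive definite of rank $\#\{\text{white regions of }D^+\}-1=s_\mathbf{A}(D^+)-1$; this is what the paper takes from Murasugi. Likewise $\mathcal{G}_-$ is negative definite, with signature $-(c(D)+1-s_\mathbf{A}(D^-))$.
\item The correction term is a crossing-by-crossing check on $D$ with its given orientation. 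At a crossing of incidence $+1$, Type II is equivalent to being positive; at incidence $-1$, Type II is equivalent to being negative. Hence $\mu(D,\kappa_+)=n_+(D)$ and $\mu(D,\kappa_-)=-n_-(D)$.
\end{itemize}
Combining the two gives $\sigma_\pm(L)=s_\mathbf{A}(D^\pm)-n_+(D)-1$. The $s_\mathbf{B}$ forms then follow from $s_\mathbf{A}+s_\mathbf{B}=c(D)+2$, as you note.
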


\begin{proof}
It is clear by definition that $\mathcal{G}_\pm = \mathcal{G}(D^\pm)$. In \cite{MR898151}, Murasugi shows that the signature of the latter is equal to $s_\mathbf{A}(D^\pm) - 1$ (see also \cite{MR2128055}). Since $G_\pm$ has only $\pm$-signed edges, $\mu(\kappa_\pm) = n_+(D)$. The other equalities follow similarly.
\end{proof}

\begin{proposition}
\label{prop:goeritzdet}
If $D$ is a reduced, alternating diagram for $L \subset \RP^3$, then 
\[
\det(\mathcal{G}_\pm) = \pm \abs{\{\text{spanning trees of}~G_\pm\}}.
\]
\end{proposition}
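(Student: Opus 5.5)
The plan is to reduce to the matrix-tree theorem, since $\mathcal{G}_\pm$ is essentially a reduced signed Laplacian of the Tait graph $G_\pm$.

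\textbf{Step 1 (identify the matrix).} Fix the checkerboard coloring $\kappa_\pm$ giving $G_\pm$. Goeritz matrices are indexed by white regions. By duality in $\RP^2$, the white regions are the vertices of the dual graph $G_\pm^* = G_\mp$, and each crossing is an edge of $G_\mp$ joining the two white regions that meet there. The assumption that no white region is incident to itself at a crossing means there are no loops, after passing to the coloring for which this holds. Since $D$ is reduced and alternating, every crossing of a given coloring has the same incidence number $\mu(c)$. Write $\epsilon = \pm 1$ for this common sign. Then
\[
g_{ij} = -\epsilon \cdot \#\{\text{edges between } R_i \text{ and } R_j\}, \qquad g_{ii} = \epsilon \deg(R_i).
\]
So the full incidence matrix is $\epsilon$ times the Laplacian $\Delta$ of the white graph, and $\mathcal{G}$ is $\epsilon$ times the reduced Laplacian obtained by deleting the row and column of $R_0$. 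As a check, in Example \ref{ex:maingoeritz} the $3\times 3$ matrices are exactly $\pm$ Laplacians of the red and blue graphs.

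\textbf{Step 2 (count spanning trees).} By Kirchhoff's matrix-tree theorem, $\det \mathcal{G} = \epsilon^{n}\tau(W)$. Here $W$ is the white graph, $\tau(W)$ is its number of spanning trees, and $n = |V(W)| - 1$. Spanning trees of a graph and of its Poincar\'e dual are not in bijection in $\RP^2$, because complements of trees are not trees there. So I must match conventions carefully. In the example, $\mathcal{G}_+$ is the reduced Laplacian of the red graph (2 trees), and the blue graph $G_+$ has 3 trees. This forces the reading that $G_\pm$ in the statement denotes the graph whose vertices are the regions indexing $\mathcal{G}_\pm$. I would therefore fix the convention relating $\kappa_\pm$, $G_\pm$ and the indexing regions once, using Figure \ref{fig:incidence} and the sign conventions for $G_\pm$ given later in the paper, so that the count becomes $\tau(G_\pm)$.

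\textbf{Step 3 (the sign).} The sign $\epsilon$ is determined by whether the crossings are of $+1$ or $-1$ type for that coloring. This is exactly what distinguishes $G_+$ (all $+$-edges) from $G_-$. Hence $\epsilon^n$ matches the $\pm$ in the statement, up to the parity of $n$. In the example, $\mathcal{G}_-$ is $2\times 2$ negative definite but has positive determinant 3. So the honest statement is $\det \mathcal{G}_\pm = (\pm1)^{n}\tau$, and $\lvert \det \mathcal{G}_\pm \rvert$ is always the spanning tree count. I would present the sign in that form, noting that the $\pm$ should be read as $(\pm 1)^{\mathrm{rank}}$.

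\textbf{Main obstacle.} Step 2 is the one I expect to be hardest. It is purely a bookkeeping issue, namely which of the two dual graphs is called $G_\pm$ relative to the coloring used in $\mathcal{G}_\pm$. It matters because in $\RP^2$ the tree counts of $G$ and $G^*$ genuinely differ ($3$ versus $2$ in the example). I would settle it by checking the definitions against Example \ref{ex:maingoeritz}.
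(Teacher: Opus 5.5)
Invoking Kirchhoff's theorem for a signed reduced Laplacian is the right mechanism. However, Step~1 identifies the wrong graph, and the relabeling proposed in Step~2 cannot fix it.

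The Goeritz matrix is indexed by the white regions of $\mathbb{D}^2 - D$, not of $\RP^2 - D$. A region of $\RP^2 - D$ that meets $\partial\mathbb{D}^2$ is cut into several pieces. This cutting, not a choice of coloring, is what guarantees that no white region is incident to itself. In Example~\ref{ex:maingoeritz}, $R_0$ and $R_2$ are the same region of $\RP^2$. Both Tait graphs in (\ref{eq:PT_taits}) have only two vertices, so the $3\times 3$ incidence matrices cannot be Laplacians of either; your consistency check is false. If the white regions really were the vertices of $G_\mp$, Kirchhoff would give $\lvert\det\mathcal{G}_+\rvert = \tau(G_-) = 3$, not the actual value $2$. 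Also, (\ref{eq:PT_taits}) labels the red graph $G_+$ and the blue graph $G_-$, and you have them swapped. With the paper's labels the statement already matches the example: $\det\mathcal{G}_+ = 2 = \tau(G_+)$ and $\lvert\det\mathcal{G}_-\rvert = 3 = \tau(G_-)$. So no change of convention is called for.

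The missing idea is the paper's: pass to the planar closure $D^\pm = \overline{D(\kappa_\pm)}$ of (\ref{eq:platish}).
\begin{itemize}
\item The graph $W$ on the disk regions, whose signed reduced Laplacian is $\mathcal{G}_\pm$, is the white Tait graph of $D^\pm$. Kirchhoff gives $\lvert\det\mathcal{G}_\pm\rvert = \tau(W)$.
\item Planar duality in $S^2$ gives $\tau(W) = \tau(B)$, where $B$ is the black Tait graph of $D^\pm$. Unlike duality in $\RP^2$, planar duality preserves tree counts.
\item $B$ is $G_\pm$ as an abstract graph.
\end{itemize}
In the example, $W$ for $\kappa_+$ has one edge $R_0R_1$ and two edges $R_1R_2$, so $\tau(W) = 2$. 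Its planar dual is two vertices joined by two edges plus a loop, which is the red graph $G_+$.

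This last identification, in your argument and in the paper's, tacitly needs the black pieces along $\partial\mathbb{D}^2$ to lie in a single region of $\RP^2 - D$. That is automatic when $\partial\mathbb{D}^2$ meets $D$ in four points, as in the example. Otherwise the closure merges distinct vertices of $G_\pm$ and the tree count can change.

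Your Step~3 is correct and goes beyond the paper's proof, which does not address signs. Since $\det\mathcal{G}_- = 3 > 0$ in the example, the literal $\pm$ in the statement is wrong. The accurate statement is $\det\mathcal{G}_\pm = (\pm 1)^{n}\,\tau(G_\pm)$, with $n$ the size of $\mathcal{G}_\pm$. Only $\lvert\det\mathcal{G}_\pm\rvert$ is used later in the paper.
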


\begin{proof}
Notice that the Tait graphs obtained from the diagrams produced by the procedure in (\ref{eq:platish}) are equal as abstract graphs to the Tait graphs obtained from the checkerboard-colored diagram in $\RP^2$. Thus the result follows from the analogous statement in $S^3$ (which is proven using, say, Kirchhoff's Matrix-Tree Theorem).
\end{proof}

\subsection{Khovanov homology in $\RP^3$}
\label{ss:khbackground}

We briefly describe the $\F:= \Z/2\Z$-valued Khovanov complex for links in $\RP^3$. Given a diagram $D \subset \RP^2$ of $L\subset \RP^3$, take its hypercube of resolutions in the usual sense. That is, at each crossing, perform a smoothing as follows.
\[
\tikz[]{
\node(c) at (0,0) {$\tikz{
\draw[knot] (0,1) -- (1,0);
\draw[knot, overcross] (0,0) -- (1,1);
	}$};
\node(A) at (-4,0) {$\tikz{
\draw[knot] (0,0) to[out=30, in=-30] (0,1);
\draw[knot] (1,0) to[out=150, in=-150] (1,1);
	}$};
\node(B) at (4,0) {$\tikz{
\draw[knot] (0,0) to[out=45, in=135] (1,0);
\draw[knot] (0,1) to[out=-45, in=-135] (1,1);
	}$};
\draw[->] (c) -- node[above, midway]{$A$-resolution} (A);
\draw[->] (c) -- node[above, midway]{$B$-resolution} (B);
}
\]
Recall that for any $\mathbf{v} \in \{A, B\}^n$, $D_{\mathbf{v}}$ denotes the resolution obtained by applying the $v_i$-resolution at the $i$th crossing of $D$ for each $i=1,\ldots, n$.

Then, for $R$ a commutative ring, define two free $R$-modules
\[
V = R \langle 1, X \rangle, \qquad \text{and}, \qquad \bar{V} = R \langle \bar{1}, \bar{X} \rangle.
\]
To each resolution $D_\mathbf{v}$, assign a triply-graded chain group $\hat{C}(D_\mathbf{v})$ as follows. First, each component of $D_\mathbf{v}$ is either nullhomologous or homologically essential. As an $R$-module, $\hat{C}(D_\mathbf{v})$ is the tensor product of factors of $V$ for each trivial circle and $\bar{V}$ for an essential circle, if present (there can be at most one; see Lemma \ref{lem:ManWillis}). Then the tri-grading on any generator of $\hat{C}(D_\mathbf{v})$ is given by
\begin{align*}
& i = \abs{\mathbf{v}} \\
& j = i + \#(\text{circles labeled with}~1~\text{or}~\bar{1}) - \#(\text{circles labeled with}~X~\text{or}~\bar{X}) \\
& k = \#(\text{circles labeled with}~\bar{1}) - \#(\text{circles labeled with}~\bar{X})
\end{align*}
where $\abs{\mathbf{v}} = \#(B\text{-resolutions in}~\mathbf{v})$.

We say that the edge $e: \mathbf{v} \to \mathbf{u}$ of this hypercube corresponds to:
\begin{itemize}
\item a \emph{merge} if $D_\mathbf{u}$ is obtained from $D_\mathbf{v}$ by merging two circles into one;
\item a \emph{split} if $D_\mathbf{u}$ is obtained from $D_\mathbf{v}$ by splitting one circle into two;
\item a \emph{1-1 bifurcation} if $D_\mathbf{u}$ and $D_\mathbf{v}$ have the same number of circles.
\end{itemize}
Note that 1-1 bifurcations do not appear as possible cobordisms between resolutions of links in $S^3$.

\begin{example}
In Figure \ref{fig:maincube}, we picture a cube of resolutions associated to our usual diagram of $3_1$. The 1-1 bifurcations are represented by dotted arrows.

\begin{figure}[ht]
\tikz[yscale=1, xscale=0.9]{
	\node(AAA) at (0,0) {$\tikz[baseline={([yshift=-.5ex]current bounding box.center)}, scale=.2]{
	\node at (0,-6) {$AAA$};
	\draw[knot, overcross] (0,1.75) to[out=0, in=-135] (5*1.41/2, 5*1.41/2);
	\draw[knot, overcross] (2,2) to[out=-90, in=45] (-5*1.41/2, -5*1.41/2);
	\draw[knot, overcross] (5*1.41/2, -5*1.41/2) to[out=135, in=-90] (-2,2);
	\draw[knot, overcross] (-2,2) to[out=90, in=180] (0,4);
	\draw[knot, overcross] (-5*1.41/2, 5*1.41/2) to[out=-45, in=180] (0, 1.75);
	\draw[knot, overcross] (0,4) to[out=0, in=90] (2,2);
	\draw[white, ultra thick] (0,0) circle (5cm);
	\draw[dotted] (0,0) circle (5cm);
	\node[anchor=north] at (0,-1.18) {\tiny$2$};
	\node[anchor=west] at (2, 1.7) {\tiny$1$};
	\node[anchor=east] at (-2, 1.7) {\tiny$3$};
	\fill[white, very thick] (2, 2.25) circle (.45cm);
        \draw[knot] (2.375, 2.51) to[out=210,in=-75] (1.88, 2.685);
        \draw[knot] (1.995, 1.8) to[out=90, in=25] (1.58, 2.055);
	\fill[white, very thick] (0,-1.15) circle (.45cm);
        \draw[knot] (-0.4,-1.38) to[out=32,in=148] (0.4,-1.38);
        \draw[knot] (-0.375,-0.875) to[out=-36,in=-144] (0.375,-0.875);
	\fill[white, very thick] (-2, 2.25) circle (.45cm);
        \draw[knot] (-2.375, 2.51) to[out=-30,in=-105] (-1.88, 2.685);
        \draw[knot] (-1.995, 1.8) to[out=90, in=155] (-1.58, 2.055);
}$};
	\node(BAA) at (5,3) {$\tikz[baseline={([yshift=-.5ex]current bounding box.center)}, scale=.2]{
	\node at (0,-6) {$BAA$};
	\draw[knot, overcross] (0,1.75) to[out=0, in=-135] (5*1.41/2, 5*1.41/2);
	\draw[knot, overcross] (2,2) to[out=-90, in=45] (-5*1.41/2, -5*1.41/2);
	\draw[knot, overcross] (5*1.41/2, -5*1.41/2) to[out=135, in=-90] (-2,2);
	\draw[knot, overcross] (-2,2) to[out=90, in=180] (0,4);
	\draw[knot, overcross] (-5*1.41/2, 5*1.41/2) to[out=-45, in=180] (0, 1.75);
	\draw[knot, overcross] (0,4) to[out=0, in=90] (2,2);
	\draw[white, ultra thick] (0,0) circle (5cm);
	\draw[dotted] (0,0) circle (5cm);
	\node[anchor=north] at (0,-1.18) {\tiny$2$};
	\node[anchor=west] at (2, 1.7) {\tiny$1$};
	\node[anchor=east] at (-2, 1.7) {\tiny$3$};
	\fill[white, very thick] (2, 2.25) circle (.45cm);
       \draw[knot] (2.375, 2.51) to[out=210,in=90] (1.995, 1.8);
        \draw[knot] (1.88, 2.685) to[out=-75,in=25] (1.58, 2.055);
	\fill[white, very thick] (0,-1.15) circle (.45cm);
        \draw[knot] (-0.4,-1.38) to[out=32,in=148] (0.4,-1.38);
        \draw[knot] (-0.375,-0.875) to[out=-36,in=-144] (0.375,-0.875);
	\fill[white, very thick] (-2, 2.25) circle (.45cm);
        \draw[knot] (-2.375, 2.51) to[out=-30,in=-105] (-1.88, 2.685);
        \draw[knot] (-1.995, 1.8) to[out=90, in=155] (-1.58, 2.055);
}$};
	\node(ABA) at (5,0) {$\tikz[baseline={([yshift=-.5ex]current bounding box.center)}, scale=.2]{
	\node at (0,-6) {$ABA$};
	\draw[knot, overcross] (0,1.75) to[out=0, in=-135] (5*1.41/2, 5*1.41/2);
	\draw[knot, overcross] (2,2) to[out=-90, in=45] (-5*1.41/2, -5*1.41/2);
	\draw[knot, overcross] (5*1.41/2, -5*1.41/2) to[out=135, in=-90] (-2,2);
	\draw[knot, overcross] (-2,2) to[out=90, in=180] (0,4);
	\draw[knot, overcross] (-5*1.41/2, 5*1.41/2) to[out=-45, in=180] (0, 1.75);
	\draw[knot, overcross] (0,4) to[out=0, in=90] (2,2);
	\draw[white, ultra thick] (0,0) circle (5cm);
	\draw[dotted] (0,0) circle (5cm);
	\node[anchor=north] at (0,-1.18) {\tiny$2$};
	\node[anchor=west] at (2, 1.7) {\tiny$1$};
	\node[anchor=east] at (-2, 1.7) {\tiny$3$};
	\fill[white, very thick] (2, 2.25) circle (.45cm);
        \draw[knot] (2.375, 2.51) to[out=210,in=-75] (1.88, 2.685);
        \draw[knot] (1.995, 1.8) to[out=90, in=25] (1.58, 2.055);
	\fill[white, very thick] (0,-1.15) circle (.45cm);
        \draw[knot] (-0.4,-1.38) to[out=30,in=-33] (-0.375,-0.875);
        \draw[knot] (0.4,-1.38) to[out=150,in=213] (0.375,-0.875);
	\fill[white, very thick] (-2, 2.25) circle (.45cm);
        \draw[knot] (-2.375, 2.51) to[out=-30,in=-105] (-1.88, 2.685);
        \draw[knot] (-1.995, 1.8) to[out=90, in=155] (-1.58, 2.055);
}$};
	\node(AAB) at (5,-3) {$\tikz[baseline={([yshift=-.5ex]current bounding box.center)}, scale=.2]{
	\node at (0,-6) {$AAB$};
	\draw[knot, overcross] (0,1.75) to[out=0, in=-135] (5*1.41/2, 5*1.41/2);
	\draw[knot, overcross] (2,2) to[out=-90, in=45] (-5*1.41/2, -5*1.41/2);
	\draw[knot, overcross] (5*1.41/2, -5*1.41/2) to[out=135, in=-90] (-2,2);
	\draw[knot, overcross] (-2,2) to[out=90, in=180] (0,4);
	\draw[knot, overcross] (-5*1.41/2, 5*1.41/2) to[out=-45, in=180] (0, 1.75);
	\draw[knot, overcross] (0,4) to[out=0, in=90] (2,2);
	\draw[white, ultra thick] (0,0) circle (5cm);
	\draw[dotted] (0,0) circle (5cm);
	\node[anchor=north] at (0,-1.18) {\tiny$2$};
	\node[anchor=west] at (2, 1.7) {\tiny$1$};
	\node[anchor=east] at (-2, 1.7) {\tiny$3$};
	\fill[white, very thick] (2, 2.25) circle (.45cm);
        \draw[knot] (2.375, 2.51) to[out=210,in=-75] (1.88, 2.685);
        \draw[knot] (1.995, 1.8) to[out=90, in=25] (1.58, 2.055);
	\fill[white, very thick] (0,-1.15) circle (.45cm);
        \draw[knot] (-0.4,-1.38) to[out=32,in=148] (0.4,-1.38);
        \draw[knot] (-0.375,-0.875) to[out=-36,in=-144] (0.375,-0.875);
	\fill[white, very thick] (-2, 2.25) circle (.45cm);
        \draw[knot] (-2.375, 2.51) to[out=-30,in=90] (-1.995, 1.8);
        \draw[knot] (-1.88, 2.685) to[out=-105,in=155] (-1.58, 2.055);
}$};
	\node(BBA) at (10,3) {$\tikz[baseline={([yshift=-.5ex]current bounding box.center)}, scale=.2]{
	\node at (0,-6) {$BBA$};
	\draw[knot, overcross] (0,1.75) to[out=0, in=-135] (5*1.41/2, 5*1.41/2);
	\draw[knot, overcross] (2,2) to[out=-90, in=45] (-5*1.41/2, -5*1.41/2);
	\draw[knot, overcross] (5*1.41/2, -5*1.41/2) to[out=135, in=-90] (-2,2);
	\draw[knot, overcross] (-2,2) to[out=90, in=180] (0,4);
	\draw[knot, overcross] (-5*1.41/2, 5*1.41/2) to[out=-45, in=180] (0, 1.75);
	\draw[knot, overcross] (0,4) to[out=0, in=90] (2,2);
	\draw[white, ultra thick] (0,0) circle (5cm);
	\draw[dotted] (0,0) circle (5cm);
	\node[anchor=north] at (0,-1.18) {\tiny$2$};
	\node[anchor=west] at (2, 1.7) {\tiny$1$};
	\node[anchor=east] at (-2, 1.7) {\tiny$3$};
	\fill[white, very thick] (2, 2.25) circle (.45cm);
       \draw[knot] (2.375, 2.51) to[out=210,in=90] (1.995, 1.8);
        \draw[knot] (1.88, 2.685) to[out=-75,in=25] (1.58, 2.055);
	\fill[white, very thick] (0,-1.15) circle (.45cm);
        \draw[knot] (-0.4,-1.38) to[out=30,in=-33] (-0.375,-0.875);
        \draw[knot] (0.4,-1.38) to[out=150,in=213] (0.375,-0.875);
	\fill[white, very thick] (-2, 2.25) circle (.45cm);
        \draw[knot] (-2.375, 2.51) to[out=-30,in=-105] (-1.88, 2.685);
        \draw[knot] (-1.995, 1.8) to[out=90, in=155] (-1.58, 2.055);
}$};
	\node(BAB) at (10,0) {$\tikz[baseline={([yshift=-.5ex]current bounding box.center)}, scale=.2]{
	\node at (0,-6) {$BAB$};
	\draw[knot, overcross] (0,1.75) to[out=0, in=-135] (5*1.41/2, 5*1.41/2);
	\draw[knot, overcross] (2,2) to[out=-90, in=45] (-5*1.41/2, -5*1.41/2);
	\draw[knot, overcross] (5*1.41/2, -5*1.41/2) to[out=135, in=-90] (-2,2);
	\draw[knot, overcross] (-2,2) to[out=90, in=180] (0,4);
	\draw[knot, overcross] (-5*1.41/2, 5*1.41/2) to[out=-45, in=180] (0, 1.75);
	\draw[knot, overcross] (0,4) to[out=0, in=90] (2,2);
	\draw[white, ultra thick] (0,0) circle (5cm);
	\draw[dotted] (0,0) circle (5cm);
	\node[anchor=north] at (0,-1.18) {\tiny$2$};
	\node[anchor=west] at (2, 1.7) {\tiny$1$};
	\node[anchor=east] at (-2, 1.7) {\tiny$3$};
	\fill[white, very thick] (2, 2.25) circle (.45cm);
       \draw[knot] (2.375, 2.51) to[out=210,in=90] (1.995, 1.8);
        \draw[knot] (1.88, 2.685) to[out=-75,in=25] (1.58, 2.055);
	\fill[white, very thick] (0,-1.15) circle (.45cm);
        \draw[knot] (-0.4,-1.38) to[out=32,in=148] (0.4,-1.38);
        \draw[knot] (-0.375,-0.875) to[out=-36,in=-144] (0.375,-0.875);
	\fill[white, very thick] (-2, 2.25) circle (.45cm);
        \draw[knot] (-2.375, 2.51) to[out=-30,in=90] (-1.995, 1.8);
        \draw[knot] (-1.88, 2.685) to[out=-105,in=155] (-1.58, 2.055);
}$};
	\node(ABB) at (10,-3) {$\tikz[baseline={([yshift=-.5ex]current bounding box.center)}, scale=.2]{
	\node at (0,-6) {$ABB$};
	\draw[knot, overcross] (0,1.75) to[out=0, in=-135] (5*1.41/2, 5*1.41/2);
	\draw[knot, overcross] (2,2) to[out=-90, in=45] (-5*1.41/2, -5*1.41/2);
	\draw[knot, overcross] (5*1.41/2, -5*1.41/2) to[out=135, in=-90] (-2,2);
	\draw[knot, overcross] (-2,2) to[out=90, in=180] (0,4);
	\draw[knot, overcross] (-5*1.41/2, 5*1.41/2) to[out=-45, in=180] (0, 1.75);
	\draw[knot, overcross] (0,4) to[out=0, in=90] (2,2);
	\draw[white, ultra thick] (0,0) circle (5cm);
	\draw[dotted] (0,0) circle (5cm);
	\node[anchor=north] at (0,-1.18) {\tiny$2$};
	\node[anchor=west] at (2, 1.7) {\tiny$1$};
	\node[anchor=east] at (-2, 1.7) {\tiny$3$};
	\fill[white, very thick] (2, 2.25) circle (.45cm);
        \draw[knot] (2.375, 2.51) to[out=210,in=-75] (1.88, 2.685);
        \draw[knot] (1.995, 1.8) to[out=90, in=25] (1.58, 2.055);
	\fill[white, very thick] (0,-1.15) circle (.45cm);
        \draw[knot] (-0.4,-1.38) to[out=30,in=-33] (-0.375,-0.875);
        \draw[knot] (0.4,-1.38) to[out=150,in=213] (0.375,-0.875);
	\fill[white, very thick] (-2, 2.25) circle (.45cm);
        \draw[knot] (-2.375, 2.51) to[out=-30,in=90] (-1.995, 1.8);
        \draw[knot] (-1.88, 2.685) to[out=-105,in=155] (-1.58, 2.055);
}$};
	\node(BBB) at (15,0) {$\tikz[baseline={([yshift=-.5ex]current bounding box.center)}, scale=.2]{
	\node at (0,-6) {$BBB$};
	\draw[knot, overcross] (0,1.75) to[out=0, in=-135] (5*1.41/2, 5*1.41/2);
	\draw[knot, overcross] (2,2) to[out=-90, in=45] (-5*1.41/2, -5*1.41/2);
	\draw[knot, overcross] (5*1.41/2, -5*1.41/2) to[out=135, in=-90] (-2,2);
	\draw[knot, overcross] (-2,2) to[out=90, in=180] (0,4);
	\draw[knot, overcross] (-5*1.41/2, 5*1.41/2) to[out=-45, in=180] (0, 1.75);
	\draw[knot, overcross] (0,4) to[out=0, in=90] (2,2);
	\draw[white, ultra thick] (0,0) circle (5cm);
	\draw[dotted] (0,0) circle (5cm);
	\node[anchor=north] at (0,-1.18) {\tiny$2$};
	\node[anchor=west] at (2, 1.7) {\tiny$1$};
	\node[anchor=east] at (-2, 1.7) {\tiny$3$};
	\fill[white, very thick] (2, 2.25) circle (.45cm);
       \draw[knot] (2.375, 2.51) to[out=210,in=90] (1.995, 1.8);
        \draw[knot] (1.88, 2.685) to[out=-75,in=25] (1.58, 2.055);
	\fill[white, very thick] (0,-1.15) circle (.45cm);
        \draw[knot] (-0.4,-1.38) to[out=30,in=-33] (-0.375,-0.875);
        \draw[knot] (0.4,-1.38) to[out=150,in=213] (0.375,-0.875);
	\fill[white, very thick] (-2, 2.25) circle (.45cm);
        \draw[knot] (-2.375, 2.51) to[out=-30,in=90] (-1.995, 1.8);
        \draw[knot] (-1.88, 2.685) to[out=-105,in=155] (-1.58, 2.055);
}$};
\draw[->, thick] (AAA) -- node[above, midway, sloped]{$m$} (BAA);
\draw[->, thick] (AAA) -- node[above, midway, sloped]{$m$} (ABA);
\draw[->, thick] (AAA) -- node[above, midway, sloped]{$m$} (AAB);
\draw[->, thick, dotted] (ABA) -- (BBA);
\draw[->, thick, dotted] (ABA) -- (ABB);
\draw[->, thick, dotted] (BAA) -- (BBA);
\draw[->, line width=3pt, white] (BAA) -- (BAB);
\draw[->, thick] (BAA) -- node[above, pos=0.3, sloped]{$\Delta$} (BAB);
\draw[->, thick, dotted] (AAB) -- (ABB);
\draw[->, line width=3pt, white] (AAB) -- (BAB);
\draw[->, thick] (AAB) -- node[above, pos=0.3, sloped]{$\Delta$}(BAB);
\draw[->, thick] (BBA) -- node[above, midway, sloped]{$\Delta$} (BBB);
\draw[->, thick, dotted] (BAB) -- (BBB);
\draw[->, thick] (ABB) -- node[above, midway, sloped]{$\Delta$} (BBB);
}
\caption{The hypercube associated to the diagram of the 3-crossing nullhomologous knot $3_1$ from Drobotukhina's table.}
\label{fig:maincube}
\end{figure}
\end{example}

To any edge $e$ of the hypercube, we assign a map $d_e: \hat{C}(D_\mathbf{v}) \to \hat{C}(D_\mathbf{u})$. In this paper, we are only interested in the case where $L$ is nullhomologous, so we limit the definition of this map to the case where all components are class-0 and the third grading is trivial.
\begin{itemize}
\item If $e$ is a merge, then $d_e = \mathrm{Id}^{\otimes k} \otimes m \otimes \mathrm{Id}^{\otimes n-k-2}$ where
\[
m : V \otimes V \to V \qquad \text{by} \qquad \begin{cases} 1 \otimes 1 & \mapsto 1 \\ 1 \otimes X & \mapsto X \\ X \otimes 1 & \mapsto X \\ X \otimes X & \mapsto 0 \end{cases}
\]
\item If $e$ is a split, then $d_e = \mathrm{Id}^{\otimes k} \otimes \Delta \otimes \mathrm{Id}^{\otimes n - k -1}$ where
\[
\Delta : V \to V \otimes V \qquad \text{by} \qquad \begin{cases} 1 \mapsto 1 \otimes X + X \otimes 1 \\ X \mapsto X \otimes X \end{cases}
\]
\item If $e$ is a 1-1 bifurcation, then $d_e = 0$.
\end{itemize}

To simplify our discussion, we assume that $R = \Z/2\Z := \mathbb{F}$, so that $d^i = \sum_{\substack{e: \mathbf{v} \to \mathbf{u} \\ \abs{\mathbf{v}} = i}} d_e$ is a differential without mention of auxiliary choices (see \cite[\S 2]{MR4904031}). In conclusion, given an orientation on $L$, we define
\[
C(D) := \hat{C}(D) [-n_-, n_+ - 2n_-]
\]
where $[a, b]$ indicates a global shift of the bigrading and $n_{\pm}$ is the number of positive/negative crossings in $D$. The \emph{Khovanov homology} $\Kh(L; \F)$ of the oriented link $L$ is the cohomology of $C(D)$.

We can define a reduced version of Khovanov homology for links in $\RP^3$ exactly as in \cite[\S 3.1]{MR2034399}. We denote the \emph{reduced Khovanov complex} by $\widetilde{C}(D)$. The homology is an invariant of based links, and is denoted $\widetilde{\Kh}(L)$. In $\F$-coefficients, the Khovanov homology is two copies of the reduced homology:
\[
\Kh^{i,j}(L; \F) \cong \widetilde{\Kh}^{i, j-1}(L; \F) \oplus \widetilde{\Kh}^{i, j+1}(L; \F).
\]
To ensure that our conventions align with \cite{MR2480298}, we renormalize so that the reduced Khovanov homology of the unknot is supported in the bigrading $(0,-1)$.

\begin{example}
\label{ex:khmain}
We invite the reader to verify the computation asserted in the introduction:
\[
\tikz{
\node(K) at (0,0) {$\tikz[baseline={([yshift=-.5ex]current bounding box.center)}, scale=.25]{
	\draw[knot, overcross] (0,1.75) to[out=0, in=-135] (5*1.41/2, 5*1.41/2);
	\draw[knot, overcross] (2,2) to[out=-90, in=45] (-5*1.41/2, -5*1.41/2);
	\draw[knot, overcross] (5*1.41/2, -5*1.41/2) to[out=135, in=-90] (-2,2);
	\draw[knot, overcross] (-2,2) to[out=90, in=180] (0,4);
	\draw[knot, overcross] (-5*1.41/2, 5*1.41/2) to[out=-45, in=180] (0, 1.75);
	\draw[knot, overcross] (0,4) to[out=0, in=90] (2,2);
	\draw[white, ultra thick] (0,0) circle (5cm);
	\draw[dotted] (0,0) circle (5cm);
}$};
\node(T) at (7,0) {\begin{tabular}{|c||c|c|c|c|} 
\hline
$2$  &              &              &              & $\orange{\mathbb{F}}$  \\ 
\hline
$1$  &              &              & $\green{\mathbb{F}}$ &               \\ 
\hline
$0$  &              &              & $\orange{\mathbb{F}}$ & $\red{\mathbb{F}}$  \\ 
\hline
$-1$ &              & $\green{\mathbb{F}}$ & $\blue{\mathbb{F}}$ &               \\ 
\hline
$-2$ &              &              & $\red{\mathbb{F}}$ &               \\ 
\hline
$-3$ & $\green{\mathbb{F}}$ & $\blue{\mathbb{F}}$ &              &               \\ 
\hline
$-4$ &              &              &              &               \\ 
\hline
$-5$ & $\blue{\mathbb{F}}$ &              &              &               \\ 
\hhline{|=::====|}
   \slashbox{$j$}{$i$}   & $-2$         & $-1$         & $0$          & $1$           \\
\hline
\end{tabular}};
\draw[->, shorten <=5pt, shorten >=5pt, thick] (K) -- node[above, midway]{$\Kh$} (T)
}
\]
The two copies of the reduced homology are given in blue/red and green/orange respectively. Notice that this diagram for $3_1$ has $n_+ = 1$ and $n_- = 2$. We again remark upon a motivating irregularity: the $j$-gradings are not supported in a single parity. Summands supported in odd (resp. even) $j$-grading are pictured in cool (resp. warm) colors. Recalling our computation in Example \ref{ex:maingoeritz}, it is also satisfying that the average of the $j$-intercepts of the cool-colored diagonals is $0 = \sigma_-(L)$, and $-1 = -\sigma_+(L)$ for the warm-colored diagonals.
\end{example}

\section{The myopic Tutte polynomial for graphs in $\RP^2$}
\label{s:myopictutte}

The Tutte polynomial \cite{MR61366, MR1813436} of a graph $G$ is a polynomial $\chi_G \in \mathbb{Z}[x, y]$. If $G$ is the Tait graph of a reduced, alternating diagram, then $\chi_G(x=-t, y=-t^{-1})$ agrees with the Jones polynomial up to a multiple of $\pm t^k$ for some $k$ \cite{MR899051}. Classically, the Tutte polynomial is an invariant of $G$ as an abstract graph---it does not distinguish between different embeddings of $G$.

For our purposes, we need a version of the Tutte polynomial for graphs embedded in surfaces. There is a significant quantity of literature in this direction: extensions include the relative Tutte polynomial \cite{MR2607372}, Bollob\'as-Riordan polynomials \cite{MR1851080, MR1906909}, the Las Vergnas polynomial \cite{MR597150}, and, ultimately, the generalized Krushkal polynomial \cite{MR2769192, MR3739494}. We introduce a Tutte-like polynomial for graphs embedded in $\RP^2$ and show that it produces a model for Drobotukhina's polynomial for alternating links. This was already accomplished by the invariant introduced in \cite{MR3438450}. The novelty of our polynomial is that it is sensitive to the splitting $J_L(t) = J_L^+(t) + J_L^-(t)$: each summand corresponds to the distinct Tait graphs induced by a reduced, non-local, alternating diagram. In addition, while defined similarly to the abstract Tutte polynomial, it turns out that our polynomial recovers the Krushkal polynomial of graphs in $\RP^2$. We omit a comparison of our polynomial with the relative Tutte polynomial, but note that the Jones polynomial of a virtual link can be computed from the relative Tutte polynomial of an associated graph.

We start by fixing some terminology and notation. We call an edge $e$ of $G$ a \emph{bridge} if the removal of $e$ increases the number of components of $G$. We call an edge a \emph{loop} if its endpoints lie on the same vertex. If an edge is neither a bridge nor a loop, we call it \emph{regular}. If a graph has no edges apart from loops and bridges, we call it \emph{terminal}. We write $e \in G$ to mean ``an edge $e$ of $G$.'' The graph resulting from deleting a regular edge $e\in G$ is denoted by $G - e$. The graph resulting from contracting a regular edge $e\in G$ is denoted by $G/ e$.

We will write $G\subset \RP^2$ to mean an embedding of a graph $G$ into $\RP^2$. (We are most interested in this case because the Tait graphs of diagrams of nullhomologous links are naturally embeddings of graphs into $\RP^2$.) We conflate $G$ with the image of the embedding of $G$. We say that $G_1, G_2\in \RP^2$ are equal if there is an ambient isotopy taking one to the other. We call $G \subset \RP^2$ \emph{local} if there is a disk $\mathbb{D}^2$ such that $G \subset \mathbb{D}^2 \subset \RP^2$. A connected, non-local graph $G \subset \RP^2$ is called \emph{cellular}. 

\begin{definition}
\label{def:protutte}
The \emph{myopic Tutte polynomial} of a graph $G \subset \RP^2$ is a polynomial $\psi_G \in \mathbb{Z}[x, y]$ defined by the following rules.
\begin{enumerate}
\item If $e\in G$ is a regular edge, then $\psi_G = \psi_{G-e} + \psi_{G/e}$.
\item If $e\in G$ is not regular, then we have
\[
\psi_G = \begin{cases}
x \, \psi_{G/e} & \text{if $e$ is a bridge}, \\
y \, \psi_{G-e} & \text{if $e$ is a loop with $[e]=0\in H_1(\RP^2)$}, \\
\psi_{G-e} & \text{if $e$ is a loop with $[e]=1\in H_1(\RP^2)$}.
\end{cases}
\]
\item If $G$ is a graph with no edges, then $\psi_G = 1$.
\end{enumerate}
\end{definition}

We can use (1) to reduce a graph into terminal summands, and then apply (2) and (3). It is worth comparing this definition with Definition \ref{def:kauffmanbracket}. Note that $\psi_G$ would be ill-defined if we introduced a new variable for which $\psi_G = z \, \psi_{G - e}$ whenever $[e] = 1$; for example (conflating a graph $G$ with $\psi_G$), we would obtain
\begin{align*}
\tikz[baseline={([yshift=-.5ex]current bounding box.center)}, scale=.125]{
	\begin{scope}[blue, very thick]
		\fill (0,2.5) circle (.55cm);
		\fill (0,-2.5) circle (.55cm);
		\draw (0,-2.5) -- (0,-5);
		\draw (0,2.5) -- (0,5);
		\draw (0,-2.5) to[out=30, in=-30] (0,2.5);
		\draw (0,-2.5) to[out=150, in=-150] (0,2.5);
	\end{scope}
	\draw[white, ultra thick] (0,0) circle (5cm);
	\draw[dotted] (0,0) circle (5cm);
}
& = \tikz[baseline={([yshift=-.5ex]current bounding box.center)}, scale=.125]{
	\begin{scope}[blue, very thick]
		\fill (0,2.5) circle (.55cm);
		\fill (0,-2.5) circle (.55cm);
		\draw (0,-2.5) -- (0,-5);
		\draw (0,2.5) -- (0,5);
		\draw (0,-2.5) -- (0,2.5);
	\end{scope}
	\draw[white, ultra thick] (0,0) circle (5cm);
	\draw[dotted] (0,0) circle (5cm);
} + 
\tikz[baseline={([yshift=-.5ex]current bounding box.center)}, scale=.125]{
	\begin{scope}[blue, very thick]
		\fill (0,0) circle (.55cm);
		\draw (0,0) -- (0,-5);
		\draw (0,0) -- (0,5);
		\draw (0,0) edge [loop, out=45, in=-45, distance=5cm] (0,0);
	\end{scope}
	\draw[white, ultra thick] (0,0) circle (5cm);
	\draw[dotted] (0,0) circle (5cm);
}
\\
&=
\tikz[baseline={([yshift=-.5ex]current bounding box.center)}, scale=.125]{
	\begin{scope}[blue, very thick]
		\fill (0,2.5) circle (.55cm);
		\fill (0,-2.5) circle (.55cm);
		\draw (0,-2.5) -- (0,-5);
		\draw (0,2.5) -- (0,5);
	\end{scope}
	\draw[white, ultra thick] (0,0) circle (5cm);
	\draw[dotted] (0,0) circle (5cm);
} +
\tikz[baseline={([yshift=-.5ex]current bounding box.center)}, scale=.125]{
	\begin{scope}[blue, very thick]
		\fill (0,0) circle (.55cm);
		\draw (0,0) -- (0,-5);
		\draw (0,0) -- (0,5);
	\end{scope}
	\draw[white, ultra thick] (0,0) circle (5cm);
	\draw[dotted] (0,0) circle (5cm);
} +
\tikz[baseline={([yshift=-.5ex]current bounding box.center)}, scale=.125]{
	\begin{scope}[blue, very thick]
		\fill (0,0) circle (.55cm);
		\draw (0,0) -- (0,-5);
		\draw (0,0) -- (0,5);
		\draw (0,0) edge [loop, out=45, in=-45, distance=5cm] (0,0);
	\end{scope}
	\draw[white, ultra thick] (0,0) circle (5cm);
	\draw[dotted] (0,0) circle (5cm);
}
\\ &= x + z + yz
\end{align*}
if the initial application of (1) was applied along an internal edge, but
\begin{align*}
\tikz[baseline={([yshift=-.5ex]current bounding box.center)}, scale=.125]{
	\begin{scope}[blue, very thick]
		\fill (0,2.5) circle (.55cm);
		\fill (0,-2.5) circle (.55cm);
		\draw (0,-2.5) -- (0,-5);
		\draw (0,2.5) -- (0,5);
		\draw (0,-2.5) to[out=30, in=-30] (0,2.5);
		\draw (0,-2.5) to[out=150, in=-150] (0,2.5);
	\end{scope}
	\draw[white, ultra thick] (0,0) circle (5cm);
	\draw[dotted] (0,0) circle (5cm);
}
& = \tikz[baseline={([yshift=-.5ex]current bounding box.center)}, scale=.125]{
	\begin{scope}[blue, very thick]
		\fill (0,2.5) circle (.55cm);
		\fill (0,-2.5) circle (.55cm);
		\draw (0,-2.5) to[out=30, in=-30] (0,2.5);
		\draw (0,-2.5) to[out=150, in=-150] (0,2.5);
	\end{scope}
	\draw[white, ultra thick] (0,0) circle (5cm);
	\draw[dotted] (0,0) circle (5cm);
} + 
\tikz[baseline={([yshift=-.5ex]current bounding box.center)}, scale=.125]{
	\begin{scope}[blue, very thick]
		\fill (0,0) circle (.55cm);
		\draw (-5*0.70711,-5*0.70711) -- (5*0.70711, 5*0.70711);
		\draw (-5*0.70711, 5*0.70711) -- (5*0.70711,-5*0.70711);
	\end{scope}
	\draw[white, ultra thick] (0,0) circle (5cm);
	\draw[dotted] (0,0) circle (5cm);
}
\\
&=
\tikz[baseline={([yshift=-.5ex]current bounding box.center)}, scale=.125]{
	\begin{scope}[blue, very thick]
		\fill (0,2.5) circle (.55cm);
		\fill (0,-2.5) circle (.55cm);
		\draw (0,-2.5) -- (0,2.5);
	\end{scope}
	\draw[white, ultra thick] (0,0) circle (5cm);
	\draw[dotted] (0,0) circle (5cm);
} +
\tikz[baseline={([yshift=-.5ex]current bounding box.center)}, scale=.125]{
	\begin{scope}[blue, very thick]
		\fill (0,0) circle (.55cm);
		\draw (0,0) edge [loop, out=135, in=-135, distance=5cm] (0,0);
	\end{scope}
	\draw[white, ultra thick] (0,0) circle (5cm);
	\draw[dotted] (0,0) circle (5cm);
} +
\tikz[baseline={([yshift=-.5ex]current bounding box.center)}, scale=.125]{
	\begin{scope}[blue, very thick]
		\fill (0,0) circle (.55cm);
		\draw (-5*0.70711,-5*0.70711) -- (5*0.70711, 5*0.70711);
		\draw (-5*0.70711, 5*0.70711) -- (5*0.70711,-5*0.70711);
	\end{scope}
	\draw[white, ultra thick] (0,0) circle (5cm);
	\draw[dotted] (0,0) circle (5cm);
}
\\ &= x + y + z^2
\end{align*}
if we perform our initial application of (1) along the edge passing through the boundary of the disk-model. These expressions are equivalent if and only if $z = 1$ or $z = y$. The latter choice produces the ordinary Tutte polynomial; we will show that the former choice produces a well-defined invariant of graphs in $\RP^2$ by giving a state-sum model for $\psi_G$ in Proposition \ref{prop:tutte2}.

\begin{example}
\label{ex:PT_Tutte}
We have just computed the myopic Tutte polynomial for one of the cellular graphs obtained from the diagram in Example \ref{ex:PT_KB}:
\begin{align*}
\tikz[baseline={([yshift=-.5ex]current bounding box.center)}, scale=.125]{
	\begin{scope}[blue, very thick]
		\fill (0,2.5) circle (.55cm);
		\fill (0,-2.5) circle (.55cm);
		\draw (0,-2.5) -- (0,-5);
		\draw (0,2.5) -- (0,5);
		\draw (0,-2.5) to[out=30, in=-30] (0,2.5);
		\draw (0,-2.5) to[out=150, in=-150] (0,2.5);
	\end{scope}
	\draw[white, ultra thick] (0,0) circle (5cm);
	\draw[dotted] (0,0) circle (5cm);
}
= x + 1 + y.
\end{align*}
For the other, we have
\begin{align*}
\tikz[baseline={([yshift=-.5ex]current bounding box.center)}, scale=.125]{
	\begin{scope}[red, very thick]
		\fill (0, 3) circle (.55cm);
		\fill (0,0) circle (.55cm);
		\draw (0,-5) -- (0,5);
		\draw (-5,0) -- (5,0);
	\end{scope}
	\draw[white, ultra thick] (0,0) circle (5cm);
	\draw[dotted] (0,0) circle (5cm);
} &= \tikz[baseline={([yshift=-.5ex]current bounding box.center)}, scale=.125]{
	\begin{scope}[red, very thick]
		\fill (0, 3) circle (.55cm);
		\fill (0,0) circle (.55cm);
		\draw (-5,0) -- (5,0);
		\draw (0,3) -- (0,5);
		\draw (0,0) -- (0,-5);
	\end{scope}
	\draw[white, ultra thick] (0,0) circle (5cm);
	\draw[dotted] (0,0) circle (5cm);
} +
\tikz[baseline={([yshift=-.5ex]current bounding box.center)}, scale=.125]{
	\begin{scope}[red, very thick]
		\fill (0,0) circle (.55cm);
		\draw (0,-5) -- (0,5);
		\draw (-5,0) -- (5,0);
	\end{scope}
	\draw[white, ultra thick] (0,0) circle (5cm);
	\draw[dotted] (0,0) circle (5cm);
} 
 =
x + 1.
\end{align*}
Note that the classical Tutte polynomial $\chi_G$ satisfies $\chi_{G^*}(x,y) = \chi_G(y,x)$ for $G^*$ the graph dual to $G$, but this computation illustrates that no such duality holds for $\psi_G$. Now, denote the blue graph by $G_1$ and the dual red graph by $G_2$. Remarkably, setting $x = -t$ and $y=-t^{-1}$, we find that
\begin{align*}
(-t^{-1}) \psi_{G_1}(-t, -t^{-1}) + (-t^{-\frac{1}{2}}) \psi_{G_2}(-t, -t^{-1}) &= (-t^{-1})(-t + 1 -t^{-1}) + (-t^{-\frac{1}{2}})(-t+1)
\\ &= (1 -t^{-1} + t^{-2}) + (t^{\frac{1}{2}} - t^{-\frac{1}{2}})
\\ &= J_L(t).
\end{align*}
Theorem \ref{thm:jonespolynomialalternatingtutte} asserts that this computation is indicative of a general fact: to obtain $J_L(t)$ for any non-local, alternating link $L \subset \RP^3$, we can find $k_1$ and $k_2$ such that
\[
J_L(t) = \pm (t^{k_1} \psi_{G_1}(-t, -t^{-1}) + t^{k_2} \psi_{G_2}(-t^{-1}, -t)).
\]
\end{example}

By a \emph{spanning subgraph} of $G$, we mean a subgraph $H$ whose vertex set coincides with that of $G$. Let $i: G \hookrightarrow \RP^2$ be the embedding of $G$ into $\RP^2$ and let $i^H := i|_{H}$ for any subgraph $H\subseteq G$. Denote the induced map on homology by $i_*^H: H_1(H; \F)\cong \F^{e(H) - v(H) + b_0(H)} \to H_1(\RP^2) \cong \F$.

\begin{proposition}
\label{prop:tutte2}
For any embedded graph $G \subset \RP^2$,
\begin{equation}
\label{eq:tutte2}
\psi_G(x, y) = \sum_{H \subseteq G} \delta(H) (x-1)^{b_0(H) - b_0(G)} (y-1)^{b_1(H)}
\end{equation}
where the sum is taken over all spanning subgraphs and
\[
\delta(H) = \begin{cases} 0 & \text{if} ~ b_1(H) > \dim \ker(i_*^H),~\text{and} \\ 1 & \text{if} ~ b_1(H) = \dim \ker(i_*^H). \end{cases}
\]
In particular, $\psi_G$ is well-defined.
\end{proposition}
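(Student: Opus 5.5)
The plan is to define $\Psi_G(x,y)$ to be the right-hand side of (\ref{eq:tutte2}) and show it satisfies rules (1)--(3) of Definition \ref{def:protutte}. Then induction on the number of edges shows that any sequence of applications of those rules computes $\Psi_G$. This proves both the formula and well-definedness at once, since $\Psi_G$ is manifestly independent of choices.

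First rephrase $\delta$. Since $H_1(\RP^2;\F)\cong\F$, the map $i^H_*$ has rank $0$ or $1$. So $\delta(H)=1$ exactly when every cycle of $H$ is nullhomologous in $\RP^2$, i.e. $H$ is local. Rule (3) is immediate: the only spanning subgraph is $G$ itself, and it contributes $1$.

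For rule (1), fix a regular edge $e$ and split the sum into subgraphs $H\not\ni e$ and $H\ni e$.
\begin{itemize}
\item \emph{Terms with $e\notin H$.} These are exactly the spanning subgraphs of $G-e$. Because $e$ is not a bridge, $b_0(G-e)=b_0(G)$. Hence these terms sum to $\Psi_{G-e}$.
\item \emph{Terms with $e\in H$.} Send $H$ to $H/e$, a bijection onto spanning subgraphs of $G/e$. Since $e$ is not a loop, contraction preserves $b_0$ and $b_1$, and $b_0(G/e)=b_0(G)$.
\end{itemize}
The point to check in the second case is that $\delta(H)=\delta(H/e)$. Contracting a non-loop edge is realised by collapsing a small neighborhood of $e$, which is a disk in $\RP^2$. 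This is a homotopy equivalence $H\to H/e$ compatible with the embeddings up to isotopy. So the images of $H_1$ in $H_1(\RP^2)$ coincide, and the kernel dimensions agree. Therefore these terms sum to $\Psi_{G/e}$.

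For rule (2), pair each $H\not\ni e$ with $H\cup e$.
\begin{itemize}
\item \emph{Bridge.} The same contraction argument shows $\delta(H\cup e)=\delta(H)$. The subgraph $H$ has one more component than $H\cup e$, with equal $b_1$. So the pair contributes $\bigl((x-1)+1\bigr)$ times the $G/e$ term, giving $x\,\Psi_{G/e}$.
\item \emph{Nullhomologous loop.} Adding $e$ raises $b_1$ by one and adds a class in $\ker i_*$, so $\delta$ is unchanged. The pair contributes $\bigl(1+(y-1)\bigr)$, giving $y\,\Psi_{G-e}$.
\item \emph{Essential loop.} Adding $e$ raises $b_1$ by one but puts the nonzero class in the image. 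Thus $\delta(H\cup e)=0$, and only $H$ survives, giving $\Psi_{G-e}$.
\end{itemize}
In all cases $b_0(G)$ is unchanged by the operation.

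The main obstacle is being careful that contraction and deletion are well-defined operations on embedded graphs, and that $\delta$ is invariant under contraction. I would handle this by identifying $G/e$ with the image of $G$ under a quotient map $\RP^2\to\RP^2/\overline{N(e)}\cong\RP^2$ that is homotopic to the identity.
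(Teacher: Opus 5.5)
Your proposal is correct and follows the paper's proof. Both verify that the state sum obeys the rules of Definition~\ref{def:protutte} by splitting the sum according to whether $e\in H$, using that contracting a non-loop edge preserves $b_0$, $b_1$ and $\delta$, and pairing each $H\not\ni e$ with $H\cup e$ for bridges and loops; your disk-collapse argument for $\delta(H/e)=\delta(H)$ is more explicit than the paper's, though in the bridge case $\delta(H\cup e)=\delta(H)$ holds because $e$ joins two distinct components of $H$ and so creates no new cycle, not as a consequence of contraction.
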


We call (\ref{eq:tutte2}) the \emph{state-sum model} for $\psi_G$. Notice that $b_1(H) = \dim \ker(i_*^H)$ if and only if $H$ is contained within a disk in $\RP^2$, \textit{i.e.}, $H$ is a local spanning subgraph of $G$.

\begin{proof}
The result follows from a routine verification that the state-sum model obeys the rules provided in Definition \ref{def:protutte}. We write $\widetilde{\psi}_G = \sum_{H \subseteq G} \widetilde{\psi}_G^H$ where $\widetilde{\psi}_G^H = (x-1)^{b_0(H) - b_0(G)} (y-1)^{b_1(H)}$ under the assumption that $H$ is a local spanning subgraph of $G$. Note that $H \subseteq G$ naturally induces spanning subgraphs $H'$ on $G - e$ and $G / e$.

For \ref{def:protutte}(1), assume that $e \in G$ is a regular edge. We write
\[
\widetilde{\psi}_G = \sum_{\substack{H \subseteq G \\ e \in H}} \widetilde{\psi}_G^H + \sum_{\substack{H \subseteq G \\ e \not \in H}} \widetilde{\psi}_G^H
\]
and claim that the first summation is $\widetilde{\psi}_{G/e}$ and the second is $\widetilde{\psi}_{G - e}$. Both claims are straightforward. First, assuming $e\in H$, we have $\delta(H/e) = \delta(H)$. Edge contraction preserves the number of components and, since $e$ is not a loop, $b_1(H) = b_1(H/e)$. Next, if $e \not\in H$, then $H$ is a spanning subgraph of $G - e$. Since $e$ is not a bridge, $b_0(G - e) = b_0(G)$, and the result follows.

For \ref{def:protutte}(2), by the reasoning above, we can write
\[
\widetilde{\psi}_G = \widetilde{\psi}_{G / e} + \sum_{\substack{H \subseteq G \\ e \not \in H}} \widetilde{\psi}_G^H 
\]
if $e\in G$ is a bridge. If $e\not\in H$, let $H'$ be the spanning subgraph of $G / e$ with exactly the same edge set as $H$ but with vertex set $V(G) / e$. Notice that $b_0(G/e) = b_0(G)$ and $b_1(H') = b_1(H)$ but that $b_0(H') = b_0(H) - 1$. Thus
\[
\widetilde{\psi}_G = \widetilde{\psi}_{G/e} + (x-1) \widetilde{\psi}_{G/e} = x\, \widetilde{\psi}_{G/e}.
\]
Similarly, if $e$ is a loop, then
\[
\widetilde{\psi}_G = \sum_{\substack{H \subseteq G \\ e \in H}} \widetilde{\psi}_G^H + \widetilde{\psi}_{G-e}.
\]
If $e \in H$, let $H'$ be the spanning subgraph $H - e$ of $G - e$. Then $b_0(H) = b_0(H')$ and $b_0(G) = b_0(G-e)$, but $b_1(H') = b_1(H) - 1$. Then, on one hand, if $[e]=0$, then
\[
\widetilde{\psi}_G = (y-1) \widetilde{\psi}_{G-e} + \widetilde{\psi}_{G-e} = y \, \widetilde{\psi}_{G-e}.
\]
On the other hand, when $[e]=1$, then $\delta(H) = 0$ if $e \in H$, so we have $\widetilde{\psi}_G = \widetilde{\psi}_{G-e}$.

Finally, for \ref{def:protutte}(3), if $G$ has no edges, then it is its only spanning subgraph, so $\widetilde{\psi}_G = 1$. We conclude that $\widetilde{\psi}_G(x,y) = \psi_G(x,y)$.
\end{proof}

\begin{remark}
We note that there are generalizations of the myopic Tutte polynomial. First, we could define the myopic Tutte polynomial for an embedded graph $\G = (G, \iota: G \hookrightarrow \Sigma)$ on any surface $\Sigma$, with the only difference being that $H_1(\RP^2)$ is replaced by $H_1(\Sigma)$ in the definition of $i_*^H$. We could also consider the polynomial for $\G = (G, \iota: G \hookrightarrow \RP^3)$, an embedded graph whose image is subsequently projected onto $\RP^2$. We do not investigate graphs embedded into $\RP^3$ or their myopic Tutte polynomials in this paper.
\end{remark}

\begin{example}
At this point, we can observe some features of myopic Tutte polynomials. Let's start by comparing the state-sum model of $\psi_G$ with those of the abstract Tutte polynomial $\chi_G$ and the generalized Krushkal polynomial \cite{MR2769192, MR3739494}, denoted by $K_G \in \mathbb{Z}[x, y, \sqrt{A}, \sqrt{B}]$ (see Definition \ref{def:krushkal}). A relevant computation is provided in Figure \ref{fig:statesumcomps}, where we consider the dual graphs from Example \ref{ex:PT_Tutte}.

\begin{figure}[ht]
\tikz[baseline={([yshift=-.5ex]current bounding box.center)}, scale=.2]{
\node at (0,0) {\tikz[baseline={([yshift=-.5ex]current bounding box.center)}, scale=.225]{
\begin{scope}[blue, very thick]
	\fill (0,2.5) circle (.4cm);
	\fill (0,-2.5) circle (.4cm);
	\draw (0,-2.5) -- (0,-5);
	\draw (0,2.5) -- (0,5);
	\draw (0,-2.5) to[out=30, in=-30] (0,2.5);
	\draw (0,-2.5) to[out=150, in=-150] (0,2.5);
\end{scope}
\node[anchor=west] at (0,3.75) {\tiny$2$};
\node[anchor=east] at (-1.25,0) {\tiny$3$};
\node[anchor=west] at (1.25,0) {\tiny$1$};
\draw[white, ultra thick] (0,0) circle (5cm);
\draw[dotted] (0,0) circle (5cm);
}};
\node at (40,0) {\begingroup
\small
\renewcommand{\arraystretch}{1.5}
\begin{tabular}{|c||c|c|c|c|c|c|c|c|} 
\hline
            & $\{\}$          & $\{1\}$    & $\{2\}$    & $\{3\}$    & $\{1,2\}$  & $\{2,3\}$  & $\{1,3\}$       & $\{1,2,3\}$      \\ 
\hhline{|=::========|}
$\chi_G$             & $(x-1)$         & $1$        & $1$        & $1$        & $(y-1)$    & $(y-1)$    & $(y-1)$         & $(y-1)^2$        \\ 
\hline
$\psi_G$             & $(x-1)$         & $1$        & $1$        & $1$        & $0$        & $0$        & $(y-1)$         & $0$              \\ 
\hline
$K_G$                & $(x-1)\sqrt{B}$ & $\sqrt{B}$ & $\sqrt{B}$ & $\sqrt{B}$ & $\sqrt{A}$ & $\sqrt{A}$ & $(y-1)\sqrt{B}$ & $(y-1)\sqrt{A}$  \\ 
\hline                                
\end{tabular}
\endgroup};
\node at (36,-10) {$\leadsto \quad \chi_G(x,y) = x + y +y^2 \qquad \psi_G(x,y) = x+y+1 \qquad K_G(x, y, A, B) = (x+y+1)\sqrt{B} +  (y+1)\sqrt{A}$};
\node at (0,-20) {\tikz[baseline={([yshift=-.5ex]current bounding box.center)}, scale=.225]{
\begin{scope}[red, very thick]
	\fill (0, 3) circle (.4cm);
	\fill (0,0) circle (.4cm);
	\draw (0,-5) -- (0,5);
	\draw (-5,0) -- (5,0);
\end{scope}
\node[anchor=north] at (2.5, 0) {\tiny$2$};
\node[anchor=east] at (0,1.5) {\tiny$3$};
\node[anchor=east] at (0,-2.5) {\tiny$1$};
\draw[white, ultra thick] (0,0) circle (5cm);
\draw[dotted] (0,0) circle (5cm);
}};
\node at (42.85,-20) {\begingroup
\small
\renewcommand{\arraystretch}{1.5}
\begin{tabular}{|c||c|c|c|c|c|c|c|c|} 
\hline
                     & $\{\}$          & $\{1\}$    & $\{2\}$      & $\{3\}$    & $\{1,2\}$  & $\{2,3\}$  & $\{1,3\}$  & $\{1,2,3\}$      \\ 
\hhline{|=::========|}
$\chi_{G^*}$         & $(x-1)$         & $1$        & $(y-1)(x-1)$ & $1$        & $(y-1)$    & $(y-1)$    & $(y-1)$    & $(y-1)^2$        \\ 
\hline
$\psi_{G^*}$         & $(x-1)$         & $1$        & $0$          & $1$        & $0$        & $0$        & $0$        & $0$              \\ 
\hline
$K_{G^*}$            & $(x-1)\sqrt{B}$ & $\sqrt{B}$ & $\sqrt{B}$   & $\sqrt{B}$ & $\sqrt{A}$ & $\sqrt{A}$ & $\sqrt{A}$ & $(y-1)\sqrt{A}$  \\ 
\hline                        
\end{tabular}
\endgroup};
\node at (34.5,-30) {$\leadsto \quad \chi_{G^*}(x,y) = xy+y^2 \qquad \psi_{G^*}(x,y) = x+1 \qquad K_{G^*}(x, y, A, B) = (x+1)\sqrt{B} +  (x+y+1)\sqrt{A}$};
}
\caption{Computing the Tutte, myopic Tutte, and Krushkal polynomials via their state-sum definitions for the graphs obtained from a diagram of $3_1$ from Drobotukhina's table.}
\label{fig:statesumcomps}
\end{figure}

It is well known that the Krushkal polynomial generalizes the Tutte polynomial for embedded graphs and that $K_G$ satisfies a duality statement analogous to the Tutte polynomial's; we review these properties in \S \ref{ss:krushkal}. Proposition \ref{prop:tutte2} establishes that the myopic Tutte polynomial $\psi_G$ of embedded graphs is related to the Tutte polynomial for abstract graphs by dismissing summands of the state-sum formula which are homologically essential. It is also evident in this example that
\[
K_G(x, y, 0, 1) = \psi_G(x, y).
\]
Proposition \ref{prop:fromkrushkal} states that this holds in general. Notably, we will also prove in Theorem \ref{thm:equalkrushkal} that, for cellular graphs $G \subset \RP^2$, the generalized Krushkal polynomial $K_G$ is equivalent to the data of $\{\psi_G, \psi_{G^*}\}$. Namely,
\[
K_G(x, y, A, B) = \sqrt{B} \, \psi_G(x,y) + \sqrt{A} \, \psi_{G^*}(y, x),
\]
as is the case in this example. Finally, these results imply that the (abstract) Tutte polynomial can be recovered: in general,
\[
\chi_G(x,y) = \psi_G(x,y) + (y-1) \, \psi_{G^*}(y, x),
\]
see Corollary \ref{cor:tuttefrommyopic}.
\end{example}

\subsection{Spanning tree expansion for the myopic Tutte polynomial}
\label{ss:spanningtreemyopic}

We will need one more description of the myopic Tutte polynomial in terms of spanning trees, mirroring Tutte's original definition \cite{MR61366}. It requires a few relatively burdensome definitions which will be used throughout the rest of the paper.

Assume that $G$ is a graph in $\RP^2$ with $n$ vertices. An acyclic subgraph $H$ of $G$ is a spanning tree of $G$ if and only if $H$ has $n-1$ edges. Assume that $T$ is a spanning tree of $G$ and that $e \in G$.
\begin{itemize}
\item If $e \in T$, then $T - e$ has two components, $P_1$ and $P_2$. Let $\mathsf{Cut}(T, e)$ denote the subgraph of $G$ consisting of edges of $G$ with one endpoint in $P_1$ and the other endpoint in $P_2$.
\item If $e \not\in T$, then $T \cup e$ has exactly one cycle. Let $\mathsf{Cyc}(T,e)$ denote the subgraph of $T \cup e$ consisting only of this cycle.
\end{itemize}
Note that in either situation, $e \in \cut(T,e)$ and $e\in \cyc(T,e)$. In addition, $e\in \cut(T,e')$ if and only if $e' \in \cyc(T, e)$.

Now, assume that $G$ is a connected graph and place an arbitrary ordering on its edges. We say the edge $e_i$ \emph{precedes} the edge $e_j$ if $i < j$. Let $e_i \in G$ and assume that $T$ is a spanning tree of $G$.
\begin{itemize}
\item If $e_i \in T$, it is called \emph{internally active with respect to $T$} if $e_i$ precedes all other edges in $\cut(T, e_i)$. Otherwise, it is called \emph{internally inactive with respect to $T$}.
\item If $e_i \not\in T$, it is called \emph{externally active with respect to $T$} if $e_i$ precedes all other edges in $\cyc(T,e_i)$. Moreover, in this case, $e_i$ is called
\begin{itemize}
\item \emph{locally externally active with respect to $T$} if $[\cyc(T,e_i)] = 0 \in H_1(\RP^2)$, and
\item \emph{projectively externally active with respect to $T$} if $[\cyc(T, e_i)] = 1 \in H_1(\RP^2)$.
\end{itemize}
Otherwise, it is called \emph{externally inactive with respect to $T$}.
\end{itemize}

Finally, for a given spanning tree $T$ of a connected graph $G$ with ordered edges, we define:
\begin{itemize}
\item the \emph{internal activity of $T$} to be the number of edges of $G$ which are internally active with respect to $T$;
\item the \emph{local external activity of $T$} to be the number of edges of $G$ which are locally externally active with respect to $T$; and
\item the \emph{projective external activity of $T$} to be the number of edges of $G$ which are projectively externally active with respect to $T$.
\end{itemize}

\begin{proposition}
\label{prop:tutte_spanningtrees}
If $G\subset \RP^2$ is a connected graph with ordered edges, then
\[
\psi_G(x, y) = \sum_{T \subset G} x^i y^j
\]
where the sum is taken over all spanning trees of $G$, $i$ is the internal activity of $T$, and $j$ is the local external activity of $T$. In particular, the myopic Tutte polynomial is independent of edge ordering.
\end{proposition}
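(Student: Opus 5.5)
We follow Tutte's original argument, by induction on the number of edges, using the largest (last) edge. Let $e = e_m$ be the last edge in the ordering and define $\Phi_G(x,y) := \sum_T x^{i(T)} y^{j(T)}$. We will show that $\Phi_G$ obeys the rules of Definition \ref{def:protutte} when $e_m$ is the edge being reduced. Proposition \ref{prop:tutte2} shows that $\psi_G$ is well defined and that the recursion may be applied along any edge at each step. So if $\Phi$ satisfies the recursion along $e_m$, with the orderings on $G-e_m$ and $G/e_m$ inherited from $G$, induction gives $\Phi_G=\psi_G$. Independence of the ordering then follows immediately. The base case is a graph with no edges: it has one spanning tree with no activities, so $\Phi=1$.

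Suppose $e_m$ is regular. Split the spanning trees of $G$ into those omitting $e_m$ and those containing it. The former are exactly the spanning trees of $G-e_m$, and the latter correspond bijectively to spanning trees of $G/e_m$ via $T\mapsto T/e_m$. We must check that all activities are preserved.

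For $T\not\ni e_m$, the edge $e_m$ is externally inactive: it is regular, so $\cyc(T,e_m)$ contains another edge, which precedes $e_m$. For any other edge $f$, $e_m$ can lie in $\cut(T,f)$ or $\cyc(T,f)$, but since $e_m$ is last it never affects whether $f$ precedes the other edges there. Deleting $e_m$ leaves every cycle unchanged, so the homology classes $[\cyc(T,f)]$, and hence local versus projective activity, are preserved.

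For $T\ni e_m$, the edge $e_m$ is internally inactive, because $e_m$ is not a bridge and so $\cut(T,e_m)$ contains another edge. For $f\neq e_m$, cuts and cycles in $G/e_m$ are the images of those in $G$ with $e_m$ removed. The key point is homological: contracting the non-loop edge $e_m$ is a homotopy equivalence of the embedded graph in $\RP^2$ (collapse a small disk around $e_m$). Hence $[\cyc(T/e_m,f)]=[\cyc(T,f)]$, and the local/projective type of each externally active edge is unchanged. Together these give $\Phi_G=\Phi_{G-e_m}+\Phi_{G/e_m}$.

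If $e_m$ is not regular, every edge is a bridge or a loop, since regularity of an edge does not depend on the ordering. In that case we treat all terminal graphs at once. A connected terminal graph has a unique spanning tree, consisting of its bridges. Each bridge $b$ has $\cut(T,b)=\{b\}$, so it is internally active. Each loop $\ell$ has $\cyc(T,\ell)=\{\ell\}$, so it is externally active, and it is local or projective according to $[\ell]$. Thus $\Phi=x^{\#\text{bridges}}y^{\#\text{local loops}}$. Applying rule (2) of Definition \ref{def:protutte} repeatedly gives the same value for $\psi$, so the two agree on terminal graphs.

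It remains to reconcile this with the induction. When the last edge is not regular, some other edge may still be regular, and then the step above does not apply. The cleanest remedy is to prove directly the stronger statement that $\Phi_G$ satisfies deletion--contraction along $e_m$ whatever type $e_m$ is. For a bridge, every tree contains it, it is internally active, and contracting it preserves the other activities, so $\Phi_G=x\,\Phi_{G/e_m}$. For a loop, no tree contains it, it is externally active, and deleting it preserves the other activities, so $\Phi_G=y\,\Phi_{G-e_m}$ or $\Phi_G=\Phi_{G-e_m}$ according to the class of the loop. Each case uses the same argument as before, together with the fact that a last edge never affects the activity of another edge.

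The main obstacle is the homological bookkeeping in the contraction step. Fundamental cycles in $G/e_m$ may pass through the merged vertex, and we must verify carefully that their classes in $H_1(\RP^2)$ agree with those of the corresponding cycles in $G$. This is what allows the split of external activity into local and projective parts to pass through the recursion.
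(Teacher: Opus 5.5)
Your proof is correct, and it takes a different route from the paper.

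\textbf{Your route.} You induct on the number of edges. You check that $\Phi_G=\sum_T x^{i(T)}y^{j(T)}$ obeys the recursion of Definition \ref{def:protutte} along the \emph{last} edge $e_m$, whatever its type, with the orderings on $G-e_m$ and $G/e_m$ inherited from $G$. Proposition \ref{prop:tutte2} already shows that $\psi_G$ is well defined and obeys the recursion along every edge. So the induction gives $\Phi_G=\psi_G$ for every ordering, and order-independence follows for free.

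\textbf{The paper's route.} The paper proves the recursion only for the highest-ordered regular edge, and only after \emph{assuming} order-independence. It then proves order-independence separately with Tutte's transposition argument. It tracks how the states of $T$ and $\sigma(T)$ change when two adjacent labels are swapped (Table \ref{tab:thisthlethwaite}). It adds the homological identity $\gamma=[\cyc(T,e_j)]+[\cyc(\sigma(T),e_j)]$ to handle $\ell_{\text{loc}}\leftrightarrow\ell_{\text{pro}}$ discrepancies between $T$ and $\sigma(T)$. Those discrepancies turn out to matter only in Case 2 with $\bullet=\mathrm{pro}$, where they leave the local external activity unchanged.

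\textbf{Comparison.} Your argument is shorter and avoids that case analysis entirely. It is also more explicit than the paper about why the local or projective type of a fundamental cycle survives contraction, which the paper leaves implicit. You could simplify that step further: pick a one-sided essential curve disjoint from a disk around $e_m$, and note that the parity of intersection with it is unchanged. The paper's route buys a self-contained combinatorial proof of order-independence that does not lean on the state sum. Its transposition table is also reused in Lemma \ref{lem:orderings} for the signed weights $\Psi_D$, where the paper needs order-independence to pick a convenient edge in the proof of Proposition \ref{prop:KBgraphpolynomial}.

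\textbf{One correction.} The sentence ``If $e_m$ is not regular, every edge is a bridge or a loop'' is false: a loop can come last after regular edges. You notice this yourself in the next paragraph. The terminal-graph paragraph can simply be dropped in favor of your direct treatment of a last bridge or loop, which is what the induction needs.
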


We will introduce some notation which will be helpful for our proof and the succeeding discussion. Each edge $e$ of any spanning tree $T$ of $G$ falls into one of five states with respect to $T$ depending on whether $e$ is internal or external, active or inactive and, if externally active, local or projective. We associate to each of these states a symbol\footnote{prescribed originally by John Conway; see \cite{MR899051}.} according to the decision tree of Figure \ref{fig:decisiontree1}.

\begin{figure}[h]
\tikz[]{
	\draw (0,0) -- (2,-1) node [midway, above, sloped] {external};
	\draw (0,0) -- (-2,-1) node [midway, above, sloped] {internal};
	\draw (2,-1) -- (1,-2) node [midway, above, sloped] {\small active};
	\draw (2,-1) -- (3,-2) node [midway, above, sloped] {\small inactive};
	\draw (-2,-1) -- (-1,-2) node [midway, above, sloped] {\small inactive};
	\draw (-2,-1) -- (-3,-2) node [midway, above, sloped] {\small active};
	\draw (1,-2) -- (0.25, -3) node [midway, above, sloped] {\tiny local};
	\draw (1,-2) -- (1.75, -3) node [midway, above, sloped] {\tiny projective};
	\node[anchor=north] at (-3,-2) {$L$};
	\node[anchor=north] at (-1,-2) {$D$};
	\node[anchor=north] at (0.25,-3) {$\ell_{\text{loc}}$};
	\node[anchor=north] at (1.75,-3) {$\ell_{\text{pro}}$};
	\node[anchor=north] at (3,-2) {$d$};
}
\caption{Determining the state of any edge in a graph relative to a spanning tree.}
\label{fig:decisiontree1}
\end{figure}

If $e$ is an edge of $G$, we denote the state of $e$ relative $T$ by $s_T(e)$. If we want to suppress whether an externally active edge is local or projective, we use the label $\ell$, and write $s^0_T(e) \in \{L, D, \ell, d\}$ to denote the suppressed state. The state of the spanning tree $T$ relative an edge ordering of $G$ is the (unordered) set $s(T) = \{s_T(e_1), \ldots, s_T(e_m)\}$, usually presented as a word $s_T(e_1)\cdots s_T(e_m)$

\begin{proof}[Proof of Proposition \ref{prop:tutte_spanningtrees}]
Let $\displaystyle \psi_G'(x, y) = \sum_{T \subset G} x^i y^j$. Again, we will show that $\psi_G'$ satisfies the defining relations of $\psi_G$. First, assume that $e$ is a regular edge, and (assuming the second statement of the proposition) that $e$ is the highest-ordered regular edge. Thus, $e$ is inactive with respect to each spanning tree of $G$, so it never contributes to the internal or external activities of any spanning tree. Hence
\begin{align*}
\psi_G'(x, y) = \sum_{T \subset G} x^i y^j = \sum_{\substack{T \subset G \\ e \in T}} x^i y^j + \sum_{\substack{T \subset G \\ e\not\in T}} x^i y^j = \psi_{G/e}' + \psi_{G-e}' 
\end{align*}
where the edge orderings on $G-e$ and $G/e$ are the natural ones induced by the ordering on $G$. Now, assume that $e$ is irregular. If $e$ is a bridge, then it belongs to each spanning tree of $G$, and $\cut(T, e)$ is the subgraph of $G$ consisting only of $e$. Thus $e$ is internally active with respect to each spanning tree $T$, so
\[
\psi_G' = x \, \psi_{G/e}'.
\]
If $e$ is a loop, then $\cyc(T, e)$ is exactly $e$. Depending on whether $e$ is a local loop or a projective loop, it follows that
\[
\psi_G' = y\, \psi_{G- e}' \qquad \text{or} \qquad \psi_G' = \psi_{G-e}'
\]
respectively. Finally, since $G$ is connected by assumption, any graph with no edges must be the graph with a single vertex. The graph itself is the only spanning tree, hence $\psi'_{\odot} = x^0 y^0 = 1$.

Now we prove that $\psi_G$ is independent under edge ordering. It suffices to examine the effect of transposing labels adjacent in a given edge ordering. Let $G$ and $G'$ be identical graphs in $\RP^2$ with edge orderings which satisfy
\[
e_i' = e_{i+1}, \qquad e_{i+1}' = e_i, \qquad \text{and}\qquad e_j' = e_j ~ \text{for each}~ j \not= i, i+1.
\]
First, it is apparent that $s_T(e_j) = s_T(e_j')$ for any spanning tree $T$ if $j \not= i, i+1$; transposing adjacent labels cannot affect activity or inactivity, and $[T \cup e_j] = [T \cup e_j'] \in H_1(\RP^2)$ since $e_j = e_j'$. Next, in \cite[pp. 85--88]{MR61366} (or \cite[Chapter IX]{MR1813436}), Tutte proves that a change in activity of $e_i$ or $e_{i+1}$ is only possible if
\begin{enumerate}[label=(\roman*)]
\item one of them, say $e_i$, is in $T$, but $e_{i+1} \not\in T$,
\item $e_i \in \cyc(T, e_{i+1})$, and
\item each edge $e_j$ for $j \not=i, i+1$ satisfies $s_T^0(e_j) = s_{\sigma(T)}^0(e_j)$ where $\sigma(T)$ is the tree obtained by replacing $e_i\in T$ with $e_{i+1} \not\in T$.
\end{enumerate}
Tutte's work holds in our setting, but there is a nontriviality: it can now be the case that $s_T(e_j) \not= s_{\sigma(T)}(e_j)$ when $s_T(e_j) = \ell_{\text{pro}}$ and $s_{\sigma(T)}(e_j) = \ell_{\text{loc}}$, or vice versa.

Let us enumerate the possible changes in activity which follow from Tutte's description. Start by assuming that $e_i\in T$ is internally active. Since $e_i \in \cyc(T, e_{i+1})$, $e_{i+1}$ is forced to be externally inactive. Recall that $e_i \in \cyc(T, e_{i+1})$ implies that $e_{i+1} \in \cyc(\sigma(T), e_i)$. Thus, $s_{\sigma(T)}(e_{i+1}) = D$ independent of whether $s_{\sigma(T)} (e_i) = d$ or $s_{\sigma(T)}^0(e_i) = \ell$. This means that the states associated $e_i, e_{i+1}$ and $e_i', e_{i+1}'$ are interchanged by the reordering. Otherwise, $e_i$ is internally inactive with respect to $T$, so $s_T(e_{i+1}) = d$. If $s_{\sigma(T)} (e_i) = d$, then $s_{\sigma(T)}(e_{i+1}) = D$, and there is no change in activity after reordering. If $s^0_{\sigma(T)}(e_i) = \ell$, then we still have $s_{\sigma(T)} (e_{i+1}) = D$, but there is a change in activity interchanging the states associated to $e_i, e_{i+1}$ and $e_i', e_{i+1}'$. In \cite[\S 3]{MR899051}, Thistlethwaite summarizes these findings, which we replicate in Table \ref{tab:thisthlethwaite}. Here, we use $\ell_\bullet$ for $\bullet \in \{\mathrm{loc}, \mathrm{pro}\}$ fixed. Notice that the type of external activity cannot change between $e_i$ and $e_i'$.

\begin{table}[h]
\centering
\begin{tabular}{cc||cc|cc}
                       &                        & \multicolumn{2}{c|}{Old edge ordering} & \multicolumn{2}{c}{New edge ordering}  \\
\multicolumn{1}{c}{}   & \multicolumn{1}{c||}{} & $e_i$  & $e_{i+1}$                     & $e_i'$ & $e_{i+1}'$                    \\ 
\hhline{==::====}
\multirow{2}{*}{Case 1}   & $T$                    & $L$    & $d$                           & $d$    & $D$                           \\
                       & $\sigma(T)$            & $d$    & $D$                           & $L$    & $d$                           \\ 
\hline
\multirow{2}{*}{Case 2}  & $T$                    & $D$    & $d$                           & $\ell_{\bullet}$ & $D$                           \\
                       & $\sigma(T)$            & $\ell_{\bullet}$ & $D$                           & $D$    & $d$                           \\ 
\hline
\multirow{2}{*}{Case 3} & $T$                    & $L$    & $d$                           & $\ell_{\bullet}$ & $D$                           \\
                       & $\sigma(T)$            & $\ell_{\bullet}$ & $D$                           & $L$    & $d$                          
\end{tabular}
\caption{}
\label{tab:thisthlethwaite}
\end{table}

In what follows, we use $s(T)$ to denote the state of $T$ relative the old ordering of edges, and $s'(T)$ to denote the state of $T$ relative the new ordering. For Tutte, the proof is finished, since $s(T) = s'(\sigma(T))$ and $s(\sigma(T)) = s'(T)$ if there is no distinction between $\ell_\mathrm{loc}$ and $\ell_\mathrm{pro}$. We have to consider the possibility that $s_T(e_j) \not= s_{\sigma(T)}(e_j)$, $j\not=i, i+1$, for $e_j$ an edge which is externally active with respect to $T$ and $\sigma(T)$ subject to the conditions (i)--(iii).

Assume $e_i \in T$. Following \cite{MR61366}, $T - e_i$ consists of two components, denoted by $C$ and $D$. Since $e_i \in \cyc(T, e_{i+1})$, we know that $e_{i+1}$ has ends in $C$ and $D$; \emph{i.e.}, $e_{i+1} \in \cut(T, e_i)$. If $e_j$ starts and ends at the same component $C$ or $D$, then $\cyc(T, e_j) = \cyc(\sigma(T), e_j)$, so $s_T(e_j) = s_{\sigma(T)}(e_j)$. Otherwise, $e_j \in \cut(T, e_i)$. Therefore, if $e_i$ is internally active with respect to $T$, then $s_T(e_j) = s_{\sigma(T)}(e_j) = d$. We must conclude that if $s_T(e_j) \not= s_{\sigma(T)}(e_j)$, then $e_i$ must be internally inactive with respect to $T$, putting us squarely within Case 2 of Table \ref{tab:thisthlethwaite}.

Let $\gamma \in H_1(\RP^2)$ be the homology class represented by $\cyc(T, e_{i+1}) = \cyc(\sigma(T), e_i)$ under the conditions (i)--(iii). We claim that 
\[
\gamma = [\cyc(T, e_j)] + [\cyc(\sigma(T), e_j)]
\]
for $e_j \in \cut(T, e_i)$. Let $a, \alpha, c$ be the vertices of $e_i$, $e_j$, and $e_{i+1}$ respectively which reside in $C\subset T$, and $b, \beta, d$ be the vertices of each which reside in $D \subset T$. See Figure \ref{fig:cycleaddition} for a schematic. Since $T$ and $\sigma(T)$ are trees, there are unique paths $P_1$ from $\alpha$ to $\beta$ in $T$ and $P_2$ from $\alpha$ to $\beta$ in $\sigma(T)$. Define $\alpha'$ to be the last vertex in $P_1$ preceding $e_i$ which is a term of $P_2$, and $\beta'$ to be the first vertex in $P_1$ succeeding $e_i$ which is a term of $P_2$. Finally, define $R_1$ to be the subpath of $P_1$ going from $\alpha'$ to $\beta'$ and $R_2$ to be the subpath of $P_2$ going from $\alpha'$ to $\beta'$. Then
\[
\gamma = [R_1 \cup R_2] = [R_1 \cup e_j] + [R_2 \cup e_j] = [\cyc(T, e_j)] + [\cyc(\sigma(T), e_j)]
\]
as desired.

\begin{figure}[h]
\tikz[]{
	\draw[rounded corners=6mm, dotted, thick] (0,0) rectangle (2.5,4);
	\node(c) at (2, 0.5) {$\bullet$};
		\node[anchor=east] at (c) {\small $c$};
		\draw (2,0.5) -- (1.75, 0.7) node[pos=1.6, sloped] {\tiny$\cdots$};
	\node(alpha) at (2, 2.5) {$\bullet$};
		\node[anchor=east] at (alpha) {\small $\alpha$};
		\draw (2,2.5) -- (1.95, 2)node[pos=1.6, sloped] {\tiny$\cdots$};
		\draw (2,2.5) -- (1.75, 2.3)node[pos=1.6, sloped] {\tiny$\cdots$};
		\draw (2,2.5) -- (1.75, 2.7)node[pos=1.6, sloped] {\tiny$\cdots$};
	\node(a) at (2, 3.5) {$\bullet$};
		\node[anchor=east] at (a) {\small $a$};
		\draw (2,3.5) -- (1.75, 3.65)node[pos=1.6, sloped] {\tiny$\cdots$};
		\draw (2,3.5) -- (1.75, 3.3)node[pos=1.6, sloped] {\tiny$\cdots$};
	\node at (0.75,2) {\large $C$};
	\draw[rounded corners=6mm, dotted, thick] (3.5,0) rectangle (6,4);
	\node(d) at (4, 0.5) {$\bullet$};
		\node[anchor=west] at (d) {\small $d$};
		\draw (4,0.5) -- (4.35, 0.85)node[pos=1.6, sloped] {\tiny$\cdots$};
	\node(beta) at (4, 1.5) {$\bullet$};
		\node[anchor=west] at (beta) {\small $\beta$};
		\draw (4,1.5) -- (4.25, 1.7)node[pos=1.6, sloped] {\tiny$\cdots$};
		\draw (4,1.5) -- (4.2, 1.9)node[pos=1.6, sloped] {\tiny$\cdots$};
	\node(b) at (4, 3.5) {$\bullet$};
		\node[anchor=west] at (b) {\small $b$};
		\draw (4,3.5) -- (4.4, 3)node[pos=1.4, sloped] {\tiny$\cdots$};
	\node at (5,2.5) {\large $D$};
	\draw (a.center) -- (b.center) node [midway, above] {$e_i$};
	\draw[dashed] (alpha.center) -- (beta.center) node [midway, above] {$e_j$};
	\draw[dashed] (c.center) -- (d.center) node [midway, above] {$e_{i+1}$};
}
\caption{A tree $T$ with $e_i\in T$ and $e_j, e_{i+1}\in \cut(T, e_i)$. The tree $\sigma(T)$ is obtained by replacing $e_i$ with $e_{i+1}$.}
\label{fig:cycleaddition}
\end{figure}

Returning to Case 2 of Table \ref{tab:thisthlethwaite}, there are two possibilities.
\begin{itemize}
\item If $\bullet = \mathrm{loc}$, then we have that $0 = [\cyc(T, e_j)] + [\cyc(\sigma(T), e_j)]$, so $s_T(e_j) = s_{\sigma(T)}(e_j)$. Thus $s(T) = s'(\sigma(T))$ and $s(\sigma(T)) = s'(T)$.
\item If $\bullet = \mathrm{pro}$, then $1 = [\cyc(T, e_j)] + [\cyc(\sigma(T), e_j)]$. This means that either $s_T(e_j) = s_{\sigma(T)}(e_j) = d$, $s_T(e_j) = \ell_{\mathrm{pro}}$ and $s_{\sigma(T)}(e_j) = \ell_{\mathrm{loc}}$, or $s_T(e_j) = \ell_{\mathrm{loc}}$ and $s_{\sigma(T)}(e_j) = \ell_{\mathrm{pro}}$. But neither $d$ nor $\ell_{\mathrm{pro}}$ contribute to the local external activity of spanning tree, thus $s(T) = s'(T)$ and $s(\sigma(T)) = s' (\sigma(T))$.
\end{itemize}
\end{proof}

\begin{example}
Below, we compute $\psi_G$ for one of the Tait graphs from Example \ref{ex:PT_Tutte} using the spanning tree definition and two distinct edge orderings. The change in activity after reordering described here is the one guaranteed by Case 2 in Table \ref{tab:thisthlethwaite}. Notice that $e_2$ and $e_3$ are interchanged, and that $s_T(e_1) \not= s_{\sigma(T)}(e_1)$ for $T = e_2$ in $G$.
\[
\tikz[baseline={([yshift=-.5ex]current bounding box.center)}, scale=.225]{
\node[anchor=west] at (-25,0) {$G$ and $G'$:};
\node[anchor=west]  at (-25,-12.5) {Spanning trees:};
\node[anchor=west]  at (-25,-18.3) {States:};
\node[anchor=west]  at (-25,-20.3) {Summands of $\psi_G$:};
	\begin{scope}[xshift=2cm]
		\begin{scope}[blue, very thick]
			\fill (0,2.5) circle (.4cm);
			\fill (0,-2.5) circle (.4cm);
			\draw (0,-2.5) -- (0,-5);
			\draw (0,2.5) -- (0,5);
			\draw (0,-2.5) to[out=30, in=-30] (0,2.5);
			\draw (0,-2.5) to[out=150, in=-150] (0,2.5);
		\end{scope}
		\node[anchor=west] at (0,3.75) {\tiny$2$};
		\node[anchor=east] at (-1.25,0) {\tiny$3$};
		\node[anchor=west] at (1.25,0) {\tiny$1$};
		\draw[white, ultra thick] (0,0) circle (5cm);
		\draw[dotted] (0,0) circle (5cm);
	\end{scope}
	\begin{scope}[yshift=-12.5cm, xshift=-7.25cm, scale=0.8]
	\draw[black!20!white, thick] (0,-2.5) to[out=150, in=-150] (0,2.5);
	\draw[black!20!white, thick] (0,-2.5) -- (0,-5);
	\draw[black!20!white, thick] (0,2.5) -- (0,5);
		\begin{scope}[blue, very thick]
			\fill (0,2.5) circle (.4cm);
			\fill (0,-2.5) circle (.4cm);
			\draw (0,-2.5) to[out=30, in=-30] (0,2.5);
		\end{scope}
		\node[anchor=west] at (0,3.75) {\tiny$2$};
		\node[anchor=east] at (-1.25,0) {\tiny$3$};
		\node[anchor=west] at (1.25,0) {\tiny$1$};
		\draw[white, ultra thick] (0,0) circle (5cm);
		\draw[dotted] (0,0) circle (5cm);
		\node at (0,-7.5) {$Ldd$};
		\node at (0,-10) {$x$};
	\end{scope}
	\begin{scope}[yshift=-12.5cm, xshift=2cm, scale=0.8]
	\draw[black!20!white, thick] (0,-2.5) to[out=150, in=-150] (0,2.5);
	\draw[black!20!white, thick]  (0,-2.5) to[out=30, in=-30] (0,2.5);
		\begin{scope}[blue, very thick]
			\fill (0,2.5) circle (.4cm);
			\fill (0,-2.5) circle (.4cm);
			\draw (0,-2.5) -- (0,-5);
			\draw (0,2.5) -- (0,5);
		\end{scope}
		\node[anchor=west] at (0,3.75) {\tiny$2$};
		\node[anchor=east] at (-1.25,0) {\tiny$3$};
		\node[anchor=west] at (1.25,0) {\tiny$1$};
		\draw[white, ultra thick] (0,0) circle (5cm);
		\draw[dotted] (0,0) circle (5cm);
		\node at (0,-7.5) {$\ell_{\text{pro}}Dd$};
		\node at (0,-10) {$1$};
	\end{scope}
	\begin{scope}[yshift=-12.5cm, xshift=11.25cm, scale=0.8]
	\draw[black!20!white, thick] (0,-2.5) to[out=30, in=-30] (0,2.5);
	\draw[black!20!white, thick] (0,-2.5) -- (0,-5);
	\draw[black!20!white, thick] (0,2.5) -- (0,5);
		\begin{scope}[blue, very thick]
			\fill (0,2.5) circle (.4cm);
			\fill (0,-2.5) circle (.4cm);
			\draw (0,-2.5) to[out=150, in=-150] (0,2.5);
		\end{scope}
		\node[anchor=west] at (0,3.75) {\tiny$2$};
		\node[anchor=east] at (-1.25,0) {\tiny$3$};
		\node[anchor=west] at (1.25,0) {\tiny$1$};
		\draw[white, ultra thick] (0,0) circle (5cm);
		\draw[dotted] (0,0) circle (5cm);
		\node at (0,-7.5) {$\ell_{\text{loc}}\ell_{\text{pro}}D$};
		\node at (0,-10) {$y$};
	\end{scope}
	\begin{scope}[xshift=31.25cm]
		\begin{scope}[blue, very thick]
			\fill (0,2.5) circle (.4cm);
			\fill (0,-2.5) circle (.4cm);
			\draw (0,-2.5) -- (0,-5);
			\draw (0,2.5) -- (0,5);
			\draw (0,-2.5) to[out=30, in=-30] (0,2.5);
			\draw (0,-2.5) to[out=150, in=-150] (0,2.5);
		\end{scope}
		\node[anchor=west] at (0,3.75) {\tiny$3$};
		\node[anchor=east] at (-1.25,0) {\tiny$2$};
		\node[anchor=west] at (1.25,0) {\tiny$1$};
		\draw[white, ultra thick] (0,0) circle (5cm);
		\draw[dotted] (0,0) circle (5cm);
	\end{scope}
	\begin{scope}[xshift=22cm, yshift=-12.5cm, scale=0.8]
	\draw[black!20!white, thick] (0,-2.5) to[out=150, in=-150] (0,2.5);
	\draw[black!20!white, thick] (0,-2.5) -- (0,-5);
	\draw[black!20!white, thick] (0,2.5) -- (0,5);
		\begin{scope}[blue, very thick]
			\fill (0,2.5) circle (.4cm);
			\fill (0,-2.5) circle (.4cm);
			\draw (0,-2.5) to[out=30, in=-30] (0,2.5);
		\end{scope}
		\node[anchor=west] at (0,3.75) {\tiny$3$};
		\node[anchor=east] at (-1.25,0) {\tiny$2$};
		\node[anchor=west] at (1.25,0) {\tiny$1$};
		\draw[white, ultra thick] (0,0) circle (5cm);
		\draw[dotted] (0,0) circle (5cm);
		\node at (0,-7.5) {$Ldd$};
		\node at (0,-10) {$x$};
	\end{scope}
	\begin{scope}[xshift=31.25cm, yshift=-12.5cm, scale=0.8]
	\draw[black!20!white, thick] (0,-2.5) to[out=150, in=-150] (0,2.5);
	\draw[black!20!white, thick]  (0,-2.5) to[out=30, in=-30] (0,2.5);
		\begin{scope}[blue, very thick]
			\fill (0,2.5) circle (.4cm);
			\fill (0,-2.5) circle (.4cm);
			\draw (0,-2.5) -- (0,-5);
			\draw (0,2.5) -- (0,5);
		\end{scope}
		\node[anchor=west] at (0,3.75) {\tiny$3$};
		\node[anchor=east] at (-1.25,0) {\tiny$2$};
		\node[anchor=west] at (1.25,0) {\tiny$1$};
		\draw[white, ultra thick] (0,0) circle (5cm);
		\draw[dotted] (0,0) circle (5cm);
		\node at (0,-7.5) {$\ell_{\text{pro}}\ell_{\text{pro}}D$};
		\node at (0,-10) {$1$};
	\end{scope}
	\begin{scope}[xshift=40.5cm, yshift=-12.5cm, scale=0.8]
	\draw[black!20!white, thick] (0,-2.5) to[out=30, in=-30] (0,2.5);
	\draw[black!20!white, thick] (0,-2.5) -- (0,-5);
	\draw[black!20!white, thick] (0,2.5) -- (0,5);
		\begin{scope}[blue, very thick]
			\fill (0,2.5) circle (.4cm);
			\fill (0,-2.5) circle (.4cm);
			\draw (0,-2.5) to[out=150, in=-150] (0,2.5);
		\end{scope}
		\node[anchor=west] at (0,3.75) {\tiny$3$};
		\node[anchor=east] at (-1.25,0) {\tiny$2$};
		\node[anchor=west] at (1.25,0) {\tiny$1$};
		\draw[white, ultra thick] (0,0) circle (5cm);
		\draw[dotted] (0,0) circle (5cm);
		\node at (0,-7.5) {$\ell_{\text{loc}}Dd$};
		\node at (0,-10) {$y$};
	\end{scope}
	\draw[->, thick] (8,0) -- (25,0) node[midway, above]{\Large $\substack{e_1'=e_1 \\ e_2' = e_3 \\ e_3'=e_2}$};
}
\]
\end{example}

\subsubsection{Graphs dual to spanning trees in $\RP^2$}
\label{sss:quasitrees}

In $S^3$, if $G$ is a Tait graph of a diagram $D$ with spanning tree $T$, the dual spanning tree $\hat{T}$---obtained by taking the edges dual to each $e\not\in T$---has the same activity as $T$. This is not the case in $\RP^2$. In particular, if $T$ is a spanning tree of $G$ in $\RP^2$, then the dual subgraph (still denoted by $\hat{T}$) is not a tree. It is close though:

\begin{lemma}
\label{lem:projectivecompletions}
Let $G$ be a cellular graph in $\RP^2$. Let $\{T_1,\ldots, T_k\}$ denote the spanning trees of $G$ and $\{S_1, \ldots, S_\ell\}$ denote the spanning trees of $G^*$. Then
\[
\{\hat{T}_1,\ldots, \hat{T}_k\} = \{S_i \cup e : [S_i \cup e] = 1 \in H_1(\RP^2), i = 1,\ldots, \ell\}.
\]
\end{lemma}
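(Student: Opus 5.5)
The plan is to reduce the statement to Euler characteristic counting on the cellular embedding, together with the standard fact that a subgraph and the complement of its dual "see" each other through the surface.

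Since $G$ is cellular in $\RP^2$, every face of $G$ is a disk, so $v(G) - e(G) + f(G) = \chi(\RP^2) = 1$. The dual $G^*$ has $f(G)$ vertices, and its edges are in bijection with those of $G$. For a spanning subgraph $H \subseteq G$, write $\hat H$ for the spanning subgraph of $G^*$ whose edges are dual to the edges of $G$ not in $H$.

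The key structural input I will establish is a relation between a regular neighborhood $N(H)$ and the complement $\RP^2 - N(H)$. The complement deformation retracts onto $\hat H$. The first thing this gives is
\[
b_0(\hat H) = \#\{\text{components of } \RP^2 - N(H)\}.
\]
The second comes from Lefschetz/Alexander duality with $\F$-coefficients in $\RP^2$: the pair $(i_*^H, i_*^{\hat H})$ is complementary. Concretely, exactly one of $H$ and $\hat H$ carries an essential cycle, except that both can do so when each has one. I expect to use the precise form in which both $H$ and $\hat H$ are local only when neither contains a cycle nonzero in $H_1(\RP^2)$, and that cannot happen for both at once.

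Given this, take a spanning tree $T$ of $G$. Then $N(T)$ is a disk, so $\RP^2 - N(T)$ is an open Möbius band. It is connected, hence $\hat T$ is a connected spanning subgraph of $G^*$. Counting edges,
\[
e(\hat T) = e(G) - (v(G)-1) = f(G) = v(G^*),
\]
so $\hat T$ is connected with exactly one independent cycle, i.e.\ $\hat T = S \cup e$ for a spanning tree $S$ of $G^*$ and an edge $e \notin S$. Its unique cycle is the core of the Möbius band, so $[S\cup e] = 1$. This gives the inclusion $\subseteq$.

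For the converse, start from $S \cup e$ with $[S \cup e]=1$, and let $T$ be the subgraph of $G$ consisting of edges whose duals are not in $S\cup e$. Apply the same argument with the roles of $G$ and $G^*$ swapped, using $G^{**}=G$ (both are cellular). A neighborhood of $S\cup e$ is a Möbius band, since its unique cycle is one-sided. Its complement in $\RP^2$ is therefore an open disk, which retracts onto $T$. Hence $T$ is connected and simply connected, i.e.\ a spanning tree, and $\hat T = S \cup e$. The assignment $T \mapsto \hat T$ is injective because it is complementation of edge sets, so the two sets coincide.

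The main obstacle is making the retraction claim rigorous: that $\RP^2 - N(H)$ deformation retracts onto $\hat H$ for a cellularly embedded $G$. I would prove it by building $N(H)$ from vertex disks and edge bands of $G$, and observing that the complement is a union of face disks (one per vertex of $G^*$) glued along the bands of the edges of $G$ not in $H$. This union is exactly a ribbon neighborhood of $\hat H$, which handles the retraction. Everything else is Euler characteristic bookkeeping and the classification of neighborhoods of graphs with one cycle (annulus versus Möbius band, according to the homology class of that cycle).
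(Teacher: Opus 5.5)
Your proof is correct, and it takes a more topological route than the paper's. The paper argues purely by counting. It uses $\chi(\RP^2)=1$ to give $\hat T_i$ exactly $v(G^*)$ edges, and from this asserts that $\hat T_i$ has a single cycle and that this cycle is essential. For the converse it applies the first half to $G^*$ to write $\hat S_i = T_j\cup e'$, so that $\widehat{S_i\cup e}=T_j\cup e'-e''$ with $e''$ dual to $e$, and concludes by asserting $e''\in\cyc(T_j,e')$.

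You instead use that the complement of a regular neighborhood of $H$ is a regular neighborhood of $\hat H$, together with the fact that in $\RP^2$ the complement of a disk is a Möbius band and vice versa. This supplies exactly the two points the paper leaves implicit.
\begin{enumerate}
\item The edge count yields a unique cycle only once $\hat T$ is known to be connected. Your Möbius-band complement gives connectivity and essentiality at once.
\item The paper's claim $e''\in\cyc(T_j,e')$ is equivalent to the hypothesis $[S_i\cup e]=1$. Were the cycle local, $N(S_i\cup e)$ would be an annulus whose complement is a disk together with a Möbius band, so $T_j\cup e'-e''$ would be disconnected. Your converse instead invokes the hypothesis directly to see that $T$ is a deformation retract of a disk.
\end{enumerate}
The paper's version is shorter and reuses the first direction; yours is self-contained.

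One correction: your paragraph on Lefschetz/Alexander duality is garbled. $H$ and $\hat H$ are disjoint in $\RP^2$, so they can never both carry an essential cycle; in fact exactly one of them does. Since nothing in your argument uses that paragraph, simply delete it.
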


If $T$ is a spanning tree of $G$ and $[T \cup e] = 1$, then we call the subgraph $T \cup e$ a \emph{projective completion} of $T$. The previous lemma says that the spanning trees of $G\subset \RP^2$ are in bijective correspondence with the projective completions of the spanning trees of $G^*$.

\begin{remark}
The set of spanning trees together with all projective completions coincides with the set of quasi-trees of $G$. Recall that a \emph{ribbon graph} is a pair $(G, S)$ in which $G$ is a graph, $S$ is a surface with boundary, and the inclusion $G \hookrightarrow S$ is a homotopy equivalence. Then, \emph{quasi-trees}, introduced in \cite{MR2665767}, are ribbon graphs with exactly one boundary component. See also \cite{MR2366169, MR2854567}. The quasi-trees associated to a graph in $\RP^2$ are exactly those obtained as thickenings of trees or spanning subgraphs which are projective completions of spanning trees.
\end{remark}

\begin{proof}
Notice that if $G$ has $n$ vertices and $m$ edges, then it has $m-n+1$ many faces, since $G$ is assumed to be a non-local graph in $\RP^2$, and $\chi(\RP^2) = 1$. In particular, since the spanning tree $T_i$ has $n-1$ edges, its dual $\hat{T}_i$ has $m-n+1$ edges. Since $G^*$ has $m-n+1$ vertices, $\hat{T}_i$ has one edge too many to be a spanning tree. However, $\hat{T}_i$ is a spanning subgraph with $m-n+1$ edges, so it has exactly one cycle. It follows that $[\hat{T}_i]=1$ since $G$ is non-local and $T_i$ is a local subgraph without loops.

In the other direction, we show that the dual of the projective completion $S_i \cup e$ is a spanning tree. Notice that $S_i \cup e$ has $m-n+1$ edges, so its dual has $n-1$ edges. To verify that $\widehat{S_i \cup e}$ has no loops, we use the first part of the argument. Namely, $\hat{S}_i$ is a projective closure $T_j \cup e'$ of some spanning tree $T_j$. Thus $\widehat{S_i \cup e}$ is $T_j \cup e' - e''$ for some $e'' \in \cyc(T_j, e')$. Since $T_j \cup e'$ has exactly one (homologically essential) cycle, $T_j \cup e' - e''$ is a spanning tree.
\end{proof}

\subsection{Relation to the generalized Krushkal polynomial for graphs in $\RP^2$}
\label{ss:krushkal}

In \cite{MR2769192}, Krushkal described a polynomial invariant $K_G \in \mathbb{Z}[x, y, A, B]$ of graphs embedded in orientable surfaces. It specializes to each of the Tutte, relative Tutte \cite{MR2607372}, Las Vergnas \cite{MR597150}, and Bollob\'as-Riordan \cite{MR1851080, MR1906909} polynomials. Subsequently, Buttler \cite{MR3739494} extended the Krushkal polynomial to nonorientable surfaces. Again, we refer the interested reader to two excellent surveys: \cite{MR4972578} for a full description of the relationships between these polynomials, and \cite{MR4952626} for descriptive derivations of polynomials for embedded graphs. In this subsection, we observe that the myopic Tutte polynomial for graphs $G \subset \RP^2$ recovers the generalized Krushkal polynomial. In particular, we prove Theorem \ref{thm:equalkrushkal}, which follows from Propositions \ref{prop:fromkrushkal} and \ref{prop:tokrushkal}.

Assume that $G$ is a graph embedded in a surface $\Sigma$. Following \cite{MR3739494}, for any spanning subgraph $H$ of $G$, let 
\[
s(H) = 2b_0(H) - \chi (\mathcal{S}(H)) \qquad \text{and} \qquad s^\perp(H) = 2b_0(\Sigma - H) - \chi(\mathcal{S}^\perp(H))
\]
where $\mathcal{S}(H)$ and $\mathcal{S}^\perp(H)$ are defined as follows. Take a small regular neighborhood $\nu(H)$ of $H \subset \Sigma$ (so that $\partial \nu(H)$ is a collection of circles). Then $\mathcal{S}(H)$ is obtained by attaching a disk to each boundary component of $\nu(H)$, and $\mathcal{S}^\perp(H)$ is obtained by attaching a disk to each boundary component of $\Sigma - \nu(H)$. Recall that $i_*^H: H_1(H; \F) \to H_1(\Sigma; \F)$ is the map induced by the embedding $i: H \hookrightarrow \Sigma$.

\begin{definition}
\label{def:krushkal}
The \emph{generalized Krushkal polynomial} of embedded graphs $G \subset \Sigma$ is given by
\begin{equation}
\label{eq:krushkal}
K_{G}(x, y, A, B) = \sum_{H \subseteq G} (x-1)^{b_0(H) - b_0(G)} (y-1)^{\dim \ker(i_*^H)} A^{\frac{s(H)}{2}} B^{\frac{s^\perp(H)}{2}}.
\end{equation}
\end{definition}

We list pertinent properties of the generalized Krushkal polynomial below.

\begin{theorem}[Theorems 2.4 and 2.5 of \cite{MR3739494}]
\label{thm:krushkalprops}
Let $G$ be a graph embedded in a compact surface $\Sigma$.
\begin{enumerate}
\item $K_G$ recovers the Tutte polynomial:
\[
\chi_G(x, y) = (y-1)^{\frac{s(\Sigma)}{2}} K_G\left(x, y, y-1, \frac{1}{y-1}\right)
\]
where $s(\Sigma) = 2b_0(\Sigma) - \chi(\Sigma)$.
\item If $G$ is cellular, and $G^* \subset \Sigma$ is the embedded graph which is Poincar\'e dual to $G$, then
\[
K_{G^*}(x, y, A, B) = K_G(y, x, B, A).
\]
\end{enumerate}
\end{theorem}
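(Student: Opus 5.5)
Since this is Buttler's result, the plan is to reprove both parts directly from the state sum (\ref{eq:krushkal}). I work throughout with $\F$-coefficients, so orientability never matters. The strategy is to reduce both statements to homological identities about a spanning subgraph $H\subseteq G$ and the two capped surfaces $\mathcal{S}(H)$ and $\mathcal{S}^\perp(H)$. Write $r(H)=\operatorname{rank} i_*^H$, so that $\dim\ker(i_*^H)=b_1(H)-r(H)$. For a closed surface $S$ we have $s(S)=2b_0(S)-\chi(S)=\dim H_1(S;\F)$. Hence $s(H)=\dim H_1(\mathcal{S}(H);\F)$ and $s^\perp(H)=\dim H_1(\mathcal{S}^\perp(H);\F)$. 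Since $\nu(H)\simeq H$, the group $H_1(\mathcal{S}(H);\F)$ is the quotient of $H_1(H;\F)$ by the span of the boundary circles of $\nu(H)$.

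For part (1), substituting $A=y-1$ and $B=(y-1)^{-1}$ and multiplying by $(y-1)^{s(\Sigma)/2}$ turns the $H$-summand into
\[
(x-1)^{b_0(H)-b_0(G)}\,(y-1)^{\,b_1(H)-r(H)+\frac{1}{2}\left(s(H)-s^\perp(H)+s(\Sigma)\right)}.
\]
Comparing with the Tutte state sum, part (1) is equivalent to the identity
\[
2r(H)=s(H)-s^\perp(H)+s(\Sigma).
\]
First I check the extreme cases. For $H=G$ cellular, $s^\perp=0$ and $s(H)=s(\Sigma)=r(H)$. For $H$ edgeless, $s(H)=0$, $s^\perp(H)=s(\Sigma)$ and $r(H)=0$.

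In general I apply Mayer--Vietoris to $\Sigma=\nu(H)\cup\overline{\Sigma-\nu(H)}$, glued along the boundary circles $\partial\nu(H)$. This expresses $H_1(\Sigma)$ through $H_1(\nu H)\oplus H_1(\Sigma-\nu H)$ modulo the images of $H_1(\partial\nu H)$. Let $a$ and $a^\perp$ be the dimensions of the images of $H_1(\partial\nu H)$ in $H_1(\nu H)$ and in $H_1(\Sigma-\nu H)$. Then $s(H)=b_1(H)-a$ and $s^\perp(H)=b_1(\Sigma-\nu H)-a^\perp$. Counting dimensions in Mayer--Vietoris, together with the symmetric statement for the complement, should give the identity. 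Equivalently, I can use the intersection-pairing argument: under the nondegenerate pairing on $H_1(\Sigma;\F)$, the image of $i_*^H$ and the image of the complement are mutually annihilating up to the radical coming from $\mathcal{S}$.

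For part (2), I use the standard bijection $H\mapsto H^*$, where $H^*$ consists of the edges of $G^*$ dual to the edges of $G$ not in $H$. Because $G$ is cellular, $\nu(H^*)$ is isotopic to the closure of $\Sigma-\nu(H)$ with the vertex disks of $G^*$ identified with the faces. Hence $\mathcal{S}(H^*)\cong\mathcal{S}^\perp(H)$ and $\mathcal{S}^\perp(H^*)\cong\mathcal{S}(H)$, which swaps the roles of $A$ and $B$. It remains to show two identities:
\[
b_0(H^*)-b_0(G^*)=\dim\ker(i_*^H)
\qquad\text{and}\qquad
\dim\ker(i_*^{H^*})=b_0(H)-b_0(G),
\]
with the second following from the first by symmetry. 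For the first, I use the long exact sequence of the pair $(\Sigma,H)$ with $\F$-coefficients:
\[
0\to H_2(\Sigma)\to H_2(\Sigma,H)\to H_1(H)\to H_1(\Sigma).
\]
Lefschetz duality gives $H_2(\Sigma,H)\cong H^0(\Sigma-H)$, and $\Sigma-H\simeq H^*$. This yields $\dim\ker(i_*^H)=b_0(H^*)-1$, and $b_0(G^*)=1$.

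I expect the main obstacle to be the bookkeeping in part (1): making sure the boundary-curve contributions $a$ and $a^\perp$ combine exactly to $2r(H)-s(\Sigma)$ with no off-by-one errors from components. I would first verify the identity when $H$ is connected and then argue additivity over components, since isolated components contribute spheres to $\mathcal{S}(H)$.
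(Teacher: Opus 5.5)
The paper does not prove this statement; it quotes it from the cited source, and your argument follows the standard one (Krushkal's, run with $\F$-coefficients). Your part (2) is complete and correct. The bijection $H\mapsto H^*$, the identification of $\overline{\Sigma\setminus\nu(H)}$ with $\nu(H^*)$, and the pair sequence plus Lefschetz duality giving $\dim\ker i_*^H=b_0(H^*)-1$ are exactly what is needed.

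The gap is in part (1). You correctly reduce it to $2r(H)=s(H)-s^\perp(H)+s(\Sigma)$, but that identity is the whole content of part (1), and you only assert that dimension counting ``should'' give it. Neither route you sketch gets there as stated:
\begin{itemize}
\item The images of $H_1(\nu(H))$ and $H_1(\Sigma\setminus\nu(H))$ in $H_1(\Sigma;\F)$ are exact annihilators of each other, with no radical correction. That fact only yields $r(H)+r^\perp(H)=s(\Sigma)$, not your identity.
\item Proving the connected case and adding over components fails. The quantities $r(H)$ and $s^\perp(H)$ are not additive over components, while $s(\Sigma)$ is a global constant. For example, two disjoint parallel essential loops on a torus each have $r=1$, and so does their union.
\end{itemize}

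The missing step is short and uses your own part (2) computation. Take $\Sigma$ closed and set $M=\overline{\Sigma\setminus\nu(H)}$. Stop before you invoke $\Sigma\setminus H\simeq H^*$. At that point the sequence $0\to H_2(\Sigma)\to H_2(\Sigma,H)\to H_1(H)\to H_1(\Sigma)$, together with $H_2(\Sigma,H)\cong H_2(M,\partial M)\cong H^0(M)$, already gives $\dim\ker i_*^H=b_0(\Sigma\setminus H)-b_0(\Sigma)$. This needs no cellularity hypothesis.

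Now let $c$ be the number of circles in $\partial\nu(H)$. Capping adds $c$ disks, so $s(H)=b_0(H)+b_1(H)-c$ and $s^\perp(H)=2b_0(M)-\chi(M)-c$. Also $\chi(M)=\chi(\Sigma)-b_0(H)+b_1(H)$. Substituting, $c$ cancels and
\[
s(H)-s^\perp(H)+s(\Sigma)=2b_1(H)-2\bigl(b_0(\Sigma\setminus H)-b_0(\Sigma)\bigr)=2\bigl(b_1(H)-\dim\ker i_*^H\bigr)=2r(H).
\]
In your Mayer--Vietoris notation (for connected $\Sigma$), this amounts to $a=c-b_0(H)$ and $a^\perp=c-b_0(M)$. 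The reason is that over $\F$ the only relation among the boundary circles of a compact surface is that they sum to zero on each component.
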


\begin{lemma}
\label{lem:rp2krushkal}
If $\Sigma = \RP^2$, then
\[
K_G \in \sqrt{A} \, \mathbb{Z}[x,y] \oplus \sqrt{B} \, \mathbb{Z}[x,y].
\]
\end{lemma}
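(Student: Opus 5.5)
The plan is to work term by term in the state sum (\ref{eq:krushkal}). For $\Sigma = \RP^2$ I will show that every spanning subgraph $H \subseteq G$ has exactly one of $(s(H), s^\perp(H)) = (1,0)$ and $(s(H), s^\perp(H)) = (0,1)$. The first case occurs when $H$ is non-local, that is, when $i_*^H \neq 0$; the second occurs when $H$ is local. Each summand is then $\sqrt{A}$ or $\sqrt{B}$ times a monomial in $(x-1)$ and $(y-1)$. Grouping the non-local and local spanning subgraphs gives the claimed decomposition $K_G \in \sqrt{A}\,\mathbb{Z}[x,y] \oplus \sqrt{B}\,\mathbb{Z}[x,y]$.

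\emph{Computing $s(H)$.} Each component $N$ of $\nu(H)$ is a compact subsurface of $\RP^2$. Capping its boundary circles produces a closed surface $\hat N$. I claim $\hat N$ is a sphere or $\RP^2$:
\begin{itemize}
\item $\hat N$ is a sphere if $N$ is planar, which happens exactly when every loop in the corresponding component of $H$ is nullhomologous in $\RP^2$;
\item $\hat N$ is $\RP^2$ if $N$ contains a Möbius band.
\end{itemize}
This holds because a subsurface of $\RP^2$ has nonorientable genus at most $1$; alternatively, one can argue directly from $H_1$ with $\F$-coefficients. Moreover, at most one component of $\nu(H)$ can contain a Möbius band, since two disjoint essential simple closed curves cannot exist in $\RP^2$: their mod 2 intersection number would be $1$. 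Hence
\[
\chi(\mathcal{S}(H)) = 2b_0(H) - \epsilon, \qquad \text{where } \epsilon = 1 \text{ if } H \text{ is non-local and } \epsilon = 0 \text{ otherwise}.
\]
It follows that $s(H) = \epsilon$.

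\emph{Computing $s^\perp(H)$.} The same argument applies to the complement $\RP^2 - \nu(H)$, whose components are in bijection with those of $\Sigma - H$. If $H$ is local, then $\nu(H)$ lies in a disk. The complement then has exactly one component containing a Möbius band, namely the one containing the complement of that disk; every other component lies inside the disk and is planar. This gives $s^\perp(H) = 1$. If $H$ is non-local, then $H$ contains an essential curve. Any Möbius band in the complement would contain a disjoint essential curve, which is impossible. So all complementary components cap off to spheres, and $s^\perp(H) = 0$.

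\emph{Main obstacle.} The step needing the most care is the identification of the capped surfaces, in particular confirming that ``$H$ is non-local'' is equivalent to ``some component of $\nu(H)$ contains a Möbius band''. I would establish this by showing that each of the following is equivalent to the next:
\begin{itemize}
\item $i_*^H \neq 0$;
\item some cycle of $H$ is one-sided;
\item the regular neighborhood of that cycle is a Möbius band;
\item $H$ does not lie in a disk.
\end{itemize}
The last equivalence uses the observation from Proposition \ref{prop:tutte2} that $H$ is local exactly when $b_1(H) = \dim \ker(i_*^H)$. Once this is in place, the exponents $\tfrac{s(H)}{2}$ and $\tfrac{s^\perp(H)}{2}$ lie in $\{0, \tfrac12\}$ and are complementary, which completes the proof.
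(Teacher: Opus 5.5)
Your proposal is correct and follows the paper's approach. Like the paper, you check term by term that a local spanning subgraph has $(s(H), s^\perp(H)) = (0,1)$ and a non-local one has $(1,0)$, by identifying the capped-off components as spheres together with at most one $\RP^2$. Your version supplies justification the paper's brief proof leaves implicit: the mod-$2$ intersection argument for uniqueness of the M\"obius band, and an explicit treatment of the complementary surface when computing $s^\perp$.
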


\begin{proof}
From Equation (\ref{eq:krushkal}), this follows if, for any spanning subgraph $H$ of a graph $G$ embedded in $\RP^2$, $s(H), s^\perp(H) \in \{0,1\}$ and $s(H) \not= s^\perp(H)$. First of all, notice that $\nu(H)$ is either a collection of $b_0(H)$-many disks, in the case that $H$ is a local graph, or a collection of $(b_0(H) - 1)$-many disks together with one copy of $\RP^2$ minus $\alpha(H)$-many disks. In the former case, we have that $s(H) = 0$ and $s^\perp(H) = 1$, and in the latter case we have that $s(H) = 1$ and $s^\perp(H) = 0$.
\end{proof}

Continuing, it is not hard to see that the myopic Tutte polynomial for graphs in $\RP^2$ can be recovered by the Krushkal polynomial:

\begin{proposition}
\label{prop:fromkrushkal}
Assume that $G$ is a graph embedded into $\RP^2$. Then 
\[
\psi_G(x,y) = K_G(x, y, 0, 1).
\]
\end{proposition}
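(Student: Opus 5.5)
The plan is to compare the two state-sum formulas term by term, using the state-sum model of Proposition \ref{prop:tutte2} for $\psi_G$ and Definition \ref{def:krushkal} for $K_G$. Both sums run over the same spanning subgraphs $H \subseteq G$, and both carry the same factor $(x-1)^{b_0(H)-b_0(G)}$. So it suffices to show that, for each $H$, the summand of $K_G$ evaluated at $A=0$, $B=1$ equals $\delta(H)(x-1)^{b_0(H)-b_0(G)}(y-1)^{b_1(H)}$.

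The key input is the computation in the proof of Lemma \ref{lem:rp2krushkal}, which splits into two cases.

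If $H$ is local, then $s(H)=0$ and $s^\perp(H)=1$. The summand therefore carries $A^0 B^{1/2}$, which becomes $1$ at $(A,B)=(0,1)$. Locality also means $i^H_*$ is zero, since $H$ lies in a disk, so $\dim\ker(i^H_*) = b_1(H)$. The summand thus matches $\delta(H)=1$ times the $\psi_G$ term.

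If $H$ is non-local, then $s(H)=1$, so the summand carries $\sqrt{A}$ and vanishes at $A=0$. This matches $\delta(H)=0$, using the remark after Proposition \ref{prop:tutte2} that $\delta(H)=1$ exactly when $H$ is local. In this case no comparison of the $(y-1)$ exponents is needed.

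The one point that needs a short check is this equivalence: $H$ is local if and only if $i^H_*=0$, if and only if $b_1(H)=\dim\ker(i^H_*)$. One direction is immediate. For the converse, a connected graph in $\RP^2$ all of whose cycles are nullhomologous has a regular neighborhood that is orientable planar, hence contained in a disk. Components that are trees are local automatically, and at most one component can be essential. Putting this together, every component of $H$ has a neighborhood that is a disk, and finitely many disjoint disks in $\RP^2$ lie inside a single disk. This is also exactly the dichotomy used in the proof of Lemma \ref{lem:rp2krushkal}, so I would cite that proof and note the equivalence.

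Summing over $H$ then gives $K_G(x,y,0,1)=\psi_G(x,y)$. I expect no genuine obstacle beyond being careful about reading $\sqrt{A}$ and $\sqrt{B}$ as $A^{s/2}$ and $B^{s^\perp/2}$ at the specialization.
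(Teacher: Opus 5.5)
Your proposal is correct and follows the same route as the paper: you compare the two state sums term by term and use the dichotomy from the proof of Lemma~\ref{lem:rp2krushkal}. Non-local $H$ carry a factor $\sqrt{A}$ and vanish at $A=0$, while local $H$ contribute $B^{1/2}=1$ and satisfy $\dim\ker(i^H_*)=b_1(H)$. Your extra check that locality is equivalent to $i^H_*=0$ is something the paper simply asserts after Proposition~\ref{prop:tutte2}. The check is sound, though the step from ``orientable planar neighborhood'' to ``contained in a disk'' deserves the one-line Euler characteristic count showing that exactly one complementary region is a M\"obius band.
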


\begin{proof}
From the proof of Lemma \ref{lem:rp2krushkal}, if $H$ is not local, then it has one component for which $\mathcal{S}(H)$ is $\RP^2$. Thus, terms in (\ref{eq:krushkal}) are annihilated for each non-local spanning subgraph $H$. For the remaining, we have $\dim \ker(i_*^H) = b_1(H)$, and the desired result follows.
\end{proof}

By (2) of Theorem \ref{thm:krushkalprops}, Proposition \ref{prop:fromkrushkal} says that both $\psi_G$ and $\psi_{G^*}$ can be recovered from $K_G$. However, it also says that the converse is true:

\begin{proposition}
\label{prop:tokrushkal}
For any cellular graph $G \subset \RP^2$,
\[
K_G(x, y, A, B) = \sqrt{B} \, \psi_G(x, y) + \sqrt{A} \, \psi_{G^*}(y, x).
\]
\end{proposition}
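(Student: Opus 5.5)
The plan is to split the state sum (\ref{eq:krushkal}) according to whether the spanning subgraph $H$ is local or non-local, using Lemma \ref{lem:rp2krushkal}. Local $H$ have $s(H)=0$ and $s^\perp(H)=1$, so they carry the factor $\sqrt{B}$. Non-local $H$ have $s(H)=1$ and $s^\perp(H)=0$, so they carry $\sqrt{A}$. Thus we may write
\[
K_G = \sqrt{B}\, P(x,y) + \sqrt{A}\, Q(x,y),
\]
where $P$ is the sum over local $H$ and $Q$ the sum over non-local $H$. For local $H$ we have $\dim\ker(i^H_*) = b_1(H)$, so by Proposition \ref{prop:tutte2} the polynomial $P$ is exactly $\psi_G(x,y)$. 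This is the same computation as in Proposition \ref{prop:fromkrushkal}, which gives $K_G(x,y,0,1)=\psi_G$; so $P=\psi_G$ is already established.

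It remains to identify $Q(x,y)=\psi_{G^*}(y,x)$. The intended route is duality, Theorem \ref{thm:krushkalprops}(2): since $G$ is cellular, $K_G(x,y,A,B)=K_{G^*}(y,x,B,A)$. We first check that $G^*$ is also cellular. It is connected because it is a dual, and it is non-local because a local $G^*$ would have a local dual, which is impossible since $G$ is non-local; alternatively, the dual of a cellular embedding is cellular. We then apply the decomposition from the first paragraph to $G^*$, obtaining
\[
K_{G^*}(x,y,A,B) = \sqrt{B}\,\psi_{G^*}(x,y) + \sqrt{A}\,Q^*(x,y).
\]
Substituting $(x,y,A,B)\mapsto(y,x,B,A)$ gives
\[
K_G(x,y,A,B) = \sqrt{A}\,\psi_{G^*}(y,x) + \sqrt{B}\,Q^*(y,x).
\]
Comparing the $\sqrt{A}$ coefficients with the first decomposition, which is legitimate because the sum in Lemma \ref{lem:rp2krushkal} is direct, yields $Q(x,y)=\psi_{G^*}(y,x)$. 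As a byproduct, comparing the $\sqrt{B}$ coefficients gives $Q^*(y,x)=\psi_G(x,y)$, which is a consistency check.

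The only delicate point is the applicability of Buttler's duality theorem. We need the ``cellular'' hypothesis there, which means every face is a disk, to agree with the paper's usage, where cellular means connected and non-local. In $\RP^2$ these coincide: if $G$ is connected and not contained in a disk, then every complementary region is a disk, because a complementary region that is not a disk would contain an essential curve avoiding $G$, forcing $G$ into a disk (the complement of a one-sided curve in $\RP^2$ is a disk). I expect this identification to be the main thing to justify. If there were any doubt, a fallback is to prove $Q=\psi_{G^*}(y,x)$ directly. This would use the complementary-subgraph bijection $H\mapsto \widehat{H}$, with edges dual to $E(G)\setminus E(H)$, which sends non-local spanning subgraphs of $G$ to local spanning subgraphs of $G^*$. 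The exponents would then be matched via Euler characteristic bookkeeping, $b_0(H)-1 = \dim\ker(i_*^{\widehat H}) = b_1(\widehat H)$ and $\dim\ker(i_*^H)=b_0(\widehat H)-1$, extending the edge-counting argument of Lemma \ref{lem:projectivecompletions}.
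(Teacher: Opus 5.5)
Your proposal is correct and follows the paper's proof essentially verbatim: you split $K_G$ using Lemma~\ref{lem:rp2krushkal}, identify the $\sqrt{B}$-coefficient with $\psi_G$ via Proposition~\ref{prop:fromkrushkal}, and recover the $\sqrt{A}$-coefficient by comparing coefficients in Buttler's duality $K_{G^*}(x,y,A,B)=K_G(y,x,B,A)$. Your check that ``connected and non-local'' agrees with Buttler's ``cellularly embedded'' in $\RP^2$ is a point the paper leaves implicit. Your fallback through the complementary dual subgraph $\widehat{H}$ also checks out: for non-local $H$ one has $b_0(\widehat H)=b_1(H)$ and $b_1(\widehat H)=b_0(H)-1$.
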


\begin{proof}
By Lemma \ref{lem:rp2krushkal}, we can write $K_G(x,y, A, B) = \sqrt{B}\, p(x,y) + \sqrt{A}\, q(x,y)$ for some $p, q \in \mathbb{Z}[x,y]$. Proposition \ref{prop:fromkrushkal} says that $p(x,y) = \psi_G(x,y)$. Moreover, (2) of Theorem \ref{thm:krushkalprops} says that $K_{G^*}(x, y, 0, 1) = q(y,x)$, so $\psi_{G^*}(y,x) = q(x,y)$, so we are done.
\end{proof}

Theorem \ref{thm:equalkrushkal} follows. In addition, it is a consequence of (1) from Theorem \ref{thm:krushkalprops} that the Tutte polynomial is a linear combination of myopic Tutte polynomials.

\begin{corollary}
\label{cor:tuttefrommyopic}
The Tutte polynomial of a cellular graph $G\subset\RP^2$ can be recovered from the myopic Tutte polynomials of itself and its dual:
\[
\chi_G(x,y) = \psi_G(x,y) + (y-1) \, \psi_{G^*}(y, x).
\]
\end{corollary}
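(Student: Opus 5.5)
We use part (1) of Theorem \ref{thm:krushkalprops} together with Proposition \ref{prop:tokrushkal}, specializing $A = y-1$ and $B = 1/(y-1)$.

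\textbf{The prefactor.} For $\Sigma = \RP^2$ we have $b_0(\Sigma) = 1$ and $\chi(\Sigma) = 1$, so $s(\Sigma) = 2 - 1 = 1$. Part (1) of Theorem \ref{thm:krushkalprops} therefore reads
\[
\chi_G(x,y) = (y-1)^{\frac{1}{2}} \, K_G\left(x, y, y-1, \tfrac{1}{y-1}\right).
\]

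\textbf{The substitution.} By Lemma \ref{lem:rp2krushkal} and Proposition \ref{prop:tokrushkal}, $K_G$ is linear in $\sqrt{A}$ and $\sqrt{B}$:
\[
K_G = \sqrt{B}\,\psi_G(x,y) + \sqrt{A}\,\psi_{G^*}(y,x).
\]
Substituting $\sqrt{B} = (y-1)^{-1/2}$ and $\sqrt{A} = (y-1)^{1/2}$ and multiplying by $(y-1)^{1/2}$ gives
\[
\chi_G(x,y) = \psi_G(x,y) + (y-1)\,\psi_{G^*}(y,x),
\]
which is the claimed identity. Because $K_G$ involves only $\sqrt{A}$ and $\sqrt{B}$ to the first power, the substitution is formal. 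Both sides are polynomials, so it is legitimate as an identity of rational functions in $y$.

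\textbf{Main obstacle: the branch of square roots.} The one point needing care is that the choice of $\sqrt{\,\cdot\,}$ is consistent. Buttler's exponents $s(H)/2$ and $s^\perp(H)/2$ are half-integers. Setting $A = y-1$ and $B = (y-1)^{-1}$ turns $A^{s(H)/2}B^{s^\perp(H)/2}$ into $(y-1)^{(s(H)-s^\perp(H))/2}$, and with the prefactor this becomes $(y-1)^{(1+s(H)-s^\perp(H))/2}$. By the proof of Lemma \ref{lem:rp2krushkal}, this exponent is $0$ for local $H$ and $1$ for non-local $H$. So only integer powers appear.

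\textbf{Independent check.} The argument can also be run directly from the state sums, bypassing the ambiguity. Local spanning subgraphs $H$ contribute exactly the $\psi_G$ terms, since $\dim\ker(i_*^H) = b_1(H)$ for them.

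For non-local $H$, $\dim\ker(i_*^H) = b_1(H) - 1$, so the Tutte summand $(x-1)^{b_0(H)-b_0(G)}(y-1)^{b_1(H)}$ equals $(y-1)$ times the Krushkal $\sqrt{A}$-coefficient. By duality (part (2) of Theorem \ref{thm:krushkalprops}), these coefficients sum to $\psi_{G^*}(y,x)$.

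This recovers the identity and confirms the result.
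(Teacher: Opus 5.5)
Your proof is correct and follows the paper's route exactly: specialize part (1) of Theorem \ref{thm:krushkalprops} with $s(\RP^2)=1$, $A=y-1$, $B=(y-1)^{-1}$, and substitute the decomposition $K_G=\sqrt{B}\,\psi_G(x,y)+\sqrt{A}\,\psi_{G^*}(y,x)$ from Proposition \ref{prop:tokrushkal}. Your remarks on the square-root branch and your state-sum cross-check are accurate but not needed; the cross-check uses the fact that non-local $H$ have $\dim\ker(i_*^H)=b_1(H)-1$.
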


\section{A spanning tree expansion for the Jones polynomial in $\RP^3$}
\label{s:spanningtreejones}

Let $G$ be a connected graph with ordered edges. We make one further decoration: assign a sign $\{+, -\}$ to each edge as well. Thus, for each spanning tree $T$ of a signed graph $G$, each edge $e$ falls into one of ten states with respect to $T$ depending on whether $e$ is signed $+$ or $-$, internal or external, active or inactive and, if externally active, local or projective. The decision tree of Figure \ref{fig:decisiontree1} is doubled into that of Figure \ref{fig:decisiontree}.

\begin{figure}[h]
\tikz[]{
	\draw (3.5,1) -- (0,0) node [midway, above, sloped] {$+$};
	\draw (3.5,1) -- (7,0) node [midway, above, sloped] {$-$};
	\draw (0,0) -- (2,-1) node [midway, above, sloped] {external};
	\draw (0,0) -- (-2,-1) node [midway, above, sloped] {internal};
	\draw (2,-1) -- (1,-2) node [midway, above, sloped] {\small active};
	\draw (2,-1) -- (3,-2) node [midway, above, sloped] {\small inactive};
	\draw (-2,-1) -- (-1,-2) node [midway, above, sloped] {\small inactive};
	\draw (-2,-1) -- (-3,-2) node [midway, above, sloped] {\small active};
	\draw (1,-2) -- (0.25, -3) node [midway, above, sloped] {\tiny local};
	\draw (1,-2) -- (1.75, -3) node [midway, above, sloped] {\tiny projective};
	\node[anchor=north] at (-3,-2) {$L$};
	\node[anchor=north] at (-1,-2) {$D$};
	\node[anchor=north] at (0.25,-3) {$\ell_{\text{loc}}$};
	\node[anchor=north] at (1.75,-3) {$\ell_{\text{pro}}$};
	\node[anchor=north] at (3,-2) {$d$};
	\begin{scope}[xshift=7cm]
	\draw (0,0) -- (2,-1) node [midway, above, sloped] {external};
	\draw (0,0) -- (-2,-1) node [midway, above, sloped] {internal};
	\draw (2,-1) -- (1,-2) node [midway, above, sloped] {\small active};
	\draw (2,-1) -- (3,-2) node [midway, above, sloped] {\small inactive};
	\draw (-2,-1) -- (-1,-2) node [midway, above, sloped] {\small inactive};
	\draw (-2,-1) -- (-3,-2) node [midway, above, sloped] {\small active};
	\draw (1,-2) -- (0.25, -3) node [midway, above, sloped] {\tiny local};
	\draw (1,-2) -- (1.75, -3) node [midway, above, sloped] {\tiny projective};
	\node[anchor=north] at (-3,-2) {$\bar{L}$};
	\node[anchor=north] at (-1,-2) {$\bar{D}$};
	\node[anchor=north] at (0.25,-3) {$\bar{\ell}_{\text{loc}}$};
	\node[anchor=north] at (1.75,-3) {$\bar{\ell}_{\text{pro}}$};
	\node[anchor=north] at (3,-2) {$\bar{d}$};
	\end{scope}
}
\caption{Determining the state of any edge in a signed graph relative to a spanning tree.}
\label{fig:decisiontree}
\end{figure}

Let $D$ be a diagram of a nullhomologous link in $\RP^3$. Order the crossings of $D$ and fix a checkerboard surface for it. Assign a sign to each crossing by the following convention.
\begin{equation}
\label{eq:signconvention}
\begin{tikzpicture}[baseline={([yshift=-.5ex]current bounding box.center)}, thick]
\fill[black!20!white] (0,0) -- (.5,.5) -- (1,0);
\fill[black!20!white] (1,1) -- (.5,.5) -- (0,1);
\draw (0,0) -- (1,1);
\draw (0,1) -- (.3,.7);
\draw (1,0) -- (.7,.3);
\node at (0.5,-.35) {$+$};
\begin{scope}[xshift=4cm]
\fill[black!20!white] (0,0) -- (.5,.5) -- (0,1);
\fill[black!20!white] (1,1) -- (.5,.5) -- (1,0);
\draw (0,0) -- (1,1);
\draw (0,1) -- (.3,.7);
\draw (1,0) -- (.7,.3);
\node at (0.5,-.35) {$-$};
\end{scope}
\end{tikzpicture}
\end{equation}
Then, to both of the checkerboard graphs of $D$, we associate a Tait graph with signed (and ordered) edges. For instance:
\begin{equation}
\label{eq:PT_taits}
\tikz[baseline={([yshift=-.5ex]current bounding box.center)}, scale=.25]{
	\fill[black!20!white] (-5*1.41/2, 5*1.41/2) to[out=-45, in=135] (-2, 2.25) to[out=-100, in=135] (0,-1.17) to[out=-135, in=45] (-5*1.41/2, -5*1.41/2) to[out=135, in=-90, looseness=0.8] (-5,0) to[out=90,in=-135] (-5*1.41/2, 5*1.41/2);
	\fill[black!20!white] (5*1.41/2, 5*1.41/2) to[out=-135, in=45] (2, 2.25) to[out=-80, in=45] (0,-1.17) to[out=-45, in=135] (5*1.41/2, -5*1.41/2) to[out=45, in=-90, looseness=0.8] (5,0) to[out=90,in=-45] (5*1.41/2, 5*1.41/2);
	\fill[black!20!white] (-2, 2.25) to[out=100, in=180] (0,4) to[out=0, in=80] (2, 2.25) to[out=210, in=0] (0,1.75) to[out=180, in=-30] (-2, 2.25);
	\draw[knot, overcross] (0,1.75) to[out=0, in=-135] (5*1.41/2, 5*1.41/2);
	\draw[knot, overcross] (2,2) to[out=-90, in=45] (-5*1.41/2, -5*1.41/2);
	\draw[knot, overcross] (5*1.41/2, -5*1.41/2) to[out=135, in=-90] (-2,2);
	\draw[knot, overcross] (-2,2) to[out=90, in=180] (0,4);
	\draw[knot, overcross] (-5*1.41/2, 5*1.41/2) to[out=-45, in=180] (0, 1.75);
	\draw[knot, overcross] (0,4) to[out=0, in=90] (2,2);
	\draw[white, ultra thick] (0,0) circle (5cm);
	\draw[dotted] (0,0) circle (5cm);
}
\quad \leadsto \quad
\tikz[baseline={([yshift=-.5ex]current bounding box.center)}, scale=.25]{
	\begin{scope}[white!60!black]
	\draw[knot] (0,1.75) to[out=0, in=-135] (5*1.41/2, 5*1.41/2);
	\draw[knot] (2,2) to[out=-90, in=45] (-5*1.41/2, -5*1.41/2);
	\draw[knot] (5*1.41/2, -5*1.41/2) to[out=135, in=-90] (-2,2);
	\draw[knot] (-2,2) to[out=90, in=180] (0,4);
	\draw[knot] (-5*1.41/2, 5*1.41/2) to[out=-45, in=180] (0, 1.75);
	\draw[knot] (0,4) to[out=0, in=90] (2,2);
	\end{scope}
	\begin{scope}[red, very thick]
		\fill (0, 3) circle (.3cm);
		\fill (-3, 0) circle (.3cm);
		\draw[rounded corners = 1mm] (0,3) -- (-2.05,2.25) -- (-3, 0) -- (-5,0);
		\draw (-3, 0) -- (-5*0.866, -5*0.5);
		\draw[rounded corners = 1mm] (0,3) -- (2.05,2.25) -- (3, 2) -- (5*0.866, 5*0.5);
		\draw[rounded corners] (-3, 0) -- (0,-1.15) -- (5,0);
	\end{scope}
	\node[anchor=north] at (0,-1.18) {\tiny$+$};
	\node[anchor=south] at (2.25, 2.25) {\tiny$+$};
	\node[anchor=south] at (-2.25, 2.25) {\tiny$+$};
	\draw[white, ultra thick] (0,0) circle (5cm);
	\draw[dotted] (0,0) circle (5cm);
}
\qquad \text{and} \qquad
\tikz[baseline={([yshift=-.5ex]current bounding box.center)}, scale=.25]{
	\fill[black!20!white] (-2, 2.25) to[out=-45, in=180] (0,1.75) to[out=0,in=-135] (2,2.25) to[out=-80, in=45] (0,-1.17) to[out=135, in=-100] (-2, 2.25);
	\fill[black!20!white] (-5*1.41/2, 5*1.41/2) to[out=45, in=180] (0,5) to[out=0, in=135] (5*1.41/2, 5*1.41/2) to[out=225, in=30] (2, 2.25) to[out=100, in=0] (0,4) to[out=180, in=80] (-2, 2.25) to[out=150, in=-45] (-5*1.41/2, 5*1.41/2);
	\fill[black!20!white] (-5*1.41/2, -5*1.41/2) to[out=45, in=-135] (0,-1.17) to[out=-45, in=135] (5*1.41/2, -5*1.41/2) to[out=-135, in=0] (0,-5) to[out=180, in=-45] (-5*1.41/2, -5*1.41/2);
	\draw[knot, overcross] (0,1.75) to[out=0, in=-135] (5*1.41/2, 5*1.41/2);
	\draw[knot, overcross] (2,2) to[out=-90, in=45] (-5*1.41/2, -5*1.41/2);
	\draw[knot, overcross] (5*1.41/2, -5*1.41/2) to[out=135, in=-90] (-2,2);
	\draw[knot, overcross] (-2,2) to[out=90, in=180] (0,4);
	\draw[knot, overcross] (-5*1.41/2, 5*1.41/2) to[out=-45, in=180] (0, 1.75);
	\draw[knot, overcross] (0,4) to[out=0, in=90] (2,2);
	\draw[white, ultra thick] (0,0) circle (5cm);
	\draw[dotted] (0,0) circle (5cm);
}
\quad \leadsto \quad
\tikz[baseline={([yshift=-.5ex]current bounding box.center)}, scale=.25]{
	\begin{scope}[white!60!black]
	\draw[knot] (0,1.75) to[out=0, in=-135] (5*1.41/2, 5*1.41/2);
	\draw[knot] (2,2) to[out=-90, in=45] (-5*1.41/2, -5*1.41/2);
	\draw[knot] (5*1.41/2, -5*1.41/2) to[out=135, in=-90] (-2,2);
	\draw[knot] (-2,2) to[out=90, in=180] (0,4);
	\draw[knot] (-5*1.41/2, 5*1.41/2) to[out=-45, in=180] (0, 1.75);
	\draw[knot] (0,4) to[out=0, in=90] (2,2);
	\end{scope}
	\begin{scope}[blue, very thick]
		\fill (0, 0.5) circle (.3cm);
		\fill (0, -3) circle (.3cm);
		\draw (0, 0.5) -- (0, -3);
		\draw[rounded corners = 3mm] (0, 0.5) -- (-2.25,2.2) -- (-5*0.5,5*0.866);
		\draw[rounded corners = 3mm] (0, 0.5) -- (2.25,2.2) -- (5*0.5,5*0.866);
		\draw (-5*0.5,-5*0.866) -- (0,-3);
		\draw (5*0.5,-5*0.866) -- (0,-3);
	\end{scope}
	\node[anchor=west] at (0,-1.18) {\tiny$-$};
	\node[anchor=west] at (2.2, 1.7) {\tiny$-$};
	\node[anchor=east] at (-2.2, 1.7) {\tiny$-$};
	\draw[white, ultra thick] (0,0) circle (5cm);
	\draw[dotted] (0,0) circle (5cm);
}
\end{equation}
Note that each of the edges of the Tait graphs corresponding to $D$ take the same value in $\{+, -\}$ if and only if $D$ is an alternating diagram.

\begin{definition}
Assume that $D \subset \RP^2$ is a diagram of a nullhomologous link in $\RP^3$. Denote the two Tait graphs associated to $D$ by $G_1$ and $G_2$. We define
\begin{equation}
\label{eq:bigpsi}
\Psi_D = \sum_{T_i\subset G_1} \left(\prod_{e_j \in G_1} \mu_{ij}\right) + \sum_{T_i\subset G_2} \left(\prod_{e_j \in G_2} \mu_{ij}\right)
\end{equation}
where $\mu_{ij} \in \mathbb{Z}[A, A^{-1}]$ is a monomial determined by the state of $e_j$ relative $T_i \subset G_\epsilon$, $\epsilon \in \{1,2\}$, according to Table \ref{tab:state_monomialsA}.
\begin{table}[h]
\renewcommand{\arraystretch}{1.5}
\centering
\begin{tabular}{|>{\centering\hspace{0pt}}m{0.092\linewidth}||>{\centering\hspace{0pt}}m{0.065\linewidth}>{\centering\hspace{0pt}}m{0.065\linewidth}>{\centering\hspace{0pt}}m{0.065\linewidth}>{\centering\hspace{0pt}}m{0.065\linewidth}>{\centering\hspace{0pt}}m{0.065\linewidth}|>{\centering\hspace{0pt}}m{0.065\linewidth}>{\centering\hspace{0pt}}m{0.065\linewidth}>{\centering\hspace{0pt}}m{0.065\linewidth}>{\centering\hspace{0pt}}m{0.065\linewidth}>{\centering\arraybackslash\hspace{0pt}}m{0.081\linewidth}|} 
\hline
\multicolumn{1}{|>{\hspace{0pt}}m{0.092\linewidth}||}{state of~$e_j$} & $L$       & $D$ & $\ell_{\text{loc}}$ & $\ell_{\text{pro}}$ & $d$      & $\bar{L}$ & $\bar{D}$ & $\bar{\ell}_{\text{loc}}$ & $\bar{\ell}_{\text{pro}}$ & $\bar{d}$  \\
$\mu_{ij}$                                                            & $-A^{-3}$ & $A$ & $-A^3$              & $A^{-1}$             & $A^{-1}$ & $-A^3$         & $A^{-1}$       & $-A^{-3}$                      & $A$                            & $A$             \\
\hline
\end{tabular}
\caption{The monomial $\mu_{ij}$ associated to an edge $e_j$ of a spanning tree $T_i$ is determined by the state of $e_j$; see Figure \ref{fig:decisiontree}.}
\label{tab:state_monomialsA}
\end{table}
The product $\mu(T_i) := \prod_{e_j\in G_\epsilon} \mu_{ij}$ is called the \emph{weight} of $T_i \subset G_\epsilon$. Recall that the \emph{state} of a tree is the word given by the states of each of $e_j\in G_\epsilon$ (usually presented according to the ordering of the edges, though this is inconsequential). For each Tait graph $G$ associated to $D$, we denote the corresponding summation appearing in Equation (\ref{eq:bigpsi}) by $\Psi_{G}$.
\end{definition}

\begin{example}
\label{ex:mainST}
Returning to $3_1$ of Drobotukhina's table, order the crossings as follows.
\[
\tikz[baseline={([yshift=-.5ex]current bounding box.center)}, scale=.25]{
	\draw[knot, overcross] (0,1.75) to[out=0, in=-135] (5*1.41/2, 5*1.41/2);
	\draw[knot, overcross] (2,2) to[out=-90, in=45] (-5*1.41/2, -5*1.41/2);
	\draw[knot, overcross] (5*1.41/2, -5*1.41/2) to[out=135, in=-90] (-2,2);
	\draw[knot, overcross] (-2,2) to[out=90, in=180] (0,4);
	\draw[knot, overcross] (-5*1.41/2, 5*1.41/2) to[out=-45, in=180] (0, 1.75);
	\draw[knot, overcross] (0,4) to[out=0, in=90] (2,2);
	\draw[white, ultra thick] (0,0) circle (5cm);
	\draw[dotted] (0,0) circle (5cm);
	\node[anchor=north] at (0,-1.18) {\tiny$2$};
	\node[anchor=west] at (2, 1.7) {\tiny$1$};
	\node[anchor=east] at (-2, 1.7) {\tiny$3$};
}
\]
According to (\ref{eq:PT_taits}) and Table \ref{tab:state_monomialsA}, we can compute $\Psi_D$ as follows.
\[
\tikz[baseline={([yshift=-.5ex]current bounding box.center)}, scale=.225]{
\node[anchor=west] at (-25,0) {Tait graphs:};
\node[anchor=west]  at (-25,-12.5) {Spanning trees:};
\node[anchor=west]  at (-25,-20) {States:};
\node[anchor=west]  at (-25,-22.5) {Weights:};
	\begin{scope}[red, very thick]
		\fill (0, 3) circle (.4cm);
		\fill (0,0) circle (.4cm);
		\draw (0,-5) -- (0,5);
		\draw (-5,0) -- (5,0);
	\end{scope}
	\node[anchor=north] at (2.5, 0) {\tiny$2$};
	\node[anchor=east] at (0,1.5) {\tiny$3$};
	\node[anchor=east] at (0,-2.5) {\tiny$1$};
	\draw[white, ultra thick] (0,0) circle (5cm);
	\draw[dotted] (0,0) circle (5cm);
	\begin{scope}[yshift=-12.5cm, xshift=-6.25cm]
		\draw[black!20!white, thick] (0,-5) -- (0,5);
		\draw[black!20!white, thick] (-5,0) -- (5,0);
		\begin{scope}[red, very thick]
			\fill (0, 3) circle (.4cm);
			\fill (0,0) circle (.4cm);
			\draw (0,-5) -- (0,0);
			\draw (0,3) -- (0,5);
		\end{scope}
		\node[anchor=north] at (2.5, 0) {\tiny$2$};
		\node[anchor=east] at (0,1.5) {\tiny$3$};
		\node[anchor=east] at (0,-2.5) {\tiny$1$};
		\draw[white, ultra thick] (0,0) circle (5cm);
		\draw[dotted] (0,0) circle (5cm);
		\node at (0,-7.5) {$L\ell_{\text{pro}}d$};
		\node at (0,-10) {$-A^{-5}$};
	\end{scope}
	\begin{scope}[yshift=-12.5cm, xshift=6.25cm]
		\draw[black!20!white, thick] (0,-5) -- (0,5);
		\draw[black!20!white, thick] (-5,0) -- (5,0);
		\begin{scope}[red, very thick]
			\fill (0, 3) circle (.4cm);
			\fill (0,0) circle (.4cm);
			\draw (0,0) -- (0,3);
		\end{scope}
		\node[anchor=north] at (2.5, 0) {\tiny$2$};
		\node[anchor=east] at (0,1.5) {\tiny$3$};
		\node[anchor=east] at (0,-2.5) {\tiny$1$};
		\draw[white, ultra thick] (0,0) circle (5cm);
		\draw[dotted] (0,0) circle (5cm);
		\node at (0,-7.5) {$\ell_{\text{pro}} \ell_{\text{pro}} D$};
		\node at (0,-10) {$A^{-1}$};
	\end{scope}
	\begin{scope}[xshift=31.25cm]
		\begin{scope}[blue, very thick]
			\fill (0,2.5) circle (.4cm);
			\fill (0,-2.5) circle (.4cm);
			\draw (0,-2.5) -- (0,-5);
			\draw (0,2.5) -- (0,5);
			\draw (0,-2.5) to[out=30, in=-30] (0,2.5);
			\draw (0,-2.5) to[out=150, in=-150] (0,2.5);
		\end{scope}
		\node[anchor=west] at (0,3.75) {\tiny$2$};
		\node[anchor=east] at (-1.25,0) {\tiny$3$};
		\node[anchor=west] at (1.25,0) {\tiny$1$};
		\draw[white, ultra thick] (0,0) circle (5cm);
		\draw[dotted] (0,0) circle (5cm);
	\end{scope}
	\begin{scope}[xshift=18.75cm, yshift=-12.5cm]
	\draw[black!20!white, thick] (0,-2.5) to[out=150, in=-150] (0,2.5);
	\draw[black!20!white, thick] (0,-2.5) -- (0,-5);
	\draw[black!20!white, thick] (0,2.5) -- (0,5);
		\begin{scope}[blue, very thick]
			\fill (0,2.5) circle (.4cm);
			\fill (0,-2.5) circle (.4cm);
			\draw (0,-2.5) to[out=30, in=-30] (0,2.5);
		\end{scope}
		\node[anchor=west] at (0,3.75) {\tiny$2$};
		\node[anchor=east] at (-1.25,0) {\tiny$3$};
		\node[anchor=west] at (1.25,0) {\tiny$1$};
		\draw[white, ultra thick] (0,0) circle (5cm);
		\draw[dotted] (0,0) circle (5cm);
		\node at (0,-7.5) {$\bar{L}\bar{d}\bar{d}$};
		\node at (0,-10) {$-A^5$};
	\end{scope}
	\begin{scope}[xshift=31.25cm, yshift=-12.5cm]
	\draw[black!20!white, thick] (0,-2.5) to[out=150, in=-150] (0,2.5);
	\draw[black!20!white, thick]  (0,-2.5) to[out=30, in=-30] (0,2.5);
		\begin{scope}[blue, very thick]
			\fill (0,2.5) circle (.4cm);
			\fill (0,-2.5) circle (.4cm);
			\draw (0,-2.5) -- (0,-5);
			\draw (0,2.5) -- (0,5);
		\end{scope}
		\node[anchor=west] at (0,3.75) {\tiny$2$};
		\node[anchor=east] at (-1.25,0) {\tiny$3$};
		\node[anchor=west] at (1.25,0) {\tiny$1$};
		\draw[white, ultra thick] (0,0) circle (5cm);
		\draw[dotted] (0,0) circle (5cm);
		\node at (0,-7.5) {$\bar{\ell}_{\text{pro}}\bar{D}\bar{d}$};
		\node at (0,-10) {$A$};
	\end{scope}
	\begin{scope}[xshift=43.75cm, yshift=-12.5cm]
	\draw[black!20!white, thick] (0,-2.5) to[out=30, in=-30] (0,2.5);
	\draw[black!20!white, thick] (0,-2.5) -- (0,-5);
	\draw[black!20!white, thick] (0,2.5) -- (0,5);
		\begin{scope}[blue, very thick]
			\fill (0,2.5) circle (.4cm);
			\fill (0,-2.5) circle (.4cm);
			\draw (0,-2.5) to[out=150, in=-150] (0,2.5);
		\end{scope}
		\node[anchor=west] at (0,3.75) {\tiny$2$};
		\node[anchor=east] at (-1.25,0) {\tiny$3$};
		\node[anchor=west] at (1.25,0) {\tiny$1$};
		\draw[white, ultra thick] (0,0) circle (5cm);
		\draw[dotted] (0,0) circle (5cm);
		\node at (0,-7.5) {$\bar{\ell}_{\text{loc}}\bar{\ell}_{\text{pro}}\bar{D}$};
		\node at (0,-10) {$-A^{-3}$};
	\end{scope}
}
\]
Therefore, we have that
\[
\Psi_D = (-A^{-5} + A^{-1}) +  (- A^5 + A - A^{-3})
\]
which matches $\langle D \rangle$; see Example \ref{ex:PT_KB}.
\end{example}

\begin{lemma}
\label{lem:orderings}
The polynomial $\Psi_D$ does not depend on the order of crossings of $D$.
\end{lemma}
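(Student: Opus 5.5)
The plan mirrors the proof of edge-ordering independence in Proposition \ref{prop:tutte_spanningtrees}. Every permutation of the crossings is a product of adjacent transpositions, so it suffices to show that $\Psi_{G}$ is unchanged when the labels of two consecutive edges $e_i, e_{i+1}$ are swapped, separately for each Tait graph $G \in \{G_1, G_2\}$. For a spanning tree $T$ whose state word is unchanged by the swap, the weight $\mu(T)$ is unchanged as well. So only the trees covered by Tutte's conditions (i)--(iii) need attention. For these, $e_i \in T$, $e_{i+1} \notin T$, $e_i \in \cyc(T,e_{i+1})$, and $T$ is paired with $\sigma(T) = T - e_i + e_{i+1}$. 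We will show that
\[
\mu(T) + \mu(\sigma(T)) = \mu'(T) + \mu'(\sigma(T)),
\]
where primes denote weights computed with the new ordering.

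First, we treat the edges $e_j$ with $j \neq i, i+1$. By the argument in the proof of Proposition \ref{prop:tutte_spanningtrees}, $s^0_T(e_j) = s^0_{\sigma(T)}(e_j)$, and neither state depends on the ordering. The only discrepancy is a possible switch between $\ell_{\text{loc}}$ and $\ell_{\text{pro}}$, or between their barred versions. This switch can only happen in Case 2 of Table \ref{tab:thisthlethwaite} with $\bullet = \mathrm{pro}$, where $[\cyc(T,e_j)] + [\cyc(\sigma(T),e_j)] = \gamma = 1$. In that case the state of $e_j$ is the same before and after reordering within each tree. The weights are computed tree by tree, so these edges contribute identical factors to $\mu(T)$ and $\mu'(T)$, and likewise to $\mu(\sigma(T))$ and $\mu'(\sigma(T))$. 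In every other case $s_T(e_j) = s_{\sigma(T)}(e_j)$, so these edges contribute a common factor to all four weights. In both situations the common factors cancel, and the comparison reduces to the factors coming from $e_i$ and $e_{i+1}$.

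Second, we verify Table \ref{tab:thisthlethwaite} case by case, now with signs. The signs of $e_i$ and $e_{i+1}$ need not agree, since $D$ need not be alternating, so each case splits into four sign patterns. In Cases 1 and 3, and in Case 2 with $\bullet = \mathrm{loc}$, the multiset of state pairs $\{(s(e_i),s(e_{i+1}))\}$ over $\{T,\sigma(T)\}$ is just permuted by the swap. Each such pair multiplies a common factor, so the sum is preserved.

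In Case 2 with $\bullet = \mathrm{pro}$, the swap changes the pairs from $\{(D,d),(\ell_{\text{pro}},D)\}$ to $\{(\ell_{\text{pro}},D),(D,d)\}$, now attached to the opposite trees. If the common factors from the other edges differ between $T$ and $\sigma(T)$, we need a genuine identity. Here Table \ref{tab:state_monomialsA} helps: $\mu(\ell_{\text{pro}}) = \mu(d) = A^{-1}$ and $\mu(\bar\ell_{\text{pro}}) = \mu(\bar d) = A$. So a projectively active edge has the same weight as an inactive one, and the reassignment does not alter the product contributed by $e_i$ and $e_{i+1}$ within each tree. The same equality also covers the $\ell_{\text{pro}} \leftrightarrow d$ and $\ell_{\text{loc}} \leftrightarrow \ell_{\text{pro}}$ discrepancies of the other edges in this case. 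Concretely, we will check that in Case 2 with $\bullet = \mathrm{pro}$, $\mu(T) = \mu'(T)$ and $\mu(\sigma(T)) = \mu'(\sigma(T))$ individually, using $\mu(\ell_{\text{pro}}) = \mu(d)$ together with the statement that states of other edges are reordering-invariant per tree.

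The main obstacle is this last case: confirming that no $\ell_{\text{loc}}$ state is created or destroyed for $e_i$ or $e_{i+1}$ by the swap. The pair has the form $\{D, \ell_{\text{pro}}\}$ or $\{d, D\}$, with signs attached, and $\mu(\ell_{\text{pro}}) = \mu(d)$ makes these weights agree. The sign patterns then require only a finite table check.
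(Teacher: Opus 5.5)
Your overall structure is the paper's: reduce to adjacent transpositions, use Tutte's pairing $T\leftrightarrow\sigma(T)$ and the case split of Table \ref{tab:thisthlethwaite}, and get per-tree invariance in Case 2 with $\bullet=\mathrm{pro}$ from $\mu(\ell_{\text{pro}})=\mu(d)$ and $\mu(\bar\ell_{\text{pro}})=\mu(\bar d)$. The gap is your claim that in Cases 1, 3, and 2 with $\bullet=\mathrm{loc}$ the swap ``just permutes'' the state pairs between $T$ and $\sigma(T)$. That is true only for label positions. The signs live on the physical edges, and the swap changes which physical edge carries which state. So the permutation argument works only when $e_i$ and $e_{i+1}$ have the same sign, which is the case the paper calls apparent.

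For opposite signs it fails. Take Case 1 with $e_i$ positive, $e_{i+1}$ negative, and let $C$ be the factor from the remaining edges. In the old ordering, $T$ has $(L,\bar d)$, weight $-A^{-3}\cdot A\cdot C=-A^{-2}C$, and $\sigma(T)$ has $(d,\bar D)$, weight $A^{-2}C$. After the swap, the physical edge $e_i\in T$ becomes $D$ while $e_{i+1}$ stays $\bar d$, giving $A^{2}C$. In $\sigma(T)$, $e_{i+1}$ becomes $\bar L$ while $e_i$ stays $d$, giving $-A^{3}\cdot A^{-1}\cdot C=-A^{2}C$. The weights are not permuted. The sums agree only because both are zero.

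The missing idea is exactly the dichotomy in the paper's proof, for opposite signs:
\begin{itemize}
\item In Case 1, Case 2 with $\bullet=\mathrm{loc}$, and Case 3 with $\bullet=\mathrm{pro}$, one has $\mu(T)+\mu(\sigma(T))=0$ in both orderings. This uses that $C$ is the same for $T$ and $\sigma(T)$, which holds because either $e_i$ is internally active in $T$ or $\gamma=0$.
\item In Case 3 with $\bullet=\mathrm{loc}$, the weights are individually unchanged, because $\mu(L)\mu(\bar d)=\mu(D)\mu(\bar\ell_{\text{loc}})=-A^{-2}$ and $\mu(\ell_{\text{loc}})\mu(\bar D)=\mu(d)\mu(\bar L)=-A^{2}$ (and the mirror identities for the other sign pattern).
\end{itemize}
Your deferred ``finite table check'' would uncover this, but as written you assign these cases to a mechanism that does not hold. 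You also name Case 2 with $\bullet=\mathrm{pro}$ as the main obstacle, when it is the easy case; the real content is the opposite-sign cancellation.
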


\begin{proof}
To see this, we return to the situation described in the proof of Proposition \ref{prop:tutte_spanningtrees} and Table \ref{tab:thisthlethwaite}. If $e_i$ and $e_{i+1}$ have the same signs, invariance is apparent. If $e_i$ and $e_{i+1}$ have different signs, there are cases to consider. One verifies that
\begin{itemize}
\item in Case 1, Case 2 when $\ell_{\bullet} = \ell_{\text{loc}}$, and Case 3 when $\ell_{\bullet} = \ell_{\text{pro}}$, the weights of $T$ and $\sigma(T)$ sum to zero in both orderings, and
\item in Case 2 when $\ell_{\bullet} = \ell_{\text{pro}}$ and Case 3 when $\ell_{\bullet} = \ell_{\text{loc}}$, the respective weights of $T$ and $\sigma(T)$ are unaltered by the change of ordering.
\end{itemize}
\end{proof}

\begin{proposition}
\label{prop:KBgraphpolynomial}
Assume $L$ is a nullhomologous link in $\RP^3$ with diagram $D$.
\begin{enumerate}[label=\roman*.]
\item If $D$ is local, then $\Psi_D = 2 \langle D \rangle$; more precisely, $\Psi_{G_1} = \Psi_{G_2} = \langle D \rangle$.
\item If $D$ is non-local, then $\Psi_D = \langle D \rangle$.
\end{enumerate}
\end{proposition}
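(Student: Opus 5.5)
We prove the statement by induction on the number of crossings of $D$, mirroring Thistlethwaite's argument in $S^3$. By Lemma \ref{lem:orderings} we may choose the crossing order freely. We always take the highest-ordered crossing $c$ so that its edge $e$ is regular in the Tait graph $G_\epsilon$ under consideration; such a crossing is inactive with respect to every spanning tree.

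\textbf{Inductive step.} Splitting the spanning trees of $G_\epsilon$ according to whether $e \in T$ gives
\[
\Psi_{G_\epsilon} = \mu_e^{\mathrm{in}}\, \Psi_{G_\epsilon/e} + \mu_e^{\mathrm{out}}\, \Psi_{G_\epsilon - e}.
\]
The factors are read off Table \ref{tab:state_monomialsA}. For a $+$ edge, $e \in T$ gives the inactive state $D$ and weight $A$, while $e \notin T$ gives $d$ and weight $A^{-1}$. For a $-$ edge the roles swap: $\bar D$ gives $A^{-1}$ and $\bar d$ gives $A$.

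Contracting or deleting $e$ in $G_\epsilon$ corresponds to the two smoothings of $D$ at $c$. The dual graph undergoes the complementary operation. Comparing with the skein relation in Definition \ref{def:kauffmanbracket} and the sign convention (\ref{eq:signconvention}), the monomial attached to each piece is exactly the one multiplying the corresponding smoothing. We must also check two things. First, the homology classes of the cycles $\cyc(T,e_j)$ for the remaining edges are unchanged, since contracting or deleting a regular edge does not change which subgraphs are local. Second, the signs of the other crossings are preserved. Hence $\Psi$ satisfies the Kauffman skein relation crossing by crossing.

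\textbf{Base case: terminal graphs.} We need to show $\Psi_{G_1} + \Psi_{G_2} = \langle D \rangle$ in the non-local case. The trickier point is that a crossing can be regular in $G_1$ while its dual edge is a loop or bridge in $G_2$. So rather than treating $\Psi_D$ as a whole, we reduce each Tait graph separately.

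We expand $\Psi_{G_1}$ down to terminal graphs: a single vertex with local and projective loops, plus bridges. Each terminal graph corresponds to a partial resolution of $D$ in which the remaining crossings are nugatory-type. A bridge or local loop of sign $\pm$ contributes $-A^{\mp 3}$ (the states $L$, $\ell_{\mathrm{loc}}$ and their barred versions), which is the Reidemeister I factor. A projective loop contributes $A^{-1}$ or $A$ (the states $\ell_{\mathrm{pro}}$, $\bar\ell_{\mathrm{pro}}$).

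\textbf{Main obstacle.} The key step is identifying, for a terminal $G_1$ whose edges are all projective loops at one vertex, the bracket of the corresponding diagram. Such a diagram is a single twisted band through the cross-cap. Its bracket, as in Example \ref{ex: twists_KB} and (\ref{eq:M2KB}), is $A^{\pm n}$ plus a sum that equals $\Psi$ of the dual graph, which is a tree-like graph with $n$ vertices forming a cycle through the cross-cap.

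Concretely, I would first prove the statement for the family $\Upsilon_n$ and its mirrors by direct computation. For an $n$-loop bouquet there is one spanning tree, all $\ell_{\mathrm{pro}}$, with weight $A^{\pm n}$. For the dual cycle graph, its $n$ spanning trees give $\sum_i (-1)^i A^{n-2-4i}$.

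Next, I would show that any pair of terminal-ized Tait graphs arising at the leaves of the recursion reduces to such a pair, together with local pieces handled by the Reidemeister I factors. Then I would assemble these with the skein relation to obtain (ii). The bookkeeping that the recursion in $G_1$ and the recursion in $G_2$ produce matching terminal diagrams, via Lemma \ref{lem:projectivecompletions}, is where I expect the real work to lie.

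\textbf{Part (i).} When $D$ is local, both Tait graphs are local planar duals. Every cycle is nullhomologous, so no $\ell_{\mathrm{pro}}$ states occur and $\Psi_{G_\epsilon}$ is Thistlethwaite's classical expansion. This gives $\Psi_{G_1} = \Psi_{G_2} = \langle D \rangle$, and hence $\Psi_D = 2\langle D \rangle$.
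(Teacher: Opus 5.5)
Your inductive step only yields the skein relation for $\Psi_D=\Psi_{G_1}+\Psi_{G_2}$ if the dual edge $\bar e$ can be deleted/contracted in the other Tait graph at the same time, with the complementary monomials. That requires $\bar e$ to be regular too, but you only arrange that $e$ is regular in $G_\epsilon$. Since bridges and local loops are mutually dual, a regular $e$ has $\bar e$ either regular or a \emph{projective loop}, and in the second case the step fails:
\begin{itemize}
\item $\bar e$ lies outside every spanning tree, in state $\ell_{\mathrm{pro}}$ or $\bar\ell_{\mathrm{pro}}$. So $\Psi_{G_{\bar\epsilon}}$ is a single monomial times $\Psi_{G_{\bar\epsilon}-\bar e}$ and does not split into two terms.
\item $G_\epsilon-e$ misses the essential curve $\bar e$, hence is local. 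So the smoothing $D'$ with Tait graph $G_\epsilon-e$ is a local diagram, for which the inductive hypothesis (ii) is false.
\end{itemize}
Your fallback of reducing each Tait graph separately does not repair this. $\Psi_{G_1}$ alone satisfies no inductive statement: for $\Upsilon_1$ it is $A^{\pm1}$, while $\langle D\rangle=A+A^{-1}$. Moreover, the two recursions stop at different partial smoothings of $D$: a crossing that is regular in $G_1$ but a projective loop in $G_2$ is smoothed in one recursion and not in the other. So there are no matching leaves to pair. You defer exactly this matching as ``where the real work lies,'' but it is the whole content of (ii). Your base case also treats only all-positive or all-negative bouquets; mixed-sign ones are not covered.

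The paper never reduces one graph alone. It expands only at a crossing whose edge is regular in $G_1$ \emph{and} whose dual is regular in $G_2$, in both graphs at once, and regroups:
\[
\Psi_D=\mu_e^{\mathrm{in}}\bigl(\Psi_{G_1/e}+\Psi_{G_2-\bar e}\bigr)+\mu_e^{\mathrm{out}}\bigl(\Psi_{G_1-e}+\Psi_{G_2/\bar e}\bigr)=\mu_e^{\mathrm{in}}\Psi_{D''}+\mu_e^{\mathrm{out}}\Psi_{D'}.
\]
Both smoothings stay non-local, since two distinct disk regions are merged. If no such crossing exists, $D$ is \emph{terminal}: every edge of either graph is a bridge, a loop, or a regular edge dual to a projective loop. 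Bridges and local loops are dual, and at most one vertex can carry projective loops. Hence a non-local terminal diagram is a bouquet of projective loops against its dual cycle through the cross-cap, decorated with R1 kinks. One strips the factors $-A^{\pm3}$, cancels the trees attached to adjacent opposite-sign edges of the cycle in pairs (the R2 reduction), and compares with (\ref{eq:M2KB}).

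Alternatively, your one-graph-at-a-time scheme can be rescued in the bad case. If $\bar e$ is a projective loop, part (i) applied to the local diagram $D'$ gives $\Psi_{G_\epsilon-e}=\langle D'\rangle$. The weight of $\bar e$ equals $\mu_e^{\mathrm{in}}$, so $\Psi_D=\mu_e^{\mathrm{in}}\Psi_{D''}+\mu_e^{\mathrm{out}}\langle D'\rangle$, with $D''$ still non-local. Some argument of this kind has to be supplied.
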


\begin{proof}
The proof of the first statement is implicit to \cite[\S 4]{MR899051}. We will prove the second statement; our argument follows a structure similar to Thistlethwaite's. 

Notice that bridges and local loops are mutually dual to each other. The story for projective loops is more complicated. Start by observing that there is at most one vertex in a non-local graph $G$ embedded in $\RP^2$ which supports projective loops. If there is exactly one projective loop $p$, and $G - p$ is a local graph, then the edge dual to $p$ is also a projective loop. Otherwise, the dual to each projective loop is a regular edge.

This observation prompts the following definition. We call the diagram $D$ \emph{terminal} if, in both of its Tait graphs, all edges are either bridges, loops, or regular edges whose duals are projective loops. Notice that the dual of a regular edge $e$ is a projective loop if and only if $e$ has the same face on either side of it. Therefore, the Tait graphs of a terminal diagram $D$ fall into precisely one of three types:
\begin{enumerate}[leftmargin=2cm, label=Type \Roman*.]
\item All edges are bridges or local loops;
\item All edges are bridges, local loops, or projective loops;
\item All edges are bridges, local loops, or regular edges whose duals are projective loops.
\end{enumerate}
If $G_1$ and $G_2$ are the Tait graphs of a terminal diagram, then $G_1$ is Type I if and only if $G_2$ is Type I, and $G_1$ is Type II if and only if $G_2$ is Type III. If $G_1$ and $G_2$ are both Type I, then $D$ is evidently a local diagram for the class-0 unknot, a case belonging to Part i of this proposition. Without loss of generality, if $G_1$ is Type II and $G_2$ is Type III, then $D$ is a diagram for $\Upsilon_n$ for some $n$.

Thus, assume that $D$ is terminal, and that $G_1$ is Type II with $p$ positive bridges, $q$ negative bridges, $r$ positive local loops, $s$ negative local loops, $t$ positive projective loops, and $u$ negative projective loops. This means that $G_2$ is Type III with $p$ negative local loops, $q$ positive local loops, $r$ negative bridges, $s$ positive bridges, $t$ negative regular edges, and $u$ positive regular edges. In this case,
\begin{align}
\Psi_D &= (-A^3)^{-p+q+r-s} \sum_{T_i\subset G_1'} \left(\prod_{e_j\in G_1'} \mu_{ij}\right) + (-A^3)^{-p+q+r-s} \sum_{T_i\subset G_2'} \left(\prod_{e_j\in G_2'} \mu_{ij}\right) \nonumber
\\ &= (-A^3)^{-p+q+r-s} \left( A^{-t+u} + \sum_{T_i\subset G_2'} \left(\prod_{e_j\in G_2'} \mu_{ij}\right) \right) \label{eq:M2graphpoly}
\end{align}
for $G_1'$ the graph consisting of a single vertex with $t$ positive projective loops and $u$ negative projective loops, and 
\[
G_2' = 
\tikz[baseline={([yshift=-.5ex]current bounding box.center)}, scale=.125]{
	\begin{scope}[blue, very thick]
		\fill (0,1.75) circle (.55cm);
		\fill (0,3.5) circle (.55cm);
		\fill (0,-1.75) circle (.55cm);
		\fill (0,-3.5) circle (.55cm);
		\fill (0,0) circle (.55cm);
		\node[anchor=west] at (1,0.75) {$\vdots$};
		\draw (0,0) -- (0,-5);
		\draw (0,0) -- (0,5);
	\end{scope}
	\draw[white, ultra thick] (0,0) circle (5cm);
	\draw[dotted] (0,0) circle (5cm);
}
\]
with $t$ negative edges and $u$ positive edges. The corresponding diagram $D'$ with Tait graphs $G_1'$ and $G_2'$ has been obtained from $D$ by removing all Reidemeister I crossings. It can be further reduced by performing $\min\{t, u\}$-many Reidemeister II moves, resulting in a minimal diagram for $\Upsilon_{-t+u}$. The Kauffman bracket is invariant under Reidemeister II moves, so we can conclude the proof for terminal graphs by showing that $\Psi_{D'}$ agrees with Equation (\ref{eq:M2KB}) from Example \ref{ex: twists_KB}, replacing $n$ with $-t+u$. Following (\ref{eq:M2graphpoly}), we compute $\sum_{T_i\subset G_2'} \left(\prod_{e_j\in G_2'} \mu_{ij}\right)$.

Order all the edges of $G_2'$ cyclically, and let $T_i = G_2' - e_i$ for each $i=1,\ldots, t+u$. First, assume that $t=0$, \textit{i.e.}, all edges are signed positively. We list the states and weights associated to each spanning tree of $G_2'$ in this case in Table \ref{tab:statesandweightsG2}. Comparing with (\ref{eq:M2KB}) of Example \ref{ex: twists_KB}, we have the desired result.

\begin{table}[ht]
\centering
\begin{tblr}{
  cells = {c},
  row{5} = {t},
  cell{2}{2} = {r},
  cell{3}{2} = {r},
  cell{4}{2} = {r},
  cell{6}{2} = {r},
  vline{2-3} = {-}{},
  hline{2} = {-}{},
}
Tree     & State                          & Weight      \\
$T_1$    & $\ell_{\text{pro}}DD \cdots D$ & $A^{u-2}$   \\
$T_2$    & $LdD\cdots D$                  & $-A^{u-6}$  \\
$T_3$    & $LLd\cdots D$                  & $A^{u-10}$  \\
$\vdots$ & $\vdots$                       & $\vdots$    \\
$T_u$    & $LLL \cdots d$                 & $(-1)^{u-1} A^{-3n+2}$ 
\end{tblr}
\caption{States and weights associated to spanning trees of $G_2'$.}
\label{tab:statesandweightsG2}
\end{table}

Now, the computation is entirely analogous if $u=0$, \textit{i.e.}, all edges are signed negatively. Thus, we need to show that the computation reduces to one of these cases for $n = -t+u$; that is, when both positive and negative crossings are present. 

We will consider how $\sum_{T_i\subset G_2'} \left(\prod_{e_j\in G_2'} \mu_{ij}\right)$ reduces assuming two neighboring edges $e_i$ and $e_{i+1}$ for $i=1,\ldots, t+u-1$ have differing sign. We will assume that $e_i$ is positive and $e_{i+1}$ is negative (though the proof of the converse will be immediately apparent) unless $i+1 = t+u$, in which case we assume that $e_i+1$ is positive and $e_i$ is negative.

If $1<i<t+u$, then for each tree $T_j$ with $j \not=i$, the state of $T_j$ has either $D \bar{D}$ or $L \bar{L}$, which contributes nothing to the weight of $T_j$. The weight of $T_i$ itself cancels with the weight of $T_{i+1}$. To see this, notice that the states of $T_i$ and $T_{i+1}$ are identical outside of the $i$th and $(i+1)$st letters, where $T_i$ has a $d \bar{D}$ and $T_{i+1}$ has a $ L \bar{d}$. Thus the weights of $T_i$ and $T_{i+1}$ differ only in that $T_{i+1}$ has a factor of $-A^{-2}$ where $T_i$ has a factor of $A^{-2}$, thereby cancelling. This means that the edges $e_i$ and $e_{i+1}$ of differing sign can be completely ignored in the computation of $\sum_{T_i\subset G_2'} \left(\prod_{e_j\in G_2'} \mu_{ij}\right)$.

The above argument works just as well when $i = t+u$, only differing in that the weight of $T_{t+u}$ cancels with the weight of $T_{t+u-1}$. When $i=1$, notice that the state of $T_1$ starts with $\ell_\text{pro} \bar{D}$ (contributing a factor of $A^{-2}$) and differs from the state of $T_2$ only at the beginning, where $T_2$ has $L \bar{d}$ (contributing a factor of $-A^{-2}$). Thus, the weights of $T_1$ and $T_2$ cancel. This may be concerning, since no other state begins with an $\ell_\text{pro}$. However, the states of all other trees begin with $L \bar{L}$, which can again be ignored, so $T_3$ effectively begins with either a $d$ or a $\bar{d}$, which has the same associated monomial as $\ell_\text{pro}$ or $\bar{\ell}_\text{pro}$. So, again, the edges $e_1$ and $e_2$ can be discarded in the computation. This concludes the proof that $\Psi_D = \langle D \rangle$ if $D$ is a terminal graph.

Finally, assume that $D$ is non-terminal. Assume that $e$ is the highest ordered regular edge whose dual is also regular. Denote its dual by $\bar{e}$. Then $e$ is always internally inactive in spanning trees containing it and externally inactive in spanning trees which omit it. The same can be said for $\bar{e}$. Let $\varepsilon\in\{-1,1\}$ denote the sign of $e$, so that $-\varepsilon$ is the sign of $\bar{e}$. Then
\begin{align*}
\Psi_D &= \sum_{T_i\subset G_1} \left(\prod_{e_j \in G_1} \mu_{ij}\right) + \sum_{T_i\subset G_2} \left(\prod_{e_j \in G_2} \mu_{ij}\right) 
\\ &=  
\left(
A^{\varepsilon} \sum_{T_i\subset G_1 - e} \left(\prod_{e_j \in G_1 - e} \mu_{ij}\right) + A^{-\varepsilon} \sum_{T_i\subset G_1 / e} \left(\prod_{e_j \in G_1/e} \mu_{ij}\right)
\right)
\\&
\qquad\qquad\qquad + 
\left(
A^{-\varepsilon}\sum_{T_i\subset G_2 - \bar{e}} \left(\prod_{e_j \in G_2 - \bar{e}} \mu_{ij}\right) + A^{\varepsilon}\sum_{T_i\subset G_2/\bar{e}} \left(\prod_{e_j \in G_2/\bar{e}} \mu_{ij}\right) 
\right)
\\ &=
A^\varepsilon \left( \sum_{T_i\subset G_1 - e} \left(\prod_{e_j \in G_1 - e} \mu_{ij}\right)  + \sum_{T_i\subset G_2/\bar{e}} \left(\prod_{e_j \in G_2/\bar{e}} \mu_{ij}\right) \right)
\\ &
\qquad\qquad\qquad + A^{-\varepsilon} \left( \sum_{T_i\subset G_1 / e} \left(\prod_{e_j \in G_1/e} \mu_{ij}\right) + \sum_{T_i\subset G_2 - \bar{e}} \left(\prod_{e_j \in G_2 - \bar{e}} \mu_{ij}\right)  \right)
\\ &= A^\varepsilon \Psi_{D'} + A^{-\varepsilon} \Psi_{D''}
\end{align*}
for $D'$ and $D''$ the two resolutions of $D$ along the crossing corresponding to the edges $e$ and $\bar{e}$. This agrees with (1) in Definition \ref{def:kauffmanbracket}, so the proof is complete.
\end{proof}

\begin{remark}
We highlight the feature that $\Psi_D$ can be used to compute the bracket of a local link $L$ precisely as long as $D$ is a non-local diagram for $L$. We only risk double-counting the bracket when $D$ is a local diagram.
\end{remark}

We are now ready to prove Theorem \ref{thm:jonespolynomialalternatingtutte}. We assume that $L$ is a non-split, non-local alternating link in $\RP^3$ and that $D$ is a connected, irreducible, alternating diagram for $L$. We denote by $G_+$ and $G_-$ the two checkerboard graphs associated to $D$ (recall that since $D$ is a reduced alternating diagram, we can assume $G_+$ has only positively signed edges and $G_-$ has only negatively signed edges). We will show that there are $k_+, k_- \in \mathbb{Z}[\pm\frac{1}{2}]$ such that
\[
J_L(t) = (-1)^{w(D)} \left( t^{k_+} \psi_{G_+}(-t, -t^{-1}) + t^{k_-} \psi_{G_-}(-t^{-1}, -t) \right).
\]
Moreover, $k_+-k_- = \frac{1}{2}$. Since $J_L(t) \in \Z[t^{\frac{1}{2}}, t^{-\frac{1}{2}}]$, we can conclude that exactly one of $k_+$ and $k_-$ is a half-integer, and the other is an integer.

\begin{proof}[Proof of Theorem \ref{thm:jonespolynomialalternatingtutte}]
Assume $D$ has $m$ crossings, so that $G_+$ and $G_-$ both have $m$ edges. Since $D$ is assumed irreducible, neither $G_+$ nor $G_-$ contain any bridges or loops.

The proof proceeds as in \cite[\S 5]{MR899051}. The state of a spanning tree $T$ of $G_+$ takes the form $L^pD^q \ell_{\text{loc}}^r \ell_{\text{pro}}^s d^t$ for some values satisfying $p + q + r + s + t = m$. According to Table \ref{tab:state_monomialsA}, this tree has weight $(-1)^{p+r} A^{-3p+q +3r - s - t}$. 

Notice that $p+q$ is the number of edges in any spanning tree of $G_+$. If $n_+$ is the number of vertices in $G_+$, this means that $p + q = n_+-1$ and $r + s + t = m - n_+ + 1$. Set $u(T) = p-r$ and $k_+' = 2(n_+-1) - m$; in particular, $k_+'$ is constant among all spanning trees of $G_+$. Thus, we can rewrite the weight of the spanning tree as $(-1)^{u(T)} A^{k_+'-4u(T)}$.

The process for spanning trees $T$ of $G_-$ is similar, only differing in that states take the form $\bar{L}^p \bar{D}^q \bar{\ell}_{\text{loc}}^r \bar{\ell}_{\text{pro}}^s \bar{d}^t$, so that the weight of spanning trees can be written as $(-1)^{u(T)} A^{4u(T)-k_-'}$ for $k_-' = 2(n_--1) - m$ where $n_-$ the number of vertices of $G_-$. By Proposition \ref{prop:KBgraphpolynomial}, we conclude that
\[
\langle D \rangle = \Psi_D =
A^{k_+'} \sum_{T_i \subset G_+} (-1)^{u(T_i)} A^{-4u(T_i)} + A^{-k_-'} \sum_{T_j \subset G_-} (-1)^{u(T_j)} A^{4u(T_j)}
\]
which means that the Jones polynomial takes the following form:
\begin{equation}\label{eq:jlsplitting}
J_L(t) = (-1)^{w(D)} \left( t^{\frac{3w(D) - k_+'}{4}} \left( \sum_{T_i \subset G_+} (-1)^{u(T_i)} t^{u(T_i)}
\right) + t^{\frac{3w(D) + k_-'}{4}} \left( \sum_{T_j \subset G_-} (-1)^{u(T_j)} t^{-u(T_j)}\right)\right).
\end{equation}
Finally, note that $p(T)$, $r(T)$, and $s(T)$ are the internal, local external, and projective external activities of $T$. Using Proposition \ref{prop:tutte_spanningtrees}, we compute
\newline
\begin{minipage}{.5\linewidth}
\begin{align*}
\psi_{G_+}(-t, -t^{-1}) &= \left. \sum_{T \subset G_+} x^{p(T)} y^{r(T)} \right|_{\raisemath{4pt}{\subalign{&x = -t \\ &y = -t^{-1}}}}
\\ &= \sum_{T \subset G_+} (-1)^{u(T)} t^{u(T)},
\end{align*}
\end{minipage}
\begin{minipage}{.5\linewidth}
\begin{align*}
\psi_{G_-}(-t^{-1}, -t) &= \left. \sum_{T \subset G_-} x^{p(T)} y^{r(T)} \right|_{\raisemath{4pt}{\subalign{&x = -t^{-1} \\ &y = -t }}}
\\ &= \sum_{T \subset G_-} (-1)^{u(T)} t^{-u(T)}.
\end{align*}
\end{minipage}
Thus, the result follows setting $k_+ = \frac{3w(D) - k_+'}{4}$ and $k_- = \frac{3w(D) + k_-'}{4}$.

Finally, we verify that $k_+-k_- = \frac{1}{2}$. First, in general, notice that if $G$ is a cellular graph in $\RP^2$ (\textit{e.g.}, when $G$ is a checkerboard graph of a non-local, non-split nullhomologous link), then
\[
v(G) - e(G) + f(G) = 1
\]
since $\chi(\RP^2) = 1$. Let $f_+$ denote the number of faces of $G_+$. Then, we can write
\[
k_+' = 2(n_+-1) - m = 2(m - f_+) - m = m - 2f_+
\]
and, using the fact that $G_-$ is the graph Poincar\'e dual to $G_+$ so $n_- = f_+$,
\[
k_-' = 2(n_- - 1) - m = 2(f_+ - 1) - m = 2 (m - n_+) - m = m - 2n_+.
\]
Thus $k_+' + k_-' = -2(n_+ - m + f_+) = -2$. We conclude that
\[
k_+ - k_- = \cfrac{3w(D) - k_+' - 3w(D) - k_-'}{4} = - \cfrac{k_+' + k_-'}{4} = \cfrac{1}{2}\,.
\]
This concludes the proof.
\end{proof}

\begin{corollary}
\label{cor:detsandtrees}
If $L$ is a non-split, non-local alternating link in $\RP^3$, then $\det_\Re(L)$ and $\det_\Im(L)$ are the number of spanning trees in the distinct Tait graphs of any reduced, alternating diagram of $L$.
\end{corollary}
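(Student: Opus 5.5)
We evaluate the splitting of Theorem \ref{thm:jonespolynomialalternatingtutte} at $t=-1$. Fix a connected, reduced, alternating diagram $D$ of $L$ with Tait graphs $G_+$ and $G_-$, so that
\[
J_L(t) = (-1)^{w(D)} \left( t^{k_+} \psi_{G_+}(-t, -t^{-1}) + t^{k_-} \psi_{G_-}(-t^{-1}, -t)\right),
\]
with $k_+ - k_- = \frac12$, and exactly one of $k_\pm$ an integer. Substituting $t=-1$ gives $x=y=1$ in both myopic Tutte polynomials. By the spanning tree expansion (Proposition \ref{prop:tutte_spanningtrees}), $\psi_G(1,1)=\sum_{T\subset G} 1$ is the number of spanning trees of $G$. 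Hence each of $\psi_{G_\pm}(-t,\ldots)$ evaluates at $t=-1$ to the number $\tau(G_\pm)$ of spanning trees of $G_\pm$.

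Next we track the prefactors. Writing $t^{1/2}$ consistently with the convention used to define $\det_\Re,\det_\Im$ (so $(-1)^{1/2}=\pm i$), the factor $(-1)^{k}$ is $\pm1$ when $k\in\Z$ and $\pm i$ when $k\in \Z+\frac12$. Since exactly one of $k_+,k_-$ is a half-integer, one summand contributes $\pm\tau(G_\epsilon)$ to the real part and the other contributes $\pm\tau(G_{-\epsilon})\, i$ to the imaginary part. Positivity of $\tau$ then identifies $\det_\Re(L)$ and $\det_\Im(L)$ with these two tree counts.

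One point needs checking: the two summands in the splitting must genuinely be the integral and half-integral parts of $J_L$. This holds because each $\psi_{G_\pm}(-t^{\pm1},-t^{\mp1})$ is a Laurent polynomial in integer powers of $t$. So $t^{k_+}\psi_{G_+}$ has all exponents in $k_+ + \Z$, and $t^{k_-}\psi_{G_-}$ has all exponents in $k_- + \Z$, and these cosets are distinct.

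Finally, we justify ``any'' reduced alternating diagram. The argument above applies verbatim to every connected, reduced, alternating diagram. The hypotheses non-split and non-local ensure that such diagrams are connected and that $G_\pm$ are cellular, as required in Theorem \ref{thm:jonespolynomialalternatingtutte}. We expect the only delicate step to be the bookkeeping of the branch of $(-1)^{1/2}$ together with the sign $(-1)^{w(D)}$. Since the determinants are defined up to sign, we only need the absolute values. Proposition \ref{prop:goeritzdet} offers an independent consistency check via $\det(\mathcal{G}_\pm)$, though it is not needed for the argument.
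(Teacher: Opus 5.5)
Your proposal is correct and follows the paper's argument. The paper evaluates the splitting of Theorem~\ref{thm:jonespolynomialalternatingtutte} at $t=-1$, where both substitutions become $x=y=1$, and uses that $\psi_G(1,1)$ is the number of spanning trees of $G$ by Proposition~\ref{prop:tutte_spanningtrees}. Your additional bookkeeping — the branch of $(-1)^{1/2}$, and the observation that the two summands lie in the distinct exponent cosets $k_\pm+\Z$ so they cannot cancel — is exactly what the paper's one-line proof leaves implicit.
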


\begin{proof}
This follows immediately from Theorem \ref{thm:jonespolynomialalternatingtutte}, since $\psi_G(1, 1)$ is equal to the number of spanning trees of $G$ by Proposition \ref{prop:tutte_spanningtrees} (or Proposition \ref{prop:tutte2}, for that matter).
\end{proof}

\begin{corollary}
\label{cor:detsandtreesandmatrices}
If $L$ is a non-split, non-local alternating link in $\RP^3$ with Goeritz matrices $\mathcal{G}_\pm$ obtained from a reduced, alternating diagram of $L$, then
\[
\abs{\det(\mathcal{G}_\pm)} = \mathrm{det}_{\pm}(L).
\]
\end{corollary}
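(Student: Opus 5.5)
The plan is to combine Proposition \ref{prop:goeritzdet} with Corollary \ref{cor:detsandtrees}. The point is that both $\abs{\det(\mathcal{G}_\pm)}$ and the determinants $\det_\Re(L), \det_\Im(L)$ have already been expressed as spanning-tree counts of the Tait graphs $G_\pm$. The only remaining work is to identify which of the two determinants corresponds to which Tait graph. I read $\det_+(L)$ and $\det_-(L)$ as the determinant coming from the Jones summand $J_L^+$ and $J_L^-$ respectively, i.e.\ from whichever of $\det_\Re,\det_\Im$ is produced by $G_+$ and $G_-$.

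First I fix a connected, reduced, alternating diagram $D$, with Tait graphs $G_+$ and $G_-$. By Proposition \ref{prop:goeritzdet}, $\abs{\det(\mathcal{G}_\pm)}$ equals the number $\tau(G_\pm)$ of spanning trees of $G_\pm$. This step reduces to the classical matrix-tree theorem applied to the planar closures $D^\pm$ of (\ref{eq:platish}), whose Tait graphs agree with $G_\pm$ as abstract graphs.

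Next I evaluate Theorem \ref{thm:jonespolynomialalternatingtutte} at $t=-1$ with $t^{1/2}=i$. Since $x=-t$ and $y=-t^{-1}$ both become $1$, the summands are
\[
J_L^\pm(-1) = \pm (-1)^{k_\pm}\, \psi_{G_\pm}(1,1) = \pm (-1)^{k_\pm}\, \tau(G_\pm),
\]
where the last equality uses Proposition \ref{prop:tutte_spanningtrees}. Because $k_+-k_-=\tfrac12$, exactly one of $(-1)^{k_+}$ and $(-1)^{k_-}$ is a real sign and the other is $\pm i$. The two summands are therefore the real and imaginary parts of $J_L(-1)$, and $\det_\pm(L)=\tau(G_\pm)$. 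This is exactly Corollary \ref{cor:detsandtrees}, refined so that it records which graph gives which part.

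Combining the two identifications gives $\abs{\det(\mathcal{G}_\pm)} = \tau(G_\pm) = \det_\pm(L)$.

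The main obstacle is bookkeeping rather than mathematics. I need to make sure that the checkerboard coloring producing $\mathcal{G}_+$ in \S\ref{ss:Goeritz} is the same one that produces $G_+$ in the sign convention (\ref{eq:signconvention}). Example \ref{ex:maingoeritz} confirms this: there $\det\mathcal{G}_+=2=\det_\Im$, and $G_+$ is the red graph with $2$ spanning trees. I also need the fact that the imaginary part is never accidentally zero. This holds because a non-local cellular graph always has at least one spanning tree.
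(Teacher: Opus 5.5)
Your proposal is correct and is essentially the paper's proof. The paper reads off $\abs{J_L^\pm(-1)} = \tau(G_\pm)$ from Theorem \ref{thm:jonespolynomialalternatingtutte}, using that $\psi_G(1,1)$ counts spanning trees, and then applies Proposition \ref{prop:goeritzdet} to identify this count with $\abs{\det(\mathcal{G}_\pm)}$. Your extra checks on matching $\mathrm{det}_\Re$ and $\mathrm{det}_\Im$ to Tait graphs agree with Example \ref{ex:maingoeritz}, but they are not needed once $\mathrm{det}_\pm(L)$ is read as $\abs{J_L^\pm(-1)}$.
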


\begin{proof}
Again, by Theorem \ref{thm:jonespolynomialalternatingtutte}, we have that 
\[
\abs{J_L^\pm(-1)} = \abs{\text{spanning trees of}~G_\pm}.
\]
The latter is equal to $\det(\mathcal{G}_\pm)$ by Proposition \ref{prop:goeritzdet}.
\end{proof}

\section{Classical theorems, tabulation, and detection results}
\label{s:applications}

We discuss some applications of Theorem \ref{thm:jonespolynomialalternatingtutte} before progressing to link homology. First, we give a fresh take on classical theorems of Kauffman, Murasugi, and Thistlethwaite and prove Theorem \ref{thm:altbreadth}. This theorem lends credence to a different method of tabulation of nullhomologous links in $\RP^3$, which we describe in \S \ref{ss:tabulation}. Via Theorem \ref{thm:altbreadth}, these tabulation results give a simple proof that several infinite families of nullhomologous links are detected among alternating links in $\RP^3$. This claim was stated as Theorem \ref{thm:detection}.

\subsection{Classical theorems}

In this subsection, we prove Theorem \ref{thm:altbreadth}, which determines the breadth of each summand of $J_L(t) = J_L^+(t) + J_L^-(t)$ when $L$ is a non-split alternating link.

\begin{lemma}[C.f. Proposition 2 of \cite{MR899051}]
\label{lem:mindegs}
If $G$ has $n$ vertices and $m$ edges, then $\psi_G(x,y)$ is degree $n-1$ in $x$ and degree $m-n+1 - \alpha$ in $y$. Furthermore, if $G$ has no bridges and no local loops, then:
\begin{enumerate}
\item $\psi_G$ has precisely one term of maximal degree in $x$, namely $x^{n-1}$; and
\item $\psi_G$ has a term $cy^{m-n+1-\alpha}$ for some $c \in \Z_{>0}$.
\end{enumerate}
\end{lemma}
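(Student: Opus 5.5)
I would work directly with the state-sum model of Proposition \ref{prop:tutte2}, supplemented by the spanning tree expansion of Proposition \ref{prop:tutte_spanningtrees} for the refined statements. Throughout, $G$ is taken connected (the lemma is applied to cellular Tait graphs), with $n$ vertices, $m$ edges, and $\alpha=\alpha(G)$ as in the introduction.

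\emph{Degree in $x$.} In (\ref{eq:tutte2}) the exponent of $x-1$ is $b_0(H)-1\le n-1$. Equality holds only for the edgeless spanning subgraph, which is local and contributes exactly $(x-1)^{n-1}$. Hence $\deg_x\psi_G=n-1$ with leading coefficient $1$.

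\emph{Degree in $y$.} Only local spanning subgraphs $H$ contribute, with $y$-exponent $b_1(H)$. Let $E$ be the edge set of a local spanning subgraph. Its complement $E(G)-E$ has the property that deleting it leaves a local graph, so $\abs{E(G)-E}\ge\alpha$ and $\abs{E}\le m-\alpha$. Since $b_0(H)\ge 1$, this gives
\[
b_1(H)=\abs{E}-n+b_0(H)\le m-\alpha-n+b_0(H).
\]
This bound is not yet sharp, because $b_0(H)$ may exceed $1$. To handle this I would pass to spanning trees. In the expansion of Proposition \ref{prop:tutte_spanningtrees}, the $y$-exponent $j$ of a tree $T$ counts locally externally active edges. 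These edges together with $T$ form a local subgraph: each fundamental cycle is nullhomologous, and a sum of nullhomologous fundamental cycles is nullhomologous, so the cycle space maps to $0$. The number of edges outside this subgraph is $m-(n-1)-j$, and by the definition of $\alpha$ it is at least $\alpha$. Hence $j\le m-n+1-\alpha$.

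\emph{Attainment, and statement (2).} Choose $E_0$ with $\abs{E_0}=\alpha$ and $G-E_0$ local. I claim $G-E_0$ is connected. Otherwise, some edge $f\in E_0$ would join two components, and $G-(E_0-f)$ would still be local, contradicting minimality; here one uses that in $\RP^2$ a union of local pieces joined by a single arc is still contained in a disk. Pick a spanning tree $T\subset G-E_0$ and order the edges so that the $m-n+1-\alpha$ edges of $G-E_0-T$ come first. Each of these edges is externally active, and its fundamental cycle lies in $G-E_0$, so it is local. This produces the monomial $y^{m-n+1-\alpha}$ with coefficient $1$, and since all coefficients in the spanning tree expansion are nonnegative, the coefficient $c$ of $y^{m-n+1-\alpha}$ is a positive integer. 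Together with the upper bound, this proves $\deg_y\psi_G=m-n+1-\alpha$ and statement (2). The hypothesis of no bridges and no local loops is not needed for this part.

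\emph{Uniqueness in (1).} Statement (1) requires that the only term of $x$-degree $n-1$ be $x^{n-1}$. In the tree expansion, such a tree has every edge internally active, and following Thistlethwaite this forces $T$ to be the unique tree built greedily from the lowest edges. I would then argue that this tree has no locally externally active edge, using that $G$ has no bridges or local loops. This transfer of Thistlethwaite's Proposition 2 argument, in which the presence of projective cycles must be checked not to create extra $x^{n-1}y^k$ terms, is the main obstacle.

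If that argument proves awkward, there is an alternative for (1) via deletion--contraction on a regular edge $e$. Contraction lowers the vertex count, so the $x^{n-1}$-part of $\psi_G$ comes only from $\psi_{G-e}$. I would then induct while avoiding the creation of bridges; if every choice of $e$ creates a bridge, $G$ is a cycle, which can be handled directly.
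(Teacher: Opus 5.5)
\textbf{Gap in statement (2).} The tree $T\subset G-E_0$ you construct contributes $x^{i(T)}y^{m-n+1-\alpha}$ to the expansion of Proposition \ref{prop:tutte_spanningtrees}, but you never control the internal activity $i(T)$. You only require that the edges of $G-E_0-T$ come first. That is enough for $\deg_y\psi_G=m-n+1-\alpha$, since all coefficients are nonnegative and nothing cancels. Statement (2), however, asks for an $x$-free term.

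Your remark that the no-bridge hypothesis is unnecessary here is false. A bridge is internally active in every spanning tree, so $\psi_G=x\,\psi_{G/e}$ has no $x$-free terms at all. For example, take a vertex carrying one projective loop, with a pendant edge attached. Then $n=m=2$, $\alpha=1$, and $\psi_G=x$, which has no constant term.

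Even without bridges, your ordering can fail. Take two vertices joined by edges $e,f$ forming an essential cycle, with $E_0=\{f\}$ and $T=\{e\}$. If $e$ precedes $f$, then $e$ is internally active and $T$ contributes $x$ rather than $1$.

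The fix is the paper's ordering: put \emph{all} non-tree edges, including those of $E_0$, before the tree edges. Then every non-tree edge is externally active, and those in $E_0$ are necessarily projectively active by your upper bound. If $G$ has no bridges, the fundamental cut of each tree edge contains a non-tree edge preceding it, so $i(T)=0$. Your upper bound on the local external activity is the paper's argument made explicit. Your proof that $G-E_0$ is connected fills in a step the paper leaves implicit.

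\textbf{The $x$-degree paragraph.} The edgeless subgraph is not the only spanning subgraph with $b_0(H)=n$: every set of loops has $b_0(H)=n$. Sets containing a projective loop are killed by $\delta$. The subsets $S$ of the $\ell$ local loops contribute $\sum_S (y-1)^{\abs{S}}=y^{\ell}$ to the coefficient of $x^{n-1}$. So $\deg_x\psi_G=n-1$ still holds, but the leading coefficient is $y^{\ell}$, not $1$.

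Once corrected, this observation proves statement (1) outright. With no local loops, the coefficient of $x^{n-1}$ is exactly $1$, so $x^{n-1}$ is the unique term of maximal $x$-degree. You therefore need neither the Thistlethwaite-style tree argument you flag as the main obstacle (the paper simply cites Thistlethwaite here) nor the no-bridge hypothesis for (1).
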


\begin{proof}
The statements regarding the variable $x$ can be proven exactly as in \cite[Proposition 2]{MR899051}. Notice that the external activity of any tree $T$ cannot exceed $m-n+1$. Since $T$ is local, there are at least $\alpha(G)$-many edges $e$ not in $T$ such that $[e \cup T] = 1$. Thus, the local external activity of $T$ cannot exceed $m-n+1-\alpha$. 

Let $A \subset E(G)$ be a collection of $\alpha(G)$-many edges such that $G - A$ is local. Notice that there is a spanning tree of $G$ which contains none of the edges in $A$. Pick one and call it $T$. Then, order the edges of $G$ so that all the edges not in $T$ are $e_1, e_2, \ldots, e_{m-n+1}$, and that the edges not in $A$ nor $T$ are ordered $e_1, e_2, \ldots, e_{m-n+1-\alpha}$. Thus, the first $m-n+1-\alpha$ edges are locally externally active with respect to $T$, while the next $\alpha$ are projectively externally active. The resulting term is of degree $m-n+1-\alpha$ in the variable $y$. If there are no bridges, then none of the edges of $T$ are internally active, so the corresponding term in $\psi_G$ is $y^{m-n+1-\alpha}$. There may be other trees contributing the same value but, in any case, the result follows from Proposition \ref{prop:tutte_spanningtrees}.
\end{proof}

\begin{example}
\label{ex:alphacomp}
Consider the graph
\[
\tikz[baseline={([yshift=-.5ex]current bounding box.center)}, scale=.225]{
	\begin{scope}[very thick]
		\fill (0,2.5) circle (.4cm);
		\fill (0,0) circle (.4cm);
		\fill (0,-2.5) circle (.4cm);
		\draw (0,-2.5) -- (0,2.5);
		\draw (0,-2.5) to[out=30, in=-30] (0,2.5);
		\draw (0,2.5) to[out=45, in=-135] (0.70711*5, 0.70711*5);
		\draw (0,2.5) to[out=135, in=-45] (-0.70711*5, 0.70711*5);
		\draw (0,-2.5) to[out=-45, in=135] (0.70711*5, -0.70711*5);
		\draw (0,-2.5) to[out=-135, in=45] (-0.70711*5, -0.70711*5);
	\end{scope}
	\draw[white, ultra thick] (0,0) circle (5cm);
	\draw[dotted] (0,0) circle (5cm);
}
\]
which has myopic Tutte polynomial
\[
\psi_G = x^2 + 2x + 1 + xy + 2y.
\]
As promised, there is exactly one term of maximal degree in $x$. The terms $xy$ and $2y$ both take maximal degree in $y$ (here, $\alpha(G) = 2$ so $m - n + 1 - \alpha = 1$); we have proven that there is always a term achieving maximal $y$-degree with no exponent in $x$.
\end{example}

\begin{proof}[Proof of Theorem \ref{thm:altbreadth}]
Applying Lemma \ref{lem:mindegs} to Theorem \ref{thm:jonespolynomialalternatingtutte}, the breadth of $J_L^\pm(t)$ is computed as $(n-1) - (-m + n - 1 + \alpha(G_\pm)) = m - \alpha(G_\pm)$. Theorem \ref{thm:dromain} tells us that $m = c(L)$.
\end{proof}

\begin{remark}
The statement 
\[
(d_{\max} J_L(t) - d_{\min} J_L(t)) - (d_{\max} J_L^\pm(t) - d_{\min} J_L^\pm(t)) + \frac{1}{2} = \alpha(G_\pm)
\]
is equivalent to the one provided in Theorem \ref{thm:jonespolynomialalternatingtutte}, and is obtained by substituting $d_{\max} J_L(t) - d_{\min} J_L(t) = c(L) - \frac{1}{2}$ from Theorem \ref{thm:dromain}.
\end{remark}

\subsection{Tabulation of links in $\RP^3$ by local crossing number}
\label{ss:tabulation}
In \cite{MR1296890}, Drobotukhina initiated the tabulation of links in $\RP^3$ through $6$ crossings. In light of Definition \ref{def:localcrossingno}, we may try to tabulate links in $\RP^3$ by local crossing number, or by $\min\{\beta_\Re(L), \beta_\Im(L)\}$. We begin this tabulation process with Theorem \ref{thm:betatabulation}. We call a link in $\RP^3$ \emph{prime} if it cannot be written as the connected sum of a non-trivial link $L \subset \RP^3$ with a non-trivial link $L' \subset S^3$; see, for example, Figure \ref{fig:hopffam}.

\begin{theorem}
\label{thm:betatabulation}
Assume $L$ is a non-split nullhomologous prime link in $\RP^3$.
\begin{enumerate}
\item If $\beta_\varepsilon(L) = 0$ for some $\varepsilon \in \{\Re, \Im\}$, then $L = \Upsilon_n$ for some $n\in \mathbb{Z}$ (see Figure \ref{fig:upsilon}, reproduced in Figure \ref{subfig:inf0}).
\item If $\beta_\varepsilon(L) = 1$ for some $\varepsilon \in \{\Re, \Im\}$, then $L$ belongs to the infinite family depicted in Figure \ref{subfig:inf1}.
\item If $\beta_\varepsilon(L) = 2$ for some $\varepsilon \in \{\Re, \Im\}$, then $L$ belongs to one of the infinite families depicted in Figure \ref{subfig:inf2}, or it is the Hopf link (see Figure \ref{fig:hopffam}).
\end{enumerate}
\end{theorem}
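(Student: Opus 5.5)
The plan is to work diagrammatically. If $\beta_\varepsilon(L)=k$, then some diagram $D$ of $L$ has a Tait graph $G$ (the one contributing to $J^\varepsilon_L$) with $|E(G)|-\alpha(G)=k$. In other words, $G$ contains a set $A$ of $\alpha(G)$ edges whose deletion leaves a local subgraph $G-A$ with exactly $k$ edges. So I will classify cellular graphs $G\subset\RP^2$ for which removing all but $k$ edges can produce a local graph while no fewer deletions suffice. After that, I will use non-splitness, primality and Drobotukhina's moves $\Omega_1$--$\Omega_5$ to cut the resulting list of diagrams down to the pictured families. Signs on edges are arbitrary, since $L$ need not be alternating. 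The families will therefore be recorded with arbitrary integer twist parameters, and Reidemeister II moves between adjacent edges of opposite sign in parallel classes will normalize them.

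\textbf{Step 1 ($k=0$).} Here $G-A$ has no edges. Because $G-A$ is spanning and local, any vertices of $G$ would have to sit in a disk. The first task is to show that minimality of $\alpha$ together with connectedness forces every edge of $G$ to be a projective loop at a single vertex. Indeed, if there were two vertices, a connecting edge would be an edge of $G$ lying in $A$. Deleting it could not be required for locality, unless it were essential together with the other edges, in which case one could keep some edge, contradicting $|E|=\alpha$. I expect this to reduce to the statement already used for Theorem~\ref{thm:detection}: $|E(G)|=\alpha(G)$ if and only if $G$ is one vertex with non-local loops. All such loops are parallel in $\RP^2$ (any two essential curves intersect, and embedded essential loops at one vertex are isotopic rel the vertex up to ordering). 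The diagram is then the twist region of Figure~\ref{fig:upsilon}, so $L=\Upsilon_n$ after removing oppositely signed pairs by $\Omega_2$.

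\textbf{Steps 2--3 ($k=1,2$).} Now $G-A$ is a local graph with $k$ edges. By connectedness of $G$ and primality/non-splitness it should be connected as a local graph, or have few components, each joined by the projective edges. For $k=1$ the local part is a single edge $uv$ (two vertices) or a single local loop. A local loop gives a nugatory or connected-sum crossing and is excluded by primality or removed by $\Omega_1$. In the edge case, every edge of $A$ is an essential arc from $\{u,v\}$ to $\{u,v\}$. Essential loops at $u$, essential loops at $v$, and essential $u$--$v$ arcs fall into a bounded number of parallel classes in $\RP^2$. Minimality of $\alpha$ constrains which classes coexist, and each class contributes a twist parameter, giving the family of Figure~\ref{subfig:inf1}. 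For $k=2$ I enumerate the local parts: a path on three vertices, a double edge on two vertices, or two disjoint edges. In each case I enumerate the parallel classes of essential edges attached to the local part, discard configurations that are non-prime or reducible by $\Omega_1,\Omega_4,\Omega_5$, and identify what remains with Figure~\ref{subfig:inf2} or the Hopf link. The Hopf link should arise from a local double edge with all projective edges removable.

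\textbf{Main obstacle.} The delicate part is the case analysis of how essential edges attach to a small local graph in $\RP^2$. In particular, I must justify that edges in the same isotopy class rel vertices form twist regions, and verify that the minimality $|A|=\alpha(G)$ (rather than just some local deletion) holds or fails in each configuration. I would handle this by cutting $\RP^2$ along a fixed essential edge to get a disk, and treating the remaining edges as chords between boundary copies of the vertices. This turns the classification into a planar chord-diagram enumeration.
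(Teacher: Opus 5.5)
Your route is a genuinely different organization of the same exhaustive enumeration. The paper inducts on $\beta(G)=|E(G)|-\alpha(G)$:
\begin{itemize}
\item a graph with $\beta=0$ has one vertex;
\item graphs with $\beta=n-1$ have at most $n$ vertices;
\item every graph with $\beta=n$ is claimed to arise from one with $\beta=n-1$ by adding a single edge.
\end{itemize}
The list in Figure~\ref{fig:tabulationgraphs} is then produced by hand and converted into diagrams. You instead fix a maximum local spanning subgraph $H=G-A$ with exactly $k$ edges, classify $H$, and then classify the essential edges attached to it.

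The paper's scheme is a mechanical generating procedure. Yours describes the graphs structurally, and it does not rely on the claim that some single edge deletion lowers $\beta$. That claim is not true in general. Take two parallel local $u$--$v$ edges $e_1,e_2$ and two $u$--$v$ edges $f_1,f_2$ in the other homotopy class, with or without projective loops at $u$. Here $\{e_1,e_2\}$ and $\{f_1,f_2\}$ are disjoint maximum local edge sets, so deleting any one edge leaves $\beta=2$. This graph does appear in the paper's list, but the stated induction would not generate it without also closing under $\beta$-preserving edge additions. Your decomposition reaches it directly from the local part $\{e_1,e_2\}$.

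Several supporting claims in your sketch need repair.

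\begin{itemize}
\item \emph{Connectedness of $G-A$.} This has nothing to do with primality or non-splitness; it comes from maximality. If an edge of $A$ joined two components of $G-A$, adding it back would create no new cycle, so the result would still be local, contradicting $|A|=\alpha(G)$. Hence $G-A$ contains a spanning tree and $v(G)\le k+1$. This is the bound that makes your enumeration finite. It also means ``components joined by projective edges'' never occur, and your ``two disjoint edges'' case for $k=2$ is vacuous: a spanning tree of $G$ would already be a local subgraph with three edges.
\item \emph{Local loops and bridges.} Exclude these for every $k$, not only $k=1$; your $k=2$ list silently omits loop cases. Each is a nugatory crossing. Removing it replaces $G$ by $G-\ell$ or $G/e$ without changing which edge sets are local, so $c(D)-\alpha_\varepsilon(D)$ drops by one, contradicting minimality of $D$. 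The same argument forces the essential $u$--$v$ edge to be present when $k=1$; otherwise $v$ is a leaf and you are back to $\Upsilon_n$.
\item \emph{Local diagrams.} Restricting to cellular graphs omits local diagrams, where $\alpha=0$ and hence $c(D)\le 2$. These only produce the unknot and the Hopf link, so the omission is harmless, but it should be said.
\item \emph{Loops at two vertices.} Essential loops at two different vertices are ruled out by topology, since two essential curves in $\RP^2$ must meet. Minimality of $\alpha$ plays no role there.
\end{itemize}
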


\begin{figure}[ht]
\begin{subfigure}[t]{.475\linewidth}
\[
\tikz[baseline={([yshift=-.5ex]current bounding box.center)}, scale=.31]{
    \draw[knot] (-5*0.70710678, -5*0.70710678) to[out=45,in=-90] (-0.5,-1.5) -- (-0.5,1.5) to[out=90, in=-45] (-5*0.70710678, 5*0.70710678);
    \draw[knot] (5*0.70710678, -5*0.70710678) to[out=135,in=-90] (0.5,-1.5) -- (0.5,1.5) to[out=90, in=-135] (5*0.70710678, 5*0.70710678);
    \node[draw, fill=white, thin, scale=1.5] at (0,0) {$n$};
	\draw[white, ultra thick] (0,0) circle (5cm);
	\draw[dotted] (0,0) circle (5cm);
}
\]
\subcaption{The family of links with $\beta_\varepsilon(L) = 0$ for some $\varepsilon$. We denote this family by $\Upsilon_n$.}
\label{subfig:inf0}
\end{subfigure}
\bigskip
\begin{subfigure}[t]{.475\linewidth}
\[
\tikz[baseline={([yshift=-.5ex]current bounding box.center)}, scale=.31]{
    \draw[knot, overcross] (0,3.5) to[out=0, in=90] (1.5,2) to[out=-90,in=90] (0.5,0) -- (0.5,-1.5) to[out=-90, in=135] (5*0.70710678, -5*0.70710678);
    \draw[knot, overcross] (-5*0.70710678, 5*0.70710678) to[out=-45, in=-135] (5*0.70710678, 5*0.70710678);
    \draw[knot, overcross] (0,3.5) to[out=180, in=90] (-1.5,2) to[out=-90,in=90] (-0.5,0) -- (-0.5,-1.5) to[out=-90, in=45] (-5*0.70710678, -5*0.70710678);
    \node[draw, fill=white, thin, scale=1.5] at (0,-0.75) {$n$};
	\draw[white, ultra thick] (0,0) circle (5cm);
	\draw[dotted] (0,0) circle (5cm);
}
\]
\subcaption{The family of links with $\beta_\varepsilon(L) = 1$ for some $\varepsilon$. We denote this family by $V_n$.}
\label{subfig:inf1}
\end{subfigure}
\begin{subfigure}[t]{\linewidth}
\[
\tikz[baseline={([yshift=-.5ex]current bounding box.center)}, scale=.31]{
    \draw[knot] (-1.5,2.5) to[out=0,in=180] (-0.5,1.5);
    \draw[knot] (-0.5,2.5) to[out=0,in=180] (0.5,1.5);
    \draw[knot] (0.5,2.5) to[out=0,in=90] (1.5,1.5);
    \draw[knot, overcross] (-1.5,1.5) to[out=90,in=180] (-0.5,2.5);
    \draw[knot, overcross] (-0.5,1.5) to[out=0,in=180] (0.5,2.5);
    \draw[knot, overcross] (0.5,1.5) to[out=0,in=180] (1.5,2.5);
    \draw[knot] (-5*0.70710678, 5*0.70710678) to[out=-45, in=180] (-1.5,2.5);
    \draw[knot] (5*0.70710678, 5*0.70710678) to[out=-135, in=0] (1.5,2.5);
    \draw[knot] (1.5,1.5) to[out=-90,in=90] (0.5,0) -- (0.5,-1.5) to[out=-90, in=135] (5*0.70710678, -5*0.70710678);
    \draw[knot] (-1.5,1.5) to[out=-90,in=90] (-0.5,0) -- (-0.5,-1.5) to[out=-90, in=45] (-5*0.70710678, -5*0.70710678);
    \node[draw, fill=white, thin, scale=1.5] at (0,-0.75) {$n$};
	\draw[white, ultra thick] (0,0) circle (5cm);
	\draw[dotted] (0,0) circle (5cm);
}
\qquad
\tikz[baseline={([yshift=-.5ex]current bounding box.center)}, scale=.31]{
\begin{scope}[yshift=0.25cm]
    \draw[knot, overcross] (-2,1.25) arc (180:90:2cm and -0.65cm);
    \draw[knot, overcross] (2,1.25) arc (0:-90:2cm and -0.65cm);
\end{scope}
    \draw[knot,overcross] (-5*0.70710678, -5*0.70710678) to[out=45,in=-90] (-0.5,-1.5) -- (-0.5,1.5) to[out=90, in=-45] (-5*0.70710678, 5*0.70710678);
    \draw[knot,overcross] (5*0.70710678, -5*0.70710678) to[out=135,in=-90] (0.5,-1.5) -- (0.5,1.5) to[out=90, in=-135] (5*0.70710678, 5*0.70710678);
\begin{scope}[yshift=0.25cm]
    \draw[knot, overcross] (-2,1.25) arc (180:90:2cm and 0.65cm);
    \draw[knot, overcross] (2,1.25) arc (0:-90:2cm and 0.65cm);
\end{scope}
    \node[draw, fill=white, thin, scale=1.5] at (0,-0.75) {$n$};
	\draw[white, ultra thick] (0,0) circle (5cm);
	\draw[dotted] (0,0) circle (5cm);
}
\qquad
\tikz[baseline={([yshift=-.5ex]current bounding box.center)}, scale=.31]{
\begin{scope}[yshift=0.25cm]
    \draw[knot] (-2,1.25) arc (-180:0:2cm and -0.65cm);
\end{scope}
    \draw[knot,overcross] (-5*0.70710678, -5*0.70710678) to[out=45,in=-90] (-0.5,-1.5) -- (-0.5,1.5) to[out=90, in=-45] (-5*0.70710678, 5*0.70710678);
    \draw[knot,overcross] (5*0.70710678, -5*0.70710678) to[out=135,in=-90] (0.5,-1.5) -- (0.5,1.5) to[out=90, in=-135] (5*0.70710678, 5*0.70710678);
\begin{scope}[yshift=0.25cm]
    \draw[knot, overcross] (-2,1.25) arc (-180:0:2cm and 0.65cm);
\end{scope}
    \node[draw, fill=white, thin, scale=1.5] at (0,-0.75) {$n$};
	\draw[white, ultra thick] (0,0) circle (5cm);
	\draw[dotted] (0,0) circle (5cm);
}
\qquad
\tikz[baseline={([yshift=-.5ex]current bounding box.center)}, scale=.31]{
    \draw[knot, overcross] (0,3.5) to[out=0, in=90] (1.5,2) to[out=-90,in=90] (0.5,0.25);
    \draw[knot, overcross] (0,-3.5) to[out=180, in=-90] (-1.5,-2) to[out=90,in=-90] (-0.5,-0.25);
    \draw[knot, overcross] (-5*0.70710678, 5*0.70710678) to[out=-45, in=-135] (5*0.70710678, 5*0.70710678);
    \draw[knot, overcross] (-5*0.70710678, -5*0.70710678) to[out=45, in=135] (5*0.70710678, -5*0.70710678);
    \draw[knot, overcross] (0,-3.5) to[out=0, in=-90] (1.5,-2) to[out=90,in=-90] (0.5,-0.25);
    \draw[knot, overcross] (0,3.5) to[out=180, in=90] (-1.5,2) to[out=-90,in=90] (-0.5,0.25);
    \node[draw, fill=white, thin, scale=1.5] at (0,0) {$n$};
	\draw[white, ultra thick] (0,0) circle (5cm);
	\draw[dotted] (0,0) circle (5cm);
}
\]
\subcaption{The families of prime links with $\beta_\varepsilon(L) = 2$ for some $\varepsilon$. Denote them by $W_n,\, X_n,\, Y_n$, and $Z_n$ respectively.}
\label{subfig:inf2}
\end{subfigure}
\caption{Some families of links with small $\beta_{\varepsilon}$. We have omitted mirrors, split links, and composite links, but see Figure \ref{fig:hopffam} for another family of mostly composite links with $\min\{\beta_\Re, \beta_\Im\}=2$; there are no other non-split links with $\beta_\varepsilon(L)\ \leq 2$.}
\label{fig:betafams}
\end{figure}

In Figure \ref{fig:betafams}, we use the convention that
\[
\ldots \qquad
\tikz[baseline={([yshift=-.5ex]current bounding box.center)}, scale=.25]{    \node[draw, fill=white, thin, scale=1] at (0,0) {$-1$};} = \tikz[baseline={([yshift=-.5ex]current bounding box.center)}, scale=.5]{ \draw[knot, overcross] (0,0) to[out=90,in=-90] (1,1); \draw[knot, overcross](1,0) to[out=90,in=-90] (0,1); }
~ ,\qquad \qquad
\tikz[baseline={([yshift=-.5ex]current bounding box.center)}, scale=.25]{    \node[draw, fill=white, thin, scale=1] at (0,0) {$0$};} = \tikz[baseline={([yshift=-.5ex]current bounding box.center)}, scale=.5]{ \draw[knot, overcross](0,0) to[out=45,in=-45] (0,1); \draw[knot, overcross] (1,0) to[out=135,in=-135] (1,1); }
~ ,\qquad \qquad
\tikz[baseline={([yshift=-.5ex]current bounding box.center)}, scale=.25]{    \node[draw, fill=white, thin, scale=1] at (0,0) {$1$};} = \tikz[baseline={([yshift=-.5ex]current bounding box.center)}, scale=.5]{ \draw[knot, overcross](1,0) to[out=90,in=-90] (0,1); \draw[knot, overcross] (0,0) to[out=90,in=-90] (1,1); }
~ ,\qquad \ldots
\]
and so on. We give names to each of the links referenced in Theorem \ref{thm:betatabulation} in Figure \ref{fig:betafams}. Comparing with Drobotukhina's tabulation \cite{MR1296890}, we note that:
\begin{itemize}
\item the family $\Upsilon_n$ contains the unknot and $1_1^2$, $2_1$, $3_1^2$, $4_1$, $5_1^2$, and $6_1$;
\item the family $V_n$ contains $2_1$, $3_1$, $4_2$, $5_1$, and $6_2$;
\item the family $W_n$ contains $3_1$, $3_1^2$, $4_3$, $4_1^2$, $5_2$, $5_2^2$, $6_3$, and $6_1^2$;
\item the family $X_n$ contains $4_2^2$, $5_1^3$, and $6_{12}^2$;
\item the family $Y_n$ contains $4_3^2$, $5_2^3$, and $6_{35}^2$;
\item the family $Z_n$ contains $4_2^2$, $4_3^2$, and $6_9^2$.
\end{itemize}
We note that $Z_n$ should also contain a 5-crossing link from Drobotukhina's table. It was probably meant to be in the place of the knot (erroneously called a link) labeled $5_4^2$; notice that the pictured knot is a duplicate of $5_8$.

There are overlaps between these families. For example, $V_0 = m \Upsilon_2$, reflecting the fact that $\beta_\Re(\Upsilon_2) = 0$ and $\beta_\Im(\Upsilon_2) = 1$. Moreover:
\begin{enumerate}
\item $V_0 = m\Upsilon_2$ and $V_k = mV_{-k-1}$ for each $k$;
\item $W_0 = m\Upsilon_3 = \Upsilon_{-3}$ and $W_{-1} = V_1$;
\item $Y_{-k} = mY_k$ for each $k$; and
\item $Z_0 = mX_0$, $Z_{-1} = Y_0$, $Z_{-2} = X_0$, and $Z_k = mZ_{-k-2}$ for each $k$.
\end{enumerate}
Otherwise, each link is uniquely represented. Thus, we could tabulate the (non-split, prime) links with small $\beta_\varepsilon$ as in Table \ref{tab:tabulation}.

\begin{table}[ht]
\centering
\begin{tabular}{|c||l|l|l|} 
\hline
$\min\{\beta_\Re(L), \beta_\Im(L)\}$ & \multicolumn{1}{c|}{$0$}                        & \multicolumn{1}{c|}{$1$}                       & \multicolumn{1}{c|}{$2$}       \\ 
\hline
\multirow{5}{*}{Links}             & \multirow{5}{*}{$\Upsilon_n~\text{for each}~n$} & \multirow{5}{*}{$V_n~\text{for each}~n\neq 0$} & $W_n~\text{for each}~n \neq 0, -1$        \\
                                   &                                                 &                                                & $X_n~\text{for each}~n$        \\
                                   &                                                 &                                                & $Y_n~\text{for each}~n$  \\
                                   &                                                 &                                                & $Z_n~\text{for each}~n$  \\
                                   &                                                 &                                                & The Hopf link                  \\
\hline
\end{tabular}
\caption{Tabulating links in $\RP^3$ by $\min\{\beta_\Re(L), \beta_\Im(L)\}$.}
\label{tab:tabulation}
\end{table}

\begin{proof}[Proof of Theorem \ref{thm:betatabulation}]
To prove this, we will consider the checkerboard graphs of the relevant diagrams. Indeed, if we have a complete list of checkerboard graphs $G$ with $\beta(G) = n$, then by considering all possible over/under assignments to the crossings we obtain a complete list of link diagrams $D$ with $\beta_{\varepsilon}(D) = n$. Up to $\beta(G) = 2$, it turns out that a calculation of the Jones polynomials is enough to distinguish all of the resulting links which are not straightforwardly equivalent; see Figure \ref{fig:betafams}. 

To enumerate (connected) graphs $G \subset \RP^2$ with fixed $\beta(G)$, we proceed by induction on $\beta(G)$. The base case is $\beta(G) = 0$. In this case we claim that there can only be one vertex in $G$. Indeed, if there were two distinct vertices connected by an edge, then removing all other edges from $G$ would produce a local graph so that $\beta(G) \geq 1$. Graphs with a single vertex are classified by the number of local loops, and the number of projective loops. Since local loops produce crossings which can be removed by an R1 move, the only resulting links with $\beta_{\varepsilon}(L) = 0$ are those with a checkerboard graph consisting of a single vertex and $n$ projective loops. These are precisely the links $\Upsilon_n$. 

Note that while graphs with $\beta(G) = 0$ may have arbitrarily many edges, they have a finite number of vertices: in this case only one. 

Now suppose we have a (connected) graph $G\subset \RP^2$ with $\beta(G) = n$. By definition of $\beta(G)$, there is an edge $e$ in $G$ so that $\beta(G - e) = n-1$. However, by inductive assumption, we can list all graphs with $\beta = n-1$, and they have at most $n$ vertices. To obtain a complete list of graphs with $\beta = n$, take each graph $G'\subset \RP^2$ with $\beta(G') = n-1$ and consider the collection of connected graphs which can be obtained by adding a single edge to $G'$, either with both vertices in $G'$ or with one vertex in $G'$ and adding one new vertex. Since $G'$ has at most $n$ vertices, this is a finite collection. We can then throw out any of these graphs which we have already tabulated to have $\beta < n$, and the remaining graphs are precisely those with $\beta = n$. 

Following this strategy, we tabulated by hand the complete list of graphs $G \subset \RP^2$ with $\beta(G) \leq 2$. 
\begin{figure}[ht]
\[
\tikz[baseline={([yshift=-.5ex]current bounding box.center)}, scale=.31]{
    \fill[] (0,0) circle(.3cm);
    \draw[very thick] (-5,0) -- (5,0);
    \node[draw, fill=white, thin] at (2.5,0) {$n$};
	\draw[white, ultra thick] (0,0) circle (5cm);
	\draw[dotted] (0,0) circle (5cm);
}
\qquad
\tikz[baseline={([yshift=-.5ex]current bounding box.center)}, scale=.31]{
    \fill[] (0,0) circle(.3cm);
    \fill[] (0,2.5) circle(.3cm);
    \draw[very thick] (-5,0) -- (5,0);
    \draw[very thick] (0,-5) -- (0,5);
    \node[draw, fill=white, thin] at (2.5,0) {$n$};
	\draw[white, ultra thick] (0,0) circle (5cm);
	\draw[dotted] (0,0) circle (5cm);
}
\qquad
\tikz[baseline={([yshift=-.5ex]current bounding box.center)}, scale=.31]{
    \fill[] (0,0) circle(.3cm);
    \fill[] (0,2.5) circle(.3cm);
    \draw[very thick] (-5,0) -- (5,0);
    \draw[very thick] (0,0) to[out=180,in=180] (0,2.5);
    \draw[very thick] (0,0) to[out=0,in=0] (0,2.5);
    \node[draw, fill=white, thin] at (2.5,0) {$n$};
	\draw[white, ultra thick] (0,0) circle (5cm);
	\draw[dotted] (0,0) circle (5cm);
}
\]

\[
\tikz[baseline={([yshift=-.5ex]current bounding box.center)}, scale=.31]{
    \fill[] (0,0) circle(.3cm);
    \fill[] (0,2.5) circle(.3cm);
    \draw[very thick] (-5,0) -- (5,0);
    \draw[very thick] (0,-5) -- (0,0);
    \draw[very thick] (0,0) to[out=180,in=180] (0,2.5);
    \draw[very thick] (0,0) to[out=0,in=0] (0,2.5);
    \draw[very thick] (0,2.5) -- (0,5);
    \node[draw, fill=white, thin] at (2.5,0) {$n$};
	\draw[white, ultra thick] (0,0) circle (5cm);
	\draw[dotted] (0,0) circle (5cm);
}
\qquad
\tikz[baseline={([yshift=-.5ex]current bounding box.center)}, scale=.31]{
    \fill[] (0,0) circle(.3cm);
    \fill[] (0,2.5) circle(.3cm);
    \fill[] (0,-2.5) circle(.3cm);
    \draw[very thick] (-5,0) -- (5,0);
    \draw[very thick] (0,-5) -- (0,5);
    \node[draw, fill=white, thin] at (2.5,0) {$n$};
	\draw[white, ultra thick] (0,0) circle (5cm);
	\draw[dotted] (0,0) circle (5cm);
}
\qquad
\tikz[baseline={([yshift=-.5ex]current bounding box.center)}, scale=.31]{
    \fill[] (0,0) circle(.3cm);
    \fill[] (0,2.5) circle(.3cm);
    \draw[very thick] (-5,0) -- (5,0);
    \draw[very thick] (0,0) to[out=180,in=180] (0,2.5);
    \draw[very thick] (0,0) to[out=0,in=0] (0,2.5);
    \draw[very thick] (0,2.5) to[out=0,in=-90] (0.5*5, 0.866025*5);
    \draw[very thick] (0,2.5) to[out=180,in=-90] (-0.5*5, 0.866025*5);
    \draw[very thick] (0,0) to[out=-135,in=90] (-0.5*5, -0.866025*5);
    \draw[very thick] (0,0) to[out=-45,in=90] (0.5*5, -0.866025*5);
    \node[draw, fill=white, thin] at (2.5,0) {$n$};
	\draw[white, ultra thick] (0,0) circle (5cm);
	\draw[dotted] (0,0) circle (5cm);
}
\qquad
\tikz[baseline={([yshift=-.5ex]current bounding box.center)}, scale=.31]{
    \fill[] (0,0) circle(.3cm);
    \fill[] (-2.5*0.70710678,-2.5*0.70710678) circle(.3cm);
    \fill[] (2.5*0.70710678,-2.5*0.70710678) circle(.3cm);
    \draw[very thick] (0,-5) -- (0,5);
    \draw[very thick] (-5,0) -- (5,0);
    \draw[very thick] (-5*0.70710678, -5*0.70710678) -- (5*0.70710678, 5*0.70710678);
    \draw[very thick] (5*0.70710678, -5*0.70710678) -- (-5*0.70710678, 5*0.70710678);
    \node[draw, fill=white, thin] at (0,-2.5) {$n$};
    \node[draw, fill=white, thin] at (-2.5,0) {$m$};
	\draw[white, ultra thick] (0,0) circle (5cm);
	\draw[dotted] (0,0) circle (5cm);
}
\]
\caption{All graphs with $\beta(G) = e(G) - \alpha(G) \le 2$, except for those with a pendant edge corresponding to a Reidemeister I move. The indices denote any non-negative number of parallel edges.}
\label{fig:tabulationgraphs}
\end{figure}
Taking the corresponding possible link diagrams, and removing obvious duplicates and mirror images, we obtain the links shown in Figure \ref{fig:betafams} and Figure \ref{fig:hopffam}. Noting that the family in Figure \ref{fig:hopffam} consists of connected sums of $\Upsilon_n$ with the local Hopf link, we are reduced to only the links in Figure \ref{fig:betafams} or the Hopf link itself, as claimed. It is also notable that the first two families of graphs in the second line of Figure \ref{fig:tabulationgraphs} yield the same family $W_n$, observing the isotopy in Figure \ref{fig:doublecountedfamilies}.
\begin{figure}[ht]
\[
\tikz[baseline={([yshift=-.5ex]current bounding box.center)}, scale=0.3]
{
\draw[knot, overcross] (0,0) to[out=90, in=-90] (1,1);
\begin{scope}[yshift=1cm]
\draw[knot, overcross] (0,2) to[out=90, in=180] (1,3);
\draw[knot, overcross] (0,1) to[out=90,in=-90] (1,2);
\end{scope}
\draw[knot, overcross] (1,0) to[out=90, in=-90] (0,1);
\begin{scope}[yshift=1cm]
\draw[knot, overcross] (1,2) to[out=90,in=0] (0,3);
\draw[knot, overcross] (1,1) to[out=90,in=-90] (0,2);
\end{scope}
\draw[knot] (0.5+5*0.707, 2.5+5*0.707) to[out=-135, in=0] (2,6);
\draw[knot, overcross] (0.5-5*0.707, 2.5+5*0.707) to[out=-45, in=180] (-1,6);
\draw[knot] (0.5+5*0.707, 2.5-5*0.707) to[out=135, in=-90] (1,0);
\draw[knot] (0.5-5*0.707, 2.5-5*0.707) to[out=45, in=-90] (0,0);
\begin{scope}[yshift=1cm]
\draw[knot] (-1,5) to[out=0,in=180] (0,4);
\draw[knot] (0,5) to[out=0,in=180] (1,4);
\draw[knot] (1,5) to[out=0,in=180] (2,4);
\draw[knot, overcross] (-1,4) to[out=0,in=180] (0,5);
\draw[knot, overcross] (0,4) to[out=0,in=180] (1,5);
\draw[knot, overcross] (1,4) to[out=0,in=180] (2,5);
\end{scope}
\draw[knot] (-1,5) to[out=180, in=90] (-1.5,4.5) to[out=-90,in=180] (0,4);
\draw[knot] (2,5) to[out=0,in=90] (2.5,4.5) to[out=-90,in=0] (1,4);
\node at (0.5,1.75) {$\vdots$};
\draw[ultra thick, white] (0.5,2.5) circle(5cm);
\draw[dotted] (0.5,2.5) circle(5cm);
} 
\qquad \simeq \qquad
\tikz[baseline={([yshift=-.5ex]current bounding box.center)}, scale=0.3]
{
\draw[knot, overcross] (0,0) to[out=90, in=-90] (1,1);
\draw[knot, overcross] (0,2) to[out=90, in=-90] (1,3);
\draw[knot, overcross] (1,0) to[out=90, in=-90] (0,1);
\draw[knot, overcross] (1,2) to[out=90,in=-90] (0,3);
\node at (0.5,1.75) {$\vdots$};
\draw[knot] (0.5+5*0.707, 2.5+5*0.707) to[out=-135, in=0] (2.5,5.5);
\draw[knot, overcross] (0.5-5*0.707, 2.5+5*0.707) to[out=-45, in=180] (-1,5.5);
\draw[knot] (0.5+5*0.707, 2.5-5*0.707) to[out=135, in=-90] (1,0);
\draw[knot] (0.5-5*0.707, 2.5-5*0.707) to[out=45, in=-90] (0,0);
\draw[knot] (1.5,3.5) to[out=0,in=-90] (2,4.5) to[out=90,in=180] (2.5,5.5);
\draw[knot, overcross] (0.5,4.5) to[out=180,in=90] (-1.5,4);
\draw[knot, overcross] (0,3) -- (0,4);
\draw[knot, overcross] (-1.5,4) to[out=-90,in=180] (1.5,3.5);
\draw[knot, overcross] (0,4) -- (0,5) to[out=90,in=0] (-1,5.5);
\draw[knot, overcross] (1,3) to[out=90,in=0] (0.5,4.5);
\draw[ultra thick, white] (0.5,2.5) circle(5cm);
\draw[dotted] (0.5,2.5) circle(5cm);
}
\]
\caption{Duplicate families of links provided by distinct graphs with $\beta(G) = 2$.}
\label{fig:doublecountedfamilies}
\end{figure}
\end{proof}

\begin{figure}[ht]
\tikz[baseline={([yshift=-.5ex]current bounding box.center)}, scale=0.4]
{
\draw[knot] (1,0) to[out=90, in=-90] (0,1);
\draw[knot] (1,2) to[out=90,in=-90] (0,3);
\node at (0.5,1.75) {$\vdots$};
\draw[knot, overcross] (0,0) to[out=90, in=-90] (1,1);
\draw[knot, overcross] (0,2) to[out=90, in=-90] (1,3);
\draw[knot] (-0.5,4) arc (-180:0:0.5cm and .3cm);
\draw[knot, overcross] (1,3) -- (1,5);
\draw[knot, overcross] (0,3) -- (0,5);
\draw[knot] (0.5+5*0.707, 2.5+5*0.707) to[out=-135, in=90] (1,5);
\draw[knot, overcross] (0.5-5*0.707, 2.5+5*0.707) to[out=-45, in=90] (0,5);
\draw[knot] (0.5+5*0.707, 2.5-5*0.707) to[out=135, in=-90] (1,0);
\draw[knot] (0.5-5*0.707, 2.5-5*0.707) to[out=45, in=-90] (0,0);
\draw[knot, overcross] (-0.5,4) arc (-180:0:0.5cm and -.3cm);
\draw[ultra thick, white] (0.5,2.5) circle(5cm);
\draw[dotted] (0.5,2.5) circle(5cm);
}
\caption{Another family of links with $\min\{\beta_\Re, \beta_\Im\}=2$. Every member of this family is the connected sum of $\Upsilon_n$ with the Hopf link. Thus every member is composite except for the Hopf link, obtained when $n=0$.}
\label{fig:hopffam}
\end{figure}

We conclude with some questions about local and projective crossing number.

\begin{question} Let $L_1$ be a nullhomologous link in $\RP^3$, let $L_2$ be a link in $S^3$, and let $L_1 \# L_2 \subset \RP^3$ be an arbitrary connected sum of $L_1$ and $L_2$. 
\begin{enumerate}
	\item Is $\beta_{\varepsilon}(L_1 \# L_2) = \beta_{\varepsilon}(L_1) + c(L_2)$?
	\item Is $\alpha_{\varepsilon}(L_1 \# L_2) = \alpha_{\varepsilon}(L_1)$?
	\item Is $\alpha_{\varepsilon}(L_1) + \beta_{\varepsilon}(L_1) = c(L_1)$?
	\item Is there an $n \in \N$ so that for all nullhomologous links $L_1 \in \RP^3$, $\beta_{\varepsilon}(L_1) \leq n$? so that $\alpha_{\varepsilon}(L_1) \leq n$?
\end{enumerate}
\end{question}

\subsection{Detection results}
\label{ss:detection}

It is known that if $L$ is a prime, non-split, alternating link and $J_L(t) = J_{T(2,k)}(t)$, then $L = T(2,k)$; \emph{i.e.}, the Jones polynomial detects the $(2,k)$-torus links among the class of alternating links. This follows from \cite[Theorem 6.5]{MR99667}: for such a link, 
\[
\det(L) \ge 2c(L) - 3
\]
unless $L = T(2,k)$ for some $k$. If $J_L(t) = J_{T(2,k)}(t)$, then $\det(L) = k$, so $L = T(2,k)$ for each $k > 3$. Otherwise, we can use tabulation to conclude the detection. In what follows, we observe that we can use the tabulation of \S \ref{ss:tabulation} in conjunction with Theorem \ref{thm:altbreadth} to detect several infinite families among the class of alternating links. Again, the trick is to replace crossing number with local crossing number. We will write $\mathscr{L}$ to stand for the set
\begin{equation}
\label{eq:scriptell}
\mathscr{L} =\{\Upsilon_n, V_n, W_n, mW_n\}_{n \in \Z} \cup \{Z_n\}_{n \in \Z_{\neq -1 }} 
\end{equation}

Recall that, in Example \ref{ex: twists_KB}, we computed
\begin{equation}
\label{eq:JPtwist}
(-1)^n J_{\Upsilon_n^{\star}} (t) =
\begin{dcases}
t^{\frac{n}{2}} + \sum_{i=0}^{n-1}(-1)^i t^{\frac{n+1}{2} + i} & \text{if $n$ is even or $\star = \, \uparrow \,$,}
\\
t^{-n} + \sum_{i=0}^{n-1} (-1)^i t^{-n+\frac{1}{2}+i} & \text{if $n$ is odd and $\star = \, \downarrow \,$.}
\end{dcases}
\end{equation}
Namely, $\Upsilon_n$ is a knot if $n$ is even, and a 2-component link if $n$ is odd. As in Example \ref{ex: twists_KB}, when $n$ is odd, we let $\Upsilon_n^\uparrow$ denote the oriented link $\Upsilon_n$ which has both components in Figure \ref{fig:upsilon} co-oriented, and $\Upsilon_n^\downarrow$ otherwise. Notice that none of $J_{\Upsilon_n^\uparrow}$, $J_{\Upsilon_{-n}^\uparrow}$, $J_{\Upsilon_n^\downarrow}$, or  $J_{\Upsilon_{-n}^\downarrow}$ are equal for $\abs{n} > 1$. When $n=1$, $J_{\Upsilon_1^\uparrow} = J_{\Upsilon_{-1}^\downarrow}$ and $J_{\Upsilon_{-1}^\uparrow} = J_{\Upsilon_1^\downarrow}$, but this is because $\Upsilon_1^\uparrow = \Upsilon_{-1}^\downarrow$ and $\Upsilon_{-1}^\uparrow = \Upsilon_1^\downarrow$ as oriented links.

For the rest of the members of $\mathscr{L}$, we can hope that each is distinguished by $J^\mathsf{Minor}_L(t)$, where $J^\mathsf{Minor}_L(t)$ is a part of $J_L(t) = J_L^+(t) + J_L^-(t)$ with minimal breadth. We use Proposition \ref{prop:KBgraphpolynomial} (or Theorem \ref{thm:jonespolynomialalternatingtutte}) to isolate these values and streamline their calculation. Assuming that $n\ge 0$, we compute that
\begin{equation}
\label{eq:jpforV}
J_{V_n}^{\mathsf{minor}} (t) =(-1)^n  \left( -t^{-n-\frac{5}{2}} + t^{-n-\frac{3}{2}}\right).
\end{equation}
There is no ambiguity since $V_n$ is a knot for each $n$. 

\begin{proof}[Proof of Theorem \ref{thm:detection}]
Assume that $L'$ is an alternating link and $L\in \mathscr{L}$. If $J_L(t) = J_{L'}(t)$, then $J_{L'}^\varepsilon(t) = J_{\Upsilon_n^\star}^\varepsilon(t)$ for both $\varepsilon \in \{+, -\}$.

\medskip

\noindent \textit{Case I}. If $L$ is $\Upsilon_n$ or $m\Upsilon_n$ for some $n$, then Equation (\ref{eq:JPtwist}) implies that $d_{\max} J_{L'}^\varepsilon - d_{\min} J_{L'}^\varepsilon = 0$ for at least one $\varepsilon$. Since $L'$ is alternating, Theorem \ref{thm:altbreadth} says that $\alpha(G_\varepsilon) = c(L')$, \textit{i.e.}, $\alpha(G_\varepsilon) = e(G_\varepsilon)$. This is true if and only if $G_\varepsilon$ is the graph with a single vertex and $n$ edges, all of which are projective loops. Thus $L'$ must be $\Upsilon_n$ as a non-oriented link. The result then follows from the remarks immediately succeeding Equation (\ref{eq:JPtwist}).

\medskip

\noindent \textit{Case II}. Assume that $L$ is $V_n$ or $mV_n$ for some $n$. Again, we have that $J_{L'}^\varepsilon - d_{\min} J_{L'}^\varepsilon = 1$ for at least one $\varepsilon$, so it follows that $L'$ is $V_k$ or $mV_k$ for some $k$. The result follows from Equation (\ref{eq:jpforV}).

\medskip

\noindent \textit{Case III}. Assume that $L \in \{W_n, mW_n\}_{n \in \Z} \cup \{Z_n\}_{n \in \Z_{\neq -1}}$. Now, we have that $J_{L'}^\varepsilon - d_{\min} J_{L'}^\varepsilon = 2$ for at least one $\varepsilon$. Thus $L'$ must be one of $W_n$, $mW_n$, $X_n$, $mX_n$, $Y_n$, or $Z_n$ for some $n$. We can place $L'$ into one of these families by computing the breadth-minimal parts of the Kauffman brackets. Table \ref{tab:comparingbracket} tells us that no family has $\langle D \rangle_\mathsf{Minor}$ taking the same form, where $\langle D \rangle_\mathsf{Minor}$ is computed via Proposition \ref{prop:KBgraphpolynomial} (using a reduced alternating diagram if one is easily determined).

\begin{table}[ht]
\renewcommand{\arraystretch}{1.5}
\centering
\begin{tabular}{|c|c||c|c|} 
\hline
Family                    & Qualification & $\langle D \rangle_\mathsf{Minor}$ & $\mathrm{det}_\mathsf{Major}$  \\ 
\hhline{|==::==|}
\multirow{2}{*}{$W_n$}    & $n>0$         & $A^{n+7} - A^{n+3} + A^{n-1}$      & $3n+1$                         \\
                          & $n<-1$        & $-A^{-n+4}+A^{-n}-A^{-n-4}$        & $-3n-1$                         \\ 
\hline
\multirow{2}{*}{$X_n$}    & $n\ge 0$      & $-A^{n+6}+A^{n+2}-2A^{n-2}$        & $4n+4$                         \\
                          & $n<0$         & $-A^{-n+6}+A^{-n+2}-2A^{-n-2}$     & \cellcolor{gray!35}                  \\ 
\hline
$Y_n$                     & $n\ge 0$      & $-A^{n+6}-A^{n-6}$                 & \cellcolor{gray!35}                    \\ 
\hline
$Z_n$                     & $n\ge 0$      & $A^{n+8}-2A^{n+4}+A^n$             & $4n+4$                         \\ 
\hline
$\Upsilon \# \mathcal{H}$ & $n\ge 0$      & $-A^{n+4}-A^{n-4}$                 & $2n$                           \\
\hline
\end{tabular}
\caption{We compare the the shapes of $\langle - \rangle_\mathsf{Minor}$ for families of links with $\beta_\varepsilon(L) = 2$ for some $\varepsilon$. We omit the determinants of $X_{n<0}$ and $Y_n$ since the diagrams in Figure \ref{fig:betafams} are non-alternating.}
\label{tab:comparingbracket}
\end{table}

Notice that each of $W_n$, $Z_n$ for $n\neq -1$, $X_n$ for $n \ge 0$ and $\Upsilon \# \mathcal{H}$ are alternating, so we can compute $\mathrm{det}_\mathsf{Major}(L)$ (which is determined by the Jones polynomial) by counting spanning trees via Corollary \ref{cor:detsandtrees}. Notice that each link in the family $\{W_n\}$ is distinguished from one another by $\mathrm{det}_\mathsf{Major}$. We compute:
\begin{equation}
\label{eq:JPW}
(-1)^{n+1} J_{W_n^{\star}}^{\mathsf{Minor}} (t) =
\begin{dcases}
t^{\frac{n+1}{2}} - t^{\frac{n+3}{2}} + t^{\frac{n+5}{2}} & \text{if $n$ is odd or $\star = \, \uparrow \,$,}
\\
t^{-n-4} - t^{-n-3} + t^{-n-2} & \text{if $n$ is even and $\star = \, \downarrow \,$.}
\end{dcases}
\end{equation}
To finish, we note that $J_{W_n}^{\mathsf{Minor}} (t) = J_{mW_n}^{\mathsf{Minor}}(t)$ only when $n = -3$; see Equation (\ref{eq:JPW}). The knot $W_{-3}$ is $5_2$ in Drobotukhina's table, so we can just look up its Jones polynomial:
\[
J_{W_{-3}}(t) = (t^{-1} - 1 + t) + (t^{-\frac{1}{2}} - 2t^{\frac{1}{2}} + 2t^{\frac{3}{2}} - 2t^{\frac{5}{2}} + t^{\frac{7}{2}}).
\]
We conclude that $J_{W_{-3}}(t) \neq J_{mW_{-3}}(t)$, so the result follows for this family. To complete the proof, we compute $J_{Z_n}^\mathsf{Minor}(t)$:
\begin{equation*}
\begin{aligned}
&n~\text{even}:\quad J_{Z_n}^\mathsf{Minor}(t) = t^{-n-2}-2t^{-n-1} +t^{-n}; \\
&n~\text{odd}:\quad J_{Z_n}^\mathsf{Minor}(t) =
{\begin{dcases}
-t^{3-n} + 2t^{2-n} - t^{1-n} & \text{or}
\\
-t^{-n-3} + 2t^{-n-4} - t^{-n-5}. &
\end{dcases}}
\end{aligned}
\end{equation*}
\end{proof}

\section{Spanning tree complexes and Khovanov homology}
\label{s:spanningtreekh}

For any link $L$ in $S^3$, it was observed by Champanerkar-Kofman \cite{MR2480298} (see also \cite{MR2468373}) and Wehrli \cite{MR2477595}, that there is a cochain complex generated by the spanning trees of any Tait graph of $L$ whose homology is the reduced Khovanov homology of $L$. In this section, we define an analogue of the spanning tree complex for links in $\RP^3$, following \cite{MR2480298}. Namely, the Champanerkar-Kofman partial order on spanning trees provides a filtration on the reduced Khovanov complex. The differentials for the resulting spectral sequence are trivial for alternating links. We conclude that the reduced Khovanov homology of an alternating link in $\RP^3$ is determined by its Jones polynomial and its two signatures.

\subsection{Twisted unknots and spanning trees}
\label{ss:twistedunknots}

In this subsection we provide some relations between twisted unknots, defined momentarily, and the spanning trees, following Champanerkar-Kofman \cite[\S 2.1]{MR2480298}.

First, notice that in the computation of $\langle D \rangle$, it is computationally benefitial to stop when the leaves of the skein-resolution tree are \emph{twisted unknots}---that is, unknots obtained from a crossingless diagram of the unknot by performing only Reidemeister I, IV, and V moves. Let $\mathscr{U}(D)$ denote the leaves of this incomplete skein-resolution tree, which we call the \emph{twisted unknots of $D$}. Then, by Definition \ref{def:kauffmanbracket},
\[
\langle D \rangle = \sum_{U\in \mathscr{U}(D)} A^{\sigma(U)} (-A)^{3w(U)}
\]
where $\sigma = \#\{\text{A-smoothings}\} - \#\{\text{B-smoothings}\}$. We call the elements of $\mathscr{U}(D)$ the twisted unknots of $D$. If $D$ is a diagram of a nullhomologous link, then Lemma \ref{lem:ManWillis} says that each of the elements of $\mathscr{U}(D)$ are class-0 unknots.

\begin{example}
To obtain a skein-resolution tree, we start by enumerating the crossings of $D$. For reasons which will become clear momentarily, we smooth crossings from greatest to least. See Figure \ref{fig:mainskeinrestree} for the incomplete skein-resolution tree corresponding to the link diagram of Example \ref{ex:mainST}.
\begin{figure}[ht]
\[
\tikz[yscale=1.75, xscale=1.5, scale=0.8]{
	\node(***) at (0,0) {$\tikz[baseline={([yshift=-.5ex]current bounding box.center)}, scale=.2]{
	\node at (0,-6) {$***$};
	\draw[knot, overcross] (0,1.75) to[out=0, in=-135] (5*1.41/2, 5*1.41/2);
	\draw[knot, overcross] (2,2) to[out=-90, in=45] (-5*1.41/2, -5*1.41/2);
	\draw[knot, overcross] (5*1.41/2, -5*1.41/2) to[out=135, in=-90] (-2,2);
	\draw[knot, overcross] (-2,2) to[out=90, in=180] (0,4);
	\draw[knot, overcross] (-5*1.41/2, 5*1.41/2) to[out=-45, in=180] (0, 1.75);
	\draw[knot, overcross] (0,4) to[out=0, in=90] (2,2);
	\draw[white, ultra thick] (0,0) circle (5cm);
	\draw[dotted] (0,0) circle (5cm);
	\node[anchor=north] at (0,-1.18) {\tiny$2$};
	\node[anchor=west] at (2, 1.7) {\tiny$1$};
	\node[anchor=east] at (-2, 1.7) {\tiny$3$};
}$};
		\node(**A) at (-3,-1.5) {$\tikz[baseline={([yshift=-.5ex]current bounding box.center)}, scale=.2]{
	\node at (0,-6) {$**A$};
	\draw[knot, overcross] (0,1.75) to[out=0, in=-135] (5*1.41/2, 5*1.41/2);
	\draw[knot, overcross] (2,2) to[out=-90, in=45] (-5*1.41/2, -5*1.41/2);
	\draw[knot, overcross] (5*1.41/2, -5*1.41/2) to[out=135, in=-90] (-2,2);
	\draw[knot, overcross] (-2,2) to[out=90, in=180] (0,4);
	\draw[knot, overcross] (-5*1.41/2, 5*1.41/2) to[out=-45, in=180] (0, 1.75);
	\draw[knot, overcross] (0,4) to[out=0, in=90] (2,2);
	\draw[white, ultra thick] (0,0) circle (5cm);
	\draw[dotted] (0,0) circle (5cm);
	\node[anchor=north] at (0,-1.18) {\tiny$2$};
	\node[anchor=west] at (2, 1.7) {\tiny$1$};
	\node[anchor=east] at (-2, 1.7) {\tiny$3$};
	\fill[white, very thick] (-2, 2.25) circle (.45cm);
        \draw[knot] (-2.375, 2.51) to[out=-30,in=-105] (-1.88, 2.685);
        \draw[knot] (-1.995, 1.8) to[out=90, in=155] (-1.58, 2.055);
}$};
			\node(*AA) at (-4.5, -4) {$\tikz[baseline={([yshift=-.5ex]current bounding box.center)}, scale=.2]{
	\node at (0,-6) {$*AA$};
	\draw[knot, overcross] (0,1.75) to[out=0, in=-135] (5*1.41/2, 5*1.41/2);
	\draw[knot, overcross] (2,2) to[out=-90, in=45] (-5*1.41/2, -5*1.41/2);
	\draw[knot, overcross] (5*1.41/2, -5*1.41/2) to[out=135, in=-90] (-2,2);
	\draw[knot, overcross] (-2,2) to[out=90, in=180] (0,4);
	\draw[knot, overcross] (-5*1.41/2, 5*1.41/2) to[out=-45, in=180] (0, 1.75);
	\draw[knot, overcross] (0,4) to[out=0, in=90] (2,2);
	\draw[white, ultra thick] (0,0) circle (5cm);
	\draw[dotted] (0,0) circle (5cm);
	\node[anchor=north] at (0,-1.18) {\tiny$2$};
	\node[anchor=west] at (2, 1.7) {\tiny$1$};
	\node[anchor=east] at (-2, 1.7) {\tiny$3$};
	\fill[white, very thick] (0,-1.15) circle (.45cm);
        \draw[knot] (-0.4,-1.38) to[out=32,in=148] (0.4,-1.38);
        \draw[knot] (-0.375,-0.875) to[out=-36,in=-144] (0.375,-0.875);
	\fill[white, very thick] (-2, 2.25) circle (.45cm);
        \draw[knot] (-2.375, 2.51) to[out=-30,in=-105] (-1.88, 2.685);
        \draw[knot] (-1.995, 1.8) to[out=90, in=155] (-1.58, 2.055);
}$};
			\node(*BA) at (-1.5,-4) {$\tikz[baseline={([yshift=-.5ex]current bounding box.center)}, scale=.2]{
	\node at (0,-6) {$*BA$};
	\draw[knot, overcross] (0,1.75) to[out=0, in=-135] (5*1.41/2, 5*1.41/2);
	\draw[knot, overcross] (2,2) to[out=-90, in=45] (-5*1.41/2, -5*1.41/2);
	\draw[knot, overcross] (5*1.41/2, -5*1.41/2) to[out=135, in=-90] (-2,2);
	\draw[knot, overcross] (-2,2) to[out=90, in=180] (0,4);
	\draw[knot, overcross] (-5*1.41/2, 5*1.41/2) to[out=-45, in=180] (0, 1.75);
	\draw[knot, overcross] (0,4) to[out=0, in=90] (2,2);
	\draw[white, ultra thick] (0,0) circle (5cm);
	\draw[dotted] (0,0) circle (5cm);
	\node[anchor=north] at (0,-1.18) {\tiny$2$};
	\node[anchor=west] at (2, 1.7) {\tiny$1$};
	\node[anchor=east] at (-2, 1.7) {\tiny$3$};
	\fill[white, very thick] (0,-1.15) circle (.45cm);
        \draw[knot] (-0.4,-1.38) to[out=30,in=-33] (-0.375,-0.875);
        \draw[knot] (0.4,-1.38) to[out=150,in=213] (0.375,-0.875);
	\fill[white, very thick] (-2, 2.25) circle (.45cm);
        \draw[knot] (-2.375, 2.51) to[out=-30,in=-105] (-1.88, 2.685);
        \draw[knot] (-1.995, 1.8) to[out=90, in=155] (-1.58, 2.055);
}$};
				\node(ABA) at (-3,-6.5) {$\tikz[baseline={([yshift=-.5ex]current bounding box.center)}, scale=.2]{
	\node at (0,-6) {$ABA$};
	\draw[knot, overcross] (0,1.75) to[out=0, in=-135] (5*1.41/2, 5*1.41/2);
	\draw[knot, overcross] (2,2) to[out=-90, in=45] (-5*1.41/2, -5*1.41/2);
	\draw[knot, overcross] (5*1.41/2, -5*1.41/2) to[out=135, in=-90] (-2,2);
	\draw[knot, overcross] (-2,2) to[out=90, in=180] (0,4);
	\draw[knot, overcross] (-5*1.41/2, 5*1.41/2) to[out=-45, in=180] (0, 1.75);
	\draw[knot, overcross] (0,4) to[out=0, in=90] (2,2);
	\draw[white, ultra thick] (0,0) circle (5cm);
	\draw[dotted] (0,0) circle (5cm);
	\node[anchor=north] at (0,-1.18) {\tiny$2$};
	\node[anchor=west] at (2, 1.7) {\tiny$1$};
	\node[anchor=east] at (-2, 1.7) {\tiny$3$};
	\fill[white, very thick] (2, 2.25) circle (.45cm);
        \draw[knot] (2.375, 2.51) to[out=210,in=-75] (1.88, 2.685);
        \draw[knot] (1.995, 1.8) to[out=90, in=25] (1.58, 2.055);
	\fill[white, very thick] (0,-1.15) circle (.45cm);
        \draw[knot] (-0.4,-1.38) to[out=30,in=-33] (-0.375,-0.875);
        \draw[knot] (0.4,-1.38) to[out=150,in=213] (0.375,-0.875);
	\fill[white, very thick] (-2, 2.25) circle (.45cm);
        \draw[knot] (-2.375, 2.51) to[out=-30,in=-105] (-1.88, 2.685);
        \draw[knot] (-1.995, 1.8) to[out=90, in=155] (-1.58, 2.055);
}$};
				\node(BBA) at (0,-6.5) {$\tikz[baseline={([yshift=-.5ex]current bounding box.center)}, scale=.2]{
	\node at (0,-6) {$BBA$};
	\draw[knot, overcross] (0,1.75) to[out=0, in=-135] (5*1.41/2, 5*1.41/2);
	\draw[knot, overcross] (2,2) to[out=-90, in=45] (-5*1.41/2, -5*1.41/2);
	\draw[knot, overcross] (5*1.41/2, -5*1.41/2) to[out=135, in=-90] (-2,2);
	\draw[knot, overcross] (-2,2) to[out=90, in=180] (0,4);
	\draw[knot, overcross] (-5*1.41/2, 5*1.41/2) to[out=-45, in=180] (0, 1.75);
	\draw[knot, overcross] (0,4) to[out=0, in=90] (2,2);
	\draw[white, ultra thick] (0,0) circle (5cm);
	\draw[dotted] (0,0) circle (5cm);
	\node[anchor=north] at (0,-1.18) {\tiny$2$};
	\node[anchor=west] at (2, 1.7) {\tiny$1$};
	\node[anchor=east] at (-2, 1.7) {\tiny$3$};
	\fill[white, very thick] (2, 2.25) circle (.45cm);
       \draw[knot] (2.375, 2.51) to[out=210,in=90] (1.995, 1.8);
        \draw[knot] (1.88, 2.685) to[out=-75,in=25] (1.58, 2.055);
	\fill[white, very thick] (0,-1.15) circle (.45cm);
        \draw[knot] (-0.4,-1.38) to[out=30,in=-33] (-0.375,-0.875);
        \draw[knot] (0.4,-1.38) to[out=150,in=213] (0.375,-0.875);
	\fill[white, very thick] (-2, 2.25) circle (.45cm);
        \draw[knot] (-2.375, 2.51) to[out=-30,in=-105] (-1.88, 2.685);
        \draw[knot] (-1.995, 1.8) to[out=90, in=155] (-1.58, 2.055);
}$};
		\node(**B) at (3,-1.5) {$\tikz[baseline={([yshift=-.5ex]current bounding box.center)}, scale=.2]{
	\node at (0,-6) {$**B$};
	\draw[knot, overcross] (0,1.75) to[out=0, in=-135] (5*1.41/2, 5*1.41/2);
	\draw[knot, overcross] (2,2) to[out=-90, in=45] (-5*1.41/2, -5*1.41/2);
	\draw[knot, overcross] (5*1.41/2, -5*1.41/2) to[out=135, in=-90] (-2,2);
	\draw[knot, overcross] (-2,2) to[out=90, in=180] (0,4);
	\draw[knot, overcross] (-5*1.41/2, 5*1.41/2) to[out=-45, in=180] (0, 1.75);
	\draw[knot, overcross] (0,4) to[out=0, in=90] (2,2);
	\draw[white, ultra thick] (0,0) circle (5cm);
	\draw[dotted] (0,0) circle (5cm);
	\node[anchor=north] at (0,-1.18) {\tiny$2$};
	\node[anchor=west] at (2, 1.7) {\tiny$1$};
	\node[anchor=east] at (-2, 1.7) {\tiny$3$};
	\fill[white, very thick] (-2, 2.25) circle (.45cm);
        \draw[knot] (-2.375, 2.51) to[out=-30,in=90] (-1.995, 1.8);
        \draw[knot] (-1.88, 2.685) to[out=-105,in=155] (-1.58, 2.055);
}$};
			\node(*AB) at (1.5,-4) {$\tikz[baseline={([yshift=-.5ex]current bounding box.center)}, scale=.2]{
	\node at (0,-6) {$*AB$};
	\draw[knot, overcross] (0,1.75) to[out=0, in=-135] (5*1.41/2, 5*1.41/2);
	\draw[knot, overcross] (2,2) to[out=-90, in=45] (-5*1.41/2, -5*1.41/2);
	\draw[knot, overcross] (5*1.41/2, -5*1.41/2) to[out=135, in=-90] (-2,2);
	\draw[knot, overcross] (-2,2) to[out=90, in=180] (0,4);
	\draw[knot, overcross] (-5*1.41/2, 5*1.41/2) to[out=-45, in=180] (0, 1.75);
	\draw[knot, overcross] (0,4) to[out=0, in=90] (2,2);
	\draw[white, ultra thick] (0,0) circle (5cm);
	\draw[dotted] (0,0) circle (5cm);
	\node[anchor=north] at (0,-1.18) {\tiny$2$};
	\node[anchor=west] at (2, 1.7) {\tiny$1$};
	\node[anchor=east] at (-2, 1.7) {\tiny$3$};
	\fill[white, very thick] (0,-1.15) circle (.45cm);
        \draw[knot] (-0.4,-1.38) to[out=32,in=148] (0.4,-1.38);
        \draw[knot] (-0.375,-0.875) to[out=-36,in=-144] (0.375,-0.875);
	\fill[white, very thick] (-2, 2.25) circle (.45cm);
        \draw[knot] (-2.375, 2.51) to[out=-30,in=90] (-1.995, 1.8);
        \draw[knot] (-1.88, 2.685) to[out=-105,in=155] (-1.58, 2.055);
}$};
			\node(*BB) at (4.5, -4) {$\tikz[baseline={([yshift=-.5ex]current bounding box.center)}, scale=.2]{
	\node at (0,-6) {$*BB$};
	\draw[knot, overcross] (0,1.75) to[out=0, in=-135] (5*1.41/2, 5*1.41/2);
	\draw[knot, overcross] (2,2) to[out=-90, in=45] (-5*1.41/2, -5*1.41/2);
	\draw[knot, overcross] (5*1.41/2, -5*1.41/2) to[out=135, in=-90] (-2,2);
	\draw[knot, overcross] (-2,2) to[out=90, in=180] (0,4);
	\draw[knot, overcross] (-5*1.41/2, 5*1.41/2) to[out=-45, in=180] (0, 1.75);
	\draw[knot, overcross] (0,4) to[out=0, in=90] (2,2);
	\draw[white, ultra thick] (0,0) circle (5cm);
	\draw[dotted] (0,0) circle (5cm);
	\node[anchor=north] at (0,-1.18) {\tiny$2$};
	\node[anchor=west] at (2, 1.7) {\tiny$1$};
	\node[anchor=east] at (-2, 1.7) {\tiny$3$};
	\fill[white, very thick] (0,-1.15) circle (.45cm);
        \draw[knot] (-0.4,-1.38) to[out=30,in=-33] (-0.375,-0.875);
        \draw[knot] (0.4,-1.38) to[out=150,in=213] (0.375,-0.875);
	\fill[white, very thick] (-2, 2.25) circle (.45cm);
        \draw[knot] (-2.375, 2.51) to[out=-30,in=90] (-1.995, 1.8);
        \draw[knot] (-1.88, 2.685) to[out=-105,in=155] (-1.58, 2.055);
}$};
	\draw[->, thick] (***) -- (**A);
		\draw[->, thick] (**A) -- (*AA);
		\draw[->, thick] (**A) -- (*BA);
			\draw[->, thick] (*BA) -- (ABA);
			\draw[->, thick] (*BA) -- (BBA);
	\draw[->, thick] (***) -- (**B);
		\draw[->, thick] (**B) -- (*AB);
		\draw[->, thick] (**B) -- (*BB);
	\node(M1) at (-4.5, -8) {$-A^5$};
	\node(M2) at (-3, -8) {$A$};
	\node(M3) at (0, -8) {$A^{-1}$};
	\node(M4) at (1.5, -8) {$-A^{-3}$};
	\node(M5) at (4.5, -8) {$-A^{-5}$};
	\draw[dotted, ->] (*AA) -- (M1);
	\draw[dotted, ->] (ABA) -- (M2);
	\draw[dotted, ->] (BBA) -- (M3);
	\draw[dotted, ->] (*AB) -- (M4);
	\draw[dotted, ->] (*BB) -- (M5);
}
\]
\caption{An incomplete skein-resolution tree whose leaves are twisted unknots.}
\label{fig:mainskeinrestree}
\end{figure}
First, notice that when $D$ is non-local, elements of $\mathscr{U}(D)$ may be crossingless, contrary to the situation in $S^3$---this is because there is a nontrivial single-crossing nullhomologous link in $\RP^3$ (namely, $\Upsilon_1$), unlike in $S^3$. Secondly, compare this computation to our computation in Example \ref{ex:mainST}. Not only is there a bijection between the spanning trees there and the twisted unknots here, but the monomials they contribute are identical. This is known to be the case in $S^3$ \cite[Theorem 2]{MR2480298}; we will prove this result holds without modification for nullhomologous links in $\RP^3$ as well.
\end{example}

\begin{lemma}
\label{lem:champkof1}
Let $D\subset \RP^2$ be a diagram for a nullhomologous link in $\RP^3$. There is a bijective correspondence between the spanning trees of the two Tait graphs of $D$ and the twisted unknots of $D$. If a twisted unknot $U$ corresponds to a spanning tree $T$, then $\mu(T) = A^{\sigma(U)}(-A)^{3w(U)}$.
\end{lemma}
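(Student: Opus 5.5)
We follow Champanerkar--Kofman, inducting on the number of crossings along the incomplete skein-resolution tree, with crossings resolved from greatest to least. The engine is the recursion already established in the proof of Proposition \ref{prop:KBgraphpolynomial}. Choose the ordering so that the highest crossing $c_m$ corresponds to an edge $e$ of $G_1$ and a dual edge $\bar e$ of $G_2$, both regular. Then $e$ is inactive for every spanning tree of $G_1$, and likewise $\bar e$ in $G_2$.

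The spanning trees of $G_1 \sqcup G_2$ then split into four classes:
\begin{itemize}
\item trees of $G_1$ containing $e$, which correspond to trees of $G_1/e$;
\item trees of $G_1$ omitting $e$, which correspond to trees of $G_1 - e$;
\item and the analogous two classes for $\bar e$ in $G_2$.
\end{itemize}
Resolving $c_m$ one way gives the diagram $D'$ whose Tait graphs are $G_1-e$ and $G_2/\bar e$; resolving the other way gives $D''$ with Tait graphs $G_1/e$ and $G_2-\bar e$. Hence the spanning trees of the Tait graphs of $D$ are in bijection with the disjoint union of those for $D'$ and $D''$. Inactivity of $e$ and $\bar e$ means each tree's weight is the corresponding weight in $D'$ or $D''$ times $A^{\pm 1}$, which is exactly the smoothing factor $A^{\pm1}$ recorded in $\sigma(U)$. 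The twisted unknots of $D$ are likewise the union of those of $D'$ and $D''$. So the bijection and the weight identity pass up one level of the tree.

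The key point is that a crossing is resolved in the skein tree exactly when $D$ is not yet a twisted unknot. So the first step is a lemma identifying when a regular-edge-with-regular-dual exists. We claim $D$ is a twisted unknot (possibly non-crossingless, via R1, R4, R5) precisely when it is \emph{terminal} in the sense of Proposition \ref{prop:KBgraphpolynomial} with only one spanning tree total. Put differently, the recursion must stop at diagrams whose two Tait graphs together have exactly one spanning tree. This matching of the ordered stopping rule with the graph-theoretic condition is the main obstacle. In $S^3$ it follows because each crossing of a twisted unknot is a bridge/loop pair; here the R4/R5 moves and crossingless projective leaves such as $\Upsilon_1$'s smoothings complicate the picture.

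We handle the obstacle using the classification of terminal Tait graphs (Types I, II, III) from the proof of Proposition \ref{prop:KBgraphpolynomial}. A terminal diagram is either local (Type I/I) or built from $\Upsilon_n$ by R1 moves (Type II/III). In the Type II/III case the Type II graph has one spanning tree and the Type III graph has as many spanning trees as it has regular edges. So a terminal diagram has exactly one spanning tree overall iff it is local with one vertex-tree and no regular edges, or it is $\Upsilon_0$ or $\Upsilon_{\pm1}$ up to R1, R4, R5. These are exactly the twisted unknots, which gives the one-to-one correspondence at the leaves. We must also check that processing in descending order never leaves a terminal but non-twisted-unknot diagram with unresolved crossings. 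For this we use the $\Upsilon_n$ computation of Table \ref{tab:statesandweightsG2} and Example \ref{ex: twists_KB}, resolving its crossings one at a time in the same way. If needed, we reorder via Lemma \ref{lem:orderings}, though the bijection itself should be order-dependent, as in Champanerkar--Kofman.

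Finally, at a leaf $U$ with unique spanning tree $T$, we verify $\mu(T)=(-A)^{3w(U)}$ up to the accumulated $A^{\sigma}$. Each remaining bridge or local loop contributes $-A^{\mp3}$, the R1 factor, matching Table \ref{tab:state_monomialsA}. Each projective edge contributes $A^{\pm1}$ in cancelling pairs, consistent with the invariance of $\langle\cdot\rangle$ under R4 and R5. Combined with the inductive step, this yields the claimed bijection and the identity $\mu(T)=A^{\sigma(U)}(-A)^{3w(U)}$.
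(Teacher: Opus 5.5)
There is a genuine gap, in two places.

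First, the inductive step cannot be run as stated. Both $\mathscr{U}(D)$ (the leaves of the skein tree, resolved from greatest to least) and the weights $\mu(T)$ (through the activities) depend on the fixed crossing order. So you may not choose the order to make the top crossing regular with regular dual. Lemma \ref{lem:orderings} does not rescue this: it only says the total $\Psi_D$ is order-independent, while individual weights and the set of leaves do change with the order.

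With the given order, the skein tree does two things your step does not handle. It skips crossings that are nugatory in the current partial resolution while still resolving lower ones; this is how $*A*A$ arises in Figure \ref{fig:finalexample}. It also resolves crossings whose edge is a projective loop in one Tait graph; crossing $2$ in Figure \ref{fig:mainskeinrestree} is a projective loop of $G_+$.

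Second, and more fundamentally, the leaf analysis is wrong. If $e$ and $\bar e$ are both regular, then $G_1-e$, $G_1/e$, $G_2-\bar e$ and $G_2/\bar e$ are all still cellular. So your recursion never leaves non-local diagrams and can only stop at non-local terminal ones (twist diagrams of $\Upsilon_n$ with kinks). None of these is a twisted unknot. The moves $\Omega_4$ and $\Omega_5$ are isotopies of the shadow in $\RP^2$, so every element of $\mathscr{U}(D)$ has the shadow of a circle with R1 kinks and is a \emph{local} diagram. In particular $\Upsilon_{\pm1}$ is not a leaf: it is a two-component link with one spanning tree in each Tait graph, and these match its two crossingless resolutions.

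The passage to local diagrams happens exactly at projective-loop crossings, and there your disjoint-union bijection breaks. In the local resolution only one of its two checkerboard graphs is the image of $G_1$ or $G_2$. The other arises from ``contracting'' a projective loop and receives no trees of $D$, so counting both Tait graphs, as in Proposition \ref{prop:KBgraphpolynomial}(i), double counts.

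Your final paragraph also conflates sums with individual terms. Each $\ell_{\text{pro}}$ (resp.\ $\bar\ell_{\text{pro}}$) edge is an individually smoothed crossing whose factor $A^{-1}$ (resp.\ $A$) is recorded in $\sigma(U)$; it is not part of a cancelling pair.

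The paper avoids the recursion entirely and builds the bijection directly:
\begin{itemize}
\item $\mathcal{U}(T)$ smooths according to Table \ref{tab:Usmooth} and keeps exactly the $L,\ell_{\text{loc}},\bar L,\bar\ell_{\text{loc}}$ crossings. These are R1 kinks (active, with nullhomologous cycle) or nugatory (every other edge of the cut is externally inactive).
\item $\mathcal{T}(U)$ is read off from the checkerboard colorings of $U$, and Lemma \ref{lem:projectivecompletions} ensures exactly one of the two resulting subgraphs is a tree.
\item The weight identity is then checked edge by edge against Table \ref{tab:state_monomialsA}.
\end{itemize}
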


We will set some notation before proving the lemma. Assume that $D \subset \RP^2$ is a diagram for a non-local nullhomologous link in $\RP^3$. In addition, fix an ordering on the crossings of $D$. Let $\mathscr{T}(D)$ denote the set of spanning trees for the two Tait graphs of $D$. Now, we describe inverse maps
\[
\mathcal{U}: \mathscr{U}(D) \leftrightarrows \mathscr{T}(D): \mathcal{T}.
\]
These maps are described in \cite[\S 2]{MR2480298}, with the caveat that $\ell_\text{pro}$ and $d$ are treated equivalently. To a spanning tree $T$ of $D$, the twisted unknot $\mathcal{U}(T)$ is defined by leaving crossings corresponding to internally active and locally externally active edges alone, and smoothing internally inactive, projectively externally active, and externally inactive edges according to Table \ref{tab:Usmooth}. 

\begin{table}[ht]
\centering
\begin{tblr}{
  column{even} = {c},
  column{3} = {c},
  column{5} = {c},
  column{7} = {c},
  vline{2} = {-}{},
  hline{2} = {-}{},
}
State     & $D$ & $\ell_{\text{pro}}$ & $d$ & $\bar{D}$ & $\bar{\ell}_{\text{pro}}$ & $\bar{d}$ \\
Smoothing & $A$ & $B$                 & $B$ & $B$       & $A$                       & $A$       
\end{tblr}
\caption{Edge smoothings in the definition of $\mathcal{U}$.}
\label{tab:Usmooth}
\end{table}

\begin{example}
\label{ex:Umain}
Each spanning tree has a state which we use to obtain a twisted unknot of $D$. The process is depicted below for our favorite example.
\[
\tikz[]{
\node(Start) at (0,0) {$
\tikz[baseline={([yshift=-.5ex]current bounding box.center)}, scale=.3]{
	\begin{scope}[white!60!black]
	\draw[knot] (0,1.75) to[out=0, in=-135] (5*1.41/2, 5*1.41/2);
	\draw[knot] (2,2) to[out=-90, in=45] (-5*1.41/2, -5*1.41/2);
	\draw[knot] (5*1.41/2, -5*1.41/2) to[out=135, in=-90] (-2,2);
	\draw[knot] (-2,2) to[out=90, in=180] (0,4);
	\draw[knot] (-5*1.41/2, 5*1.41/2) to[out=-45, in=180] (0, 1.75);
	\draw[knot] (0,4) to[out=0, in=90] (2,2);
	\end{scope}
	\begin{scope}[blue, very thick]
		\fill (0, 0.5) circle (.3cm);
		\fill (0, -3) circle (.3cm);
		\draw (0, 0.5) -- (0, -3);
		\draw[rounded corners = 3mm] (0, 0.5) -- (-2.15,2.25) -- (-5*0.5,5*0.866);
		\draw[rounded corners = 3mm] (0, 0.5) -- (2.15,2.25) -- (5*0.5,5*0.866);
		\draw (-5*0.5,-5*0.866) -- (0,-3);
		\draw (5*0.5,-5*0.866) -- (0,-3);
	\end{scope}
	\begin{scope}[red, very thick]
		\fill (0, 3) circle (.3cm);
		\fill (-3, 0) circle (.3cm);
		\draw[rounded corners = 1mm] (0,3) -- (-2.05,2.25) -- (-3, 0) -- (-5,0);
		\draw (-3, 0) -- (-5*0.866, -5*0.5);
		\draw[rounded corners = 1mm] (0,3) -- (2.05,2.25) -- (3, 2) -- (5*0.866, 5*0.5);
		\draw[rounded corners] (-3, 0) -- (0,-1.15) -- (5,0);
	\end{scope}
	\node[circle, draw, dotted, fill=white!20, scale=.5] at (2, 2.25) {$1$};
	\node[circle, draw, dotted, fill=white!20, scale=.5] at (0,-1.15) {$2$};
	\node[circle, draw, dotted, fill=white!20, scale=.5] at (-2, 2.25) {$3$};
	\draw[white, ultra thick] (0,0) circle (5cm);
	\draw[dotted] (0,0) circle (5cm);
}$};
\node(End1) at (-6, -5) {$\tikz[baseline={([yshift=-.5ex]current bounding box.center)}, scale=.2]{
	\draw[knot, overcross] (0,1.75) to[out=0, in=-135] (5*1.41/2, 5*1.41/2);
	\draw[knot, overcross] (2,2) to[out=-90, in=45] (-5*1.41/2, -5*1.41/2);
	\draw[knot, overcross] (5*1.41/2, -5*1.41/2) to[out=135, in=-90] (-2,2);
	\draw[knot, overcross] (-2,2) to[out=90, in=180] (0,4);
	\draw[knot, overcross] (-5*1.41/2, 5*1.41/2) to[out=-45, in=180] (0, 1.75);
	\draw[knot, overcross] (0,4) to[out=0, in=90] (2,2);
	\draw[white, ultra thick] (0,0) circle (5cm);
	\draw[dotted] (0,0) circle (5cm);
	\fill[white, very thick] (0,-1.15) circle (.45cm);
        \draw[knot] (-0.4,-1.38) to[out=32,in=148] (0.4,-1.38);
        \draw[knot] (-0.375,-0.875) to[out=-36,in=-144] (0.375,-0.875);
	\fill[white, very thick] (-2, 2.25) circle (.45cm);
        \draw[knot] (-2.375, 2.51) to[out=-30,in=-105] (-1.88, 2.685);
        \draw[knot] (-1.995, 1.8) to[out=90, in=155] (-1.58, 2.055);
}$};
\node(End2) at (-3, -5) {$\tikz[baseline={([yshift=-.5ex]current bounding box.center)}, scale=.2]{
	\draw[knot, overcross] (0,1.75) to[out=0, in=-135] (5*1.41/2, 5*1.41/2);
	\draw[knot, overcross] (2,2) to[out=-90, in=45] (-5*1.41/2, -5*1.41/2);
	\draw[knot, overcross] (5*1.41/2, -5*1.41/2) to[out=135, in=-90] (-2,2);
	\draw[knot, overcross] (-2,2) to[out=90, in=180] (0,4);
	\draw[knot, overcross] (-5*1.41/2, 5*1.41/2) to[out=-45, in=180] (0, 1.75);
	\draw[knot, overcross] (0,4) to[out=0, in=90] (2,2);
	\draw[white, ultra thick] (0,0) circle (5cm);
	\draw[dotted] (0,0) circle (5cm);
	\fill[white, very thick] (2, 2.25) circle (.45cm);
        \draw[knot] (2.375, 2.51) to[out=210,in=-75] (1.88, 2.685);
        \draw[knot] (1.995, 1.8) to[out=90, in=25] (1.58, 2.055);
	\fill[white, very thick] (0,-1.15) circle (.45cm);
        \draw[knot] (-0.4,-1.38) to[out=30,in=-33] (-0.375,-0.875);
        \draw[knot] (0.4,-1.38) to[out=150,in=213] (0.375,-0.875);
	\fill[white, very thick] (-2, 2.25) circle (.45cm);
        \draw[knot] (-2.375, 2.51) to[out=-30,in=-105] (-1.88, 2.685);
        \draw[knot] (-1.995, 1.8) to[out=90, in=155] (-1.58, 2.055);
}$};
\node(End3) at (0, -5) {$\tikz[baseline={([yshift=-.5ex]current bounding box.center)}, scale=.2]{
	\draw[knot, overcross] (0,1.75) to[out=0, in=-135] (5*1.41/2, 5*1.41/2);
	\draw[knot, overcross] (2,2) to[out=-90, in=45] (-5*1.41/2, -5*1.41/2);
	\draw[knot, overcross] (5*1.41/2, -5*1.41/2) to[out=135, in=-90] (-2,2);
	\draw[knot, overcross] (-2,2) to[out=90, in=180] (0,4);
	\draw[knot, overcross] (-5*1.41/2, 5*1.41/2) to[out=-45, in=180] (0, 1.75);
	\draw[knot, overcross] (0,4) to[out=0, in=90] (2,2);
	\draw[white, ultra thick] (0,0) circle (5cm);
	\draw[dotted] (0,0) circle (5cm);
	\fill[white, very thick] (2, 2.25) circle (.45cm);
       \draw[knot] (2.375, 2.51) to[out=210,in=90] (1.995, 1.8);
        \draw[knot] (1.88, 2.685) to[out=-75,in=25] (1.58, 2.055);
	\fill[white, very thick] (0,-1.15) circle (.45cm);
        \draw[knot] (-0.4,-1.38) to[out=30,in=-33] (-0.375,-0.875);
        \draw[knot] (0.4,-1.38) to[out=150,in=213] (0.375,-0.875);
	\fill[white, very thick] (-2, 2.25) circle (.45cm);
        \draw[knot] (-2.375, 2.51) to[out=-30,in=-105] (-1.88, 2.685);
        \draw[knot] (-1.995, 1.8) to[out=90, in=155] (-1.58, 2.055);
}$};
\node(End4) at (3, -5) {$\tikz[baseline={([yshift=-.5ex]current bounding box.center)}, scale=.2]{
	\draw[knot, overcross] (0,1.75) to[out=0, in=-135] (5*1.41/2, 5*1.41/2);
	\draw[knot, overcross] (2,2) to[out=-90, in=45] (-5*1.41/2, -5*1.41/2);
	\draw[knot, overcross] (5*1.41/2, -5*1.41/2) to[out=135, in=-90] (-2,2);
	\draw[knot, overcross] (-2,2) to[out=90, in=180] (0,4);
	\draw[knot, overcross] (-5*1.41/2, 5*1.41/2) to[out=-45, in=180] (0, 1.75);
	\draw[knot, overcross] (0,4) to[out=0, in=90] (2,2);
	\draw[white, ultra thick] (0,0) circle (5cm);
	\draw[dotted] (0,0) circle (5cm);
	\fill[white, very thick] (0,-1.15) circle (.45cm);
        \draw[knot] (-0.4,-1.38) to[out=32,in=148] (0.4,-1.38);
        \draw[knot] (-0.375,-0.875) to[out=-36,in=-144] (0.375,-0.875);
	\fill[white, very thick] (-2, 2.25) circle (.45cm);
        \draw[knot] (-2.375, 2.51) to[out=-30,in=90] (-1.995, 1.8);
        \draw[knot] (-1.88, 2.685) to[out=-105,in=155] (-1.58, 2.055);
}$};
\node(End5) at (6, -5) {$\tikz[baseline={([yshift=-.5ex]current bounding box.center)}, scale=.2]{
	\draw[knot, overcross] (0,1.75) to[out=0, in=-135] (5*1.41/2, 5*1.41/2);
	\draw[knot, overcross] (2,2) to[out=-90, in=45] (-5*1.41/2, -5*1.41/2);
	\draw[knot, overcross] (5*1.41/2, -5*1.41/2) to[out=135, in=-90] (-2,2);
	\draw[knot, overcross] (-2,2) to[out=90, in=180] (0,4);
	\draw[knot, overcross] (-5*1.41/2, 5*1.41/2) to[out=-45, in=180] (0, 1.75);
	\draw[knot, overcross] (0,4) to[out=0, in=90] (2,2);
	\draw[white, ultra thick] (0,0) circle (5cm);
	\draw[dotted] (0,0) circle (5cm);
	\fill[white, very thick] (0,-1.15) circle (.45cm);
        \draw[knot] (-0.4,-1.38) to[out=30,in=-33] (-0.375,-0.875);
        \draw[knot] (0.4,-1.38) to[out=150,in=213] (0.375,-0.875);
	\fill[white, very thick] (-2, 2.25) circle (.45cm);
        \draw[knot] (-2.375, 2.51) to[out=-30,in=90] (-1.995, 1.8);
        \draw[knot] (-1.88, 2.685) to[out=-105,in=155] (-1.58, 2.055);
}$};
\draw[->] (Start) to node[pos=.5, fill=white!20]{$\blue{\bar{L}\bar{d}\bar{d}}$} (End1);
\draw[->] (Start) to node[pos=.5, fill=white!20]{$\blue{\bar{\ell}_{\text{pro}}\bar{D}\bar{d}}$}  (End2);
\draw[->] (Start) to node[pos=.5, fill=white!20]{$\red{\ell_{\text{pro}} \ell_{\text{pro}}  D}$}  (End3);
\draw[->] (Start) to node[pos=.5, fill=white!20]{$\blue{\bar{\ell}_{\text{loc}} \bar{\ell}_{\text{pro}} \bar{D}}$}  (End4);
\draw[->] (Start) to node[pos=.5, fill=white!20]{$\red{L \ell_{\text{pro}} d}$} (End5);
\draw[->, thick] (-7.5,-1.5) -- node[pos=0.5, left]{$\mathcal{U}$} (-7.5,-3.5);
}
\]
\end{example}

Next, to a twisted unknot $U$ of $D$, we define $\mathcal{T}(U)$ as follows. Consider both of the checkerboard colorings of $U$. The checkerboard colorings correspond to the Tait graphs of $D$. For both colorings, consider the subgraph obtained by including an edge $e \in G$ if 
\begin{enumerate}
\item the vertices of $e$ are contained in the same shaded component, or
\item resolving the crossing corresponding to $e$ according to Figure \ref{fig:Tsmooth}  separates two shaded regions (here, the sign describes whether the crossing corresponds to a positive or a negative twist).
\end{enumerate}
By Lemma \ref{lem:projectivecompletions}, exactly one of these subgraphs is a tree (and the other is a non-tree corresponding to a quasi-tree); denote it by $\mathcal{T}(U)$.

\begin{figure}[ht]
\begin{tikzpicture}[thick, scale=0.8]
\node at (1.5,1.5) {$+$};
\draw (0,0) -- (1,1);
\draw (0,1) -- (.3,.7);
\draw (1,0) -- (.7,.3);
\draw[->] (.5,-.2) -- (.5, -.8);
\begin{scope}[yshift=-2cm]
\draw (0,0) to[out=30, in=-30] (0,1);
\draw (1,0) to[out=150, in=-150] (1,1);
\end{scope}
\begin{scope}[xshift=2cm]
\draw (0,1) -- (1,0);
\draw (0,0) -- (.3,.3);
\draw (.7, .7) -- (1,1);
\draw[->] (.5,-.2) -- (.5, -.8);
\begin{scope}[yshift=-2cm]
\draw (0,0) to[out=60, in=120] (1,0);
\draw (0,1) to[out=-60, in=-120] (1,1);
\end{scope}
\end{scope}
\begin{scope}[xshift=5cm]
\node at (1.5,1.5) {$-$};
\draw (0,0) -- (1,1);
\draw (0,1) -- (.3,.7);
\draw (1,0) -- (.7,.3);
\draw[->] (.5,-.2) -- (.5, -.8);
\begin{scope}[yshift=-2cm]
\draw (0,0) to[out=60, in=120] (1,0);
\draw (0,1) to[out=-60, in=-120] (1,1);
\end{scope}
\begin{scope}[xshift=2cm]
\draw (0,1) -- (1,0);
\draw (0,0) -- (.3,.3);
\draw (.7, .7) -- (1,1);
\draw[->] (.5,-.2) -- (.5, -.8);
\begin{scope}[yshift=-2cm]
\draw (0,0) to[out=30, in=-30] (0,1);
\draw (1,0) to[out=150, in=-150] (1,1);
\end{scope}
\end{scope}
\end{scope}
\end{tikzpicture}
\caption{Crossing smoothings in the definition of $\mathcal{T}$.}
\label{fig:Tsmooth}
\end{figure}

\begin{example}
\label{ex:Tmain}
Each twisted unknot $U$ produces two spanning subgraphs. One of them is a tree, and we denote it by $\mathcal{T}(U)$. We provide the spanning subgraphs below for our main example.
\[
\tikz[]{
\node(End1) at (-6, -5) {$\tikz[baseline={([yshift=-.5ex]current bounding box.center)}, scale=.25]{
	\draw[knot, overcross] (0,1.75) to[out=0, in=-135] (5*1.41/2, 5*1.41/2);
	\draw[knot, overcross] (2,2) to[out=-90, in=45] (-5*1.41/2, -5*1.41/2);
	\draw[knot, overcross] (5*1.41/2, -5*1.41/2) to[out=135, in=-90] (-2,2);
	\draw[knot, overcross] (-2,2) to[out=90, in=180] (0,4);
	\draw[knot, overcross] (-5*1.41/2, 5*1.41/2) to[out=-45, in=180] (0, 1.75);
	\draw[knot, overcross] (0,4) to[out=0, in=90] (2,2);
	\draw[white, ultra thick] (0,0) circle (5cm);
	\draw[dotted] (0,0) circle (5cm);
	\fill[white, very thick] (0,-1.15) circle (.45cm);
        \draw[knot] (-0.4,-1.38) to[out=32,in=148] (0.4,-1.38);
        \draw[knot] (-0.375,-0.875) to[out=-36,in=-144] (0.375,-0.875);
	\fill[white, very thick] (-2, 2.25) circle (.45cm);
        \draw[knot] (-2.375, 2.51) to[out=-30,in=-105] (-1.88, 2.685);
        \draw[knot] (-1.995, 1.8) to[out=90, in=155] (-1.58, 2.055);
	\begin{scope}[blue]
		\fill (0, 0.5) circle (.3cm);
		\fill (0, -3) circle (.3cm);
		\draw[dotted] (0, 0.5) -- (0, -3);
		\draw[dotted, rounded corners = 3mm] (0, 0.5) -- (-2.25,2.25) -- (-5*0.5,5*0.866);
		\draw[rounded corners = 3mm] (0, 0.5) -- (2.25,2.25) -- (5*0.5,5*0.866);
		\draw (-5*0.5,-5*0.866) -- (0,-3);
		\draw[dotted] (5*0.5,-5*0.866) -- (0,-3);
	\end{scope}
	\begin{scope}[red]
		\fill (0, 3) circle (.3cm);
		\fill (-3, 0) circle (.3cm);
		\draw[rounded corners = 1mm] (0,3) -- (-2.05,2.25) -- (-3, 0) -- (-5,0);
		\draw[dotted] (-3, 0) -- (-5*0.866, -5*0.5);
		\draw[dotted, rounded corners = 1mm] (0,3) -- (2.05,2.25) -- (3, 2) -- (5*0.866, 5*0.5);
		\draw[rounded corners] (-3, 0) -- (0,-1.225) -- (5,0);
	\end{scope}
	\draw[white, ultra thick] (0,0) circle (5cm);
	\draw[dotted] (0,0) circle (5cm);
}$};
\node(End2) at (-3, -5) {$\tikz[baseline={([yshift=-.5ex]current bounding box.center)}, scale=.25]{
	\draw[knot, overcross] (0,1.75) to[out=0, in=-135] (5*1.41/2, 5*1.41/2);
	\draw[knot, overcross] (2,2) to[out=-90, in=45] (-5*1.41/2, -5*1.41/2);
	\draw[knot, overcross] (5*1.41/2, -5*1.41/2) to[out=135, in=-90] (-2,2);
	\draw[knot, overcross] (-2,2) to[out=90, in=180] (0,4);
	\draw[knot, overcross] (-5*1.41/2, 5*1.41/2) to[out=-45, in=180] (0, 1.75);
	\draw[knot, overcross] (0,4) to[out=0, in=90] (2,2);
	\draw[white, ultra thick] (0,0) circle (5cm);
	\draw[dotted] (0,0) circle (5cm);
	\fill[white, very thick] (2, 2.25) circle (.45cm);
        \draw[knot] (2.375, 2.51) to[out=210,in=-75] (1.88, 2.685);
        \draw[knot] (1.995, 1.8) to[out=90, in=25] (1.58, 2.055);
	\fill[white, very thick] (0,-1.15) circle (.45cm);
        \draw[knot] (-0.4,-1.38) to[out=30,in=-33] (-0.375,-0.875);
        \draw[knot] (0.4,-1.38) to[out=150,in=213] (0.375,-0.875);
	\fill[white, very thick] (-2, 2.25) circle (.45cm);
        \draw[knot] (-2.375, 2.51) to[out=-30,in=-105] (-1.88, 2.685);
        \draw[knot] (-1.995, 1.8) to[out=90, in=155] (-1.58, 2.055);
	\begin{scope}[blue]
		\fill (0, 0.5) circle (.3cm);
		\fill (0, -3) circle (.3cm);
		\draw (0, 0.5) -- (0, -3);
		\draw[dotted, rounded corners = 3mm] (0, 0.5) -- (-2.25,2.25) -- (-5*0.5,5*0.866);
		\draw[dotted, rounded corners = 3mm] (0, 0.5) -- (2.25,2.25) -- (5*0.5,5*0.866);
		\draw[dotted] (-5*0.5,-5*0.866) -- (0,-3);
		\draw[dotted] (5*0.5,-5*0.866) -- (0,-3);
	\end{scope}
	\begin{scope}[red]
		\fill (0, 3) circle (.3cm);
		\fill (-3, 0) circle (.3cm);
		\draw[dotted, rounded corners = 1mm] (-3, 0) -- (-5,0);
		\draw[rounded corners = 1mm] (0,3) -- (-2.05,2.25) -- (-3, 0);
		\draw (-3, 0) -- (-5*0.866, -5*0.5);
		\draw[rounded corners = 1mm] (0,3) -- (2.05,2.25) -- (3, 2) -- (5*0.866, 5*0.5);
		\draw[dotted, rounded corners] (-3, 0) -- (0,-1.225) -- (5,0);
	\end{scope}
	\draw[white, ultra thick] (0,0) circle (5cm);
	\draw[dotted] (0,0) circle (5cm);
}$};
\node(End3) at (0, -5) {$\tikz[baseline={([yshift=-.5ex]current bounding box.center)}, scale=.25]{
	\draw[knot, overcross] (0,1.75) to[out=0, in=-135] (5*1.41/2, 5*1.41/2);
	\draw[knot, overcross] (2,2) to[out=-90, in=45] (-5*1.41/2, -5*1.41/2);
	\draw[knot, overcross] (5*1.41/2, -5*1.41/2) to[out=135, in=-90] (-2,2);
	\draw[knot, overcross] (-2,2) to[out=90, in=180] (0,4);
	\draw[knot, overcross] (-5*1.41/2, 5*1.41/2) to[out=-45, in=180] (0, 1.75);
	\draw[knot, overcross] (0,4) to[out=0, in=90] (2,2);
	\draw[white, ultra thick] (0,0) circle (5cm);
	\draw[dotted] (0,0) circle (5cm);
	\fill[white, very thick] (2, 2.25) circle (.45cm);
       \draw[knot] (2.375, 2.51) to[out=210,in=90] (1.995, 1.8);
        \draw[knot] (1.88, 2.685) to[out=-75,in=25] (1.58, 2.055);
	\fill[white, very thick] (0,-1.15) circle (.45cm);
        \draw[knot] (-0.4,-1.38) to[out=30,in=-33] (-0.375,-0.875);
        \draw[knot] (0.4,-1.38) to[out=150,in=213] (0.375,-0.875);
	\fill[white, very thick] (-2, 2.25) circle (.45cm);
        \draw[knot] (-2.375, 2.51) to[out=-30,in=-105] (-1.88, 2.685);
        \draw[knot] (-1.995, 1.8) to[out=90, in=155] (-1.58, 2.055);
	\begin{scope}[blue]
		\fill (0, 0.5) circle (.3cm);
		\fill (0, -3) circle (.3cm);
		\draw (0, 0.5) -- (0, -3);
		\draw[dotted, rounded corners = 3mm] (0, 0.5) -- (-2.25,2.25) -- (-5*0.5,5*0.866);
		\draw[rounded corners = 3mm] (0, 0.5) -- (2.25,2.25) -- (5*0.5,5*0.866);
		\draw (-5*0.5,-5*0.866) -- (0,-3);
		\draw[dotted] (5*0.5,-5*0.866) -- (0,-3);
	\end{scope}
	\begin{scope}[red]
		\fill (0, 3) circle (.3cm);
		\fill (-3, 0) circle (.3cm);
		\draw[dotted, rounded corners = 1mm] (-3, 0) -- (-5,0);
		\draw[rounded corners = 1mm] (0,3) -- (-2.05,2.25) -- (-3, 0);
		\draw[dotted] (-3, 0) -- (-5*0.866, -5*0.5);
		\draw[dotted, rounded corners = 1mm] (0,3) -- (2.05,2.25) -- (3, 2) -- (5*0.866, 5*0.5);
		\draw[dotted, rounded corners] (-3, 0) -- (0,-1.225) -- (5,0);
	\end{scope}
	\draw[white, ultra thick] (0,0) circle (5cm);
	\draw[dotted] (0,0) circle (5cm);
}$};
\node(End4) at (3, -5) {$\tikz[baseline={([yshift=-.5ex]current bounding box.center)}, scale=.25]{
	\draw[knot, overcross] (0,1.75) to[out=0, in=-135] (5*1.41/2, 5*1.41/2);
	\draw[knot, overcross] (2,2) to[out=-90, in=45] (-5*1.41/2, -5*1.41/2);
	\draw[knot, overcross] (5*1.41/2, -5*1.41/2) to[out=135, in=-90] (-2,2);
	\draw[knot, overcross] (-2,2) to[out=90, in=180] (0,4);
	\draw[knot, overcross] (-5*1.41/2, 5*1.41/2) to[out=-45, in=180] (0, 1.75);
	\draw[knot, overcross] (0,4) to[out=0, in=90] (2,2);
	\draw[white, ultra thick] (0,0) circle (5cm);
	\draw[dotted] (0,0) circle (5cm);
	\fill[white, very thick] (0,-1.15) circle (.45cm);
        \draw[knot] (-0.4,-1.38) to[out=32,in=148] (0.4,-1.38);
        \draw[knot] (-0.375,-0.875) to[out=-36,in=-144] (0.375,-0.875);
	\fill[white, very thick] (-2, 2.25) circle (.45cm);
        \draw[knot] (-2.375, 2.51) to[out=-30,in=90] (-1.995, 1.8);
        \draw[knot] (-1.88, 2.685) to[out=-105,in=155] (-1.58, 2.055);
	\begin{scope}[blue]
		\fill (0, 0.5) circle (.3cm);
		\fill (0, -3) circle (.3cm);
		\draw[dotted] (0, 0.5) -- (0, -3);
		\draw[rounded corners = 3mm] (0, 0.5) -- (-2.25,2.25) -- (-5*0.5,5*0.866);
		\draw[dotted, rounded corners = 3mm] (0, 0.5) -- (2.25,2.25) -- (5*0.5,5*0.866);
		\draw[dotted] (-5*0.5,-5*0.866) -- (0,-3);
		\draw (5*0.5,-5*0.866) -- (0,-3);
	\end{scope}
	\begin{scope}[red]
		\fill (0, 3) circle (.3cm);
		\fill (-3, 0) circle (.3cm);
		\draw[rounded corners = 1mm] (-3, 0) -- (-5,0);
		\draw[dotted, rounded corners = 1mm] (0,3) -- (-2.05,2.25) -- (-3, 0);
		\draw (-3, 0) -- (-5*0.866, -5*0.5);
		\draw[rounded corners = 1mm] (0,3) -- (2.05,2.25) -- (3, 2) -- (5*0.866, 5*0.5);
		\draw[rounded corners] (-3, 0) -- (0,-1.225) -- (5,0);
	\end{scope}
	\draw[white, ultra thick] (0,0) circle (5cm);
	\draw[dotted] (0,0) circle (5cm);
}$};
\node(End5) at (6, -5) {$\tikz[baseline={([yshift=-.5ex]current bounding box.center)}, scale=.25]{
	\draw[knot, overcross] (0,1.75) to[out=0, in=-135] (5*1.41/2, 5*1.41/2);
	\draw[knot, overcross] (2,2) to[out=-90, in=45] (-5*1.41/2, -5*1.41/2);
	\draw[knot, overcross] (5*1.41/2, -5*1.41/2) to[out=135, in=-90] (-2,2);
	\draw[knot, overcross] (-2,2) to[out=90, in=180] (0,4);
	\draw[knot, overcross] (-5*1.41/2, 5*1.41/2) to[out=-45, in=180] (0, 1.75);
	\draw[knot, overcross] (0,4) to[out=0, in=90] (2,2);
	\draw[white, ultra thick] (0,0) circle (5cm);
	\draw[dotted] (0,0) circle (5cm);
	\fill[white, very thick] (0,-1.15) circle (.45cm);
        \draw[knot] (-0.4,-1.38) to[out=30,in=-33] (-0.375,-0.875);
        \draw[knot] (0.4,-1.38) to[out=150,in=213] (0.375,-0.875);
	\fill[white, very thick] (-2, 2.25) circle (.45cm);
        \draw[knot] (-2.375, 2.51) to[out=-30,in=90] (-1.995, 1.8);
        \draw[knot] (-1.88, 2.685) to[out=-105,in=155] (-1.58, 2.055);
	\begin{scope}[blue]
		\fill (0, 0.5) circle (.3cm);
		\fill (0, -3) circle (.3cm);
		\draw (0, 0.5) -- (0, -3);
		\draw[rounded corners = 3mm] (0, 0.5) -- (-2.25,2.25) -- (-5*0.5,5*0.866);
		\draw[dotted, rounded corners = 3mm] (0, 0.5) -- (2.25,2.25) -- (5*0.5,5*0.866);
		\draw[dotted] (-5*0.5,-5*0.866) -- (0,-3);
		\draw (5*0.5,-5*0.866) -- (0,-3);
	\end{scope}
	\begin{scope}[red]
		\fill (0, 3) circle (.3cm);
		\fill (-3, 0) circle (.3cm);
		\draw[dotted, rounded corners = 1mm] (-3, 0) -- (-5,0);
		\draw[dotted, rounded corners = 1mm] (0,3) -- (-2.05,2.25) -- (-3, 0);
		\draw (-3, 0) -- (-5*0.866, -5*0.5);
		\draw[rounded corners = 1mm] (0,3) -- (2.05,2.25) -- (3, 2) -- (5*0.866, 5*0.5);
		\draw[dotted, rounded corners] (-3, 0) -- (0,-1.225) -- (5,0);
	\end{scope}
	\draw[white, ultra thick] (0,0) circle (5cm);
	\draw[dotted] (0,0) circle (5cm);
}$};
}
\]
\end{example}

\begin{proof}[Proof of Lemma \ref{lem:champkof1}]
It is straightforward to verify that $\mathcal{T}(\mathcal{U}(T)) = T$ and $\mathcal{U}(\mathcal{T}(U)) = U$. We will argue that both maps are well-defined.

To see that $\mathcal{U}(T) \in \mathscr{U}(D)$ for any spanning tree $T$ associated to $D$, start by taking a small regular neighborhood of $T$. It is a crossingless unknot by definition. For each $e \not\in T$, add the corresponding crossing if $e$ is locally externally active. Since these edges are active, the added crossing is the only crossing in $\cyc(T, e)$. Moreover, since $[\cyc(T,e)] = 0$, each added crossing can be undone by a Reidemeister I move, so the result is a twisted unknot. Next, add in crossings for each live edge $e \in T$. For each, since $e\in T$ is live, it is ordered first in $\cut(T,e)$. This means that any edge $f \in \cut(T,e)$ must be externally inactive, since $e \in \cyc(T,f)$. Thus, the crossing corresponding to $e \in T$ amounts to a nugatory crossing between two twisted unknots, so the result is still a twisted unknot.

Next, we verify that $\mathcal{T}(U) \in \mathscr{T}(D)$ for each twisted unknot $U$ associated to $D$. Picking a checkerboard coloring of $U$, each shaded region corresponds to at least one vertex of a Tait graph for $D$. Start by adding edges between vertices lying within the same shaded component. Notice that local cycles cannot be introduced at this stage, otherwise $U$ has more than one component, which is a contradiction. If a homologically essential cycle is introduced, then the process is restarted with the complementary checkerboard coloring. Now, we add edges corresponding to crossings between shaded regions. Since $U$ is a twisted unknot, there is a sequence of edges corresponding to a sequence of Reidemeister I moves taking $U$ to the crossingless unknot. Since $\RP^2 - D$ is colored in a checkerboard fashion, the rules of Figure \ref{fig:Tsmooth} guarantee that the result is a spanning tree.

We can rephrase the last statement as $\mu(T) = A^{\sigma(\mathcal{U}(T))} (-A)^{3 w(\mathcal{U}(T))}$. This follows from our definitions, comparing the preceding discussion with Table \ref{tab:state_monomialsA}. If an edge $e$ is in state $L$ or $\ell_{\text{loc}}$ relative $T$, the corresponding crossing is unresolved in $\mathcal{U}(T)$, and the edge determines the sign of its crossing in $\mathcal{U}(T)$. Otherwise, the edge is resolved according to Table \ref{tab:Usmooth}.
\end{proof}

In addition, Champanerkar and Kofman provide a partial ordering on $\mathscr{T}(D)$. Given a nullhomologous diagram $D\subset \RP^2$ with $n$ ordered crossings, every twisted unknot in $\mathscr{U}(D)$ is described by a partial smoothing, \emph{i.e}, a word in $\{*, A, B\}^n$ with letters in order of the crossings. For example, the leaves of the tree in Figure \ref{fig:mainskeinrestree} are twisted unknots pictured with their partial smoothings.

\begin{definition}
For any spanning trees $T, T' \in \mathscr{T}(D)$, let $(x_1, \ldots, x_n)$ and $(y_1, \ldots, y_n)$ be the corresponding partial smoothings of $D$. We declare that $T > T'$ if 
\begin{enumerate}
\item for each $i$, $y_i = A$ implies that $x_i = A$ or $x_i = *$, and
\item there exists an $i$ such that $x_i = A$ and $y_i = B$.
\end{enumerate}
The transitive closure of this relation is a partial order. Let $\mathscr{P}(D)$ be the poset $\mathscr{T}(D)$ with this partial order. We confuse spanning trees for their corresponding partial smoothing and write $(x_1,\ldots, x_n) > (y_1, \ldots, y_n)$.
\end{definition}

\begin{example}
Returning to Figure \ref{fig:mainskeinrestree}, we have two sequences in $\mathscr{P}(D)$:
\[
*AA > ABA > BBA > *BB
\qquad \text{and} \qquad
*AA > * AB > *BB.
\]
\end{example}

\subsection{Spanning tree complex}

For a signed graph $G$, let $e_\pm(G)$ denote the number of $\pm$-signed edges of $G$. Assume that $D \subset \RP^2$ is a non-local diagram for a non-split nullhomologous link $L\subset \RP^3$. We will adapt the notation of alternating diagrams to arbitrary diagrams and write $G_\pm$ to denote the Tait graph of $D$ which has $e_\pm(G) \ge e_\mp(G)$. If both Tait graphs have an equal number of $(+)$-signed and $(-)$-signed edges, then one can assign the notation arbitrarily.

For any spanning tree $T \subset G_\pm$, define
\[
\begin{aligned}
	u(T) & = \pm(\# L - \# \ell_{\mathrm{loc}} - \# \bar{L} + \# \bar{\ell}_{\mathrm{loc}}) \\
		&= \mp w(\mathcal{U}(T))\\
	v(T) &= e_\pm(T)\\
		&= \begin{cases} \# L + \# D ~ \text{if} ~ + \\ \# \bar{L} + \# \bar{D} ~ \text{if} ~ - \end{cases}
\end{aligned}
\]
and let
\begin{align*}
\mathcal{C}^{u, v}_\pm(D) = \F \langle T \subset G_\pm : u(T) = u, v(T) = v \rangle, \quad \text{setting} \quad
\mathcal{C}_\pm(D) = \bigoplus_{u,v} \mathcal{C}^{u,v}_\pm(D).
\end{align*}
The \emph{spanning tree complex} of $D$ is defined as $\mathcal{C}(D) = \mathcal{C}_+ (D)\oplus \mathcal{C}_-(D)$.

\begin{example}
Assume that $D$ is the diagram with crossings ordered as in Example \ref{ex:mainST}. Then $\mathcal{C}(D) = \mathcal{C}_+(D) \oplus \mathcal{C}_-(D)$ where
\[
\mathcal{C}_+ = \mathcal{C}_+^{0,1}\langle \ell_{\mathrm{pro}}\ell_{\mathrm{pro}}D \rangle \oplus \mathcal{C}_+^{1,1} \langle L \ell_{\mathrm{pro}} d \rangle
\qquad \text{and} \qquad
\mathcal{C}_-(D) = \mathcal{C}_-^{1,1} \langle \bar{L} \bar{d} \bar{d} \rangle \oplus \mathcal{C}_-^{0,1} \langle \bar{\ell}_{\mathrm{pro}} \bar{D} \bar{d} \rangle \oplus \mathcal{C}_-^{-1,1} \langle \bar{\ell}_{\mathrm{loc}} \bar{\ell}_{\mathrm{pro}} \bar{D} \rangle.
\]
Notice that the $v$-grading is constant throughout. In general, when $D$ is a non-split, alternating diagram, then $\mathcal{C}_+$ and $\mathcal{C}_-$ will each have constant, though potentially distinct, $v$-gradings. For example, for the reduced alternating diagram of $\Upsilon_2$ in Figure \ref{fig:upsilon}, 
\[
\mathcal{C}_+(D) = \mathcal{C}_+^{0,1} \langle \ell_{\mathrm{pro}} D \rangle \oplus \mathcal{C}_+^{1,1} \langle L d \rangle
\qquad \text{and} \qquad
\mathcal{C}_-(D) = \mathcal{C}_-^{0,0} \langle \bar{\ell}_{\mathrm{pro}} \bar{\ell}_{\mathrm{pro}} \rangle.
\]
\end{example}

\begin{proposition}[C.f. Proposition 2 of \cite{MR2480298}]
\label{prop:gradedeuler}
Assume that $D\subset \RP^2$ is a non-local diagram for a nullhomologous link $L$ in $\RP^3$. For any differential $\partial = \partial_+ \oplus \partial_-$ with $\partial_\pm: \mathcal{C}_\pm^{u,v} \to \mathcal{C}_\pm^{u\mp1,v\mp1}$, the Jones polynomial can be expressed as the graded Euler characteristic of $\{\mathcal{C}(D), \partial\}$. Precisely,
\[
 (-1)^{w(D)} J_L(t) = t^{\frac{3w(D) + k_+}{4}} \sum_{u, v} (-1)^u t^{u-v} \dim \mathcal{C}_+^{u,v} + t^{\frac{3w(D) - k_-}{4}} \sum_{u, v} (-1)^u t^{v - u} \dim \mathcal{C}_-^{u,v}
\]
where $k_\pm = \pm(e_+(G_\pm) - e_-(G_\pm)) + 2 (v(G_\pm) - 1)$. Moreover, if $D$ is a reduced, alternating diagram of $L$, then
\[
J_L^\pm(t) = t^{\frac{3w(D) \pm k_{\pm}}{4}} \sum_{u,v} (-1)^u t^{\pm(u-v)} \dim \mathcal{C}_\pm^{u,v}.
\]
\end{proposition}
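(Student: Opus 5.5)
The plan is to reduce the statement to Proposition \ref{prop:KBgraphpolynomial} (part ii) and the bookkeeping already carried out in the proof of Theorem \ref{thm:jonespolynomialalternatingtutte}. The graded Euler characteristic of a finite-dimensional graded complex does not depend on the differential, as long as the differential has fixed bidegree. Here $\partial_\pm$ shifts $u$ by $\mp1$, so it flips the parity of $(-1)^u$; it also shifts $u-v$ by $0$, so it preserves the $t$-exponent. Hence the right-hand side is just the sum over spanning trees of $(-1)^{u(T)}t^{\pm(u(T)-v(T))}$ times a global shift. The content of the proposition is therefore the identity, for each spanning tree $T \subset G_\pm$,
\[
\mu(T)\big|_{A=t^{-1/4}} = \pm\, t^{\pm k_\pm/4}(-1)^{u(T)} t^{\pm(u(T)-v(T))},
\]
up to a uniform sign. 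Once this holds, summing over all trees and applying $\Psi_D=\langle D\rangle$ gives the formula, after multiplying by $(-A^3)^{-w(D)}$ and substituting $A=t^{-1/4}$.

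First compute $\mu(T)$ for $T\subset G_+$ in terms of state counts. Write $p,q,r,s,t'$ for the numbers of positive edges in states $L,D,\ell_{\text{loc}},\ell_{\text{pro}},d$, and $\bar p,\bar q,\bar r,\bar s,\bar t$ for the negative analogues. Table \ref{tab:state_monomialsA} gives
\[
\mu(T)=(-1)^{p+r+\bar p+\bar r}A^{-3p+q+3r-s-t'+3\bar p-\bar q-3\bar r+\bar s+\bar t}.
\]
Then use the constraints. On the tree, $p+q+\bar p+\bar q=v(G_+)-1$, and $v(T)=p+q$. Off the tree, $r+s+t'=e_+(G_+)-p-q$ and similarly for the barred counts. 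Substituting, the exponent becomes
\[
-4(p-r-\bar p+\bar r)+2(p+q)-2(\bar p+\bar q)-(e_+-e_-).
\]
Using $\bar p+\bar q=v(G_+)-1-v(T)$, this is $-4u+4v-k_+$, up to the sign convention in $k_+$. The sign $(-1)^{p+r+\bar p+\bar r}$ equals $(-1)^{u}$. The case of $G_-$ is symmetric, with the roles of barred and unbarred states exchanged, and produces $t^{(v-u)}$ and $-k_-$. The identification $u(T)=\mp w(\mathcal U(T))$ from Lemma \ref{lem:champkof1} gives a consistency check but is not needed.

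The main obstacle is matching the normalizations exactly. This includes the sign and fractional shift $\frac{3w(D)\pm k_\pm}{4}$, the factor $(-1)^{w(D)}$, and the convention that $G_\pm$ is the graph with $e_\pm\ge e_\mp$ (which the computation never actually uses). I would fix these by checking against Example \ref{ex:mainST} and the alternating computation in the proof of Theorem \ref{thm:jonespolynomialalternatingtutte}, adjusting the sign convention of $k_\pm$ where needed.

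For the alternating statement, $G_+$ has only positive edges and $G_-$ only negative edges. Then $v$ is constant on each graph ($v(T)=n_\pm-1$), and $u(T)=p-r$ recovers the exponent from Equation \eqref{eq:jlsplitting}. Each summand of the formula therefore coincides with $J_L^\pm$ as defined in the introduction.
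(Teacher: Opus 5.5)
Your argument is the paper's proof. It too writes the weight of a tree via its ten state counts and uses the tree-size and $e_\pm$ constraints to obtain $\mu(T)=(-1)^uA^{-4(u-v)-k_+}$ on $G_+$ and $(-1)^uA^{4(u-v)+k_-}$ on $G_-$, then sums via Proposition~\ref{prop:KBgraphpolynomial} and normalizes with \eqref{eq:jpdefinition}; since this gives exactly the stated $k_\pm$ with no extra sign, your hedge on the sign of $k_\pm$ and the leading $\pm$ in your displayed per-tree identity are unnecessary. For the alternating part, the paper only observes that the two prefactor exponents differ by a half-integer, while your direct comparison with \eqref{eq:jlsplitting} is more informative: it shows each summand equals $(-1)^{w(D)}J_L^\pm(t)$, so the factor $(-1)^{w(D)}$ you flagged is genuinely missing from the second displayed formula (for the $3$-crossing diagram of $3_1$, the $G_+$ summand is $t^{-1/2}-t^{1/2}=-J_L^+(t)$).
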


\begin{proof}
An arbitrary spanning tree $T \subset G_\pm$ has activity word $L^a D^b \ell_{\mathrm{loc}}^c \ell_{\mathrm{pro}}^d d^e \bar{L}^f \bar{D}^g \bar{\ell}_{\mathrm{loc}}^h \bar{\ell}_{\mathrm{pro}}^i \bar{d}^j$. Using Table \ref{tab:state_monomialsA}, the corresponding weight is
\[
\mu(T) = (-1)^{a + c + f + h} A^{-3a + b + 3c - d - e + 3f - g - 3h + i + j}.
\]
Note that
\begin{itemize}
\item $a + b + f + g = v(G_\pm) - 1$,
\item $c + d + e + h + i + j = e(G_\pm) - v(G_\pm) + 1$,
\item $a + b + c + d + e = e_+(G_\pm)$, and
\item $f + g  + h + i + j = e_-(G_\pm)$.
\end{itemize}
Then:
\begin{itemize}
\item If $T \subset G_+$, then $u = a - c - f + h$ and $v = a + b$, so 
\[
\mu(T) = (-1)^u A^{-4(u-v) - k_+}.
\]
\item If $T \subset G_-$, then $u = -a + c + f - h$ and $v = f + g$, so 
\[
\mu(T) = (-1)^u A^{4(u - v) + k_-}.
\]
\end{itemize}
By Proposition \ref{prop:KBgraphpolynomial}, this means that
\begin{align*}
\langle D \rangle & = \sum_{T \subset G_+} \mu (T) + \sum_{T \subset G_-} \mu(T) \\
	&= A^{-k_+} \sum_{u, v} (-1)^u A^{-4(u - v)} \dim C_+^{u,v} + A^{k_-} \sum_{u, v} (-1)^u A^{4(u - v)} \dim C_-^{u,v}.
\end{align*}
The first statement follows from the normalization in Equation (\ref{eq:jpdefinition}). For the second statement, note that $k_\pm = e(G_\pm) + 2(v(G_\pm) - 1)$ if $D$ is an alternating diagram. Since $G_+$ and $G_-$ are dual to each other, we have that $e(G_+) = e(G_-)$ and $v(G_-) = f(G_+)$. Since $v-e+f = 1$ implies that $v + e + f = 2e+1$, we have
\[
\frac{3w(D) + k_+}{4} - \frac{3w(D) - k_-}{4} = \frac{2(v+e+f) - 4}{4} = \frac{1}{2}(2e + 1) - 1.
\]
The result follows since the difference is a half-integer.
\end{proof}

Next, we must verify that there exists a differential $\partial = \partial_+ \oplus \partial_-$ on $\mathcal{C}(D)$ where $\partial_\pm$ has bidegree $(\mp 1, \mp 1)$. We show that $\widetilde{C}(D)$, the reduced Khovanov complex described in \S \ref{ss:khbackground}, is a deformation retract of $\mathcal{C}(D)$.

We can import the argument of Champanerkar-Kofman \cite[\S 3]{MR2480298} with practically no changes. To summarize, let $U \in \mathscr{U}(D)$ be a twisted unknot. Then $\mathcal{C}(U)$ is contractible with homology $\F_{(0,-1)}$ (again, we choose this normalization for consistency). Let $Z_U \in \widetilde{C}(U)$ denote the single generator of this homology, called the \emph{fundamental cycle}. Let $\bigcirc$ denote the round local unknot. The map $f_U: \widetilde{C}(\bigcirc) \to \widetilde{C}(U)$ defined by $Z_\bigcirc \mapsto Z_U$ is a grading-preserving chain homotopy \cite[Lemma 4]{MR2480298}.

The fundamental cycle $Z_U$ is a linear combination of pure tensors, called \emph{enhanced states}, with bigrading
\begin{equation}
\label{eq:enhancedstatesgrading}
i = \frac{w-\sigma}{2} \qquad \text{and} \qquad j = i + w - \tau
\end{equation}
where $w$ is the writhe of $U$, $\sigma$ is the value defined in \S \ref{ss:twistedunknots}, and $\tau$ is the difference $\#(\text{circles labeled with X}) - \#(\text{circles labeled with 1})$. Let $\iota: \widetilde{C}(U) \to \widetilde{C}(D)$ be the inclusion of enhanced states of $U$ into the enhanced states of $D$ with grading shift
\begin{equation}\label{eq:gradingshift}
i + \frac{w(D) - w(U) - \sigma(U)}{2} \qquad \text{and} \qquad j + \frac{3(w(D) - w(U)) - \sigma(U)}{2}.
\end{equation}

\begin{example}
\label{ex:mainplusinclusion}
Let's illustrate Champanerkar-Kofman's process for our running example. In Figure \ref{fig:cubeandinclusion}, we have provided each enhanced state appearing in $\widetilde{C}(D)$ together with its bigrading. The fundamental cycle of each twisted unknot for $D$ with this particular crossing order is naturally identified with a linear combination of enhanced states in $\widetilde{C}(D)$.
\begin{figure}[ht]
\tikz[yscale=1, xscale=0.9]{
\node[draw](***) at (0,3) {$\tikz[baseline={([yshift=-.5ex]current bounding box.center)}, scale=.2]{
	\node at (0,-6) {\scriptsize$w(D)=-1$};
	\draw[knot, overcross] (0,1.75) to[out=0, in=-135] (5*1.41/2, 5*1.41/2);
	\draw[knot, overcross] (2,2) to[out=-90, in=45] (-5*1.41/2, -5*1.41/2);
	\draw[knot, overcross] (5*1.41/2, -5*1.41/2) to[out=135, in=-90] (-2,2);
	\draw[knot, overcross] (-2,2) to[out=90, in=180] (0,4);
	\draw[knot, overcross] (-5*1.41/2, 5*1.41/2) to[out=-45, in=180] (0, 1.75);
	\draw[knot, overcross] (0,4) to[out=0, in=90] (2,2);
	\node[anchor=north] at (0,-1.18) {\tiny$2$};
	\node[anchor=west] at (2, 1.7) {\tiny$1$};
	\node[anchor=east] at (-2, 1.7) {\tiny$3$};
	\draw[white, ultra thick] (0,0) circle (5cm);
	\draw[dotted] (0,0) circle (5cm);
}$};
	\node(AAA) at (0,0) {$\tikz[baseline={([yshift=-.5ex]current bounding box.center)}, scale=.2]{
	\node at (0,-7.5) {\tiny$\begin{gathered} \mathrm{gr}(X \otimes 1) = (-2,-3) \\ \highlight{\mathrm{gr}(X \otimes X) = (-2,-5)} \end{gathered}$};
	\draw[knot, overcross] (0,1.75) to[out=0, in=-135] (5*1.41/2, 5*1.41/2);
	\draw[knot, overcross] (2,2) to[out=-90, in=45] (-5*1.41/2, -5*1.41/2);
	\draw[knot, overcross] (5*1.41/2, -5*1.41/2) to[out=135, in=-90] (-2,2);
	\draw[knot, overcross] (-2,2) to[out=90, in=180] (0,4);
	\draw[knot, overcross] (-5*1.41/2, 5*1.41/2) to[out=-45, in=180] (0, 1.75);
	\draw[knot, overcross] (0,4) to[out=0, in=90] (2,2);
	\draw[white, ultra thick] (0,0) circle (5cm);
	\draw[dotted] (0,0) circle (5cm);
	\fill[white, very thick] (2, 2.25) circle (.45cm);
        \draw[knot] (2.375, 2.51) to[out=210,in=-75] (1.88, 2.685);
        \draw[knot] (1.995, 1.8) to[out=90, in=25] (1.58, 2.055);
	\fill[white, very thick] (0,-1.15) circle (.45cm);
        \draw[knot] (-0.4,-1.38) to[out=32,in=148] (0.4,-1.38);
        \draw[knot] (-0.375,-0.875) to[out=-36,in=-144] (0.375,-0.875);
	\fill[white, very thick] (-2, 2.25) circle (.45cm);
        \draw[knot] (-2.375, 2.51) to[out=-30,in=-105] (-1.88, 2.685);
        \draw[knot] (-1.995, 1.8) to[out=90, in=155] (-1.58, 2.055);
}$};
\node(*AA) at (0, -6.5) {$\tikz[baseline={([yshift=-.5ex]current bounding box.center)}, scale=.2]{
	\node at (0,-9) {\small$\begin{gathered} *AA \\ w(U_1) = 1 \\ (u,v) = (1,1) \end{gathered}$};
	\draw[knot, overcross] (0,1.75) to[out=0, in=-135] (5*1.41/2, 5*1.41/2);
	\draw[knot, overcross] (2,2) to[out=-90, in=45] (-5*1.41/2, -5*1.41/2);
	\draw[knot, overcross] (5*1.41/2, -5*1.41/2) to[out=135, in=-90] (-2,2);
	\draw[knot, overcross] (-2,2) to[out=90, in=180] (0,4);
	\draw[knot, overcross] (-5*1.41/2, 5*1.41/2) to[out=-45, in=180] (0, 1.75);
	\draw[knot, overcross] (0,4) to[out=0, in=90] (2,2);
	\draw[white, ultra thick] (0,0) circle (5cm);
	\draw[dotted] (0,0) circle (5cm);
	\fill[white, very thick] (0,-1.15) circle (.45cm);
        \draw[knot] (-0.4,-1.38) to[out=32,in=148] (0.4,-1.38);
        \draw[knot] (-0.375,-0.875) to[out=-36,in=-144] (0.375,-0.875);
	\fill[white, very thick] (-2, 2.25) circle (.45cm);
        \draw[knot] (-2.375, 2.51) to[out=-30,in=-105] (-1.88, 2.685);
        \draw[knot] (-1.995, 1.8) to[out=90, in=155] (-1.58, 2.055);
	\begin{scope}[blue]
		\fill (0, 0.5) circle (.3cm);
		\fill (0, -3) circle (.3cm);
		\draw[dotted] (0, 0.5) -- (0, -3);
		\draw[dotted, rounded corners = 3mm] (0, 0.5) -- (-2.4,2.25) -- (-5*0.5,5*0.866);
		\draw[rounded corners = 3mm] (0, 0.5) -- (2.4,2.25) -- (5*0.5,5*0.866);
		\draw (-5*0.5,-5*0.866) -- (0,-3);
		\draw[dotted] (5*0.5,-5*0.866) -- (0,-3);
	\end{scope}
	\draw[white, ultra thick] (0,0) circle (5cm);
	\draw[dotted] (0,0) circle (5cm);
}$};
	\node(BAA) at (5,3) {$\tikz[baseline={([yshift=-.5ex]current bounding box.center)}, scale=.2]{
	\node at (0,-6.5) {\tiny$\mathrm{gr}(X,0,0) = (-1,-3)$};
	\draw[knot, overcross] (0,1.75) to[out=0, in=-135] (5*1.41/2, 5*1.41/2);
	\draw[knot, overcross] (2,2) to[out=-90, in=45] (-5*1.41/2, -5*1.41/2);
	\draw[knot, overcross] (5*1.41/2, -5*1.41/2) to[out=135, in=-90] (-2,2);
	\draw[knot, overcross] (-2,2) to[out=90, in=180] (0,4);
	\draw[knot, overcross] (-5*1.41/2, 5*1.41/2) to[out=-45, in=180] (0, 1.75);
	\draw[knot, overcross] (0,4) to[out=0, in=90] (2,2);
	\draw[white, ultra thick] (0,0) circle (5cm);
	\draw[dotted] (0,0) circle (5cm);
	\fill[white, very thick] (2, 2.25) circle (.45cm);
       \draw[knot] (2.375, 2.51) to[out=210,in=90] (1.995, 1.8);
        \draw[knot] (1.88, 2.685) to[out=-75,in=25] (1.58, 2.055);
	\fill[white, very thick] (0,-1.15) circle (.45cm);
        \draw[knot] (-0.4,-1.38) to[out=32,in=148] (0.4,-1.38);
        \draw[knot] (-0.375,-0.875) to[out=-36,in=-144] (0.375,-0.875);
	\fill[white, very thick] (-2, 2.25) circle (.45cm);
        \draw[knot] (-2.375, 2.51) to[out=-30,in=-105] (-1.88, 2.685);
        \draw[knot] (-1.995, 1.8) to[out=90, in=155] (-1.58, 2.055);
}$};
	\node(ABA) at (5,0) {$\tikz[baseline={([yshift=-.5ex]current bounding box.center)}, scale=.2]{
	\node at (0,-6.5) {\tiny$\highlight{\mathrm{gr}(0,X,0) = (-1,-3)}$};
	\draw[knot, overcross] (0,1.75) to[out=0, in=-135] (5*1.41/2, 5*1.41/2);
	\draw[knot, overcross] (2,2) to[out=-90, in=45] (-5*1.41/2, -5*1.41/2);
	\draw[knot, overcross] (5*1.41/2, -5*1.41/2) to[out=135, in=-90] (-2,2);
	\draw[knot, overcross] (-2,2) to[out=90, in=180] (0,4);
	\draw[knot, overcross] (-5*1.41/2, 5*1.41/2) to[out=-45, in=180] (0, 1.75);
	\draw[knot, overcross] (0,4) to[out=0, in=90] (2,2);
	\draw[white, ultra thick] (0,0) circle (5cm);
	\draw[dotted] (0,0) circle (5cm);
	\fill[white, very thick] (2, 2.25) circle (.45cm);
        \draw[knot] (2.375, 2.51) to[out=210,in=-75] (1.88, 2.685);
        \draw[knot] (1.995, 1.8) to[out=90, in=25] (1.58, 2.055);
	\fill[white, very thick] (0,-1.15) circle (.45cm);
        \draw[knot] (-0.4,-1.38) to[out=30,in=-33] (-0.375,-0.875);
        \draw[knot] (0.4,-1.38) to[out=150,in=213] (0.375,-0.875);
	\fill[white, very thick] (-2, 2.25) circle (.45cm);
        \draw[knot] (-2.375, 2.51) to[out=-30,in=-105] (-1.88, 2.685);
        \draw[knot] (-1.995, 1.8) to[out=90, in=155] (-1.58, 2.055);
}$};
\node(*ABA) at (5,-6.5) {$\tikz[baseline={([yshift=-.5ex]current bounding box.center)}, scale=.2]{
	\node at (0,-9) {\small$\begin{gathered} ABA \\ w(U_2) = 0 \\ (u,v) = (0,1) \end{gathered}$};
	\draw[knot, overcross] (0,1.75) to[out=0, in=-135] (5*1.41/2, 5*1.41/2);
	\draw[knot, overcross] (2,2) to[out=-90, in=45] (-5*1.41/2, -5*1.41/2);
	\draw[knot, overcross] (5*1.41/2, -5*1.41/2) to[out=135, in=-90] (-2,2);
	\draw[knot, overcross] (-2,2) to[out=90, in=180] (0,4);
	\draw[knot, overcross] (-5*1.41/2, 5*1.41/2) to[out=-45, in=180] (0, 1.75);
	\draw[knot, overcross] (0,4) to[out=0, in=90] (2,2);
	\draw[white, ultra thick] (0,0) circle (5cm);
	\draw[dotted] (0,0) circle (5cm);
	\fill[white, very thick] (2, 2.25) circle (.45cm);
        \draw[knot] (2.375, 2.51) to[out=210,in=-75] (1.88, 2.685);
        \draw[knot] (1.995, 1.8) to[out=90, in=25] (1.58, 2.055);
	\fill[white, very thick] (0,-1.15) circle (.45cm);
        \draw[knot] (-0.4,-1.38) to[out=30,in=-33] (-0.375,-0.875);
        \draw[knot] (0.4,-1.38) to[out=150,in=213] (0.375,-0.875);
	\fill[white, very thick] (-2, 2.25) circle (.45cm);
        \draw[knot] (-2.375, 2.51) to[out=-30,in=-105] (-1.88, 2.685);
        \draw[knot] (-1.995, 1.8) to[out=90, in=155] (-1.58, 2.055);
	\begin{scope}[blue]
		\fill (0, 0.5) circle (.3cm);
		\fill (0, -3) circle (.3cm);
		\draw (0, 0.5) -- (0, -3);
		\draw[dotted, rounded corners = 3mm] (0, 0.5) -- (-2.4,2.25) -- (-5*0.5,5*0.866);
		\draw[dotted, rounded corners = 3mm] (0, 0.5) -- (2.4,2.25) -- (5*0.5,5*0.866);
		\draw[dotted] (-5*0.5,-5*0.866) -- (0,-3);
		\draw[dotted] (5*0.5,-5*0.866) -- (0,-3);
	\end{scope}
	\draw[white, ultra thick] (0,0) circle (5cm);
	\draw[dotted] (0,0) circle (5cm);
}$};
	\node(AAB) at (5,-3) {$\tikz[baseline={([yshift=-.5ex]current bounding box.center)}, scale=.2]{
	\node at (0,-6.5) {\tiny$\mathrm{gr}(0,0,X) = (-1,-3)$};
	\draw[knot, overcross] (0,1.75) to[out=0, in=-135] (5*1.41/2, 5*1.41/2);
	\draw[knot, overcross] (2,2) to[out=-90, in=45] (-5*1.41/2, -5*1.41/2);
	\draw[knot, overcross] (5*1.41/2, -5*1.41/2) to[out=135, in=-90] (-2,2);
	\draw[knot, overcross] (-2,2) to[out=90, in=180] (0,4);
	\draw[knot, overcross] (-5*1.41/2, 5*1.41/2) to[out=-45, in=180] (0, 1.75);
	\draw[knot, overcross] (0,4) to[out=0, in=90] (2,2);
	\draw[white, ultra thick] (0,0) circle (5cm);
	\draw[dotted] (0,0) circle (5cm);
	\fill[white, very thick] (2, 2.25) circle (.45cm);
        \draw[knot] (2.375, 2.51) to[out=210,in=-75] (1.88, 2.685);
        \draw[knot] (1.995, 1.8) to[out=90, in=25] (1.58, 2.055);
	\fill[white, very thick] (0,-1.15) circle (.45cm);
        \draw[knot] (-0.4,-1.38) to[out=32,in=148] (0.4,-1.38);
        \draw[knot] (-0.375,-0.875) to[out=-36,in=-144] (0.375,-0.875);
	\fill[white, very thick] (-2, 2.25) circle (.45cm);
        \draw[knot] (-2.375, 2.51) to[out=-30,in=90] (-1.995, 1.8);
        \draw[knot] (-1.88, 2.685) to[out=-105,in=155] (-1.58, 2.055);
}$};
	\node(BBA) at (10,3) {$\tikz[baseline={([yshift=-.5ex]current bounding box.center)}, scale=.2]{
	\node at (0,-6.5) {\tiny$\highlight{\mathrm{gr}(X,0,0) = (0,-2)}$};
	\draw[knot, overcross] (0,1.75) to[out=0, in=-135] (5*1.41/2, 5*1.41/2);
	\draw[knot, overcross] (2,2) to[out=-90, in=45] (-5*1.41/2, -5*1.41/2);
	\draw[knot, overcross] (5*1.41/2, -5*1.41/2) to[out=135, in=-90] (-2,2);
	\draw[knot, overcross] (-2,2) to[out=90, in=180] (0,4);
	\draw[knot, overcross] (-5*1.41/2, 5*1.41/2) to[out=-45, in=180] (0, 1.75);
	\draw[knot, overcross] (0,4) to[out=0, in=90] (2,2);
	\draw[white, ultra thick] (0,0) circle (5cm);
	\draw[dotted] (0,0) circle (5cm);
	\fill[white, very thick] (2, 2.25) circle (.45cm);
       \draw[knot] (2.375, 2.51) to[out=210,in=90] (1.995, 1.8);
        \draw[knot] (1.88, 2.685) to[out=-75,in=25] (1.58, 2.055);
	\fill[white, very thick] (0,-1.15) circle (.45cm);
        \draw[knot] (-0.4,-1.38) to[out=30,in=-33] (-0.375,-0.875);
        \draw[knot] (0.4,-1.38) to[out=150,in=213] (0.375,-0.875);
	\fill[white, very thick] (-2, 2.25) circle (.45cm);
        \draw[knot] (-2.375, 2.51) to[out=-30,in=-105] (-1.88, 2.685);
        \draw[knot] (-1.995, 1.8) to[out=90, in=155] (-1.58, 2.055);
}$};
	\node(BAB) at (10,0) {$\tikz[baseline={([yshift=-.5ex]current bounding box.center)}, scale=.2]{
	\node at (0,-7.5) {\tiny$\begin{gathered} \highlight{\mathrm{gr}(0,X \otimes 1,0) = (0,-1)} \\ \mathrm{gr}(0,X \otimes X,0) = (0,-3) \end{gathered}$};
	\draw[knot, overcross] (0,1.75) to[out=0, in=-135] (5*1.41/2, 5*1.41/2);
	\draw[knot, overcross] (2,2) to[out=-90, in=45] (-5*1.41/2, -5*1.41/2);
	\draw[knot, overcross] (5*1.41/2, -5*1.41/2) to[out=135, in=-90] (-2,2);
	\draw[knot, overcross] (-2,2) to[out=90, in=180] (0,4);
	\draw[knot, overcross] (-5*1.41/2, 5*1.41/2) to[out=-45, in=180] (0, 1.75);
	\draw[knot, overcross] (0,4) to[out=0, in=90] (2,2);
	\draw[white, ultra thick] (0,0) circle (5cm);
	\draw[dotted] (0,0) circle (5cm);
	\fill[white, very thick] (2, 2.25) circle (.45cm);
       \draw[knot] (2.375, 2.51) to[out=210,in=90] (1.995, 1.8);
        \draw[knot] (1.88, 2.685) to[out=-75,in=25] (1.58, 2.055);
	\fill[white, very thick] (0,-1.15) circle (.45cm);
        \draw[knot] (-0.4,-1.38) to[out=32,in=148] (0.4,-1.38);
        \draw[knot] (-0.375,-0.875) to[out=-36,in=-144] (0.375,-0.875);
	\fill[white, very thick] (-2, 2.25) circle (.45cm);
        \draw[knot] (-2.375, 2.51) to[out=-30,in=90] (-1.995, 1.8);
        \draw[knot] (-1.88, 2.685) to[out=-105,in=155] (-1.58, 2.055);
}$};
\node(*BBA) at (8.5,-6.5) {$\tikz[baseline={([yshift=-.5ex]current bounding box.center)}, scale=.2]{
	\node at (0,-9) {\small$\begin{gathered} BBA \\ w(U_3) = 0 \\ (u,v) = (0,1)  \end{gathered}$};
	\draw[knot, overcross] (0,1.75) to[out=0, in=-135] (5*1.41/2, 5*1.41/2);
	\draw[knot, overcross] (2,2) to[out=-90, in=45] (-5*1.41/2, -5*1.41/2);
	\draw[knot, overcross] (5*1.41/2, -5*1.41/2) to[out=135, in=-90] (-2,2);
	\draw[knot, overcross] (-2,2) to[out=90, in=180] (0,4);
	\draw[knot, overcross] (-5*1.41/2, 5*1.41/2) to[out=-45, in=180] (0, 1.75);
	\draw[knot, overcross] (0,4) to[out=0, in=90] (2,2);
	\draw[white, ultra thick] (0,0) circle (5cm);
	\draw[dotted] (0,0) circle (5cm);
	\fill[white, very thick] (2, 2.25) circle (.45cm);
       \draw[knot] (2.375, 2.51) to[out=210,in=90] (1.995, 1.8);
        \draw[knot] (1.88, 2.685) to[out=-75,in=25] (1.58, 2.055);
	\fill[white, very thick] (0,-1.15) circle (.45cm);
        \draw[knot] (-0.4,-1.38) to[out=30,in=-33] (-0.375,-0.875);
        \draw[knot] (0.4,-1.38) to[out=150,in=213] (0.375,-0.875);
	\fill[white, very thick] (-2, 2.25) circle (.45cm);
        \draw[knot] (-2.375, 2.51) to[out=-30,in=-105] (-1.88, 2.685);
        \draw[knot] (-1.995, 1.8) to[out=90, in=155] (-1.58, 2.055);
	\begin{scope}[red]
		\fill (0, 3) circle (.3cm);
		\fill (-3, 0) circle (.3cm);
		\draw[dotted, rounded corners = 1mm] (-3, 0) -- (-5,0);
		\draw[rounded corners = 1mm] (0,3) -- (-2.05,2.25) -- (-3, 0);
		\draw[dotted] (-3, 0) -- (-5*0.866, -5*0.5);
		\draw[dotted, rounded corners = 1mm] (0,3) -- (2.05,2.25) -- (3, 2) -- (5*0.866, 5*0.5);
		\draw[dotted, rounded corners] (-3, 0) -- (0,-1.225) -- (5,0);
	\end{scope}
	\draw[white, ultra thick] (0,0) circle (5cm);
	\draw[dotted] (0,0) circle (5cm);
}$};
\node(*AB) at (11.5,-6.5) {$\tikz[baseline={([yshift=-.5ex]current bounding box.center)}, scale=.2]{
	\node at (0,-9) {\small$\begin{gathered} *AB \\ w(U_4) = -1 \\ (u,v) = (-1,1) \end{gathered}$};
	\draw[knot, overcross] (0,1.75) to[out=0, in=-135] (5*1.41/2, 5*1.41/2);
	\draw[knot, overcross] (2,2) to[out=-90, in=45] (-5*1.41/2, -5*1.41/2);
	\draw[knot, overcross] (5*1.41/2, -5*1.41/2) to[out=135, in=-90] (-2,2);
	\draw[knot, overcross] (-2,2) to[out=90, in=180] (0,4);
	\draw[knot, overcross] (-5*1.41/2, 5*1.41/2) to[out=-45, in=180] (0, 1.75);
	\draw[knot, overcross] (0,4) to[out=0, in=90] (2,2);
	\draw[white, ultra thick] (0,0) circle (5cm);
	\draw[dotted] (0,0) circle (5cm);
	\fill[white, very thick] (0,-1.15) circle (.45cm);
        \draw[knot] (-0.4,-1.38) to[out=32,in=148] (0.4,-1.38);
        \draw[knot] (-0.375,-0.875) to[out=-36,in=-144] (0.375,-0.875);
	\fill[white, very thick] (-2, 2.25) circle (.45cm);
        \draw[knot] (-2.375, 2.51) to[out=-30,in=90] (-1.995, 1.8);
        \draw[knot] (-1.88, 2.685) to[out=-105,in=155] (-1.58, 2.055);
	\begin{scope}[blue]
		\fill (0, 0.5) circle (.3cm);
		\fill (0, -3) circle (.3cm);
		\draw[dotted] (0, 0.5) -- (0, -3);
		\draw[rounded corners = 3mm] (0, 0.5) -- (-2.4,2.25) -- (-5*0.5,5*0.866);
		\draw[dotted, rounded corners = 3mm] (0, 0.5) -- (2.4,2.25) -- (5*0.5,5*0.866);
		\draw[dotted] (-5*0.5,-5*0.866) -- (0,-3);
		\draw (5*0.5,-5*0.866) -- (0,-3);
	\end{scope}
	\draw[white, ultra thick] (0,0) circle (5cm);
	\draw[dotted] (0,0) circle (5cm);
}$};
	\node(ABB) at (10,-3) {$\tikz[baseline={([yshift=-.5ex]current bounding box.center)}, scale=.2]{
	\node at (0,-6.5) {\tiny$\mathrm{gr}(0,0,X) = (0,-2)$};
	\draw[knot, overcross] (0,1.75) to[out=0, in=-135] (5*1.41/2, 5*1.41/2);
	\draw[knot, overcross] (2,2) to[out=-90, in=45] (-5*1.41/2, -5*1.41/2);
	\draw[knot, overcross] (5*1.41/2, -5*1.41/2) to[out=135, in=-90] (-2,2);
	\draw[knot, overcross] (-2,2) to[out=90, in=180] (0,4);
	\draw[knot, overcross] (-5*1.41/2, 5*1.41/2) to[out=-45, in=180] (0, 1.75);
	\draw[knot, overcross] (0,4) to[out=0, in=90] (2,2);
	\draw[white, ultra thick] (0,0) circle (5cm);
	\draw[dotted] (0,0) circle (5cm);
	\fill[white, very thick] (2, 2.25) circle (.45cm);
        \draw[knot] (2.375, 2.51) to[out=210,in=-75] (1.88, 2.685);
        \draw[knot] (1.995, 1.8) to[out=90, in=25] (1.58, 2.055);
	\fill[white, very thick] (0,-1.15) circle (.45cm);
        \draw[knot] (-0.4,-1.38) to[out=30,in=-33] (-0.375,-0.875);
        \draw[knot] (0.4,-1.38) to[out=150,in=213] (0.375,-0.875);
	\fill[white, very thick] (-2, 2.25) circle (.45cm);
        \draw[knot] (-2.375, 2.51) to[out=-30,in=90] (-1.995, 1.8);
        \draw[knot] (-1.88, 2.685) to[out=-105,in=155] (-1.58, 2.055);
}$};
	\node(BBB) at (15,0) {$\tikz[baseline={([yshift=-.5ex]current bounding box.center)}, scale=.2]{
	\node at (0,-7.5) {\tiny$\begin{gathered} \highlight{\mathrm{gr}(X \otimes 1) = (1,0)} \\ \mathrm{gr}(X \otimes X) = (1,-2) \end{gathered}$};
	\draw[knot, overcross] (0,1.75) to[out=0, in=-135] (5*1.41/2, 5*1.41/2);
	\draw[knot, overcross] (2,2) to[out=-90, in=45] (-5*1.41/2, -5*1.41/2);
	\draw[knot, overcross] (5*1.41/2, -5*1.41/2) to[out=135, in=-90] (-2,2);
	\draw[knot, overcross] (-2,2) to[out=90, in=180] (0,4);
	\draw[knot, overcross] (-5*1.41/2, 5*1.41/2) to[out=-45, in=180] (0, 1.75);
	\draw[knot, overcross] (0,4) to[out=0, in=90] (2,2);
	\draw[white, ultra thick] (0,0) circle (5cm);
	\draw[dotted] (0,0) circle (5cm);
	\fill[white, very thick] (2, 2.25) circle (.45cm);
       \draw[knot] (2.375, 2.51) to[out=210,in=90] (1.995, 1.8);
        \draw[knot] (1.88, 2.685) to[out=-75,in=25] (1.58, 2.055);
	\fill[white, very thick] (0,-1.15) circle (.45cm);
        \draw[knot] (-0.4,-1.38) to[out=30,in=-33] (-0.375,-0.875);
        \draw[knot] (0.4,-1.38) to[out=150,in=213] (0.375,-0.875);
	\fill[white, very thick] (-2, 2.25) circle (.45cm);
        \draw[knot] (-2.375, 2.51) to[out=-30,in=90] (-1.995, 1.8);
        \draw[knot] (-1.88, 2.685) to[out=-105,in=155] (-1.58, 2.055);
}$};
\node(*BB) at (15, -6.5) {$\tikz[baseline={([yshift=-.5ex]current bounding box.center)}, scale=.2]{
	\node at (0,-9) {\small$\begin{gathered} *BB \\ w(U_5) = -1 \\ (u,v) = (1,1)\end{gathered}$};
	\draw[knot, overcross] (0,1.75) to[out=0, in=-135] (5*1.41/2, 5*1.41/2);
	\draw[knot, overcross] (2,2) to[out=-90, in=45] (-5*1.41/2, -5*1.41/2);
	\draw[knot, overcross] (5*1.41/2, -5*1.41/2) to[out=135, in=-90] (-2,2);
	\draw[knot, overcross] (-2,2) to[out=90, in=180] (0,4);
	\draw[knot, overcross] (-5*1.41/2, 5*1.41/2) to[out=-45, in=180] (0, 1.75);
	\draw[knot, overcross] (0,4) to[out=0, in=90] (2,2);
	\draw[white, ultra thick] (0,0) circle (5cm);
	\draw[dotted] (0,0) circle (5cm);
	\fill[white, very thick] (0,-1.15) circle (.45cm);
        \draw[knot] (-0.4,-1.38) to[out=30,in=-33] (-0.375,-0.875);
        \draw[knot] (0.4,-1.38) to[out=150,in=213] (0.375,-0.875);
	\fill[white, very thick] (-2, 2.25) circle (.45cm);
        \draw[knot] (-2.375, 2.51) to[out=-30,in=90] (-1.995, 1.8);
        \draw[knot] (-1.88, 2.685) to[out=-105,in=155] (-1.58, 2.055);
	\begin{scope}[red]
		\fill (0, 3) circle (.3cm);
		\fill (-3, 0) circle (.3cm);
		\draw[dotted, rounded corners = 1mm] (-3, 0) -- (-5,0);
		\draw[dotted, rounded corners = 1mm] (0,3) -- (-2.05,2.25) -- (-3, 0);
		\draw (-3, 0) -- (-5*0.866, -5*0.5);
		\draw[rounded corners = 1mm] (0,3) -- (2.05,2.25) -- (3, 2) -- (5*0.866, 5*0.5);
		\draw[dotted, rounded corners] (-3, 0) -- (0,-1.225) -- (5,0);
	\end{scope}
	\draw[white, ultra thick] (0,0) circle (5cm);
	\draw[dotted] (0,0) circle (5cm);
}$};
\draw[->, thick] (AAA) -- node[above, midway, sloped]{} (BAA);
\draw[->, thick] (AAA) -- node[above, midway, sloped]{} (ABA);
\draw[->, thick] (AAA) -- node[above, midway, sloped]{} (AAB);
\draw[->, thick, dotted] (ABA) -- (BBA);
\draw[->, thick, dotted] (ABA) -- (ABB);
\draw[->, thick, dotted] (BAA) -- (BBA);
\draw[->, line width=3pt, white] (BAA) -- (BAB);
\draw[->, thick] (BAA) -- node[above, pos=0.3, sloped]{} (BAB);
\draw[->, thick, dotted] (AAB) -- (ABB);
\draw[->, line width=3pt, white] (AAB) -- (BAB);
\draw[->, thick] (AAB) -- node[above, pos=0.3, sloped]{}(BAB);
\draw[->, thick] (BBA) -- node[above, midway, sloped]{} (BBB);
\draw[->, thick, dotted] (BAB) -- (BBB);
\draw[->, thick] (ABB) -- node[above, midway, sloped]{} (BBB);
\draw[arrows = {Hooks[right]->}, thick] (*AA) -- (AAA);
\draw[white, line width=3pt] (*ABA) to[out=135, in=-135] (ABA);
\draw[arrows = {Hooks[right]->}, thick] (*ABA) to[out=135, in=-135] (ABA);
\draw[white, line width=3pt] (*BBA) to[out=120, in=-120] (BBA);
\draw[arrows = {Hooks[right]->}, thick] (*BBA) to[out=120, in=-120] (BBA);
\draw[white, line width=3pt] (*AB) to[out=60, in=-30] (BAB);
\draw[arrows = {Hooks[left]->}, thick] (*AB) to[out=60, in=-30] (BAB);
\draw[arrows = {Hooks[right]->}, thick] (*BB) -- (BBB);
}
\caption{An illustration of $\iota: \widetilde{C}(U_i) \to \widetilde{C}(D)$ for each twisted unknot in $\mathcal{U}(D)$ for $D$ our typical diagram of $3_1$. The image of $Z_{U_i}$ is the enhanced state indicated in yellow.}
\label{fig:cubeandinclusion}
\end{figure}
Remembering that the bidegree of each fundamental cycle is in bidegree $(0,-1)$, the reader is invited to verify that the grading-shift is as described in (\ref{eq:gradingshift}). 
\end{example}

Now, $\mathcal{C}(D)$ is generated by $\mathscr{T}(D)$ which, by Lemma \ref{lem:champkof1}, is in 1-1 correspondence with $\mathscr{U}(D)$. As hinted in Example \ref{ex:mainplusinclusion}, we can embed $\phi: \mathcal{C}(D) \hookrightarrow \widetilde{C}(D)$ by $T \mapsto \iota(Z_{\mathcal{U}(T)})$. From (\ref{eq:enhancedstatesgrading}), we can use the same argument as \cite[Proof of Theorem 3]{MR2480298} to verify that if $T\subset G_\pm$ has bidegree $(u,v)$, then the $(i,j)$-degree of $\phi(T)$ is given by
\begin{equation}
\label{eq:treegradings}
i = \pm(u - 2v) + \frac{w(D) \pm k_{\pm}}{2} \qquad \text{and} \qquad j = \pm(2u - 2v) + \frac{3w(D) \pm (k_\pm +2)}{2}.
\end{equation}

Finally, let $\widetilde{U}_k$ denote $\iota(\widetilde{C}(\mathcal{U}(T_k))) \subset \widetilde{C}(D)$. The idea, described in detail in \cite{MR2480298}, is to perform Gaussian elimination \cite{MR2320156} (or \emph{elementary collapses} in the language of \cite{MR2480298}) on $\widetilde{U}_k$, taking it to its fundamental cycle. By \cite[Lemma 4]{MR2480298}, there is a sequence of Gaussian eliminations which are deformation retractions $r_k: C_0 \to C_k$ with respect to $f_{\widetilde{U}_k}$. The result is a complex $C_s$ whose generators are in bijective correspondence with the spanning trees of $G$, and $\widetilde{H}^{i,j}(C_s) \cong \widetilde{\Kh}^{i,j}(D)$. Then the map
\begin{equation}\label{eq:isom}
r_s \circ \phi : \mathcal{C}(D) \to C_s
\end{equation}
is an isomorphism of bigraded abelian groups. Recall that $\{\widetilde{C}(D), d\}$ splits as a complex if $D$ is a non-local diagram for a nullhomologous link. Since $d$ has bidegree $(1,0)$, Equation (\ref{eq:treegradings}) informs us that the induced differential on $\mathcal{C}(D)$ has bidegree $(-1,-1)$ on generators in $\mathcal{C}_+(D)$ and $(1,1)$ on generators in $\mathcal{C}_-(D)$. Then, the actual deformation retraction $\widetilde{C}(D) \to \mathcal{C}(D)$ is given by $(r_s \circ \phi)^{-1} \circ r_s$. Summarizing, 

\begin{theorem}
Assume that $D$ is a non-local diagram of a nullhomologous link in $\RP^3$. Then there is a spanning tree complex $\mathcal{C}(D) = \{\mathcal{C}_+(D) \oplus \mathcal{C}_-(D), \partial_+ \oplus \partial_-\}$ with $\partial_+$ of bidegree $(-1,-1)$ and $\partial_-$ of bidegree $(1,1)$ which is a deformation retract of the Khovanov complex $\widetilde{C}(D)$.
\end{theorem}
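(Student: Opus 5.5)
The plan is to assemble the argument just sketched into a rigorous proof, following Champanerkar--Kofman \cite[\S 3]{MR2480298} and isolating the points where $\RP^2$ differs from $S^2$. First, since $D$ is a diagram of a nullhomologous link, Lemma \ref{lem:ManWillis} ensures that every resolution consists only of class-0 circles. Hence every edge map in the hypercube is $m$, $\Delta$, or the zero map for a 1-1 bifurcation, and $\widetilde{C}(D)$ is an honest bigraded complex over $\F$. Second, Lemma \ref{lem:champkof1} gives the bijection $\mathscr{T}(D) \leftrightarrow \mathscr{U}(D)$. I would then order the twisted unknots $U_1, \ldots, U_s$ by a linear extension of the partial order $\mathscr{P}(D)$. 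The resolutions of $D$ are partitioned by the incomplete skein-resolution tree: each full resolution $D_{\mathbf v}$ lies below exactly one leaf $U_k$, namely the one whose partial smoothing it refines. Therefore, as bigraded vector spaces, $\widetilde{C}(D) = \bigoplus_k \widetilde{U}_k$, where $\widetilde{U}_k = \iota(\widetilde{C}(U_k))$ is shifted as in (\ref{eq:gradingshift}).

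Next I would show that this decomposition is filtered. The differential on $\widetilde{C}(D)$ changes an $A$-smoothing into a $B$-smoothing. So if a component of $d$ maps $\widetilde{U}_k$ into $\widetilde{U}_l$ with $l\neq k$, the partial smoothings must satisfy $U_k > U_l$ in $\mathscr{P}(D)$. Concretely, the changed crossing is a $*$ in $U_k$ that is fixed as $B$ in $U_l$, or an $A$ in $U_k$ that is $B$ in $U_l$. The internal differential of $\widetilde{U}_k$ agrees with $\widetilde{C}(U_k)$ up to shift, because both are built from the same cube restricted to the unresolved crossings. This needs care only for 1-1 bifurcations. Within a twisted unknot, the crossings left unresolved are Reidemeister I, IV, or V crossings of a class-0 unknot, and I would check that the maps agree with the local computation in \cite{MR4904031}.

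Then I would apply \cite[Lemma 4]{MR2480298}: $\widetilde{C}(U_k)$ deformation retracts to its fundamental cycle $Z_{U_k}$ through a sequence of Gaussian eliminations. The main obstacle is that the twisted unknots may now involve $\Omega_4$ and $\Omega_5$, and, unlike in $S^3$, they may even be crossingless but nonround (for example $\Upsilon_1$-type leaves). So I would first prove that $\widetilde{C}(U)\simeq \F_{(0,-1)}$ for every class-0 twisted unknot $U$ by induction on the number of Reidemeister I moves. At each Reidemeister I crossing, one resolution is $U'$ together with a small trivial circle, and the map is a merge or split with that circle. This is exactly the local situation in $S^3$, because the small circle is local, and the standard cancellation applies. $\Omega_4$ and $\Omega_5$ change no crossings in the relevant sense.

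Performing these eliminations in the order $U_1, \ldots, U_s$ is legitimate, because the filtration is preserved: an elimination inside $\widetilde{U}_k$ only modifies components of the differential from higher to lower filtration levels. The result is the complex $C_s$ with one generator per spanning tree, and $r_s\circ\phi$ is a bigraded isomorphism onto it. The gradings (\ref{eq:treegradings}) follow from (\ref{eq:enhancedstatesgrading}) and (\ref{eq:gradingshift}) together with $u(T) = \mp w(\mathcal{U}(T))$ and the identification of $\sigma$ with the state counts, by the same computation as in Proposition \ref{prop:gradedeuler}. Transporting the differential through $(r_s\circ\phi)^{-1}$ defines $\partial$, and it remains to show that $\partial$ splits as $\partial_+\oplus\partial_-$ with the stated bidegrees. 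The generators from $G_+$ and $G_-$ have $j$-gradings of opposite parity, since $k_+-k_-$ is odd by Proposition \ref{prop:gradedeuler}. The differential preserves $j$ and raises $i$ by $1$, so there are no cross terms. Solving (\ref{eq:treegradings}) for $\Delta i=1$ and $\Delta j=0$ gives $(\Delta u,\Delta v)=(-1,-1)$ on $\mathcal{C}_+$ and $(1,1)$ on $\mathcal{C}_-$.
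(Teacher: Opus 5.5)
Your overall route is the paper's own, which follows Champanerkar--Kofman:
\begin{itemize}
\item decompose $\widetilde{C}(D)$ into the pieces $\widetilde{U}_k$ indexed by twisted unknots, ordered compatibly with $\mathscr{P}(D)$;
\item collapse each piece to its fundamental cycle by the eliminations of \cite[Lemma 4]{MR2480298};
\item read off gradings from (\ref{eq:treegradings});
\item use the $j$-grading to separate $\mathcal{C}_+$ from $\mathcal{C}_-$.
\end{itemize}
You are more explicit than the paper about why the eliminations can be done piece by piece in the total complex, and that part is fine. The step that fails as written is your justification that $G_+$-trees and $G_-$-trees have $j$-gradings of opposite parity, which is exactly what rules out cross terms.

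You assert that $k_+-k_-$ is odd by Proposition \ref{prop:gradedeuler}. This is false. Since $e_+(G)-e_-(G)\equiv e(G) \pmod 2$ and $e(G_+)=e(G_-)=c(D)$, we get $k_+\equiv k_-\equiv c(D) \pmod 2$, so $k_+-k_-$ is always even. For instance, in Figure \ref{fig:finalexample}, $k_+=2$ and $k_-=4$. Moreover, the parity statement at the end of Proposition \ref{prop:gradedeuler} is proved only for reduced alternating diagrams, while the theorem concerns arbitrary non-local diagrams.

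What (\ref{eq:treegradings}) actually gives is $j(T)-j(T')\equiv \frac{k_++k_-}{2} \pmod 2$ for $T\subset G_+$ and $T'\subset G_-$. So you need $\frac{k_++k_-}{2}$ to be odd, and this does hold for every connected non-local diagram:
\begin{itemize}
\item Changing the checkerboard coloring reverses the sign of every crossing, so $e_\pm(G_-)=e_\mp(G_+)$.
\item $G_+$ is cellular with $v(G_-)=f(G_+)$, so $\chi(\RP^2)=1$ gives $v(G_+)+v(G_-)=c(D)+1$.
\end{itemize}
Hence
\[
\frac{k_++k_-}{2}=\bigl(e_+(G_+)-e_-(G_+)\bigr)+v(G_+)+v(G_-)-2=2e_+(G_+)-1,
\]
which is odd. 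With this replacement, your no-cross-terms argument and the bidegree computation $(\mp1,\mp1)$ go through.

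Two smaller points.
\begin{itemize}
\item A component of $d$ that changes a crossing marked $*$ in $U_k$ stays inside $\widetilde{U}_k$. So only the $A\to B$ case produces inter-piece components, and there $U_k>U_l$ follows directly from the definition. The $*$ case you list would not satisfy condition (2) of that definition, but it never arises.
\item Your worry about 1-1 bifurcations inside $\widetilde{U}_k$ is unnecessary. A twisted unknot comes from a disk-bounding crossingless circle by Reidemeister I moves and moves across the boundary of the disk model, so it is a local diagram. Its complex is therefore the planar reduced Khovanov complex.
\end{itemize}
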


\begin{example}
The isomorphism (\ref{eq:isom}) is apparent in Figure \ref{fig:cubeandinclusion} of Example \ref{ex:mainplusinclusion}. Indeed, there is a sequence of Gaussian eliminations which eliminate all non-highlighted generators.
\end{example}

\subsection{The Champanerkar-Kofman spectral sequence}

Define $\rho:\mathscr{P}(D) \to \widetilde{C}(D)$ by $\rho(T) = \sum_{T \ge T_i} \widetilde{U}_i$. Champanerkar-Kofman use this map to define a decreasing linearly ordered filtration on $\widetilde{C}(D)$. Start by enumerating the maximal descending ordered sequences of spanning trees in $\mathscr{P}(D)$, and let $T_k^j$ denote the $k$th element of the $j$th such sequence. Define $F^p\widetilde{C}(D) := \sum_j \rho(T_p^j)$. By \cite[Lemma 2]{MR2480298}, $\partial F^p \subseteq F^p$, so the filtration determines a spectral sequence, and
\[
E_0^{p, *} = F^p \widetilde{C}(D) / F^{p+1}\widetilde{C}(D) = \bigoplus_k \widetilde{U}_k.
\]

\begin{theorem}[C.f. Theorem 5 of \cite{MR2480298}]
\label{thm:ckspectral}
For any non-local diagram $D \subset \RP^2$ of a nullhomologous link $L \subset \RP^3$, there is a spectral sequence $E_r^{*, *}$ which converges to $\widetilde{\Kh}(L; \F)$ such that
\begin{enumerate}
\item $E_1^{*,*}$ is isomorphic to the spanning tree complex $\mathcal{C}^{*,*}(D)$, and
\item there is some $r \le c(D)$ for which the spectral sequence collapses.
\end{enumerate}
\end{theorem}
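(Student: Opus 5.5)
The plan is to follow Champanerkar--Kofman's proof of their Theorem 5 essentially verbatim, checking at each step that the only new phenomena in $\RP^3$ do not interfere. These are 1-1 bifurcations, crossingless twisted unknots such as $\Upsilon_1$, and projectively active edges. The filtration $F^p\widetilde{C}(D)$ has already been defined using $\rho$ and the maximal descending chains in $\mathscr{P}(D)$. We take as given (the analogue of \cite[Lemma 2]{MR2480298}) that $d F^p \subseteq F^p$. From this the standard machinery of a filtered complex produces a spectral sequence. Because the filtration is finite and bounded, the sequence converges to $H(\widetilde{C}(D)) \cong \widetilde{\Kh}(L;\F)$.

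For (1), we identify $E_0^{p,*} = \bigoplus_k \widetilde{U}_k$ with the associated graded differential. The first point to check is that the part of $d$ preserving filtration level restricts to the internal Khovanov differential of each twisted unknot $\iota(\widetilde{C}(\mathcal{U}(T_k)))$. The reason is that the edges of the cube of resolutions changing a smoothing at a resolved crossing of $\mathcal{U}(T_k)$ strictly change the filtration level; this is the content of the definition of the order on partial smoothings. For this restriction to be correct in the projective setting we must use that $D$ is nullhomologous. Lemma~\ref{lem:ManWillis} then guarantees that all circles are class-0, and 1-1 bifurcations carry the zero map. Consequently each $\widetilde{U}_k$ is, up to the grading shift (\ref{eq:gradingshift}), the reduced Khovanov complex of a class-0 twisted unknot. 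By \cite[Lemma 4]{MR2480298}, which only uses Reidemeister I invariance and applies since the remaining crossings are RI/RIV/RV-removable, its homology is one copy of $\F$ generated by the fundamental cycle $Z_{\mathcal{U}(T_k)}$. Hence $E_1 \cong \bigoplus_k \F\langle Z_{\mathcal{U}(T_k)}\rangle$. Lemma~\ref{lem:champkof1} gives the bijection of these generators with $\mathscr{T}(D)$, and (\ref{eq:treegradings}) matches bigradings, so $E_1 \cong \mathcal{C}(D)$ as bigraded groups.

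For (2), we bound the length of the filtration. The filtration index $p$ ranges over positions in maximal chains of $\mathscr{P}(D)$. Each strict relation $T>T'$ changes at least one letter from $A$ to $B$ among $n=c(D)$ crossings, while $B$ never reverts to $A$ along a chain. Hence every chain has length at most $c(D)+1$, so there are at most $c(D)+1$ nontrivial filtration levels. The differential $d_r$ shifts $p$ by $r$, so $d_r=0$ for $r>c(D)$ and the sequence collapses by $E_{c(D)}$ at the latest.

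The main obstacle is the filtration-compatibility together with the claim that the $E_0$ differential is exactly the twisted-unknot differential. In $\RP^2$ a projectively active edge is smoothed (Table~\ref{tab:Usmooth}) rather than left as a crossing. It is therefore not obvious that the comparison of partial smoothings still captures all components of $d$ between different $\widetilde{U}_k$. I would handle this by arguing directly on the cube. Take a cube edge from a resolution in $\widetilde{U}_k$ to one in $\widetilde{U}_l$ with $l\neq k$; such an edge changes an $A$ at a resolved crossing of $\mathcal{U}(T_k)$ to $B$, which forces $T_k > T_l$. The only extra maps in $\RP^3$ are 1-1 bifurcations, and these are zero, so they cannot violate this.
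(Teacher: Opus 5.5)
\textbf{Part (1).} This is the Champanerkar--Kofman argument transplanted, which is all the paper does: it gives no proof beyond deferring to \cite[Theorem 5]{MR2480298} and an example. Your cube-edge argument for why the associated graded differential is the internal twisted-unknot differential is fine. You should add the skein-tree fact that the switched crossing is the first position where the two partial smoothings differ, so it is resolved, and equal to $B$, in the target leaf.

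\textbf{Part (2) has a genuine gap.} The defining relation of $\mathscr{P}(D)$ only forbids a letter changing directly from $B$ to $A$ in one step. It allows $B\to{\ast}$ followed by ${\ast}\to A$, and this really happens. So the claim that $B$ never reverts to $A$ along a chain is false, and so is the bound of $c(D)+1$ on chain length.

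Here is an example. Take a Tait graph in which vertices $u,w$ are joined by parallel edges $c_0$ (positive) and $f$ (either sign), and by paths $u\,v_i\,w$ for $i=1,2$ with $c_i^*=uv_i$ positive and $m_i=v_iw$ negative. Order the edges $f<c_0<m_1<c_1^*<m_2<c_2^*$. Compute the partial smoothings via Table~\ref{tab:Usmooth}, writing ${\ast}$ for $L,\ell_{\mathrm{loc}},\bar L,\bar\ell_{\mathrm{loc}}$, in positions $(c_2^*,m_2,c_1^*,m_1,c_0,f)$. The relation then gives a chain of nine spanning trees:
\[
\begin{gathered}
AAAAA{\ast} > AAAAB{\ast} > AAAB{\ast}{\ast} > ABA{\ast}{\ast}{\ast} > B{\ast}AAA{\ast} \\
> B{\ast}AAB{\ast} > B{\ast}AB{\ast}{\ast} > B{\ast}B{\ast}A{\ast} > B{\ast}B{\ast}B{\ast}.
\end{gathered}
\]
Already with one path, the five trees form the chain $AAA{\ast}>AAB{\ast}>AB{\ast}{\ast}>B{\ast}A{\ast}>B{\ast}B{\ast}$, in which the letter at $c_0$ reads $A,B,\ast,A,B$.

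To put this in your setting, attach a projective loop to the graph. The loop is $\ell_{\mathrm{pro}}$ for every tree, so it only adds a constant letter. The result is a non-local $7$-crossing diagram in $\RP^2$ whose poset contains a chain of $9>c(D)+1$ elements.

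Hence the filtration can have more than $c(D)+1$ nonempty levels, and counting levels does not give $d_r=0$ for $r>c(D)$. Even granting your count, $c(D)+1$ levels would only give $E_{c(D)+1}=E_\infty$, not collapse at some $r\le c(D)$. The collapse bound has to come from some other argument. That is precisely the part the paper leaves to \cite{MR2480298}, and your proposal does not supply it.
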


Since the proof of Theorem \ref{thm:ckspectral} is completely derivative of the proof of \cite[Theorem 5]{MR2480298}, we content ourselves with an example. Let $D$ be the 4-crossing diagram for the knot $3_1$ pictured in Figure \ref{fig:finalexample}. We also list each Tait graph of $D$ and their spanning trees. Notice that $w(D) = 0$, so we pick $G_+$ and $G_-$ arbitrarily.

\begin{figure}[ht]
\[
\tikz[baseline={([yshift=-.5ex]current bounding box.center)}, scale=0.3]
{
\draw[knot, looseness=1] (0,0) to[out=90, in=-90] (2,2);
\draw[knot, looseness=1] (0,2) to[out=90, in=-90] (-1,3);
\draw[knot, looseness=1] (2,2) to[out=90, in=-90] (1,3);
\draw[knot, looseness=1] (-1,3) to[out=90, in=-90] (1,5);
\draw[knot, overcross, looseness=1] (1,0) to[out=90, in=-90] (-1,2);
\draw[knot, overcross, looseness=1] (-1,2) to[out=90, in=-90] (0,3);
\draw[knot, overcross, looseness=1] (1,2) to[out=90, in=-90] (2,3);
\draw[knot, overcross, looseness=1] (2,3) to[out=90, in=-90] (0,5);
\draw[knot, looseness=1] (0,2) to[out=-90,in=-90] (1,2);
\draw[knot, looseness=1] (0,3) to[out=90,in=90] (1,3);
\draw[knot] (0.5+5*0.707, 2.5+5*0.707) to[out=-135, in=90] (1,5);
\draw[knot] (0.5-5*0.707, 2.5+5*0.707) to[out=-45, in=90] (0,5);
\draw[knot] (0.5+5*0.707, 2.5-5*0.707) to[out=135, in=-90] (1,0);
\draw[knot] (0.5-5*0.707, 2.5-5*0.707) to[out=45, in=-90] (0,0);
\node at (-1.5, 2.5) {\scriptsize$1$};
\node at (2.5, 2.5) {\scriptsize$2$};
\node at (1.5,4.5) {\scriptsize$3$};
\node at (1.5,0.5) {\scriptsize$4$};
\draw[white, ultra thick] (0.5,2.5) circle (5cm);
\draw[dotted] (0.5,2.5) circle(5cm);
\node at (0.5,-4) {\Large$D$};
}
\qquad \qquad \qquad 
\tikz[baseline={([yshift=-.5ex]current bounding box.center)}, scale=0.3]
{
\node at (-2.5,3.25) {\tiny$1^-$};
\node at (2,3.25) {\tiny$2^-$};
\node at (1.75,6) {\tiny$3^+$};
\node at (-0.5,6) {\tiny$4^+$};
\draw[red, very thick] (0.5-5*0.5, 2.5-5*0.86602540378) -- (0.5+5*0.5, 2.5+5*0.86602540378);
\draw[red, very thick] (0.5-5*0.5, 2.5+5*0.86602540378) -- (0.5+5*0.5, 2.5-5*0.86602540378);
\draw[red, very thick] (-4.5, 2.5) -- (5.5, 2.5);
\fill[red] (0.5,2.5) circle (.3cm);
\fill[red] (3,2.5) circle (.3cm);
\draw[white, ultra thick] (0.5,2.5) circle (5cm);
\draw[dotted] (0.5,2.5) circle(5cm);
\node at (0.5,-4) {\Large$G_+$};
}
\qquad \qquad \qquad 
\tikz[baseline={([yshift=-.5ex]current bounding box.center)}, scale=0.3]
{
\node at (-1,1.75) {\tiny$1^+$};
\node at (2.25,1.75) {\tiny$2^+$};
\node at (1.5,4.25) {\tiny$3^-$};
\node at (1.5,6.5) {\tiny$4^-$};
\draw[blue, very thick] (0.5,3) -- (0.5,7.5);
\draw[blue, very thick] (0.5,-2.5) -- (0.5,0);
\draw[blue, very thick] (0.5,0) to[out=180,in=180] (0.5,3);
\draw[blue, very thick] (0.5,0) to[out=0,in=0] (0.5,3);
\fill[blue] (0.5,0) circle(.3cm);
\fill[blue] (0.5,3) circle(.3cm);
\fill[blue] (0.5,5.25) circle(.3cm);
\draw[white, ultra thick] (0.5,2.5) circle (5cm);
\draw[line cap=round, dash pattern=on 0pt off 3.5pt] (0.5,2.5) circle(5cm);
\node at (0.5,-4) {\Large$G_-$};
}
\]
\[
\renewcommand{\arraystretch}{2}
\begin{tabular}{|c|c|c|c|c|c|c|} 
\hline
$T_1$                                                              & $T_2$                                                                                & $T_3$                                         & $T_4$                                            & $T_5$                          & $T_6$                          & $T_7$                                                               \\ 
\hline
$\tikz[baseline={([yshift=-.5ex]current bounding box.center)}, scale=0.18]
{
\node[scale=0.7] at (-2.5,3.25) {\tiny$1^-$};
\node[scale=0.7] at (2,3.25) {\tiny$2^-$};
\node[scale=0.7] at (1.75,6) {\tiny$3^+$};
\node[scale=0.7] at (-0.5,6) {\tiny$4^+$};
\draw[black!20!white, thick] (0.5-5*0.5, 2.5-5*0.86602540378) -- (0.5+5*0.5, 2.5+5*0.86602540378);
\draw[black!20!white, thick] (0.5-5*0.5, 2.5+5*0.86602540378) -- (0.5+5*0.5, 2.5-5*0.86602540378);
\draw[red, very thick] (-4.5, 2.5) -- (0.5, 2.5);
\draw[black!20!white, thick] (0.5, 2.5) -- (3, 2.5);
\draw[red, very thick] (3, 2.5) -- (5.5, 2.5);
\fill[red] (0.5,2.5) circle (.3cm);
\fill[red] (3,2.5) circle (.3cm);
\draw[white, ultra thick] (0.5,2.5) circle (5.5cm);
\draw[white, ultra thick] (0.5,2.5) circle (5cm);
\draw[dotted] (0.5,2.5) circle(5cm);
}$
& $\tikz[baseline={([yshift=-.5ex]current bounding box.center)}, scale=0.18]
{
\node[scale=0.7] at (-2.5,3.25) {\tiny$1^-$};
\node[scale=0.7] at (2,3.25) {\tiny$2^-$};
\node[scale=0.7] at (1.75,6) {\tiny$3^+$};
\node[scale=0.7] at (-0.5,6) {\tiny$4^+$};
\draw[black!20!white, thick] (0.5-5*0.5, 2.5-5*0.86602540378) -- (0.5+5*0.5, 2.5+5*0.86602540378);
\draw[black!20!white, thick] (0.5-5*0.5, 2.5+5*0.86602540378) -- (0.5+5*0.5, 2.5-5*0.86602540378);
\draw[black!20!white, thick] (-4.5, 2.5) -- (0.5, 2.5);
\draw[red, very thick] (0.5, 2.5) -- (3, 2.5);
\draw[black!20!white, thick] (3, 2.5) -- (5.5, 2.5);
\fill[red] (0.5,2.5) circle (.3cm);
\fill[red] (3,2.5) circle (.3cm);
\draw[white, ultra thick] (0.5,2.5) circle (5cm);
\draw[dotted] (0.5,2.5) circle(5cm);
}$
& $\tikz[baseline={([yshift=-.5ex]current bounding box.center)}, scale=0.18]
{
\node[scale=0.7] at (-1,1.75) {\tiny$1^+$};
\node[scale=0.7] at (2.25,1.75) {\tiny$2^+$};
\node[scale=0.7] at (1.5,4.25) {\tiny$3^-$};
\node[scale=0.7] at (1.5,6.5) {\tiny$4^-$};
\draw[blue, very thick] (0.5,3) -- (0.5,5.25); 
\draw[black!20!white, thick] (0.5,5.25) -- (0.5,7.5); 
\draw[black!20!white, thick] (0.5,-2.5) -- (0.5,0); 
\draw[black!20!white, thick] (0.5,0) to[out=180,in=180] (0.5,3); 
\draw[blue, very thick] (0.5,0) to[out=0,in=0] (0.5,3); 
\fill[blue] (0.5,0) circle(.3cm);
\fill[blue] (0.5,3) circle(.3cm);
\fill[blue] (0.5,5.25) circle(.3cm);
\draw[white, ultra thick] (0.5,2.5) circle (5cm);
\draw[dotted] (0.5,2.5) circle(5cm);
}$
& $\tikz[baseline={([yshift=-.5ex]current bounding box.center)}, scale=0.18]
{
\node[scale=0.7] at (-1,1.75) {\tiny$1^+$};
\node[scale=0.7] at (2.25,1.75) {\tiny$2^+$};
\node[scale=0.7] at (1.5,4.25) {\tiny$3^-$};
\node[scale=0.7] at (1.5,6.5) {\tiny$4^-$};
\draw[black!20!white, thick] (0.5,3) -- (0.5,5.25); 
\draw[blue, very thick] (0.5,5.25) -- (0.5,7.5); 
\draw[blue, very thick] (0.5,-2.5) -- (0.5,0); 
\draw[black!20!white, thick] (0.5,0) to[out=180,in=180] (0.5,3); 
\draw[blue, very thick] (0.5,0) to[out=0,in=0] (0.5,3); 
\fill[blue] (0.5,0) circle(.3cm);
\fill[blue] (0.5,3) circle(.3cm);
\fill[blue] (0.5,5.25) circle(.3cm);
\draw[white, ultra thick] (0.5,2.5) circle (5cm);
\draw[dotted] (0.5,2.5) circle(5cm);
}$
& $\tikz[baseline={([yshift=-.5ex]current bounding box.center)}, scale=0.18]
{
\node[scale=0.7] at (-1,1.75) {\tiny$1^+$};
\node[scale=0.7] at (2.25,1.75) {\tiny$2^+$};
\node[scale=0.7] at (1.5,4.25) {\tiny$3^-$};
\node[scale=0.7] at (1.5,6.5) {\tiny$4^-$};
\draw[blue, very thick] (0.5,3) -- (0.5,5.25); 
\draw[black!20!white, thick] (0.5,5.25) -- (0.5,7.5); 
\draw[black!20!white, thick] (0.5,-2.5) -- (0.5,0); 
\draw[blue, very thick] (0.5,0) to[out=180,in=180] (0.5,3); 
\draw[black!20!white, thick] (0.5,0) to[out=0,in=0] (0.5,3); 
\fill[blue] (0.5,0) circle(.3cm);
\fill[blue] (0.5,3) circle(.3cm);
\fill[blue] (0.5,5.25) circle(.3cm);
\draw[white, ultra thick] (0.5,2.5) circle (5cm);
\draw[dotted] (0.5,2.5) circle(5cm);
}$
& $\tikz[baseline={([yshift=-.5ex]current bounding box.center)}, scale=0.18]
{
\node[scale=0.7] at (-1,1.75) {\tiny$1^+$};
\node[scale=0.7] at (2.25,1.75) {\tiny$2^+$};
\node[scale=0.7] at (1.5,4.25) {\tiny$3^-$};
\node[scale=0.7] at (1.5,6.5) {\tiny$4^-$};
\draw[black!20!white, thick] (0.5,3) -- (0.5,5.25); 
\draw[blue, very thick] (0.5,5.25) -- (0.5,7.5); 
\draw[blue, very thick] (0.5,-2.5) -- (0.5,0); 
\draw[blue, very thick] (0.5,0) to[out=180,in=180] (0.5,3); 
\draw[black!20!white, thick] (0.5,0) to[out=0,in=0] (0.5,3); 
\fill[blue] (0.5,0) circle(.3cm);
\fill[blue] (0.5,3) circle(.3cm);
\fill[blue] (0.5,5.25) circle(.3cm);
\draw[white, ultra thick] (0.5,2.5) circle (5cm);
\draw[dotted] (0.5,2.5) circle(5cm);
}$
& $\tikz[baseline={([yshift=-.5ex]current bounding box.center)}, scale=0.18]
{
\node[scale=0.7] at (-1,1.75) {\tiny$1^+$};
\node[scale=0.7] at (2.25,1.75) {\tiny$2^+$};
\node[scale=0.7] at (1.5,4.25) {\tiny$3^-$};
\node[scale=0.7] at (1.5,6.5) {\tiny$4^-$};
\draw[blue, very thick] (0.5,3) -- (0.5,5.25); 
\draw[blue, very thick] (0.5,5.25) -- (0.5,7.5); 
\draw[blue, very thick] (0.5,-2.5) -- (0.5,0); 
\draw[black!20!white, thick] (0.5,0) to[out=180,in=180] (0.5,3); 
\draw[black!20!white, thick] (0.5,0) to[out=0,in=0] (0.5,3); 
\fill[blue] (0.5,0) circle(.3cm);
\fill[blue] (0.5,3) circle(.3cm);
\fill[blue] (0.5,5.25) circle(.3cm);
\draw[white, ultra thick] (0.5,2.5) circle (5cm);
\draw[dotted] (0.5,2.5) circle(5cm);
}$
\\ 
\hhline{|=======|}
$\bar{L} \, \bar{d} \, \ell_{\mathrm{pro}} \, \ell_{\mathrm{pro}}$ & $\bar{\ell}_{\mathrm{pro}} \, \bar{D} \, \ell_{\mathrm{pro}} \, \ell_{\mathrm{pro}}$ & $\ell_{\mathrm{loc}} \, D \bar{L} \, \bar{d}$ & $\ell_{\mathrm{loc}} \, D \, \bar{d} \, \bar{D}$ & $L \, d \, \bar{L} \, \bar{d}$ & $L \, d \, \bar{d} \, \bar{D}$ & $\ell_{\mathrm{pro}} \, \ell_{\mathrm{pro}} \, \bar{D} \, \bar{D}$  \\ 
\hline
$*ABB$                                                             & $ABBB$                                                                               & $*A*A$                                        & $*AAB$                                           & $*B*A$                         & $*BAB$                         & $BBBB$                                                              \\ 
\hline
$(-1,0)$                                                           & $(0,0)$                                                                              & $(2,1)$                                       & $(1,1)$                                          & $(0,1)$                        & $(-1,1)$                       & $(0,2)$                                                             \\
\hline
\end{tabular}
\]
\caption{A four-crossing diagram for $3_1$ (from Drobotukhina's table \cite{MR1296890}) with its Tait graphs and their spanning trees. We list the activity word of each tree, followed by the partial smoothing which determines the corresponding twisted unknot, and the $(u,v)$-grading in the spanning tree complex.}
\label{fig:finalexample}
\end{figure}

Table \ref{tab:Usmooth} tells us how to determine the partial smoothing for the corresponding twisted unknot. From the partial smoothings, we find that there are three maximal sequences in $\mathscr{P}(D)$:
\[
\begin{tikzcd}
& \blue{*AAB} \arrow[ddr, rightarrowtail] \arrow[r, rightarrowtail] & \red{*ABB} \arrow[dr, rightarrowtail] & & 
\\
\blue{*A*A} \arrow[ur, rightarrowtail]  \arrow[dr, rightarrowtail] & & & \red{ABBB} \arrow[r, rightarrowtail] & \blue{BBBB}
\\
& \blue{*B*A} \arrow[r, rightarrowtail] & \blue{*BAB} \arrow[ur, rightarrowtail] & &
\end{tikzcd}
\]
where $X \rightarrowtail Y$ means $X > Y$. Therefore:
\[
E_0^{1,q} = \blue{\widetilde{U}_3}
\qquad
E_0^{2,q} = \blue{\widetilde{U}_4} \oplus \blue{\widetilde{U}_5}
\qquad
E_0^{3,q} = \red{\widetilde{U}_1} \oplus \blue{\widetilde{U}_6}
\qquad
E_0^{4,q} = \red{\widetilde{U}_2}
\qquad
E_0^{5,q} = \blue{\widetilde{U}_7}
\]
Notice that $k_+ = 2$ and $k_- = 4$, so we can compute the $q$-grading via the equalities $p+q = i = u - 2v + 1$ for $T \subset G_+$ and $p+q = i = -u + 2v -2$ for $T \subset G_-$. The $E_1$, $E_2$, and $E_3$ pages of the corresponding spectral sequence are pictured in Figure \ref{fig:finalss}. The spectral sequence collapses on the $E_3$ page. Notice that there is no differential on $E_2$ between $T_5$ and $T_2$ since there are no incidences between $\mathcal{C}_+(D)$ and $\mathcal{C}_-(D)$.

\begin{figure}[h!]
\[
\tikz[baseline={([yshift=-.5ex]current bounding box.center)}, scale=1]
{
\draw[very thick] (-0.5,-3.5) -- node[midway, below=1.25cm]{$E_1$}  (5.5,-3.5) node[anchor=west]{$p$};
\draw[very thick] (0.5,-4.5) -- (0.5, -1.5) node[anchor=south]{$q$};
\node at (1,-3) {$\blue{T_3}$};
\node at (2,-3) {$\blue{T_4}$};
\node at (2,-2) {$\blue{T_5}$};
\node at (3,-3) {$\red{T_1}$};
\node at (3,-2) {$\blue{T_6}$};
\node at (4,-3) {$\red{T_2}$};
\node at (5,-3) {$\blue{T_7}$};
\node at (0,-3) {$-3$};
\node at (0,-2) {$-2$};
\node at (1,-4) {$1$};
\node at (2,-4) {$2$};
\node at (3,-4) {$3$};
\node at (4,-4) {$4$};
\node at (5,-4) {$5$};
}
\qquad\qquad
\tikz[baseline={([yshift=-.5ex]current bounding box.center)}, scale=1]
{
\draw[very thick] (-0.5,-3.5) -- node[midway, below=1.25cm]{$E_2$}  (5.5,-3.5) node[anchor=west]{$p$};
\draw[very thick] (0.5,-4.5) -- (0.5, -1.5) node[anchor=south]{$q$};
\node at (1,-3) {$\blue{T_3}$};
\node at (2,-3) {$\blue{T_4}$};
\node at (2,-2) {$\blue{T_5}$};
\node at (3,-3) {$\red{T_1}$};
\node (X) at (3,-2) {$\blue{T_6}$};
\node at (4,-3) {$\red{T_2}$};
\node (Y) at (5,-3) {$\blue{T_7}$};
\node at (0,-3) {$-3$};
\node at (0,-2) {$-2$};
\node at (1,-4) {$1$};
\node at (2,-4) {$2$};
\node at (3,-4) {$3$};
\node at (4,-4) {$4$};
\node at (5,-4) {$5$};
\draw[-stealth, blue, line width=1.25pt] (X) -- (Y);
}
\]
\vspace{-0.85em}
\[
\tikz[baseline={([yshift=-.5ex]current bounding box.center)}, scale=1]
{
\draw[very thick] (-0.5,-3.5) -- node[midway, below=1.25cm]{$E_3$}  (5.5,-3.5) node[anchor=west]{$p$};
\draw[very thick] (0.5,-4.5) -- (0.5, -1.5) node[anchor=south]{$q$};
\node at (1,-3) {$\blue{T_3}$};
\node at (2,-3) {$\blue{T_4}$};
\node at (2,-2) {$\blue{T_5}$};
\node at (3,-3) {$\red{T_1}$};
\node at (4,-3) {$\red{T_2}$};
\node at (0,-3) {$-3$};
\node at (0,-2) {$-2$};
\node at (1,-4) {$1$};
\node at (2,-4) {$2$};
\node at (3,-4) {$3$};
\node at (4,-4) {$4$};
\node at (5,-4) {$5$};
}
\]
\caption{Pages $E_1$, $E_2$, and $E_3$ of the Champanerkar-Kofman spectral sequence associated to the diagram of $3_1$ in Figure \ref{fig:finalexample}.}
\label{fig:finalss}
\end{figure}

\begin{proof}[Proof of Theorem \ref{thm:main}]
Let $D\subset \RP^2$ be a non-local, reduced, alternating diagram. The Tait graphs determined by $D$ are $G_+$, which has all positive edges, and $G_-$, which has all negative edges. For each spanning tree $T$ of $G_\pm$, we have $v(T) = e_\pm(T) = e(T) = v(G_\pm) - 1$. In other words, the $v$-grading is constant for all the generators of $\mathcal{C}_\pm(D)$. The differential $\partial_{\pm}$ on $\mathcal{C}_\pm(D)$ has degree $(\mp1, \mp1)$, hence it is trivial. From Theorem \ref{thm:ckspectral}, we conclude that $\widetilde{Kh}^{i,j}(D; \F) \cong H^{u,v}(\mathcal{C}(D; \F)) \cong \mathcal{C}^{u,v}(D; \F)$. In addition, writing $J^\pm_L(t) = \sum a_n t^n$, Proposition \ref{prop:gradedeuler} implies that
\[
\dim \mathcal{C}_\pm^{u,v} = \abs{a_{\pm(u - v) + \frac{3w(D) \pm k_{\pm}}{4}}} = \abs{a_{\pm(u-v) + \frac{3w(D) \pm(c(D) + 2v)}{4}}}
\]
using the fact that $e(G_\pm) = c(D)$ and $v(T) = e_\pm(T) = v(G_\pm) -1$. Thus the Betti numbers are determined by the Jones polynomial. Finally, by Proposition \ref{prop:goeritzsig}, $\sigma_\pm(L) = 1 + n_-(D)  - s_B(D^\pm)$. Since $D$ is alternating, $v(G(D^\pm)) = v(G_{\pm}) = v+1$, where $G(D^\pm)$ is the Tait graph of the diagram $D^\pm$ defined in \S \ref{ss:Goeritz}. Thus
\[
v(T) = \frac{c(D) - w(D)}{2} - \sigma_\pm(L)
\]
for any $T \subset G_\pm$.
\end{proof}

\bibliographystyle{alpha}
\bibliography{MyopicTutte}

\end{document}